\setlist{leftmargin=5mm}
\numberwithin{equation}{section}
\newcommand{\R}{\mathbb{R}}
\newcommand{\E}{\mathbb{E}}
\newcommand{\Prob}{\mathbb{P}}
\newcommand{\pnorm}[2]{\lVert#1\rVert_{#2}}
\newcommand{\bigpnorm}[2]{\big\lVert#1\big\rVert_{#2}}
\newcommand{\abs}[1]{\lvert#1\rvert}
\renewcommand{\epsilon}{\varepsilon}
\renewcommand{\d}[1]{\mathrm{d}#1}
\newcommand{\ceil}[1]{\left\lceil #1 \right\rceil}
\renewcommand{\hat}{\widehat}
\renewcommand{\tilde}{\widetilde}
\newcommand{\diag}{\text{diag}}
\DeclareMathOperator{\tr}{tr}
\DeclareMathOperator{\ave}{ave}
\DeclareMathOperator{\op}{op}
\newcommand{\beq}{\begin{equation}}
\newcommand{\eeq}{\end{equation}}
\newcommand{\beqa}{\begin{equation} \begin{aligned}}
\newcommand{\eeqa}{\end{aligned} \end{equation}}
\newcommand{\beqas}{\begin{equation*} \begin{aligned}}
\newcommand{\eeqas}{\end{aligned} \end{equation*}}
\newcommand{\bit}{\begin{itemize}}
	\newcommand{\eit}{\end{itemize}}
\newcommand{\bmat}{\begin{bmatrix}}
	\newcommand{\emat}{\end{bmatrix}}
\DeclareMathOperator{\argmin}{\mathrm{argmin}}
\DeclareMathOperator{\argmax}{\mathrm{argmax}}
\theoremstyle{definition}\newtheorem{problem}{Problem}[section]
\theoremstyle{definition}
\theoremstyle{remark}
\theoremstyle{remark}\newtheorem{remark}[problem]{Remark}
\theoremstyle{definition}
\theoremstyle{plain}\newtheorem{theorem}[problem]{Theorem}
\theoremstyle{plain}
\theoremstyle{plain}\newtheorem{lemma}[problem]{Lemma}
\theoremstyle{plain}\newtheorem{proposition}[problem]{Proposition}
\theoremstyle{plain}
\theoremstyle{plain}
	\def\MR#1{}
\begin{document}

\title[]{Uncertainty quantification in the Bradley-Terry-Luce model}

\author[C. Gao]{Chao Gao}\thanks{The research of C. Gao is partially supported by NSF CAREER award DMS-1847590 and NSF grant CCF-1934931.}
\address[C. Gao]{
Department of Statistics, University of Chicago, Chicago, IL 60637, USA.
}
\email{chaogao@uchicago.edu}
\author[Y. Shen]{Yandi Shen}

\address[Y. Shen]{
Department of Statistics, University of Chicago, Chicago, IL 60637, USA.
}
\email{ydshen@uchicago.edu}

\author[A. Zhang]{Anderson Y. Zhang}\thanks{The research of A.Y. Zhang is supported in part by NSF grant DMS-2112988.}

\address[A. Zhang]{
Department of Statistics, University of Pennsylvania, Philadelphia, PA 19104, USA.
}
\email{ayz@wharton.upenn.edu}

\date{\today}


\begin{abstract}
The Bradley-Terry-Luce (BTL) model is a benchmark model for pairwise comparisons between individuals. Despite recent progress on the first-order asymptotics of several popular procedures, the understanding of uncertainty quantification in the BTL model remains largely incomplete, especially when the underlying comparison graph is sparse. In this paper, we fill this gap by focusing on two estimators that have received much recent attention: the maximum likelihood estimator (MLE) and the spectral estimator. Using a unified proof strategy, we derive sharp and uniform non-asymptotic expansions for both estimators in the sparsest possible regime (up to some poly-logarithmic factors) of the underlying comparison graph. These expansions allow us to obtain: (i) finite-dimensional central limit theorems for both estimators; (ii) construction of confidence intervals for individual ranks; (iii) optimal constant of $\ell_2$ estimation, which is achieved by the MLE but not by the spectral estimator. Our proof is based on a self-consistent equation of the second-order remainder vector and a novel leave-two-out analysis. 
\end{abstract}

\maketitle



\section{Introduction}

\subsection{Overview} In this paper, we study the problem of uncertainty quantification in the Bradley-Terry-Luce (BTL) model. Given $n$ individuals with unknown merit scores $\theta^* = (\theta^*_1,\ldots,\theta^*_n)^\top$ and an Erd\H{o}s-R\'{e}nyi comparison graph $A = \{A_{ij}\}_{i<j}$ with success probability $p$, individuals $i$ and $j$ are compared $L$ times if $A_{ij} = 1$, and each binary result $y_{ij\ell}$ is independently in favor of the former with probability
\begin{align*}
\Prob(y_{ij\ell} = 1) \equiv \frac{e^{\theta^*_i}}{e^{\theta^*_i} + e^{\theta^*_j}}.
\end{align*}
Equipped with the realization of the comparison graph $A$ and all the pairwise comparison results, the goal is to conduct inference on the unknown merit vector $\theta^*$. We give a detailed review of the BTL model in Section \ref{sec:main} ahead. Along with its generalizations \cite{luce1959individual,mcfadden1973conditional,plackett1975analysis} and close cousins in assortive networks \cite{holland1981exponential,park2004statistical,chatterjee2011random}, the BTL model has found extensive applications in web search \cite{dwork2001rank,cossock2006subset}, competitive sports \cite{motegi2012network,sha2016chalkboarding}, and social network analysis \cite{newman2003structure,robins2007recent}.

Among many statistical procedures developed for the BTL model, the following two approaches have received particular attention in recent years due to their wide applicability in practice.
\begin{itemize}
\item (Maximum likelihood estimator) In the current formulation, the maximum likelihood estimator (MLE) leads to a convex program and can be solved by efficient algorithms developed in \cite{zermelo1929die,ford1957solution,lange2000optimization,hunter2004mm}.
\item (Spectral estimator) The spectral method, also named \textit{rank centrality} in its original discovery \cite{negahban2017rank}, associates the pairwise comparisons with a Markov chain on the underlying comparison graph. The estimator is then taken to be the stationary measure of the corresponding sample transition probability matrix, and hence can be solved efficiently via power iterations \cite{negahban2017rank}.
\end{itemize} 
In the past few years, a series of works \cite{negahban2017rank,chen2019spectral,chen2020partial,chen2021optimal} have studied in depth the theoretical properties of these two estimators, with focus on their $\ell_2$/$\ell_\infty$ estimation accuracy and performance in partial/full ranking; see Section \ref{sec:main} ahead for a detailed review of related results.

Compared to the above thorough study of the first-order asymptotics of the estimators, the understanding of the associated limiting distribution theory is much less complete. A critical step in establishing optimal error bounds for partial recovery of ranking in \cite{chen2020partial} is sharp tail probability estimates for both the MLE and the spectral method. These results suggest, though do not directly imply, asymptotic normality. In the seminal work \cite{simons1999asymptotics}, asymptotic normality was first rigorously proved for the MLE when the comparison graph is fully connected (i.e. the underlying Erd\H{o}s-R\'{e}nyi comparison graph satisfies $p = 1$). Using the same proof strategy, \cite{han2020asymptotic} later extended the above result to the regime $p \asymp n^{-1/10}$ (up to some poly-logarithmic factors). Since the Erd\H{o}s-R\'{e}nyi graph is connected with high probability as soon as $np\geq (1+\epsilon)\log n$ for any $\epsilon > 0$, this leaves open the most challenging but practically relevant regime $p\asymp n^{-1}$ (up to some poly-logarithmic factors), as many real world networks only have near constant degrees. The goal of this paper is to fill this gap by providing a sharp characterization of both estimators in this regime. 

\subsection{Main contribution} Using a unified proof strategy, our main theoretical results provide a non-asymptotic expansion of both estimators in the regime $p\asymp n^{-1}$ (up to some poly-logarithmic factors). Let us start with the MLE, which we will denote by $\hat{\theta}$. Our first main result states that (see Theorem \ref{thm:mle_expansion} ahead for the formal statement)
\begin{align}\label{eq:mle_exp_intro}
\hat{\theta}_i - \theta_i^* \approx \frac{b_i}{d_i},
\end{align}
where
\begin{align*}
b_i =\sum_{j:j\neq i} A_{ij}\big(\bar{y}_{ij} - \psi(\theta_i^*-\theta_j^*)\big),\quad d_i = \sum_{j:j\neq i}A_{ij}\psi'(\theta_i^*-\theta_j^*).
\end{align*}
Here $\bar{y}_{ij} \equiv L^{-1}\sum_{\ell=1}^L y_{ij\ell}$ is the averaged comparison result between individuals $i$ and $j$ when $A_{ij} = 1$, $\psi(x) \equiv e^x/(1+e^x)$ is the logistic function, and the approximation $\approx$ in (\ref{eq:mle_exp_intro}) is uniform over the coordinates $i\in[n]$. To the best of our knowledge, (\ref{eq:mle_exp_intro}) as formally stated in Theorem \ref{thm:mle_expansion} is the first result in the literature with an explicit non-asymptotic expansion in the sparse regime $p\asymp n^{-1}$ (up to some poly-logarithmic factors). 

Thanks to the tractable form of the main terms $\{b_i/d_i\}$ (we will defer their interpretation to Section \ref{sec:main} ahead), we are able to characterize the asymptotic behavior of the MLE $\hat{\theta}$. In particular, we will present three concrete applications of the expansion in (\ref{eq:mle_exp_intro}): (i) a finite-dimensional central limit theorem (CLT) of $\hat{\theta}$; (ii) construction of confidence intervals for individual ranks; (iii) optimal constant of $\ell_2$ estimation in the BTL model; see Section \ref{sec:application} ahead for details.

Using the same proof strategy underlying (\ref{eq:mle_exp_intro}), we are able to derive a similar expansion for the spectral estimator which we denote by $\tilde{\theta}$ (see Theorem \ref{thm:spec_expansion} ahead for the formal statement):
\begin{align}\label{eq:spec_exp_intro}
\tilde{\theta}_i - \theta_i^* \approx \frac{\tilde{b}_i}{\tilde{d}_i}, 
\end{align}
where
\begin{align*}
\tilde{b}_i =\sum_{j:j\neq i} A_{ij}(\pi_i^* + \pi_j^*)\big(\bar{y}_{ij} -\psi(\theta_i^*-\theta_j^*)\big),\quad \tilde{d}_i = \pi_i^*\cdot \sum_{j:j\neq i}A_{ij}\psi(\theta_j^*-\theta_i^*).
\end{align*}
Here $\pi^* = (\pi^*_1,\ldots,\pi^*_n)^\top$ is the stationary measure of the Markov chain associated with the spectral estimator (see Section \ref{subsec:btl_rev} ahead for details), and the approximation $\approx$ in (\ref{eq:spec_exp_intro}) is again uniform over the coordinates $i\in[n]$. To the best of our knowledge, (\ref{eq:spec_exp_intro}) is the first result in the literature that gives a sharp non-asymptotic expansion of the spectral estimator in the sparse regime $p\asymp n^{-1}$ (up to some poly-logarithmic factors). Analogous to the MLE $\hat{\theta}$, the expansion (\ref{eq:spec_exp_intro}) also leads to a finite-dimensional CLT for the spectral estimator and its exact constant of $\ell_2$ error. Interestingly, along with a lower bound in the local minimax framework (cf. Theorem \ref{thm:lower} ahead), we are able to conclude that \textit{the MLE achieves the optimal constant of $\ell_2$ estimation in the BTL model but the spectral method does not}. An analogous observation in terms of ranking performance has previously been made in \cite{chen2020partial}.

Let us now discuss briefly the proof strategy underlying expansions (\ref{eq:mle_exp_intro}) and (\ref{eq:spec_exp_intro}). The previous proof strategy adopted in \cite{simons1999asymptotics,han2020asymptotic} for the MLE $\hat{\theta}$ proceeds by building and then inverting a self-consistent equation of $\hat{\theta}$, where the key technical step is to approximate the inverse of the Hessian of the likelihood equation in an entrywise manner. In the fully connected case $p = 1$, such a technical result is available from \cite{simons1998approximating}, but its approximation accuracy deteriorates quickly as $p$ gets smaller, resulting in the final condition $n^{1/10}p\rightarrow\infty$ in \cite{han2020asymptotic}. In this paper we adopt a higher-order analogue of the above approach by first building a self-consistent equation over the remainder vector
\begin{align*}
\delta_i \equiv \hat{\theta}_i - \frac{b_i}{d_i}
\end{align*}
in the case of the MLE $\hat{\theta}$. This approach allows us to bypass the technical difficulty of approximating the inverse of the Hessian, and reduces the problem to a sharp enough $\ell_\infty$ control of $\delta$, which we then tackle with a leave-two-out analysis. As will be detailed in Section \ref{sec:proof_idea} ahead, leaving two out instead of one as in previous works \cite{chen2019spectral,chen2020partial} is essential for our analysis. We refer to Section \ref{sec:proof_idea} for a detailed discussion of proof strategies. 

In a broader context, this paper can be categorized under the general theme of uncertainty quantification/statistical inference for models with growing/infinite dimensions, see \cite{nickl2013confidence, javanmard2014confidence, vandegeer2014asymptotically, zhang2014confidence, ren2015asymptotic, ning2017general, vanderpas2017uncertainty, neykov2018unified, chen2019inference, sur2019modern, farrell2021deep, monard2021statistical} for an incomplete list of recent works in other benchmark statistical models. This paper, together with the prior works \cite{simons1999asymptotics, han2020asymptotic}, resolves uncertainty quantification in the BTL model in both sparse and dense regimes.

\subsection{Organization}

The rest of the paper is organized as follows. Section \ref{sec:main} starts with a review of the BTL model and then presents our main results. Section \ref{sec:proof_idea} discusses in detail our proof strategy. Section \ref{sec:application} develops three concrete applications of our main theoretical expansions. Proofs for the MLE and spectral method are given in Sections \ref{sec:proof_main} and \ref{sec:proof_spec} respectively, followed by proofs for the applications in Sections \ref{subsec:proof_app_clt}-\ref{sec:proof_app_constant}. Some additional auxiliary results are presented in Appendix \ref{sec:appendix}.

\subsection{Notation}\label{section:notation}

For any positive integer $n$, let $[n]$ denote the set $\{1,\ldots,n\}$. For $a,b \in \R$, $a\vee b\equiv \max\{a,b\}$ and $a\wedge b\equiv\min\{a,b\}$. For $a \in \R$, let $a_+\equiv a\vee 0$ and $a_- \equiv (-a)\vee 0$. For $x \in \R^n$, let $\pnorm{x}{q}=\pnorm{x}{\ell_q(\R^n)}$ denote its $q$-norm $(0< q\leq \infty)$ with $\pnorm{x}{2}$ abbreviated as $\pnorm{x}{}$. By $\bm{1}_n$ we denote the vector of all ones in $\R^n$. For a matrix $M \in \R^{n\times n}$, let $\pnorm{M}{\op}$ and $\pnorm{M}{F}$ denote the spectral and Frobenius norms of $M$ respectively. Let $M^\dagger$ denote its pseudo-inverse. We use $\{e_j\}$ to denote the canonical basis, whose dimension should be clear from the context. We use $\mathbb{I}$ to denote the indicator function.


For two nonnegative sequences $\{a_n\}$ and $\{b_n\}$, we write $a_n\lesssim b_n$ or $a_n = \mathcal{O}(b_n)$ if $a_n\leq Cb_n$ for some absolute constant $C$. We write $a_n\asymp b_n$ if $a_n\lesssim b_n$ and $b_n\lesssim a_n$. We write $a_n\ll b_n$ or $a_n = \mathfrak{o}(b_n)$ (respectively~$a_n\gg b_n$) if $\lim_{n\rightarrow\infty} (a_n/b_n) = 0$ (respectively~$\lim_{n\rightarrow\infty} (a_n/b_n) = \infty$). We follow the convention that $0/0 = 0$. 

Let $\varphi,\Phi$ be the density and the cumulative distribution function of a standard normal random variable. For any $\alpha \in (0,1)$, let $z_\alpha$ be the normal quantile defined by $\Prob(\mathcal{N}(0,1)> z_\alpha) = \alpha$. 


\section{Main results}\label{sec:main}

\subsection{BTL model: a review}\label{subsec:btl_rev}

We start by laying down the details of our problem setting. Consider $n$ individuals where each one is associated with some latent merit parameter $\theta_i^*\in\R$ for $i\in[n]$. Comparisons among these $n$ individuals are then made on a random Erd\H{o}s-R\'{e}nyi graph $A  = \{A_{ij}\}_{i<j}$ with edge probability $p$, i.e. $\{A_{ij}\}_{i<j}$ are i.i.d. Bernoulli variables with success probability $p$. For each connected edge $A_{ij} = 1$, $L$ independent comparisons denoted by $\{y_{ij\ell}\}_{\ell=1}^L$ are made between individuals $i$ and $j$, where $y_{ij\ell}$ are i.i.d. Bernoulli variables with success probability $\psi(\theta_i^* - \theta_j^*)$ with $\psi$ being the logistic function $\psi(t) \equiv e^t/(1+e^t)$. We observe the comparison graph $A$ and the comparison results $\{y_{ij\ell}\}$, and the goal is to conduct statistical inference of the merit parameters $\theta^* = (\theta_1^*,\ldots,\theta_n^*)^\top$. 

Throughout the paper, we will consider the following parameter space for $\theta^*$.
\begin{align}\label{def:theta_space}
\Theta(\kappa) \equiv \Big\{\theta^*\in\R^n: \max_{i\in[n]}\theta_i^* - \min_{i\in[n]}\theta_i^* \leq \kappa, \bm{1}_n^\top\theta^* = 0\Big\}.
\end{align}
Here $\kappa$ is known as the \textit{dynamic range}, and since the BTL model is only identifiable up to a global shift in the merit parameter $\theta^*$, we make the centering condition $\bm{1}_n^\top\theta^* = 0$. In our theory below, $n$ is taken to be the main asymptotic parameter, and all other problem parameters $p, \theta^*, \kappa, L$ are allowed to change with $n$. We will mostly be interested in the following regime: 
\begin{align}\label{def:regime}
\kappa = \mathcal{O}(1), \quad np\gg (\log n)^\alpha \text{ for some }\alpha \geq 1.
\end{align}
We make a few remarks on the above conditions.
\begin{enumerate}
\item Here $\kappa = \mathcal{O}(1)$ is assumed to facilitate theoretical analysis. Direct conveniences brought by this condition include that the comparison success probability $\psi(\theta_i^*-\theta_j^*)$ is bounded away from $0$ and $1$, and the Hessian of the log-likelihood function $\ell_n(\theta)$ (see (\ref{def:hessian}) below) will be well-conditioned. This condition is commonly assumed in the existing literature \cite{chen2015spectral,chen2019spectral,chen2020partial}.
\item It is well-known that the underlying Erd\H{o}s-R\'{e}nyi graph is connected with high probability when $np \geq (1+\epsilon)\log n$ and disconnected if  $np \leq (1-\epsilon)\log n$ for any $\epsilon > 0$. Therefore the second condition allows the sparsest possible regime (without losing identifiability) of the graph up to some poly-logarithmic factors. 
\end{enumerate}
More discussions of these conditions will follow after the statement of the main results; see Remark \ref{remark:mle_extension} ahead.

In the BTL model, arguably the most natural procedure is the maximum likelihood estimator (MLE), which can be traced back to \cite{zermelo1929die,ford1957solution}. After some elementary algebra, the (normalized) negative log-likelihood function is given by
\begin{align}\label{eq:mle_likelihood}
\ell_n(\theta) = \sum_{i<j} A_{ij}\Big[\bar{y}_{ij}\log\frac{1}{\psi(\theta_i - \theta_j)} + \bar{y}_{ji}\log\frac{1}{\psi(\theta_j - \theta_i)}\Big].
\end{align}
Here for each $i < j$, we let $\bar{y}_{ij}\equiv \big(\sum_{\ell=1}^Ly_{ij\ell}\big)/L$ and we take $\bar{y}_{ji} = 1- \bar{y}_{ij}$ by convention. In accordance with the identifiability condition $\bm{1}_n^\top \theta^* = 0$, we define the MLE to be 
\begin{align}\label{def:mle}
\hat{\theta} \in \argmin_{\theta:\bm{1}_n^\top \theta = 0} \ell_n(\theta).
\end{align}
Under conditions in (\ref{def:regime}) with any $\alpha \geq 1$, the MLE exists and is unique with probability $1-\mathcal{O}(n^{-10})$ since with the prescribed probability, the above optimization can be constrained to the compact set $\{\theta:\pnorm{\theta-\theta^*}{\infty}\leq 5\}$ (cf. \cite[Proposition 8.1]{chen2020partial}) and the likelihood is strongly convex on this set (cf. Lemma \ref{lem:hessian_eigen} ahead). In what follows, we work on the event where $\hat{\theta}$ is well-defined. 

Another popular approach, named \textit{rank centrality} in its original discovery \cite{negahban2017rank}, associates the pairwise comparisons with a random walk on the underlying Erd\H{o}s-R\'{e}nyi graph. More precisely, consider a Markov chain with $n$ states (corresponding to the $n$ individuals), and let the sample transition matrix $P$ be defined by
\begin{align*}
P_{ij} \equiv 
\begin{cases}
\frac{1}{d}A_{ij}\bar{y}_{ji}, & i\neq j,\\
1-\frac{1}{d}\sum_{k:k\neq i}A_{ik}\bar{y}_{ki}, & i=j.
\end{cases}
\end{align*}
Here we take $d = 2np$ throughout so that with probability $1 - \mathcal{O}(n^{-10})$, the matrix $P$ has nonnegative entries. Its population version conditioning on $A$ is
\begin{align*}
P^*_{ij} \equiv \E(P_{ij}|A) = 
\begin{cases}
\frac{1}{d}A_{ij}\psi(\theta_j^* - \theta_i^*), &i\neq j,\\
1-\frac{1}{d}\sum_{k:k\neq i} A_{ik}\psi(\theta_k^*-\theta_i^*), & i = j,
\end{cases}
\end{align*}
hence the imaginary random walker on the graph has a higher tendency of moving to an adjacent node (individual) with a larger merit parameter. By direct verification, this population transition matrix admits the stationary measure
\begin{align}\label{def:pi}
\pi^* \equiv \Big(\frac{e^{\theta_1^*}}{\sum_{k=1}^n e^{\theta_k^*}},\ldots, \frac{e^{\theta_n^*}}{\sum_{k=1}^n e^{\theta_k^*}}\Big),
\end{align}
and the approach of rank centrality estimates it using the stationary measure $\hat{\pi}$ of $P$: 
\begin{align}
\hat{\pi}^\top P = \hat{\pi}^\top.
\end{align}
In accordance with the identifiability condition $\bm{1}_n^\top \theta^* = 0$, the associated estimator of $\theta^*$ is defined as
\begin{align}\label{def:spec}
\tilde{\theta}_i \equiv \log \hat{\pi}_i - \frac{1}{n}\sum_{k=1}^n \log \hat{\pi}_k.
\end{align}
Since the above rank centrality algorithm builds on a long list of spectral ranking methods (see \cite{vigna2016spectral} for a comprehensive survey), we will henceforth call it the spectral method. For the rest of the paper, we reserve the notation $\hat{\theta}$ for the MLE and $\tilde{\theta}$ for the spectral method. 

In the last few years, a series of works have made significant contribution towards understanding the performances of both estimators, in terms of both $\ell_2$ and $\ell_\infty$ accuracy.
\begin{proposition}[\cite{negahban2017rank,chen2019spectral,chen2020partial}]\label{prop:mle_prelim}
Suppose the conditions in (\ref{def:regime}) hold with $\alpha = 1$. Then there exists some $C = C(\kappa) > 0$ such that the following hold with probability at least $1-\mathcal{O}(n^{-10})$ uniformly over $\theta^*\in\Theta(\kappa)$.
\begin{align*}
(\textrm{MLE}) \quad\quad&\pnorm{\hat{\theta} - \theta^*}{}^2 \leq \frac{C}{pL} \quad \text{ and } \quad \pnorm{\hat{\theta} - \theta^*}{\infty}^2 \leq \frac{C\log n}{npL},\\
(\textrm{spectral}) \quad\quad &\pnorm{\tilde{\theta}-\theta^*}{}^2 \leq \frac{C}{pL} \quad \text{ and } \quad \pnorm{\tilde{\theta}-\theta^*}{\infty}^2 \leq \frac{C\log n}{npL}.
\end{align*}

\end{proposition}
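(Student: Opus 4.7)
The plan for the $\ell_2$ bound on the MLE is to exploit strong convexity. The Hessian
\begin{align*}
\nabla^2 \ell_n(\theta) = \sum_{i<j} A_{ij}\,\psi'(\theta_i - \theta_j)(e_i - e_j)(e_i-e_j)^\top
\end{align*}
is a weighted Laplacian of the Erd\H{o}s-R\'{e}nyi graph $A$. Since $\kappa = \mathcal{O}(1)$, the weights $\psi'(\theta_i-\theta_j)$ are uniformly bounded below on a neighborhood of $\theta^*$, and standard spectral-gap estimates for the Erd\H{o}s-R\'{e}nyi Laplacian (second-smallest eigenvalue of order $np$ once $np \gg \log n$) give $\lambda_{\min,\perp}(\nabla^2\ell_n) \gtrsim np$ with high probability on such a neighborhood. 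The gradient at the truth has entries $\partial_i \ell_n(\theta^*) = \sum_{j\neq i} A_{ij}(\psi(\theta_i^* - \theta_j^*) - \bar{y}_{ij})$, which are mean zero with conditional variance $\E[\pnorm{\nabla \ell_n(\theta^*)}{}^2 \mid A] \lesssim n^2 p / L$; a Bernstein-type concentration extends this to a high-probability bound. Plugging these into the basic inequality $\pnorm{\hat\theta - \theta^*}{} \leq 2\pnorm{\nabla \ell_n(\theta^*)}{}/\lambda_{\min,\perp}$, valid because $\ell_n(\hat\theta) \leq \ell_n(\theta^*)$ and $\hat\theta$ stays in a compact neighborhood of $\theta^*$ (\cite[Proposition~8.1]{chen2020partial}) where strong convexity holds, yields $\pnorm{\hat\theta - \theta^*}{}^2 \lesssim 1/(pL)$.

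For the $\ell_\infty$ bound I would use a leave-one-out decoupling. For each $m \in [n]$ define an auxiliary MLE $\hat\theta^{(m)}$ computed from the data in which every observation $\bar y_{mj}$ incident to node $m$ has been replaced by its mean $\psi(\theta_m^*-\theta_j^*)$. Because $\hat\theta^{(m)}$ is independent of the $m$-th row of randomness, $|\hat\theta_m^{(m)} - \theta_m^*|$ can be controlled by a scalar Bernstein bound applied to the $m$-th first-order optimality condition, yielding the desired rate $\sqrt{\log n/(npL)}$. The remaining task is to bound $\pnorm{\hat\theta - \hat\theta^{(m)}}{\infty}$: subtracting the gradient equations of $\hat\theta$ and $\hat\theta^{(m)}$ and reusing the strong convexity from step one controls $\pnorm{\hat\theta-\hat\theta^{(m)}}{}$ by the deviation in the $m$-th gradient coordinate, which has sub-Gaussian size $\sqrt{np/L}$; dividing by $\lambda_{\min,\perp}\gtrsim np$ and taking a union bound over $m$ gives $\pnorm{\hat\theta - \theta^*}{\infty}^2 \lesssim \log n / (npL)$.

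For the spectral estimator the plan runs parallel but in terms of $\hat\pi$. The $\ell_2$ bound on $\tilde\theta - \theta^*$ reduces, via a Taylor expansion of $\log$ around $\pi^*$ (whose entries are of order $1/n$ since $\kappa=\mathcal{O}(1)$), to an $\ell_2$ bound on the relative perturbation $(\hat\pi - \pi^*)/\pi^*$. That in turn follows from Markov-chain perturbation theory: writing $\hat\pi - \pi^*$ through $\pi^{*\top}(P - P^*)$ composed with the pseudoinverse of $I - P^*$, and invoking both the spectral gap of $P^*$ (again of order $np$) and a second-moment bound on $P - P^*$. The $\ell_\infty$ bound is once more obtained by leave-one-out, now replacing the $m$-th row and column of $P$ by their population versions. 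The main obstacle in both $\ell_\infty$ analyses is promoting the $\ell_2$ control of the leave-one-out difference to $\ell_\infty$, which requires either iterating the leave-one-out contraction or controlling the $\ell_\infty \to \ell_\infty$ operator norm of the inverse weighted Laplacian; this is the step where the sparsity $np \gg \log n$ makes the bookkeeping delicate and absorbs the bulk of the technical work in the cited papers.
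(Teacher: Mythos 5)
This proposition is not proved in the paper: it is imported verbatim from \cite{negahban2017rank,chen2019spectral,chen2020partial} (see the attribution in its label and the deferral in Lemma~\ref{lem:pre_estimate} to \cite[Lemmas~8.6--8.7]{chen2020partial}). There is therefore no in-paper proof to compare against; what you have written is a reconstruction of the arguments in those references, and as such it is largely accurate.

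Two remarks on precision. First, for the $\ell_2$ bound your strong-convexity argument is the standard one and is essentially exactly what the cited papers do. Second, your $\ell_\infty$ paragraph is blurrier on the point that actually does the work. You say that ``$|\hat\theta_m^{(m)}-\theta_m^*|$ can be controlled by a scalar Bernstein bound applied to the $m$-th first-order optimality condition,'' but the $m$-th score equation of $\hat\theta^{(m)}$ contains no randomness in the $m$-th row at all (you replaced it by its mean), so Bernstein does not enter there. What one actually does is apply Bernstein to the $m$-th score equation of the genuine $\hat\theta$, $\sum_{j\neq m}A_{mj}\bigl(\bar y_{mj}-\psi(\hat\theta_m-\hat\theta_j)\bigr)=0$, and the role of $\hat\theta^{(m)}$ is purely to decouple: one substitutes $\hat\theta_j^{(m)}$ for $\hat\theta_j$ inside $\psi$ so that the sum is conditionally independent of $\{A_{mj}\bar y_{mj}\}_j$, paying a price controlled by $\|\hat\theta-\hat\theta^{(m)}\|$. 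There is also a genuine circularity lurking that you gesture at but do not resolve: bounding $|\hat\theta_m^{(m)}-\theta_m^*|$ (or more precisely, closing the Taylor remainder in the score equation) itself requires an $\ell_\infty$ estimate for the leave-one-out estimator, which is the same kind of bound you are trying to prove. In \cite{chen2019spectral,chen2020partial} this is handled by a bootstrap/contraction argument rather than a single Bernstein application. Your closing acknowledgment that the $\ell_2\to\ell_\infty$ promotion ``absorbs the bulk of the technical work'' is correct, and that is precisely where the sketch, as written, is not yet a proof.

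Note also that the paper's own leave-one-out MLE (\ref{def:loo_mle}) is defined by dropping node $m$ entirely (producing an element of $\R^{n-1}$), whereas you keep node $m$ and replace its observations by their conditional means. Both constructions appear in the literature and both work here; the distinction matters only for bookkeeping, not for the final rates.
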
 
In fact, both error rates are known to be minimax rate optimal \cite{negahban2017rank,chen2019spectral} over $\Theta(\kappa)$; see also \cite[Lemmas 11.1 and 11.3]{chen2020partial} for some more refined $\ell_\infty$ tail estimates. Other aspects of the estimators, notably their ranking performances, have also been studied in depth in \cite{negahban2017rank,chen2019spectral,chen2020partial}.

\subsection{Non-asymptotic expansions}
Compared to the above progress in the first-order asymptotics, the understanding of limiting distribution theory in the BTL model is much less complete. In the seminal work \cite{simons1999asymptotics}, the authors derived the $\ell_\infty$ consistency and asymptotic normality of the MLE in the full comparison case $p = 1$. Adopting a similar proof technique, the recent work \cite{han2020asymptotic} extended \cite{simons1999asymptotics} to the sparsity regime $n^{1/10}p\rightarrow\infty $ (up to some poly-logarithmic factors). This leaves open the most challenging but practically relevant sparsity regime (\ref{def:regime}) where the average number of comparisons is only slowly growing.

The following first main result of this paper provides a sharp non-asymptotic expansion of the MLE $\hat{\theta}$ in the regime (\ref{def:regime}), from which asymptotic normality and a few other interesting corollaries can be readily deduced. We present its proof in Section \ref{sec:proof_main}. 
\begin{theorem}\label{thm:mle_expansion}
Suppose that the conditions in (\ref{def:regime}) hold with $\alpha = 3/2$. Then for any $i\in[n]$, we have
\begin{align}\label{eq:mle_expansion}
\hat{\theta}_i - \theta^*_i = (1+\epsilon_{1,i})\frac{b_i}{d_i} + \epsilon_{2,i}.
\end{align}
Here $\epsilon_1,\epsilon_2$ are two vectors in $\R^n$ such that $\pnorm{\epsilon_1}{\infty} = \mathfrak{o}(1)$ and $\pnorm{\epsilon_2}{\infty} = \mathfrak{o}(1/\sqrt{npL})$ with probability $1-\mathcal{O}(n^{-10})$, and $b,d\in\R^n$ are given by
\begin{align*}
b_i =\sum_{j:j\neq i} A_{ij}\big(\bar{y}_{ij} - \psi(\theta_i^*-\theta_j^*)\big),\quad d_i = \sum_{j:j\neq i}A_{ij}\psi'(\theta_i^*-\theta_j^*).
\end{align*}
\end{theorem}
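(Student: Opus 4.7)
The plan is to work directly from the first-order optimality condition for the MLE, derive a self-consistent equation for the second-order remainder around the claimed leading term $b_i/d_i$, and bound that remainder in $\ell_\infty$ by a leave-two-out decoupling argument.

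First I would differentiate $\ell_n$ in \eqref{eq:mle_likelihood} to obtain the stationarity condition (the Lagrange multiplier vanishes because $\sum_i \partial_i \ell_n \equiv 0$):
\[
\sum_{j:j\neq i} A_{ij}\bigl[\psi(\hat\theta_i-\hat\theta_j)-\bar y_{ij}\bigr]=0, \qquad i\in[n].
\]
Writing $\Delta_i \define \hat\theta_i-\theta_i^*$ and Taylor expanding $\psi$ to second order around $\theta_i^*-\theta_j^*$ rearranges this into
\[
b_i = \sum_{j\neq i} A_{ij}\psi'(\theta_i^*-\theta_j^*)(\Delta_i-\Delta_j) + \tfrac12 \sum_{j\neq i} A_{ij}\psi''(\xi_{ij})(\Delta_i-\Delta_j)^2,
\]
with $\xi_{ij}$ between $\theta_i^*-\theta_j^*$ and $\hat\theta_i-\hat\theta_j$. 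Isolating the diagonal of the first sum and dividing by $d_i$ gives
\[
\Delta_i = \frac{b_i}{d_i} + \frac{1}{d_i}\sum_{j\neq i} A_{ij}\psi'(\theta_i^*-\theta_j^*)\Delta_j - \frac{r_i}{d_i},
\]
which, after substituting $\Delta_j = b_j/d_j + \delta_j$ with $\delta_j \define \Delta_j - b_j/d_j$, becomes a self-consistent equation for $\delta$. The target is $\pnorm{\delta}{\infty} = \mathfrak{o}(1/\sqrt{npL})$; once established, \eqref{eq:mle_expansion} follows by absorbing the multiplicative discrepancy $\psi'(\xi_{ij})/\psi'(\theta_i^*-\theta_j^*)-1$ into $\epsilon_{1,i}$ and the rest into $\epsilon_{2,i}$.

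The self-consistent equation has three pieces to control in $\ell_\infty$: the linear operator $(Tf)_i \define d_i^{-1}\sum_{j\neq i} A_{ij}\psi'(\theta_i^*-\theta_j^*)f_j$ applied to the explicit leading term $b/d$; the same operator applied to $\delta$ itself; and the quadratic remainder $r_i/d_i$. For the first piece, $\{b_j/d_j\}$ has conditional-on-$A$ standard deviation of order $1/\sqrt{npL}$, and averaging $\asymp np$ near-independent such terms through $T$ produces a quantity of standard deviation $\asymp 1/(np\sqrt L)$, giving the required $\mathfrak{o}(1/\sqrt{npL})$ bound by Bernstein and a union bound, \emph{provided} the pair $(A_{ij},\bar y_{ij})$ inside $T$ can be decoupled from $b_j/d_j$. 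For the quadratic piece, the worst-case bound $r_i/d_i \lesssim \pnorm{\Delta}{\infty}^2 \lesssim \log n/(npL)$ from Proposition \ref{prop:mle_prelim} is already $\mathfrak{o}(1/\sqrt{npL})$ in the regime $np \gg (\log n)^{3/2}$, modulo sharpening through average-case cancellation in $(\Delta_i-\Delta_j)^2$ to remove residual logarithmic slack. The operator piece $T\delta$ drives the iteration: once the $\ell_\infty\to\ell_\infty$ operator norm of $T$ is shown to be strictly below one (which follows from the Laplacian spectral gap via Lemma \ref{lem:hessian_eigen}), a fixed-point/contraction argument closes the bound on $\pnorm{\delta}{\infty}$.

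The main obstacle, and the reason the leave-one-out analyses of \cite{chen2019spectral,chen2020partial} do not suffice here, lies in the decoupling required to apply Bernstein to both $T(b/d)$ and $T\delta$. A single leave-one-out proxy $\hat\theta^{(-i)}$ makes $\hat\theta_j^{(-i)}$ independent of $(A_{ij},\bar y_{ij})$, but $b_j$ and $\delta_j$ still involve edge variables around $j$ that couple to the $j$-th term of $Tf$ at the accuracy we need. My plan is to build a leave-two-out estimator $\hat\theta^{(-i,-j)}$ by removing from $\ell_n$ every edge incident to either $i$ or $j$; to prove $\pnorm{\hat\theta-\hat\theta^{(-i,-j)}}{\infty} = \tilde{\mathcal{O}}(1/(np\sqrt L))$ via a contraction argument that combines the strong convexity of $\ell_n$ on the $\ell_\infty$-ball of radius $5$ (cf.\ Lemma \ref{lem:hessian_eigen}) with the rates in Proposition \ref{prop:mle_prelim}; and then to replace $(b_j,\delta_j)$ inside $T_i$ by their leave-two-out proxies, for which the pair $(i,j)$ is genuinely independent of $(A_{ij},\bar y_{ij})$ so that Bernstein applies. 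The delicate point is calibrating the leave-two-out error to the $1/(np\sqrt L)$ scale, which is sharper than what standard leave-one-out contractions yield and is exactly the accuracy needed for the Bernstein step to produce the targeted $\mathfrak{o}(1/\sqrt{npL})$ bound on $\delta$ after the union bound over $i\in[n]$.
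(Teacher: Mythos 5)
Your plan captures the paper's skeleton correctly: derive a self-consistent equation for the remainder $\delta_i \equiv \hat\theta_i-\theta_i^*-b_i/d_i$, control the explicit drift $T(b/d)$ and the quadratic remainder, and then observe (correctly) that a single leave-one-out proxy is not enough to close the $\ell_\infty$ bound. Your derivation of the self-consistent equation directly from the score is in fact slightly more economical than the paper's, which routes through the one-dimensional conditional likelihood and a one-step Newton iterate $\bar\theta_i = \theta_i^* - f^{(i)}(\theta_i^*|\hat\theta_{-i})/g^{(i)}(\theta_i^*|\hat\theta_{-i})$, introducing two extra error terms $R_3,R_4$; either decomposition is workable.

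There is, however, a genuine gap at the crucial step. You propose to close the bound on $\pnorm{\delta}{\infty}$ by a fixed-point/contraction argument, asserting that the operator $(Tf)_i \equiv d_i^{-1}\sum_{j\neq i}A_{ij}\psi'(\theta_i^*-\theta_j^*)f_j$ has $\ell_\infty\to\ell_\infty$ norm strictly below one, derived from the Laplacian spectral gap in Lemma \ref{lem:hessian_eigen}. This is false. Because $d_i=\sum_{j\neq i}A_{ij}\psi'(\theta_i^*-\theta_j^*)$, the matrix $T$ is row-stochastic with nonnegative entries, so $\pnorm{T}{\ell_\infty\to\ell_\infty}=1$ exactly; and restricting to $\{f:\bm{1}_n^\top f=0,\ \pnorm{f}{\infty}\le 1\}$ does not help, since one can take $f$ equal to $1$ on the neighbors of $i$ and rebalanced elsewhere to make $(Tf)_i$ arbitrarily close to $1$. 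Lemma \ref{lem:hessian_eigen} yields an $\ell_2$ spectral gap ($\lambda_{\min,\perp}(H)\gtrsim np$), which is an entirely different statement from an $\ell_\infty$ contraction of $D^{-1}W$. What the paper actually does in place of a contraction is: (i) establish an $\ell_2$ bound $\pnorm{\delta}{}=\mathfrak{o}(1/\sqrt{pL})$ (Proposition \ref{prop:delta_bound}) — a step your outline omits; (ii) decouple $\sum_j A_{ij}\psi'(\theta_i^*-\theta_j^*)\delta_j$ by replacing $\delta_j$ with the \emph{single} leave-one-out proxy $\delta^{(i)}_j$, which as a vector is jointly independent of $\{A_{ij}\}_{j\ne i}$, so conditional Bernstein yields a bound in terms of $\pnorm{\delta^{(i)}}{}$ and $\pnorm{\delta^{(i)}}{\infty}$; and (iii) only then use a leave-\emph{two}-out MLE $\hat\theta^{(i,j)}$, not to decouple the sum directly, but to control quantities like $g^{(-i,j)}(\theta_j^*|\hat\theta^{(i)}_{-j})-g^{(-i,j)}(\theta_j^*|\theta^*_{-i,-j})$ appearing inside the self-consistent equation for $\delta^{(i)}$ itself. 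Relatedly, your claimed bound $\pnorm{\hat\theta-\hat\theta^{(-i,-j)}}{\infty}=\tilde{\mathcal{O}}(1/(np\sqrt L))$ is not established (and would be a nontrivial delocalization result); the paper only uses $\ell_2$ comparisons such as $\pnorm{\hat\theta^{(i,j)}-\hat\theta^{(i)}_{-j}}{}\lesssim 1/\sqrt{npL}$, plugged in through Cauchy--Schwarz. Finally, for the quadratic remainder, your worst-case bound $\pnorm{\Delta}{\infty}^2\lesssim\log n/(npL)$ is \emph{not} $\mathfrak{o}(1/\sqrt{npL})$ under $np\gg(\log n)^{3/2}$ when $L=\mathcal{O}(1)$ — the promised "average-case cancellation" is in fact the same leave-one-out Bernstein argument, and needs to be carried out, not deferred.
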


\begin{remark}
Using standard concentration arguments, we have $b_i \asymp \sqrt{np/L}$ and $d_i\asymp np$ for each $i\in[n]$, so the main term satisfies $b_i/d_i \asymp 1/\sqrt{npL}$. Hence upon proper normalization, $\epsilon_2$ is a small term in an entrywise manner. 
\end{remark}
\begin{remark}\label{remark:mle_extension}
We comment on two possible generalizations of the above theorem.
\begin{enumerate}
\item (\textit{Heterogeneity}) A direct generalization of the current BTL model is to allow heterogeneity in the pairwise comparisons, that is, replace the Erd\H{o}s-R\'{e}nyi probability $p$ by $p_{ij}$ and the number of comparisons $L$ by $L_{ij}$ for each pair $(i,j)$ of comparison. With these changes, the above theorem continues to hold upon changing the conditions in (\ref{def:regime}) to $\min_{i<j}p_{ij} \gg (\log n)^\alpha/n$ and $\max_{i<j}p_{ij}/\min_{i<j}p_{ij}\leq C$ for some universal $C > 0$, and the main term $b_i/d_i$ to 
\begin{align*}
\frac{\sum_{j:j\neq i}A_{ij}(y_{ij} - L_{ij}\psi(\theta_i^*-\theta_j^*))}{\sum_{j:j\neq i}A_{ij}L_{ij}\psi'(\theta_i^*-\theta_j^*)},
\end{align*}
where $y_{ij}$ is now $\sum_{\ell=1}^{L_{ij}} y_{ij\ell}$; see \cite[Section 3.1]{benaych2020spectral} for some related probabilistic properties of the inhomogeneous Erd\H{o}s-R\'{e}nyi graph. The applications to be introduced in Section \ref{sec:application} ahead will also hold upon proper modifications. 
\item (Sparsity condition) The slightly worse exponent in $np\gg (\log n)^{3/2}$ (instead of $1$) results from the bound $\pnorm{\delta}{\infty} \lesssim \sqrt{(\log n)^3/(np)^2}$$\mathcal{O}(1/\sqrt{npL})$ in Lemma \ref{lem:delta_main}, which is derived from an earlier rougher estimate $\pnorm{\delta}{\infty} \lesssim \sqrt{(\log n)^1/(np)^0}\mathcal{O}(1/\sqrt{npL})$ in Lemma \ref{lem:loo_delta_max_bound}. Here $\delta$, defined in (\ref{def:delta}) for the MLE and (\ref{eq:spec_expansion}) for the spectral method, is the key remainder vector we are trying to bound; see the proof idea in Section \ref{sec:proof_idea} ahead for more details. At the cost of a much lengthier proof, we may be able to iterate the above arguments and improve the condition to $np\gg (\log n)^{(3+2k)/(2+2k)}$ after the $k$th iteration, reaching exponent $1+\eta$ after $\ceil{1/(2\eta)}$ iterations. An interesting theoretical question is whether the expansion (\ref{eq:mle_expansion}) still holds under the condition $np\gg \log n$, or even better, under $np\geq C\log n$ with some large universal $C > 0$ as established for some first-order results in \cite{chen2020partial}. We leave the complete resolution of this problem to a future work.
\end{enumerate}
\end{remark}


To the best of our knowledge, Theorem \ref{thm:mle_expansion} is the first result in the literature with an explicit non-asymptotic expansion in the regime (\ref{def:regime}). 
To help with its understanding, we now explain the form $\{b_i/d_i\}$ of the main term. For each fixed $i\in[n]$, let $\ell^{(i)}_n(\theta_i|\theta_{-i})$ be the local negative likelihood of the $i$th individual given the rest of the merit parameters $\theta_{-i}$:
\begin{align*}
\ell_n^{(i)}(\theta_i|\theta_{-i}) = \sum_{j:j\neq i}A_{ij}\Big[\bar{y}_{ij}\log\frac{1}{\psi(\theta_i - \theta_j)} + \bar{y}_{ji}\log\frac{1}{\psi(\theta_j-\theta_i)}\Big].
\end{align*}
It can be readily verified that: (i) $\hat{\theta}_i$ is a minimizer of $\ell_n^{(i)}(\theta_i|\hat{\theta}_{-i})$ and (ii) $b_i = -\partial_{\theta_i}\ell_n^{(i)}(\theta_i^*|\theta^*_{-i})$ and $d_i = \partial^2_{\theta_i}\ell_n^{(i)}(\theta_i^*|\theta^*_{-i})$, so a local quadratic expansion of $\ell_n^{(i)}(\theta_i|\hat{\theta}_{-i})$ around $\theta_i^*$ yields that
\begin{align}\label{eq:mle_local_quad}
\notag\hat{\theta}_i &\approx \argmin_{\theta_i}\Big[\ell_n^{(i)}(\theta_i^*|\hat{\theta}_{-i}) +\partial_{\theta_i}\ell_n^{(i)}(\theta_i^*|\hat{\theta}_{-i})\cdot(\theta_i - \theta_i^*) + \frac{1}{2}\partial^2_{\theta_i}\ell_n^{(i)}(\theta_i^*|\hat{\theta}_{-i})\cdot(\theta_i - \theta_i^*)^2\Big]\\
&\approx \argmin_{\theta_i}\Big[\ell_n^{(i)}(\theta_i^*|\hat{\theta}_{-i}) -b_i(\theta_i - \theta_i^*) + \frac{1}{2}d_i(\theta_i - \theta_i^*)^2\Big]= \theta_i^* + \frac{b_i}{d_i}.
\end{align}
The key task of Theorem \ref{thm:mle_expansion} is to give a tight control of the above approximation error; see Section \ref{sec:proof_main} ahead for details.

As we will elaborate in Section \ref{sec:proof_idea} ahead, the previous proof idea adopted in \cite{simons1999asymptotics,han2020asymptotic} is no longer applicable in the sparse regime (\ref{def:regime}), and our Theorem \ref{thm:mle_expansion} is based on a completely different proof technique. Interestingly, this new proof technique also allows us to derive the following expansion of the spectral estimator $\tilde{\theta}$; see Section \ref{sec:proof_spec} for its proof.

\begin{theorem}\label{thm:spectral_expansion}\label{thm:spec_expansion}
Suppose that the conditions in (\ref{def:regime}) hold with $\alpha = 3/2$. Then for any $i\in[n]$, we have
\begin{align}\label{eq:spec_expansion}
\tilde{\theta}_i - \theta_i^* = (1+\tilde{\epsilon}_{1,i})\frac{\tilde{b}_i}{\tilde{d}_i} + \tilde{\epsilon}_{2,i}.
\end{align}
Here $\tilde{\epsilon}_1,\tilde{\epsilon}_2$ are two vectors in $\R^n$ such that $\pnorm{\tilde{\epsilon}_1}{\infty} = \mathfrak{o}(1)$ and $\pnorm{\tilde{\epsilon}_2}{\infty} = \mathfrak{o}(1/\sqrt{npL})$ with probability $1-\mathcal{O}(n^{-10})$, and $\tilde{b},\tilde{d}\in\R^n$ are given by
\begin{align*}
\tilde{b}_i =\sum_{j:j\neq i} A_{ij}(\pi_i^* + \pi_j^*)\big(\bar{y}_{ij} -\psi(\theta_i^*-\theta_j^*)\big),\quad \tilde{d}_i = \pi_i^*\cdot \sum_{j:j\neq i}A_{ij}\psi(\theta_j^*-\theta_i^*).
\end{align*}
Here $\pi^*$ is the population stationary measure defined in (\ref{def:pi}).
\end{theorem}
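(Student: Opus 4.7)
The plan is to mirror the MLE argument: derive a self-consistent equation for the remainder $\tilde\delta_i \define \tilde\theta_i - \theta_i^* - \tilde{b}_i/\tilde{d}_i$, then control $\pnorm{\tilde\delta}{\infty}$ through a leave-two-out analysis. The analog of the local first-order condition (\ref{eq:mle_local_quad}) is the stationarity relation itself: extracting the $i$th coordinate of $\hat\pi^\top P = \hat\pi^\top$ gives $\hat\pi_i\sum_{k\neq i}A_{ik}\bar{y}_{ki} = \sum_{j\neq i}\hat\pi_j A_{ij}\bar{y}_{ij}$, and subtracting the corresponding identity for $(\pi^*,\psi)$ together with the antisymmetry $\bar{y}_{ki}-\psi(\theta_k^*-\theta_i^*) = -(\bar{y}_{ik}-\psi(\theta_i^*-\theta_k^*))$ yields
\begin{align*}
(\hat\pi_i - \pi_i^*)\sum_{k\neq i}A_{ik}\bar{y}_{ki} - \sum_{j\neq i}(\hat\pi_j - \pi_j^*)A_{ij}\bar{y}_{ij} = \tilde{b}_i,
\end{align*}
which plays the role of $b_i \approx d_i(\hat\theta_i - \theta_i^*)$ in the MLE case.

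Next I would transport this $\hat\pi$-scale identity to the $\tilde\theta$-scale. Using the centering $\bm{1}_n^\top\tilde\theta = 0$ together with a Taylor expansion of $\log(\hat\pi_i/\pi_i^*)$, justified by the $\ell_\infty$ rate in Proposition \ref{prop:mle_prelim}, one can write $\hat\pi_i - \pi_i^* = \pi_i^*[(\tilde\theta_i - \theta_i^*) + s] + \mathcal{O}(\pi_i^*\pnorm{\tilde\theta - \theta^*}{\infty}^2)$, where the global shift $s$ is pinned down by $\sum_i(\hat\pi_i - \pi_i^*) = 0$. Substitution produces a self-consistent equation schematically of the form
\begin{align*}
\tilde\delta_i = \tilde\epsilon_{1,i}\cdot\frac{\tilde{b}_i}{\tilde{d}_i} + \frac{1}{\tilde{d}_i}\sum_{j\neq i}\pi_j^* A_{ij}\bar{y}_{ij}\,\tilde\delta_j + (\text{quadratic and concentration residuals}),
\end{align*}
in which $\tilde\epsilon_{1,i}$ arises because the denominator $\sum_{k\neq i}A_{ik}\bar{y}_{ki}$ concentrates around $\tilde{d}_i/\pi_i^*$ only up to a $(1+\mathfrak{o}(1))$ multiplicative factor. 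A separate short calculation, using the antisymmetry $\eta_{ij} = -\eta_{ji}$, verifies that the global centering shift $s$ contributes only $\mathfrak{o}(1/\sqrt{npL})$ to $\tilde\theta_i - \theta_i^*$ and can be absorbed into $\tilde\epsilon_{2,i}$.

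The main obstacle is the sharp $\ell_\infty$ bound on $\tilde\delta$, which requires decoupling $\tilde\delta_j$ from $(A_{ij},\bar{y}_{ij})$ inside the cross-sum. To this end I would construct, for every pair $(i,j)$, a fictitious spectral estimator $\tilde\theta^{(i,j)}$ by deleting from $P$ all edges incident to $i$ or $j$ and taking the stationary measure of the resulting matrix; leaving two indices out is essential because the cross-sum entangles randomness in $A_{ij}$ and $\bar{y}_{ij}$ on the \textit{same} edge, which a one-index leave-out cannot disentangle. A perturbation bound for stationary measures (e.g.\ via a resolvent identity applied to $P - P^{(i,j)}$) controls $\pnorm{\tilde\theta - \tilde\theta^{(i,j)}}{\infty}$ in terms of the first-order rates from Proposition \ref{prop:mle_prelim}, after which a Bernstein-type inequality applied to $\sum_{j\neq i}\pi_j^* A_{ij}\bar{y}_{ij}(\tilde\theta_j^{(i,j)} - \theta_j^*)$ yields the needed uniform estimate. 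As in the MLE proof, the sharp $\mathfrak{o}(1/\sqrt{npL})$ rate on $\pnorm{\tilde\delta}{\infty}$ is then obtained by bootstrapping a rougher estimate once through the self-consistent equation, which is exactly the source of the logarithmic slack $(\log n)^{3/2}$ in the sparsity condition. A secondary difficulty, relative to the MLE case, is that $\tilde\theta$ is not the minimizer of any coordinate-wise convex program, so the strong-convexity control of the Hessian used for $\hat\theta$ has to be replaced by a spectral-gap argument for $P^{(i,j)}$; verifying that the modified chain is still well conditioned after removing $\mathcal{O}(np)$ edges is the last technical hurdle.
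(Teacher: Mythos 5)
Your overall architecture — a self-consistent equation for the remainder $\tilde\delta$, an $\ell_2$ bound via a Laplacian spectral-gap argument, and a leave-out decoupling for the cross-sum — matches the paper's strategy. But two of the technical choices you flag as essential are actually not what the paper does, and one of them would fail as stated.

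First, you propose a leave-\emph{two}-out construction $\tilde\theta^{(i,j)}$ and claim it is essential because ``the cross-sum entangles randomness in $A_{ij}$ and $\bar y_{ij}$ on the same edge, which a one-index leave-out cannot disentangle.'' This reasoning is incorrect: leaving out the single index $m$ removes \emph{all} data attached to $m$ — both $\{A_{mj}\}_j$ and $\{\bar y_{mj}\}_j$ — so a leave-one-out $\delta^{(m)}$ is already independent of the entire random pair $(A_{mj},\bar y_{mj})$ for every $j$, and a conditional Bernstein inequality applied to $\sum_{j\neq m}A_{mj}\bar y_{mj}\pi_j^*\delta^{(m)}_j$ goes through. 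The paper makes exactly this point in the remark following Lemma \ref{lem:loo_delta_bound}: leave-two-out is needed for the MLE only because the coordinate Hessian $g^{(-m,i)}(\theta_i^*|\hat\theta^{(m)}_{-i})$ depends jointly on $(m,i)$, and no analogue of that quantity appears for the spectral estimator. So a full leave-two-out hierarchy is overkill here; the paper works at the leave-one-out level and reserves higher-order leave-outs for a hypothetical sharpening of the $(\log n)^{3/2}$ exponent.

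Second, and more seriously, your construction ``delete from $P$ all edges incident to $i$ or $j$'' does not yield a usable stationary measure. After deletion, $P_{ii}=P_{jj}=1$, so $i$ and $j$ become absorbing states and the stationary distribution of the modified chain is no longer unique (any mixture of $e_i$, $e_j$, and the stationary measure on the remaining component works). Even restricting to the complement $[n]\setminus\{i,j\}$ loses the identity you need, because the spectral estimator is defined on all $n$ coordinates. The paper instead keeps all $n$ states and replaces the entries $P_{mk}$, $P_{km}$ (for the leave-out index $m$) by their unconditional expectations $\frac{p}{d}\psi(\theta_k^*-\theta_m^*)$; this produces a well-conditioned chain, independent of the $m$th individual's data, whose stationary measure is provably close to $\hat\pi$ (Lemma \ref{lem:spec_rate}(1)). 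You would need to replace your deletion by an analogous expectation-fill-in for the argument to be viable, and at that point the leave-one-out version is both simpler and sufficient.

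The remaining pieces of your sketch — the antisymmetry handling of the centering shift, the $\ell_2$ control via $\pnorm{L-\E(L|A)}{\op}\leq\lambda_{\min,\perp}(\E(L|A))/2$, and the bootstrap from a rough $\pnorm{\tilde\delta}{\infty}\lesssim\sqrt{\log n/(npL)}$ estimate — are aligned with the paper (Lemmas \ref{lem:delta_ave_spec}, \ref{lem:concentrate_L_spec}, \ref{lem:loo_delta_bound}), but the leave-out construction as you have it would need to be repaired before the decoupling step can be made rigorous.
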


We now explain the form $\{\tilde{b}_i/\tilde{d}_i\}$ of the main term. Using a first-order Taylor expansion, we have
\begin{align}\label{eq:spec_prelim_expansion}
\tilde{\theta}_i - \theta_i^* = \log\Big(\frac{\hat{\pi}_i - \pi_i^*}{\pi_i^*} + 1\Big) - \frac{1}{n}\sum_{k=1}^n \log\Big(\frac{\hat{\pi}_k - \pi_k^*}{\pi_k^*} + 1\Big) \approx \frac{\hat{\pi}_i - \pi_i^*}{\pi_i^*}.
\end{align}
Then it follows from the definition of $\hat{\pi}$ that 
\begin{align}\label{eq:pi_property}
\hat{\pi}_i = \frac{\sum_{j:j\neq i}A_{ij}\bar{y}_{ij}\hat{\pi}_j}{\sum_{j:j\neq i}A_{ij}\bar{y}_{ji}} \approx \frac{\sum_{j:j\neq i}A_{ij}\bar{y}_{ij}\pi^*_j}{\sum_{j:j\neq i}A_{ij}\bar{y}_{ji}}.
\end{align}
Combining the above two displays, we have
\begin{align*}
\tilde{\theta}_i - \theta_i^*\approx \frac{\sum_{j:j\neq i}A_{ij}\big(\bar{y}_{ij}\pi^*_j-\bar{y}_{ji}\pi_i^*\big)}{\pi_i^*\cdot \sum_{j:j\neq i}A_{ij}\bar{y}_{ji}}\approx \frac{\sum_{j:j\neq i}A_{ij}\big(\bar{y}_{ij}\pi^*_j-\bar{y}_{ji}\pi_i^*\big)}{\pi_i^*\cdot \sum_{j:j\neq i}A_{ij}\psi(\theta_j^*-\theta_i^*)} = \frac{\tilde{b}_i}{\tilde{d}_i},
\end{align*}
upon noting that $\bar{y}_{ij}\pi^*_j - \bar{y}_{ji}\pi_i^* = (\pi_i^* + \pi_j^*)\big(\bar{y}_{ij} - \psi(\theta_i^* - \theta_j^*)\big)$. The key task of Theorem \ref{thm:spec_expansion} is to give a tight control of the above approximation error; see Section \ref{sec:proof_spec} ahead for details. 

Theorems \ref{thm:mle_expansion} and \ref{thm:spec_expansion} provide us with valuable information on the asymptotic behavior of the two estimators, and allow us to compare the two procedures on a much more refined basis than existing literature \cite{negahban2017rank,chen2019spectral,chen2020partial}. In particular, we will show that the MLE $\hat{\theta}$ achieves the optimal constant of $\ell_2$ estimation in the BTL model but the spectral estimator $\tilde{\theta}$ does not; see Section \ref{subsec:app_constant} for more details. 

\section{Proof Strategy}\label{sec:proof_idea}
This section discusses the unified high level proof idea underlying our main theorems, focusing on its difference from the previous one adopted in \cite{simons1999asymptotics,han2020asymptotic}; detailed proofs are given in Sections \ref{sec:proof_main} and \ref{sec:proof_spec} ahead.
\subsection{Summary of previous proof strategy}
We start by briefly summarizing the proof outline of \cite{simons1999asymptotics} when $p=1$ and explaining the additional technical difficulties in the sparse regime (\ref{def:regime}). Differentiating the likelihood in (\ref{eq:mle_likelihood}) leads to the score equations:
\begin{align*}
\frac{a_i}{L} = \sum_{j:j\neq i} \frac{e^{\hat{\theta}_i}}{e^{\hat{\theta}_i} + e^{\hat{\theta}_j}}, \quad i\in[n],
\end{align*}
with its population version given by $\E a_i = L\cdot \sum_{j:j\neq i}e^{\theta_i^*}/(e^{\theta_i^*} + e^{\theta_j^*})$. Here $a_i = \sum_{j:j\neq i}y_{ij}$ is the number of victories of the $i$th individual. Let $\Delta u_i \equiv (e^{\hat{\theta}_i} - e^{\theta^*_i})/e^{\theta_i^*}$, which can be seen as a proxy for $\hat{\theta}_i - \theta_i^*$ using a first-order Taylor expansion. Then some algebra yields that
\begin{align*}
\frac{e^{\hat{\theta}_i}}{e^{\hat{\theta}_i} + e^{\hat{\theta}_j}} - \frac{e^{\theta_i^*}}{e^{\theta_i^*} + e^{\theta_j^*}} &= \frac{e^{\theta_i^*}e^{\theta_j^*}\big(\Delta u_{i} - \Delta u_j\big)}{\big(e^{\theta_i^*} + e^{\theta_j^*}\big)^2}\times \frac{e^{\theta_i^*}+e^{\theta_j^*}}{e^{\hat{\theta}_i}+e^{\hat{\theta}_j}}\\ 
&\approx \psi'(\theta_i^*-\theta_j^*)\big(\Delta u_{i} - \Delta u_j\big).
\end{align*}
Combining the above two displays yields, in matrix form, that
\begin{align}\label{eq:proof_idea_expansion}
a - \E a \approx H(\theta^*) \Delta u \approx H(\theta^*)(\hat{\theta} - \theta^*),
\end{align}
where $a = (a_1,\ldots,a_n)^\top$, $\Delta u = (\Delta u_1, \ldots,\Delta u_n)^\top$, and $H(\theta^*)\in\R^{n\times n}$ is the Hessian given by 
\begin{align}\label{def:hessian}
H(\theta^*) \equiv \frac{\partial^2 \ell_n(\theta)}{\partial \theta^2}\Big|_{\theta = \theta^*} = \Big(\sum_{k:k\neq i}A_{ik}\psi'(\theta_i^* - \theta_k^*)\mathbb{I}_{i=j} - A_{ij}\psi'(\theta_i^*-\theta_j^*)\mathbb{I}_{i\neq j}\Big)_{i,j},
\end{align}
Given that the sample-mean type quantities $\{a_i - \E a_i\}$ are easy to analyze, it is natural to invert the Hessian $H(\theta^*)$ and find a close approximation $S$ of ideally simple form such that (note that $H(\theta^*)$ is singular along $\bm{1}_n$ so we take the pseudo-inverse)
\begin{align*}
\hat{\theta} - \theta^* \approx \big(H(\theta^*)\big)^{\dagger}(a-\E a) \approx S(a-\E a),
\end{align*}
an expansion similar to our (\ref{eq:mle_expansion}). The construction of such an $S$ is the main technical ingredient of \cite{simons1999asymptotics}, whose entrywise closeness to $\big(H(\theta^*)\big)^{\dagger}$ at the order of $n^{-2}$ is highly non-trivial to verify even in the case $p=1$ \cite{simons1998approximating}. For general $p$, this approximation error deteriorates to the order $(n^2p^3)^{-1}$ as reported by \cite[Lemma 7]{han2020asymptotic}, which eventually leads to the sub-optimal condition $n^{1/10}p\rightarrow \infty$.

We also mention a recent result by \cite{liu2021lagrangian} that achieves the sparsity level $p\asymp \frac{\log n}{n}$ via a debiasing technique. However, their method requires $L$ to be of greater order than $(n\log n)^2$, while our result and the results of \cite{simons1998approximating,han2020asymptotic} are all valid even for $L=1$.

\subsection{New proof strategy: remainder expansion and leave-two-out technique}
To bypass the above difficulty of approximating $H^{\dagger}(\theta^*)$, we take the following three-step approach. To avoid redundancy, we elaborate on the MLE $\hat{\theta}$ and briefly discuss the spectral estimator $\tilde{\theta}$.
\begin{enumerate}
\item With the approximation remainder vector defined by
\begin{align*}
\delta_i \equiv \hat{\theta}_i - \theta_i^* - \frac{b_i}{d_i},
\end{align*}
instead of directly expanding the target $\hat{\theta}-\theta^*$ as in (\ref{eq:proof_idea_expansion}), we seek to construct a self-consistent equation for the remainder vector $\delta$.
\item From the above self-consistent equation of $\delta$, derive a sharp bound for $\pnorm{\delta}{}$.
\item Starting from the above bound for $\pnorm{\delta}{}$ and a rough bound on $\pnorm{\delta}{\infty}$, we derive a sharp bound for $\pnorm{\delta}{\infty}$ in the regime (\ref{def:regime}). This step leverages a leave-two-out technique, which is inspired by, and improves substantially upon, the leave-one-out technique in \cite{chen2019spectral, chen2020partial}. 
\end{enumerate}
We now discuss these three steps in more detail. 

\subsubsection{Step 1: self-consistent equation of $\delta$}
By the definitions of $b_i$ and $d_i$ (in Theorem \ref{thm:mle_expansion}), we have
\begin{align}\label{eq:delta_id}
\delta_i = \hat{\theta}_i - \theta_i^* - \frac{\sum_{j:j\neq i}A_{ij}\big(\bar{y}_{ij} - \psi(\theta_i^*-\theta_j^*)\big)}{\sum_{j:j\neq i}A_{ij}\psi'(\theta_i^*-\theta_j^*)}.
\end{align}
On the other hand, a local quadratic expansion of the likelihood (see (\ref{eq:mle_local_quad}) and recall $\ell_n^{(i)}(\theta_i|\theta_{-i})$ defined therein) yields that
\begin{align*}
 \hat{\theta}_i - \theta_i^* \approx -\frac{\partial_{\theta_i}\ell_n^{(i)}(\theta_i^*|\hat{\theta}_{-i})}{\partial^2_{\theta_i}\ell_n^{(i)}(\theta_i^*|\theta^*_{-i})}\approx \frac{\sum_{j:j\neq i}A_{ij}\big(\bar{y}_{ij} - \psi(\theta_i^*-\hat{\theta}_j)\big)}{\sum_{j:j\neq i}A_{ij}\psi'(\theta_i^*-\theta^*_j)}.
\end{align*}
Combining the above two displays and Taylor expanding once more in the numerator yields that
\begin{align*}
\notag\delta_i &\approx\frac{\sum_{j:j\neq i}A_{ij}\big(\psi(\theta_i^*-\theta^*_j) - \psi(\theta_i^*-\hat{\theta}_j)\big)}{\sum_{j:j\neq i}A_{ij}\psi'(\theta_i^*-\theta^*_j)}\\
&\approx \frac{\sum_{j:j\neq i}A_{ij}\psi'(\theta_i^*-\theta_j^*)(\hat{\theta}_j-\theta_j^*)}{\sum_{j:j\neq i}A_{ij}\psi'(\theta_i^*-\theta^*_j)}.
\end{align*}
To make $\delta$ also appear on the right side, we plug in $\hat{\theta}_j -\theta_j^* = \delta_j + (b_j/d_j)$ for each $j\neq i$ in the above display, which yields the following self-consistent equation of $\delta$:
\begin{align}\label{eq:delta_mle_consistent}
d_i\cdot \delta_i \approx \sum_{j:j\neq i}A_{ij}\psi'(\theta_i^*-\theta_j^*)\delta_j + \sum_{j:j\neq i}A_{ij}\psi'(\theta_i^*-\theta_j^*)\frac{b_j}{d_j}.
\end{align}
In the next two steps, we will use this equation to obtain sharp bounds of $\pnorm{\delta}{}$ and $\pnorm{\delta}{\infty}$.

The derivation for the spectral estimator $\tilde{\theta}$ is analogous. The approximation remainder $\delta$ is now defined as 
\begin{align*}
\delta_i \equiv \frac{\hat{\pi}_i - \pi_i^*}{\pi_i^*} - \frac{1}{n}\sum_{k=1}^n\frac{\hat{\pi}_k - \pi_k^*}{\pi_k^*} - \frac{\tilde{b}_i}{\tilde{d}_i'}, 
\end{align*}
where $\tilde{d}_i' \equiv \pi_i^* \cdot \sum_{j:j\neq i}A_{ij}\bar{y}_{ji}$ is the sample version of $\tilde{d}_i$. Due to the preliminary expansion in (\ref{eq:spec_prelim_expansion}), we have $\delta_i \approx \tilde{\theta}_i - \theta_i^* -(\tilde{b}_i/\tilde{d}_i)$. On the other hand, the definition of $\hat{\pi}$ (see (\ref{eq:pi_property})) yields that
\begin{align*}
\frac{\hat{\pi}_i - \pi_i^*}{\pi_i^*} = \frac{\sum_{j:j\neq i}A_{ij}\big(\bar{y}_{ij}\hat{\pi}_j - \bar{y}_{ji}\pi_i^*\big)}{\pi_i^* \cdot \sum_{j:j\neq i}A_{ij}\bar{y}_{ji}} = \frac{\sum_{j:j\neq i}A_{ij}\bar{y}_{ij}(\hat{\pi}_j - \pi_j^*)}{\pi_i^* \cdot \sum_{j:j\neq i}A_{ij}\bar{y}_{ji}} + \frac{\tilde{b}_i}{\tilde{d}_i'}
\end{align*}
Combining the above two displays (and ignoring the small term $n^{-1}\sum_{k=1}^n(\hat{\pi}_k - \pi_k^*)/\pi_k^*$) yields that
\begin{align*}
\delta_i \approx \frac{\sum_{j:j\neq i}A_{ij}\bar{y}_{ij}(\hat{\pi}_j - \pi_j^*)}{\pi_i^* \cdot \sum_{j:j\neq i}A_{ij}\bar{y}_{ji}}. 
\end{align*}
By plugging in $\hat{\pi}_j - \pi_j^* \approx \pi_j^*\big(\delta_j + (\tilde{b}_j/\tilde{d}_j)\big)$, we arrive at the following self-consistent equation for the spectral estimator:
\begin{align}\label{eq:delta_spec_consistent}
\tilde{d}_i\cdot \delta_i \approx \sum_{j:j\neq i}A_{ij}\bar{y}_{ij}\pi_j^* \cdot \delta_j + \sum_{j:j\neq i}A_{ij}\bar{y}_{ij}\pi_j^*\frac{\tilde{b}_j}{\tilde{d}_j}.
\end{align}

\subsubsection{Step 2: $\ell_2$ control of $\delta$ by leave-one-out analysis}
This is an essential intermediate step towards the bound of $\pnorm{\delta}{\infty}$ in the next step. By rearranging the self-consistent equation in (\ref{eq:delta_mle_consistent}), we have in matrix form
\begin{align}\label{eq:mle_consistent}
H(\theta^*) \delta \approx R,
\end{align}
where $H(\theta^*)$ is the Hessian in (\ref{def:hessian}) and $R\in\R^n$ is some cumulated approximation error. This equation, which expands $\delta$ instead of $\hat{\theta}$ directly, can be seen as a higher order analogue of (\ref{eq:proof_idea_expansion}). The bound $\pnorm{\delta}{} = \mathfrak{o}(1/\sqrt{pL})$ (cf. Proposition \ref{prop:delta_bound}) is now readily obtained by (i) the bound of $\pnorm{R}{}$ by preliminary estimates in Proposition \ref{prop:mle_prelim} obtained via a leave-one-out analysis and (ii) the eigenvalue bound of $H(\theta^*)$ in Lemma \ref{lem:hessian_eigen}, i.e., with probability $1-\mathcal{O}(n^{-10})$,
\begin{align*}
np\lesssim\lambda_{\min,\perp}\big(H(\theta^*)\big)\lesssim \lambda_{\max}\big(H(\theta^*)\big)\lesssim np;
\end{align*}
see Section \ref{subsec:proof_delta_l2} for details. Here $\lambda_{\min,\perp}\big(H(\theta^*)\big)  = \min_{x\in\R^n: \pnorm{x}{}=1,x^\top \bm{1}_n = 0} x^\top H(\theta^*)x$ is the smallest eigenvalue of $H(\theta^*)$ orthogonal to the direction $\bm{1}_n$.

The derivation for the spectral estimator $\tilde{\theta}$ is analogous: by rearranging (\ref{eq:delta_spec_consistent}), we have
\begin{align*}
L \delta \approx \tilde{R},
\end{align*}
where $L$ is defined by $L_{ij} \equiv -A_{ij}\bar{y}_{ij}\pi_j^*$ for $i\neq j$ and $L_{ii} \equiv \sum_{j:j\neq i}A_{ij}\bar{y}_{ji}\pi_i^*$, and $\tilde{R}\in\R^n$ is some cumulated approximation error. Note that even though $L$ is not symmetric, its population version $\E(L|A)$ is a symmetric Laplacian matrix similar to $H(\theta^*)$, hence the bound for $\pnorm{\delta}{}$ can be similarly obtained as above.

\subsubsection{Step 3: $\ell_\infty$ control of $\delta$ by leave-two-out analysis}
For the more refined $\ell_\infty$ bound of $\delta$, we need to bound directly the right side of (\ref{eq:delta_mle_consistent}). For the terms therein, the term $\sum_{j:j\neq i}A_{ij}\psi'(\theta_i^*-\theta_j^*)(b_j/d_j)$ (and other approximation error terms) can be readily bounded using concentration arguments, and the most difficult term is 
\begin{align*}
\sum_{j:j\neq i}A_{ij}\psi'(\theta_i^*-\theta_j^*)\delta_j.
\end{align*}
If $\{A_{ij}\}_{j:j\neq i}$ was independent of $\{\delta_j\}_{j:j\neq i}$, then the above term could be bounded by standard concentration arguments along with the bound on $\pnorm{\delta}{}$ from the previous step. To decorrelate these two terms, we will define a proxy $\delta^{(i)}_j$ of each $\delta_j$ by leaving out the $i$th observation, so that $\delta^{(i)}_j$ is independent of $\{A_{ij}\}_{j:j\neq i}$ while being close to $\delta_j$; see Section \ref{subsubsec:leave_two_out} for details. The definition and analysis of $\delta^{(i)}_j$ further requires the analysis of a leave-two-out version of $\hat{\theta}$, which is essential for our purpose and fundamentally different from the bound given by the original leave-one-out analysis used in \cite{chen2019spectral, chen2020partial}. Indeed, as commented in Remark \ref{remark:mle_extension}, a leave-one-out analysis as in \cite{chen2019spectral,chen2020partial} will only yield the rough bound
\begin{align}\label{ineq:loo_rough}
\pnorm{\delta}{\infty} \lesssim \sqrt{\frac{(\log n)^1}{(np)^0}}\mathcal{O}(\frac{1}{\sqrt{npL}}),
\end{align}
which is not useful (i.e. of the order $\mathfrak{o}(1/\sqrt{npL})$) even for the case $p = 1$. Starting from the second stage of this hierarchy (i.e. leave-two-out analysis), this bound is improved to $\sqrt{(\log n)^3/(np)^2}\mathcal{O}(1/\sqrt{npL})$ (cf. Proposition \ref{prop:delta_entrywise_bound}), which is accurate enough under the sparsity condition $np\gg (\log n)^{1.5}$. We believe a higher order analysis (i.e. leave-$k$-out for a large but fixed $k$) will yield a better (but fixed) exponent closer to $1$, but we will not pursue this direction here to avoid digression.

To summarize, our new proof technique adds two novel technical components to the existing theoretical analysis in \cite{simons1998approximating, negahban2017rank,chen2019spectral,chen2020partial}: (i) a self-consistent equation for the remainder vector $\delta$ of first-order approximation; (ii) a leave-two-out technique that plays an essential role in the bound of $\pnorm{\delta}{\infty}$. The first component allows us to bypass the technical difficulty of approximating the inverse Hessian as in \cite{simons1998approximating} in the case of vanishing $p$, which leads to a sub-optimal condition as shown by an earlier analysis. The second component allows us to improve over the rough bound (\ref{ineq:loo_rough}), which is not needed for rate-optimal first-order bounds \cite{negahban2017rank,chen2019spectral,chen2020partial}. The rest of the proof combines concentration inequalities and the rate-optimal bounds in Proposition \ref{prop:mle_prelim}; we refer to Sections \ref{sec:proof_main} and \ref{sec:proof_spec} for proof details of the MLE and spectral estimator respectively.  

\section{Applications of the main expansions}\label{sec:application}

In this section, we present three applications of the expansions in Theorems \ref{thm:mle_expansion} and \ref{thm:spec_expansion}. Section \ref{subsec:app_clt} is devoted to a finite-dimensional CLT for the MLE $\hat{\theta}$ and spectral estimator $\tilde{\theta}$, followed by a construction of confidence intervals for the rank vector in Section \ref{subsec:app_rank}. Lastly, we discuss the implication for the optimal constant in $\ell_2$-estimation in Section \ref{subsec:app_constant}. Some simulation results are presented in Section \ref{subsec:simulation}.

\subsection{Application I: Finite-dimensional CLT}\label{subsec:app_clt}

A direct consequence of the expansions in Theorems \ref{thm:mle_expansion} and \ref{thm:spec_expansion} is the following CLTs for any finite $k$-dimensional components of the estimators. We start with the MLE $\hat{\theta}$. Without loss of generality, we state it for the first $k$ components. 
\begin{proposition}\label{prop:app_clt_mle}
Let $\hat{\theta}$ be the MLE in (\ref{def:mle}) and suppose the conditions of Theorem \ref{thm:mle_expansion} hold. Then for any $\theta^*\in\Theta(\kappa)$ and fixed $k\in[n]$, we have 
\begin{align}\label{eq:mle_clt}
\Big(\rho_1(\bar{\theta})(\hat{\theta}_1 - \theta^*_1),\ldots,\rho_k(\bar{\theta})(\hat{\theta}_k - \theta^*_k)\Big)\leadsto \mathcal{N}_k(0,I_k)
\end{align}
for both $\bar{\theta}\in\{\hat{\theta},\theta^*\}$. Here the sequence $\{\rho_i(\theta)\}_{i=1}^n$ can be either
\begin{align*}
\rho_i(\theta) = \sqrt{L\cdot \sum_{j:j\neq i}A_{ij}\psi'(\theta_i - \theta_j)} \quad \text{ or }\quad \sqrt{pL\cdot \sum_{j:j\neq i}\psi'(\theta_i - \theta_j)}.
\end{align*}
\end{proposition}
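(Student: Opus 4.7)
The approach is to substitute the expansion of Theorem~\ref{thm:mle_expansion} directly, reducing the claim to a joint CLT for the explicit leading term. Observing that $\rho_i(\theta^*) = \sqrt{L\, d_i}$, the theorem yields
\begin{align*}
\rho_i(\theta^*)(\hat\theta_i - \theta_i^*) = (1+\epsilon_{1,i}) \cdot \frac{\sqrt{L}\, b_i}{\sqrt{d_i}} + \sqrt{L\, d_i}\cdot \epsilon_{2,i}.
\end{align*}
Since $d_i \asymp np$ with high probability (Bernstein's inequality), the second term is $\mathfrak{o}(1)$ in probability, and the factor $(1+\epsilon_{1,i})$ tends to $1$. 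By Slutsky, the task reduces to establishing the joint convergence $W_n \equiv \big(\sqrt{L}\, b_i/\sqrt{d_i}\big)_{i\in[k]} \leadsto \mathcal{N}_k(0, I_k)$.

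For this I invoke the Cram\'er--Wold device. Fix $c \in \R^k$ with $\|c\|=1$ and set $S_n \equiv \sum_{i=1}^k c_i \sqrt{L}\, b_i/\sqrt{d_i}$. Conditional on the graph $A$, $S_n$ is a linear combination of the independent mean-zero bounded variables $\{\bar y_{ij} - \psi(\theta_i^* - \theta_j^*)\}_{i<j}$ (one summand per unordered pair, using the antisymmetry $\bar y_{ji} - \psi(\theta_j^* - \theta_i^*) = -\big(\bar y_{ij} - \psi(\theta_i^* - \theta_j^*)\big)$). Since $\var(\bar y_{ij} \mid A) = \psi'(\theta_i^* - \theta_j^*)/L$, a direct calculation yields
\begin{align*}
\var(S_n \mid A) = \sum_{i=1}^k c_i^2 \;-\; \sum_{\substack{i,j \in [k]\\ i \neq j}} c_i c_j \cdot \frac{A_{ij}\, \psi'(\theta_i^* - \theta_j^*)}{\sqrt{d_i\, d_j}}.
\end{align*}
The first sum equals $1$, while each off-diagonal term is $\mathcal{O}(1/(np)) = \mathfrak{o}(1)$ with high probability. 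Each individual summand contributes magnitude $\mathcal{O}(1/\sqrt{npL})$ uniformly, so the Lindeberg condition holds trivially. Lindeberg--Feller, applied conditionally on $A$, gives $S_n \mid A \leadsto \mathcal{N}(0,1)$ with high probability over $A$, and the unconditional CLT follows by bounded convergence.

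To pass from $\bar\theta = \theta^*$ to $\bar\theta = \hat\theta$, use Proposition~\ref{prop:mle_prelim} which provides $\|\hat\theta - \theta^*\|_\infty = \mathfrak{o}(1)$ with high probability; combined with the Lipschitzness of $\psi'$, this yields $\rho_i(\hat\theta)/\rho_i(\theta^*) \to 1$ in probability, and Slutsky disposes of this case. For the alternative form of $\rho_i$ with $p \sum_{j\neq i} \psi'(\theta_i^* - \theta_j^*)$, a Bernstein bound on $\sum_{j\neq i}(A_{ij} - p)\psi'(\theta_i^* - \theta_j^*)$ (valid under $np \gg \log n$) gives $d_i / \big(p \sum_{j\neq i} \psi'(\theta_i^* - \theta_j^*)\big) \to 1$ in probability, and Slutsky applies again. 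The main technical subtlety is the dependence among the $b_i$'s introduced by the shared comparisons $\bar y_{ij}$, but conditioning on $A$ cleanly isolates this into the negligible off-diagonal contribution computed above, so no delicate concentration beyond the standard Lindeberg--Feller machinery is needed.
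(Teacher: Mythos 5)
Your proof is correct and follows essentially the same path as the paper's: substitute the expansion from Theorem~\ref{thm:mle_expansion}, reduce via Slutsky and Cram\'er--Wold to a conditional Lindeberg--Feller CLT for the main term given $A$, verify that the conditional variance converges to the right limit with the off-diagonal covariance from shared comparisons $\bar y_{ij}$ ($i,j\leq k$) vanishing at rate $\mathcal{O}(1/(np))$, and integrate out $A$ via bounded convergence; the passage to $\bar\theta=\hat\theta$ and the deterministic $\rho_i$ is handled by showing the normalizers are asymptotically equivalent. The only cosmetic difference is that you compute $\var(S_n\mid A)$ directly by tracking covariances, while the paper first reindexes the double sum into an explicit triangular array $\{V_{\ell i}\}$ of conditionally independent summands before applying Lindeberg--Feller -- the two are algebraically equivalent.
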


The proof of the above result is presented in Section \ref{subsec:proof_app_clt}. Using the completely data-driven version of $\rho_i(\hat{\theta})$, the above result can be used to produce multiple and simultaneous confidence intervals for finite-dimensional contrasts of the truth vector $\theta^*$. 

We note that previous works \cite{simons1999asymptotics,han2020asymptotic} that consider the asymptotic normality of the MLE are based on a different identifiability condition. Namely, they assume that there are $n+1$ individuals with merit parameters $(\theta_0^*,\ldots,\theta_n^*)$, $\theta_0^*$ and $\hat{\theta}_0$ are set to be $0$ by convention, and asymptotics is studied on the rest of the coordinates $(\hat{\theta}_1,\ldots,\hat{\theta}_n)$. As we will now show, our CLT in Proposition \ref{prop:app_clt_mle} can be naturally transformed to be applicable in their setting as well. Therefore, our result is a strict improvement over previous results in the regime (\ref{def:regime}). 

Let $(\hat{\theta}_0,\ldots, \hat{\theta}_n)\in \argmin_{\theta\in\R^{n+1}: \bm{1}_{n+1}^\top \theta = 0} \ell_{n+1}(\theta)$ be the MLE under the truth $\bm{1}_n^\top (\theta_0^*,\ldots,\theta_n^*) = 0$, where $\ell_{n+1}(\theta)$ is the variant of (\ref{eq:mle_likelihood}) for $n+1$ individuals. By Proposition \ref{prop:app_clt_mle}, we have, heuristically speaking, $(\hat{\theta}_0-\theta_0^*,\ldots, \hat{\theta}_{k} - \theta_k^*)\leadsto \mathcal{N}_{k+1}(0,V^{-1}_{k+1}(\theta^*))$, where
\begin{align*}
V_{k+1}(\theta^*) = L\cdot \diag\Big(\sum_{\substack{\ell:0\leq \ell\leq n\\\ell\neq 0}}A_{0\ell}\psi'(\theta_0^*-\theta_\ell^*),\ldots, \sum_{\substack{\ell:0\leq \ell\leq n\\\ell\neq k}}A_{k\ell}\psi'(\theta_k^*-\theta_\ell^*)\Big);
\end{align*}
note that this representation is not completely rigorous since $V_{k+1}(\theta^*)$ depends on $n$. Now under the identifiability condition ($\theta^*_0 = 0$) in \cite{simons1999asymptotics}, the MLE for $(\theta_1^*,\ldots,\theta_k^*)$ becomes
\begin{align*}
\hat{\theta}^{\textrm{SY}}\equiv \big(\hat{\theta}_1 - \hat{\theta}_0, \ldots, \hat{\theta}_k - \hat{\theta}_0\big), 
\end{align*}
where `SY' stands for Simons-Yao. Then we have $\big(\hat{\theta}^{\textrm{SY}}_1 - \theta_1^*,\ldots, \hat{\theta}^{\textrm{SY}}_k - \theta_k^*\big) \leadsto \mathcal{N}_k\big(0, \big(V^{\textrm{SY}}(\theta^*)\big)^{-1}\big)$, with $\big(V^{\textrm{SY}}(\theta^*)\big)^{-1}\in\R^{k\times k}$ given by
\begin{align*}
\big(V^{\textrm{SY}}(\theta^*)\big)^{-1}_{ii} &= \frac{1}{L\cdot\sum_{\ell:0\leq \ell\leq n, \ell\neq i}A_{i\ell}\psi'(\theta_i^*-\theta_\ell^*)} + \frac{1}{L\cdot\sum_{1\leq\ell\leq n}A_{0\ell}\psi'(\theta_\ell^*)},\\
\big(V^{\textrm{SY}}(\theta^*)\big)^{-1}_{ij} &= \frac{1}{L\cdot\sum_{1\leq\ell\leq n}A_{0\ell}\psi'(\theta_\ell^*)} \quad\quad \textrm{if $i\neq j$}.
\end{align*}
In the special case $p=1$, this recovers \cite[Theorem 2]{simons1999asymptotics}.

Similar to Proposition \ref{prop:app_clt_mle}, we have the following finite-dimensional CLT for the spectral estimator $\tilde{\theta}$. 
\begin{proposition}\label{prop:app_clt_spec}
Let $\tilde{\theta}$ be the spectral estimator $\tilde{\theta}$ defined in (\ref{def:spec}) and suppose the conditions of Theorem \ref{thm:spec_expansion} hold. Then for any $\theta^*\in\Theta(\kappa)$ and fixed $k\in[n]$, we have
\begin{align}\label{eq:spec_clt}
\Big(\bar{\rho}_1(\bar{\theta})(\tilde{\theta}_1 - \theta^*_1),\ldots,\bar{\rho}_k(\bar{\theta})(\tilde{\theta}_k - \theta^*_k)\Big)\leadsto \mathcal{N}_k(0,I_k)
\end{align}
for both $\bar{\theta}\in\{\tilde{\theta},\theta^*\}$. Here the sequence $\{\bar{\rho}_i(\theta)\}_{i=1}^n$ can be either
\begin{align*}
\bar{\rho}_i(\theta) = \sqrt{L\cdot \frac{\Big(\sum_{j:j\neq i}A_{ij}(e^{\theta_i} + e^{\theta_j})\psi'(\theta_i - \theta_j)\Big)^2}{\sum_{j:j\neq i}A_{ij}(e^{\theta_i}+e^{\theta_j})^2\psi'(\theta_i-\theta_j)}}
\end{align*}
or its deterministic version with all $\{A_{ij}\}_{i\neq j}$ replaced by $p$.
\end{proposition}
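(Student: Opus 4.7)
The plan is to proceed in close parallel with Proposition~\ref{prop:app_clt_mle}, starting from the non-asymptotic expansion in Theorem~\ref{thm:spec_expansion}. Writing $\tilde{\theta}_i - \theta_i^* = (1+\tilde{\epsilon}_{1,i})\tilde{b}_i/\tilde{d}_i + \tilde{\epsilon}_{2,i}$ with $\pnorm{\tilde{\epsilon}_1}{\infty}=\mathfrak{o}(1)$ and $\pnorm{\tilde{\epsilon}_2}{\infty}=\mathfrak{o}(1/\sqrt{npL})$, and noting that $\bar{\rho}_i\asymp \sqrt{npL}$ (which follows from Bernstein concentration of the sums appearing in $\bar{\rho}_i$), the residual terms $\bar{\rho}_i(\bar\theta)\tilde{\epsilon}_{1,i}\tilde{b}_i/\tilde{d}_i$ and $\bar{\rho}_i(\bar\theta)\tilde{\epsilon}_{2,i}$ are uniformly $\mathfrak{o}(1)$ over $i\in[k]$ on a high-probability event. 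It therefore suffices to establish the finite-dimensional CLT for $(\bar{\rho}_i(\theta^*)\tilde{b}_i/\tilde{d}_i)_{i\in[k]}$ with the random form of $\bar{\rho}_i$, and then to transfer to the other normalizer choices. The $\tilde{\theta}$-vs-$\theta^*$ equivalence follows from $\pnorm{\tilde{\theta}-\theta^*}{\infty}=\mathfrak{o}(1)$ (Proposition~\ref{prop:mle_prelim}) together with the smoothness of $\psi'$ and $e^\theta$ on a bounded range, while the random-vs-deterministic equivalence follows from a Bernstein-type bound $\sum_{j:j\neq i}A_{ij}f(\theta_i^*,\theta_j^*)=(1+\mathfrak{o}(1))p\sum_{j:j\neq i}f(\theta_i^*,\theta_j^*)$ valid under $np\gg\log n$ for bounded $f$.

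A direct algebraic computation---substituting $\pi_i^* = e^{\theta_i^*}/S$ with $S\equiv \sum_k e^{\theta_k^*}$ and using $\psi'(x)=\psi(x)(1-\psi(x))$---shows that
\[
\bar{\rho}_i(\theta^*)^2 \;=\; 1\big/\var\bigl(\tilde{b}_i/\tilde{d}_i\,\bigm|\,A\bigr),
\]
the common $S^2$ factors in numerator and denominator of the two expressions canceling. Hence each coordinate is correctly standardized to conditional mean zero and conditional variance one given $A$, and the remaining task is a joint conditional CLT.

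The joint CLT is obtained via the Cram\'er--Wold device. Fix $c\in\R^k$ and consider $T\equiv \sum_{i\in [k]} c_i\bar{\rho}_i(\theta^*)\tilde{b}_i/\tilde{d}_i$. Using the antisymmetry $\bar{y}_{ji}-\psi(\theta_j^*-\theta_i^*) = -(\bar{y}_{ij}-\psi(\theta_i^*-\theta_j^*))$, the sum $T$ can be reorganized as a sum over unordered pairs $\{i,j\}$ with $\{i,j\}\cap [k]\neq \emptyset$ of random variables which, conditionally on $A$, are independent, mean-zero, and uniformly bounded by $\mathcal{O}(1/\sqrt{npL})$. A standard conditional Lindeberg/Lyapunov CLT then applies: the conditional variance of $T$ equals $\sum_{i\in[k]}c_i^2$ plus cross-covariance terms, each supported on a single shared edge between two coordinates in $[k]$ and therefore of relative order $\mathcal{O}(1/(np))=\mathfrak{o}(1)$; the Lyapunov third-moment ratio is $\mathcal{O}(1/\sqrt{npL})=\mathfrak{o}(1)$ by the crude bound $|\bar{y}_{ij}-\psi(\theta_i^*-\theta_j^*)|\leq 1$. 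Unconditioning is justified because these verifications are carried out on the high-probability event on which $\bar{\rho}_i(\theta^*)^2\asymp npL$ and the expansion of Theorem~\ref{thm:spec_expansion} holds.

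The main technical obstacle, as in the MLE case, is the bookkeeping of approximation errors---those from the $\tilde{\epsilon}$'s in the expansion as well as those incurred when switching between $\bar{\rho}_i(\tilde{\theta})$, $\bar{\rho}_i(\theta^*)$ and its deterministic version---and ensuring that all of them remain uniformly negligible against the scaling $1/\sqrt{npL}$ of the main term. Once these estimates are in place, the probabilistic core is a routine conditional CLT for sums of independent bounded variables.
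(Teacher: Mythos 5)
Your proposal is correct and follows essentially the same route as the paper: reduce to the main term $\tilde{b}_i/\tilde{d}_i$ via Theorem~\ref{thm:spec_expansion}, verify the normalization $\bar\rho_i(\theta^*)^{-2}=\var(\tilde{b}_i/\tilde{d}_i\mid A)$, invoke Cram\'er--Wold together with a conditional Lindeberg CLT on a high-probability graph event, and then transfer among the three normalizer choices. The paper in fact only writes out the MLE case in detail and declares the spectral case analogous, so your explicit algebra (substituting $\pi_i^*=e^{\theta_i^*}/S$ and cancelling the $S^2$ factors) is precisely the bookkeeping that analogy requires.
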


A direct application of Cauchy-Schwarz yields that
\begin{align*}
\bar{\rho}_i(\theta) \leq \rho_i(\theta),
\end{align*}
and hence the spectral estimator has a larger asymptotic variance than the MLE. We will rigorously justify the optimality of the MLE using the local minimax framework in Section \ref{subsec:app_constant} ahead.

\subsection{Application II: Confidence regions in ranking}\label{subsec:app_rank}

Consider the BTL model in the context of sports analytics, where after observing the outcome of a tournament, one wishes to construct a confidence interval for the rank of her team of interest. More concretely, let $r^{-1}(1),\ldots,r^{-1}(n)$ be the indices of the teams with the highest merit parameter, the second highest merit parameter, and so on, so that
\begin{align*}
\theta^*_{r^{-1}(1)} \geq \theta^*_{r^{-1}(2)} \geq \ldots \geq \theta^*_{r^{-1}(n)},
\end{align*}
where ties are broken arbitrarily. Correspondingly, $\{r(1),\ldots, r(n)\}$ is the rank vector of teams $1$ to $n$, and we are interested in building a (discrete) confidence interval for the rank of a pre-fixed team. Without loss of generality, we choose the first team with merit parameter $\theta^*_1$ (not necessarily the largest one). 

Since rank is a global object, we need to construct confidence intervals for all $n$ merit parameters, instead of just for $\theta^*_1$. To this end, a straightforward idea is to use the $\ell_\infty$ bound in Proposition \ref{prop:mle_prelim}. This approach has two disadvantages: (i) the constant $C$ therein is implicit and could potentially be large after complicated steps of analysis; (ii) the confidence intervals for all teams will have the same length, which is not ideal since certain teams will have more comparisons than others so their confidence intervals should heuristically be shorter. 

We now describe a different procedure based on the non-asymptotic expansions in Section \ref{sec:main}. The key is to exploit the explicit main terms $\{b_i/d_i\}$ therein to obtain short confidence intervals with data-driven lengths. With a fixed confidence level $1-\alpha$, we will construct confidence intervals such that the target $\theta_1^*$ belongs to its interval with asymptotical probability $1-\alpha$, and all other teams belong to their respective (slightly more conservative) intervals with overwhelming probability. For $\theta^*_1$, let
\begin{align*}
\mathcal{C}_1 \equiv \big[\hat{\theta}_1 - \rho_1(\hat{\theta})z_{1-\alpha/2}, \hat{\theta}_1 + \rho_1(\hat{\theta})z_{1-\alpha/2}\big],
\end{align*}
where $z_{1-\alpha/2}$ is the normal quantile such that $\Prob(\mathcal{N}(0,1)\geq z_{1-\alpha/2}) = \alpha/2$. It follows directly from the CLT in Proposition \ref{prop:app_clt_mle} that $\theta^*_1\in \mathcal{C}_1$ with asymptotic probability $1-\alpha$. For all other $i\neq 1$, let
\begin{align*}
\tau_i \equiv (1+c_0)\sqrt{2\log n\cdot \rho_i^{-2}(\hat{\theta})},
\end{align*}
where $\rho_i(\theta)$ denotes the sample version (with $\{A_{ij}\}$) defined in Proposition \ref{prop:app_clt_mle} and $c_0$ is an arbitrarily small but fixed constant. We then define the confidence intervals for $\{\hat{\theta}_i\}_{i=2}^n$ to be
\begin{align*}
\mathcal{C}_i\equiv \big[\hat{\theta_i} - \tau_i, \hat{\theta}_i + \tau_i\big].
\end{align*}
The constant in $\tau_i$ comes simply from the application of a (conditional) Bernstein's inequality to the main term $b_i/d_i$ in the expansion of Theorem \ref{thm:mle_expansion}, so that by construction $\theta_i^*\in \mathcal{C}_i$ simultaneously for $i\in\{2,\ldots,n\}$ with overwhelming probability. Since $\rho_i^{-2}$ is smaller for those teams $i$ with more comparisons, the above confidence intervals have the desirable property of data-driven length. 

The above construction of confidence intervals leads naturally to a confidence interval for the rank $r(1)$. We say two intervals satisfy $\mathcal{C}_i\leq \mathcal{C}_j$ if the upper end of $\mathcal{C}_i$ is smaller than the lower end of $\mathcal{C}_j$. Let $n_1$ be the number of $\mathcal{C}_i$'s such that $\mathcal{C}_1\leq \mathcal{C}_i$, and $n_2$ be the number such that $\mathcal{C}_i\leq \mathcal{C}_1$. By definition, $n_1,n_2$ take integer values between $0$ and $n-1$. The confidence interval for the rank $r(1)$ is then taken to be all integers inside
\begin{align*}
[n_1+1, n-n_2].
\end{align*}
The following result guarantees the confidence level of the above interval. Details of its proof are given in Section \ref{subsec:proof_app_rank}. 
\begin{proposition}\label{prop:app_rank}
Suppose the conditions of Theorem \ref{thm:mle_expansion} hold. Then with asymptotic probability at least $1-\alpha$, we have $r(1)\in [n_1+1, n-n_2]$. 
\end{proposition}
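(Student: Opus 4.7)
The plan is to introduce two events whose intersection deterministically forces $r(1) \in [n_1+1, n-n_2]$, and then to control their probabilities separately. Set $E_1 \equiv \{\theta_1^* \in \mathcal{C}_1\}$ and $E_2 \equiv \{\theta_i^* \in \mathcal{C}_i \text{ for all } i = 2, \ldots, n\}$. On $E_1 \cap E_2$, whenever $\mathcal{C}_1 \leq \mathcal{C}_i$ (meaning the upper endpoint of $\mathcal{C}_1$ lies strictly below the lower endpoint of $\mathcal{C}_i$, per the definition just above the statement), the containments chain to $\theta_1^* \leq \sup \mathcal{C}_1 < \inf \mathcal{C}_i \leq \theta_i^*$, so $\theta_1^* < \theta_i^*$ strictly. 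Hence at least $n_1$ teams have merit strictly above team $1$, forcing $r(1) \geq n_1 + 1$ regardless of how ties are broken. Symmetrically, $\mathcal{C}_i \leq \mathcal{C}_1$ gives $\theta_i^* < \theta_1^*$, and counting those $i$ yields $r(1) \leq n - n_2$. It therefore suffices to show $\Prob(E_1^c \cup E_2^c) \leq \alpha + \mathfrak{o}(1)$.

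For $E_1$, the bound $\Prob(E_1) \to 1 - \alpha$ follows directly from the $k = 1$ case of Proposition \ref{prop:app_clt_mle} with $\bar{\theta} = \hat{\theta}$ and the definition of $z_{1-\alpha/2}$. The bulk of the work is to show $\Prob(E_2) \to 1$. Plugging in Theorem \ref{thm:mle_expansion}, with probability $1 - \mathcal{O}(n^{-10})$ one has $|\hat{\theta}_i - \theta_i^*| \leq (1 + \mathfrak{o}(1))|b_i/d_i| + \mathfrak{o}(1/\sqrt{npL})$ uniformly in $i$. Conditional on $A$, $b_i$ is a sum of independent bounded centered variables with conditional variance $d_i/L$, so Bernstein's inequality, combined with $d_i \asymp np \gg (\log n)^{3/2}$ to absorb the sub-exponential correction, gives
\[
\Prob\!\Big(|b_i/d_i| > (1+c_0/4)\sqrt{2\log n}/\rho_i(\theta^*) \,\Big|\, A\Big) \leq n^{-1-c_0/4}
\]
for $n$ large. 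A union bound over $i \in \{2,\ldots,n\}$ makes the exceptional event have probability $\mathfrak{o}(1)$. To replace $\rho_i(\theta^*)$ by $\rho_i(\hat{\theta})$, combine the $\ell_\infty$-consistency of $\hat{\theta}$ from Proposition \ref{prop:mle_prelim} with the boundedness and Lipschitzness of $\psi'$ on the dynamic range; this yields $\rho_i(\hat{\theta})/\rho_i(\theta^*) = 1 + \mathfrak{o}(1)$ uniformly in $i$. Since $\sqrt{\log n}/\rho_i(\theta^*) \asymp \sqrt{\log n/(npL)} \gg 1/\sqrt{npL}$, the Bernstein main term also swallows the $\mathfrak{o}(1/\sqrt{npL})$ remainder from the expansion, and comparing with $\tau_i = (1+c_0)\sqrt{2\log n}/\rho_i(\hat{\theta})$ (where $c_0 > c_0/4$) produces $|\hat{\theta}_i - \theta_i^*| \leq \tau_i$ simultaneously for all $i \geq 2$ with probability $1 - \mathfrak{o}(1)$.

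The main obstacle is the constant-level accounting inside $E_2$: the $\sqrt{2\log n}$ inflation must beat the union bound over $n-1$ non-focal coordinates while leaving multiplicative slack both for the swap $\rho_i(\theta^*) \mapsto \rho_i(\hat{\theta})$ and for the $1 + \mathfrak{o}(1)$ prefactor on $b_i/d_i$ coming from Theorem \ref{thm:mle_expansion}. The arbitrary but fixed constant $c_0 > 0$ in the definition of $\tau_i$ exists precisely to absorb these perturbations uniformly, so that the non-focal intervals remain conservative and contribute only $\mathfrak{o}(1)$ to the failure probability, leaving the full $\alpha$ budget for team $1$.
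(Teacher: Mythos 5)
Your proof takes essentially the same route as the paper: reduce to the union bound $\Prob(\exists i:\theta_i^*\notin\mathcal{C}_i)\leq \Prob(\theta_1^*\notin\mathcal{C}_1) + \sum_{i\neq 1}\Prob(\theta_i^*\notin\mathcal{C}_i)$, handle the focal interval by the CLT of Proposition \ref{prop:app_clt_mle}, and handle the non-focal intervals via Theorem \ref{thm:mle_expansion} together with a conditional Bernstein bound on $b_i/d_i$ (with the uniform replacement $\rho_i(\hat{\theta})=(1+\mathfrak{o}(1))\rho_i(\theta^*)$ and the slack constant $c_0$ absorbing the perturbations). The only cosmetic deviation is your choice of intermediate inflation factor $(1+c_0/4)$ versus the paper's $(1+c_0/2)$, and your more explicit justification of the deterministic containment $E_1\cap E_2\subseteq\{r(1)\in[n_1+1,n-n_2]\}$ which the paper leaves as a readily-verified claim.
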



\subsection{Application III: Optimal constant of $\ell_2$ estimation}\label{subsec:app_constant}
With the main expansion in Theorem \ref{thm:mle_expansion}, we are able to pin down the exact constants of $\ell_2$ errors of the MLE $\hat{\theta}$ and the spectral estimator $\tilde{\theta}$. The proofs of the following two results are given in Sections \ref{subsec:proof_app_constant_mle} and \ref{subsec:proof_app_constant_spec} respectively.
\begin{proposition}\label{prop:mle_constant}
Suppose the conditions in (\ref{def:regime}) hold with $\alpha = 1$. Then it holds with probability at least $1 - \mathcal{O}(n^{-10})$ that
\begin{align*}
\pnorm{\hat{\theta}-\theta^*}{}^2 &= \frac{1+\mathfrak{o}(1)}{pL}\cdot\sum_{i=1}^n\Big(\sum_{k:k\neq i}\psi^\prime(\theta_i^* - \theta^*_k)\Big)^{-1},
\end{align*}
where the $\mathfrak{o}(1)$ term is uniform over $\theta^*\in\Theta(\kappa)$ in (\ref{def:theta_space}). 
\end{proposition}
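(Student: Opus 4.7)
The plan is to start from the first-order expansion $\hat{\theta}_i - \theta_i^* = b_i/d_i + \delta_i$ underlying Theorem \ref{thm:mle_expansion}, where $\delta$ is the remainder vector from Section \ref{sec:proof_idea}. Only the $\ell_2$ control $\pnorm{\delta}{}^2 = \mathfrak{o}(1/(pL))$ is needed for the $\ell_2$ estimation constant, and this is established in Step 2 of Section \ref{sec:proof_idea} (via the self-consistent equation $H(\theta^*)\delta \approx R$ and the eigenvalue bound $\lambda_{\min,\perp}(H(\theta^*))\asymp np$), which only requires $\alpha = 1$. Squaring the $\ell_2$ norm yields
\begin{align*}
\pnorm{\hat{\theta} - \theta^*}{}^2 = \sum_{i=1}^n \frac{b_i^2}{d_i^2} + 2\sum_{i=1}^n \frac{b_i}{d_i}\delta_i + \pnorm{\delta}{}^2,
\end{align*}
so by Cauchy--Schwarz, once a preliminary $\mathcal{O}(1/(pL))$ bound on $\sum_i b_i^2/d_i^2$ is in place, both remaining terms become $\mathfrak{o}(1/(pL))$, and the task reduces to a sharp evaluation of the main sum.

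For the main term, conditional on $A$, each $b_i$ is a sum of independent mean-zero summands $A_{ij}(\bar{y}_{ij}-\psi(\theta_i^*-\theta_j^*))$ with conditional variances $A_{ij}\psi'(\theta_i^*-\theta_j^*)/L$, so $\E[b_i^2 \mid A] = d_i/L$, and hence
\begin{align*}
\E\Big[\sum_{i=1}^n b_i^2/d_i^2 \,\Big|\, A\Big] = \frac{1}{L}\sum_{i=1}^n \frac{1}{d_i}.
\end{align*}
Next I would establish concentration of $\sum_i b_i^2/d_i^2$ around this conditional mean. Writing $b_i^2 - \E[b_i^2\mid A]$ as a diagonal part (the sum over $j$ of $A_{ij}[(\bar{y}_{ij}-\psi(\theta_i^*-\theta_j^*))^2 - \psi'(\theta_i^*-\theta_j^*)/L]$) plus an off-diagonal part ($2\sum_{j<k}A_{ij}A_{ik}(\bar{y}_{ij}-\psi(\theta_i^*-\theta_j^*))(\bar{y}_{ik}-\psi(\theta_i^*-\theta_k^*))$), the diagonal contribution is handled by Bernstein's inequality applied to a sum of independent bounded terms, while the off-diagonal piece is a quadratic form in the centered variables $\{\bar{y}_{ij}-\psi(\theta_i^*-\theta_j^*)\}_{i<j}$, amenable to a Hanson--Wright-type estimate. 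Using the high-probability bound $d_i\asymp np$ from standard graph concentration, both deviation terms are of smaller order than $\frac{1}{L}\sum_i 1/d_i \asymp 1/(pL)$, yielding $\sum_i b_i^2/d_i^2 = (1+\mathfrak{o}(1))\frac{1}{L}\sum_i 1/d_i$.

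To convert $\frac{1}{L}\sum_i 1/d_i$ into $\frac{1}{pL}\sum_i \big(\sum_{k\neq i}\psi'(\theta_i^*-\theta_k^*)\big)^{-1}$, I would apply Bernstein's inequality to each $d_i$ to obtain $d_i = (1+\mathfrak{o}(1))p\sum_{k\neq i}\psi'(\theta_i^*-\theta_k^*)$ uniformly in $i$, which needs only $np\gg \log n$ since $\psi'\asymp 1$ on $\Theta(\kappa)$. Uniformity over $\theta^*\in\Theta(\kappa)$ is preserved throughout because every tail estimate depends on $\theta^*$ only through quantities bounded between absolute constants. The main obstacle is the off-diagonal quadratic form, whose fluctuation must be controlled at a level strictly smaller than $1/(pL)$; the randomness of the denominators $d_i$ complicates a direct quadratic-form estimate, but this is resolved by first conditioning on $A$ (where $\{\bar{y}_{ij}\}$ concentration applies) and then handling the separate randomness of $A$ through another Bernstein step.
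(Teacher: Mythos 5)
Your proposal is correct and mirrors the paper's proof almost step for step: both start from the expansion $\hat{\theta}_i - \theta_i^* = b_i/d_i + \delta_i$ with the $\ell_2$ control $\pnorm{\delta}{}^2 = \mathfrak{o}(1/(pL))$ from Proposition \ref{prop:delta_bound}, reduce to evaluating $\sum_i b_i^2/d_i^2$, compute $\E[\sum_i b_i^2/d_i^2\mid A] = \tr(D^{-1})/L$, control the fluctuation by a Hanson--Wright estimate (the paper bundles the diagonal and off-diagonal parts into a single quadratic-form lemma, Lemma \ref{lem:quad_form_concentration}, where you split them), and finish by Bernstein concentration of $d_i$ around $p\sum_{k\neq i}\psi'(\theta_i^*-\theta_k^*)$. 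The only cosmetic difference is that the paper uses the weighted inequality $\pnorm{u+v}{}^2 \leq (1+\epsilon)\pnorm{u}{}^2 + (1+\epsilon^{-1})\pnorm{v}{}^2$ with $\epsilon\downarrow 0$ slowly in place of your explicit cross-term plus Cauchy--Schwarz, which is equivalent.
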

\begin{proposition}\label{prop:spec_constant}
Suppose the conditions in (\ref{def:regime}) hold with $\alpha = 1$. Then it holds with probability at least $1 - \mathcal{O}(n^{-10})$ that
\begin{align*}
\pnorm{\tilde{\theta}-\theta^*}{}^2 &= \frac{1+\mathfrak{o}(1)}{pL}\cdot \sum_{i=1}^n \frac{\sum_{j:j\neq i}(e^{\theta_i^*}+e^{\theta_j^*})^2\psi'(\theta_i^*-\theta_j^*)}{\Big(\sum_{j:j\neq i}(e^{\theta_i^*} + e^{\theta_j^*})\psi'(\theta_i^*-\theta_j^*)\Big)^2}.
\end{align*}
where the $\mathfrak{o}(1)$ term is uniform over $\theta^*\in\Theta(\kappa)$ in (\ref{def:theta_space}). 
\end{proposition}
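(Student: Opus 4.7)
The plan is to start from the non-asymptotic expansion of Theorem \ref{thm:spec_expansion}, reduce $\pnorm{\tilde{\theta}-\theta^*}{}^2$ to the principal sum $\sum_{i=1}^n (\tilde{b}_i/\tilde{d}_i)^2$, and then concentrate this quantity around its deterministic analogue in the statement. The structure will mirror the argument for the MLE in Proposition \ref{prop:mle_constant}, with $(b_i,d_i)$ replaced by $(\tilde{b}_i,\tilde{d}_i)$.

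First, I would square the expansion $\tilde{\theta}_i-\theta_i^* = (1+\tilde{\epsilon}_{1,i})(\tilde{b}_i/\tilde{d}_i) + \tilde{\epsilon}_{2,i}$ and sum over $i$. A routine (conditional) Bernstein bound on the centered sums $\tilde{b}_i$ together with a concentration of $\tilde{d}_i$ around its graph-expectation gives $|\tilde{b}_i/\tilde{d}_i| = \mathcal{O}(\sqrt{\log n/(npL)})$ entrywise, so $\sum_i(\tilde{b}_i/\tilde{d}_i)^2 = \mathcal{O}(1/(pL))$. Combining this with $\pnorm{\tilde{\epsilon}_1}{\infty} = \mathfrak{o}(1)$ and $\pnorm{\tilde{\epsilon}_2}{\infty} = \mathfrak{o}(1/\sqrt{npL})$ via Cauchy--Schwarz yields
\begin{align*}
\pnorm{\tilde{\theta}-\theta^*}{}^2 = (1+\mathfrak{o}(1))\sum_{i=1}^n \Big(\frac{\tilde{b}_i}{\tilde{d}_i}\Big)^2 + \mathfrak{o}(1/(pL)),
\end{align*}
uniformly over $\theta^*\in\Theta(\kappa)$.

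Next, I would deterministize the denominator by setting $\bar{d}_i \equiv p\pi_i^*\sum_{j\neq i}\psi(\theta_j^*-\theta_i^*)$; since $\bar{d}_i \asymp p$, a Bernstein bound combined with a union bound over $i\in[n]$ (affordable since $np\gg\log n$) gives $\tilde{d}_i = (1+\mathfrak{o}(1))\bar{d}_i$ uniformly in $i$. The problem then reduces to concentrating $\sum_i \tilde{b}_i^2/\bar{d}_i^2$. Conditional on the graph $A$, each $\tilde{b}_i$ is a sum of $n-1$ independent bounded centered variables with conditional variance
\begin{align*}
\sigma_i^2(A) \equiv \E[\tilde{b}_i^2\mid A] = L^{-1}\sum_{j\neq i}A_{ij}(\pi_i^*+\pi_j^*)^2\psi'(\theta_i^*-\theta_j^*).
\end{align*}
Expanding $\sum_i \tilde{b}_i^2/\bar{d}_i^2$ as a quadratic form in the independent variables $\{\bar{y}_{ij}-\psi(\theta_i^*-\theta_j^*)\}_{i<j}$, the diagonal part reproduces $\sum_i \sigma_i^2(A)/\bar{d}_i^2$, while the off-diagonal part has conditional mean zero. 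A Hanson--Wright type moment computation yields concentration of the full quadratic form with relative error $\mathcal{O}(1/\sqrt{n})$, since the mean is of order $1/(pL)$ and the standard deviation of order $1/(\sqrt{n}pL)$. A further Bernstein bound replaces the inner randomness $A_{ij}\to p$, giving
\begin{align*}
\sum_{i=1}^n \frac{\tilde{b}_i^2}{\bar{d}_i^2} = \frac{1+\mathfrak{o}(1)}{pL}\sum_{i=1}^n \frac{\sum_{j\neq i}(\pi_i^*+\pi_j^*)^2\psi'(\theta_i^*-\theta_j^*)}{\big(p\pi_i^*\sum_{j\neq i}\psi(\theta_j^*-\theta_i^*)\big)^2/p}.
\end{align*}

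Finally, I would invoke the algebraic identity $\pi_i^*\psi(\theta_j^*-\theta_i^*) = (\pi_i^*+\pi_j^*)\psi'(\theta_i^*-\theta_j^*)$ (a direct consequence of $\psi'(x) = \psi(x)\psi(-x)$ and $\pi_j^*/\pi_i^* = e^{\theta_j^*-\theta_i^*}$) to rewrite $\bar{d}_i = p\sum_{j\neq i}(\pi_i^*+\pi_j^*)\psi'(\theta_i^*-\theta_j^*)$, and factor out the common normalization $S = \sum_k e^{\theta_k^*}$ via $\pi_i^*+\pi_j^* = (e^{\theta_i^*}+e^{\theta_j^*})/S$ so that $S^2$ cancels between numerator and denominator, recovering the formula in the statement. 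The hardest step will be the concentration of the quadratic form: each $\tilde{b}_i^2$ individually has only $\Theta(1)$ relative precision (it is essentially a conditional chi-squared with one degree of freedom), and although the sum averages out to $\mathcal{O}(1/\sqrt{n})$ relative precision, the $\tilde{b}_i$'s are mutually correlated through shared comparison variables $\bar{y}_{ij}$, requiring a careful fourth-moment or Hanson--Wright calculation. Since this parallels the concentration step underlying Proposition \ref{prop:mle_constant}, the same techniques should carry over with the obvious substitutions.
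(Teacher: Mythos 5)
There is a genuine gap in the sparsity regime. The statement of Proposition \ref{prop:spec_constant} only assumes the conditions in (\ref{def:regime}) with $\alpha = 1$, i.e., $np\gg\log n$, but your proof starts from the non-asymptotic expansion of Theorem \ref{thm:spec_expansion}, which requires $\alpha = 3/2$, i.e., $np\gg(\log n)^{3/2}$. The extra poly-log factor in Theorem \ref{thm:spec_expansion} enters precisely because that result needs an $\ell_\infty$ bound on the remainder $\delta$ (via the leave-two-out machinery). Your proof would therefore only establish the conclusion under the stronger hypothesis $np\gg(\log n)^{3/2}$, missing the sparsest regime the proposition claims to cover. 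The paper's remark immediately following the statement flags exactly this point: the $\ell_2$ constant results need only the $\ell_2$ control of $\delta$, which holds already under $np\gg\log n$.

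The fix, and what the paper actually does, is to avoid Theorem \ref{thm:spec_expansion} entirely. Instead one works with the first-order term $\Delta_i$ from (\ref{def:spec_Delta}) and the decomposition $\Delta_i = (\bar{\pi}_i - \pi_i^*)/\pi_i^* + \delta_i$ in (\ref{eq:spec_expansion}), then (i) replaces $\pnorm{\tilde{\theta}-\theta^*}{}$ by $\pnorm{\Delta}{}$ up to $\mathfrak{o}(1/(pL))$ via the elementary Taylor bound on $\log(1+x)$, (ii) absorbs $\delta$ using only Proposition \ref{prop:delta_norm_spec} ($\pnorm{\delta}{} = \mathfrak{o}(1/\sqrt{pL})$ under $np\gg\log n$), and (iii) concentrates $\sum_m (\bar{\pi}_m-\pi_m^*)^2/(\pi_m^*)^2$ by the Hanson--Wright argument of Lemma \ref{lem:quad_form_concentration}. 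Your remaining steps — the conditional variance computation, the quadratic-form concentration, the deterministic replacement $A_{ij}\to p$, and the algebraic identity $\pi_i^*\psi(\theta_j^*-\theta_i^*) = (\pi_i^*+\pi_j^*)\psi'(\theta_i^*-\theta_j^*)$ used to put the answer in the stated form — are correct and match the paper's Proposition \ref{prop:Delta_risk} in substance, so they would transfer directly once the entry point is changed.
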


\begin{remark}
Note that as opposed to the main expansions in Section \ref{sec:main}, the above results only require the (almost) minimal sparsity condition $np\gg \log n$ for the underlying Erd\H{o}s-R\'{e}nyi graph to be connected. This is because the above results only require the $\ell_2$ control of the remainder vector $\delta$ in (\ref{def:delta}) instead of the $\ell_\infty$ control; see Sections \ref{subsec:proof_delta_l2} and \ref{subsec:proof_app_constant_mle} for details in the case of MLE. 
\end{remark}

The above results are much more refined than existing $\ell_2$ guarantees in the literature \cite{negahban2017rank,chen2019spectral,chen2020partial}, whose optimalities are all based on a (global) minimax framework. In particular, they allow us to compare the two estimators $\hat{\theta}$ and $\tilde{\theta}$ on a more fine-grained level. For example, while both $\hat{\theta}$ and $\tilde{\theta}$ achieve the (global) minimax rate $1/\sqrt{pL}$ in terms of $\ell_2$ accuracy \cite{negahban2017rank}, a direct application of Cauchy-Schwarz yields that the error of the spectral estimator in Proposition \ref{prop:spec_constant} is always larger than that of the MLE in Proposition \ref{prop:mle_constant}, and equality only holds in the case where all $\theta_i^*$'s are equal.


In fact, the following result states that the $\ell_2$ accuracy of the MLE cannot be improved asymptotically in terms of the local minimax framework. For any $\theta^*\in\R^n$ and $\epsilon > 0$, we use $B(\theta^*,\epsilon)$ to denote the hyperrectangle $\times_{i=1}^n [\theta^*_i- \epsilon, \theta^*_i + \epsilon]$.

\begin{theorem}\label{thm:lower}
Let $\kappa = \mathcal{O}(1)$ and $\epsilon_n$ be any sequence such that $\epsilon_n \gg (npL)^{-1/2}$. Then for any $\theta^*$ such that $\max_i\theta^*_i - \min_i\theta^*_i \leq \kappa/2$, it holds that
\begin{align*}
\inf_{\hat{\theta}}\sup_{\theta\in B(\theta^*,\epsilon_n)\cap \Theta(\kappa)} \E_\theta\pnorm{\hat{\theta}  - \theta}{}^2\geq \big(1+\mathfrak{o}(1)\big)\cdot \frac{1}{pL}\sum_{i=1}^n \Big(\sum_{j:j\neq i}\psi^\prime(\theta^*_i - \theta^*_j)\Big)^{-1},
\end{align*}
where the infimum of $\hat{\theta}$ is taken over all estimators of $\theta$.
\end{theorem}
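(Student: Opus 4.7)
The aim is a local minimax lower bound that matches, up to a $1+\mathfrak{o}(1)$ factor, the exact constant appearing in Proposition \ref{prop:mle_constant}. The natural tool is the multivariate van Trees (Bayesian Cram\'er--Rao) inequality, and the plan has four parts.

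\textbf{Step 1 (reduction and prior).} First I would reduce the minimax risk to a Bayes risk over a slightly shrunken hyperrectangle. Since $\epsilon_n \gg (npL)^{-1/2}$ while the desired lower bound is of order $(pL)^{-1}$, we may replace $\epsilon_n$ by any $\tilde\epsilon_n \leq \epsilon_n$ with $(npL)^{-1/2}\ll \tilde\epsilon_n \ll (pL)^{-1/2}$; the supremum can only shrink. Let $q$ be a smooth bump density on $[-\tilde\epsilon_n/2, \tilde\epsilon_n/2]$ vanishing at the endpoints, e.g.\ $q(x) = (2/\tilde\epsilon_n)\cos^2(\pi x/\tilde\epsilon_n)$, which satisfies $J(q) := \int (q')^2/q = 4\pi^2/\tilde\epsilon_n^2$. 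Draw $\xi_1, \ldots, \xi_n$ i.i.d.\ from $q$ and set $\theta := \theta^* + P\xi$ with $P := I_n - n^{-1}\bm{1}_n\bm{1}_n^\top$. Then $\bm{1}_n^\top \theta = 0$ and $\pnorm{\theta-\theta^*}{\infty} \leq 2\max_i|\xi_i| \leq \tilde\epsilon_n$; combined with $\max_i\theta_i^* - \min_i\theta_i^* \leq \kappa/2$, this keeps $\theta \in B(\theta^*, \tilde\epsilon_n) \cap \Theta(\kappa)$ for all large $n$.

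\textbf{Step 2 (multivariate van Trees).} Treat the BTL model as parametrized by $\xi$ through $\theta(\xi) = \theta^* + P\xi$. Because the likelihood depends on $\theta$ only through pairwise differences, the Fisher information in $\xi$-coordinates equals $I_\xi(\xi) = P^\top I(\theta(\xi)) P = I(\theta(\xi))$, using $I(\theta)\bm{1}_n = 0$ for the weighted-Laplacian BTL Fisher matrix. The prior Fisher information matrix is $J_\pi = J(q) I_n$. Applying multivariate van Trees to the functional $\psi(\xi) = P\xi = \theta - \theta^*$ (with Jacobian $P$) gives
\begin{align*}
\int \E_\theta\bigl[(\hat\theta - \theta)(\hat\theta - \theta)^\top\bigr]\, \d{\pi} \succeq P(\bar I + J_\pi)^{-1} P, \qquad \bar I := \int I(\theta(\xi))\, \d{\pi},
\end{align*}
and taking traces (using $P^2 = P$),
\begin{align*}
\int \E_\theta \pnorm{\hat\theta - \theta}{}^2\, \d{\pi} \geq \tr\bigl(P(\bar I + J_\pi)^{-1} P\bigr) = \sum_{k=1}^{n-1} \frac{1}{\lambda_k(\bar I) + J(q)},
\end{align*}
where $\lambda_1(\bar I), \ldots, \lambda_{n-1}(\bar I)$ are the nonzero eigenvalues of $\bar I$; the outer projectors $P$ kill the $\bm{1}_n$-gauge direction where a naive inverse would contribute a spurious $1/J(q)$.

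\textbf{Step 3 (spectral control and Cauchy--Schwarz).} Lipschitz continuity of $\psi'$ on bounded sets gives $\bar I = I(\theta^*) + O(\tilde\epsilon_n) \cdot pL$ in operator norm, so the nonzero spectra of $\bar I$ and $I(\theta^*)$ agree to leading order. A variant of Lemma \ref{lem:hessian_eigen} applied to the expected weighted Laplacian $pL_W$ (with $(L_W)_{ii} = \sum_{j\neq i}\psi'(\theta_i^*-\theta_j^*)$) yields $\lambda_k(\bar I) \gtrsim npL$ for every $k \leq n-1$, while $J(q) \asymp \tilde\epsilon_n^{-2} \ll npL$; therefore
\begin{align*}
\sum_{k=1}^{n-1}\frac{1}{\lambda_k(\bar I) + J(q)} = (1+\mathfrak{o}(1))\,\tr\bigl(I(\theta^*)^\dagger\bigr).
\end{align*}
To convert $\tr(I(\theta^*)^\dagger)$ into $\sum_i 1/I(\theta^*)_{ii}$, observe that for any PSD matrix $M$ with kernel $\bm{1}_n$, applying Cauchy--Schwarz on $\bm{1}_n^\perp$ to $v_i = Pe_i$ gives $(M^\dagger)_{ii} = v_i^\top M^\dagger v_i \geq (v_i^\top v_i)^2/(v_i^\top M v_i) = (1-1/n)^2/M_{ii}$, and summing over $i$,
\begin{align*}
\tr(M^\dagger) \geq (1-1/n)^2 \sum_{i=1}^n \frac{1}{M_{ii}}.
\end{align*}
Applied to $M = I(\theta^*)$ with $I(\theta^*)_{ii} = pL \sum_{j\neq i}\psi'(\theta_i^*-\theta_j^*)$, this is $(1+\mathfrak{o}(1)) \cdot (pL)^{-1}\sum_i \bigl(\sum_{j\neq i}\psi'(\theta_i^*-\theta_j^*)\bigr)^{-1}$.

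\textbf{Step 4 (conclusion and main obstacle).} Chaining the three inequalities,
\begin{align*}
\inf_{\hat\theta}\sup_{\theta\in B(\theta^*,\epsilon_n)\cap\Theta(\kappa)} \E_\theta \pnorm{\hat\theta-\theta}{}^2 \geq (1+\mathfrak{o}(1))\cdot \frac{1}{pL}\sum_{i=1}^n \biggl(\sum_{j\neq i}\psi'(\theta_i^*-\theta_j^*)\biggr)^{-1},
\end{align*}
which is the claimed bound. The main technical obstacle is handling the singular Fisher information (whose kernel is $\bm{1}_n$) within van Trees: in the $\xi$-parametrization there is a gauge redundancy along $\bm{1}_n$ which, if treated naively, would inject an extra $1/J(q) \asymp \tilde\epsilon_n^2$ term into the trace and destroy the leading constant; the remedy is to apply van Trees to the functional $\psi(\xi) = P\xi$ rather than to $\xi$ itself, so that $(\bar I + J_\pi)^{-1}$ is sandwiched between projectors that annihilate the gauge direction. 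A secondary point is the Lipschitz control $\bar I \approx I(\theta^*)$, which follows from $|\psi'(x)-\psi'(y)|\lesssim|x-y|$ on bounded intervals together with $\tilde\epsilon_n = \mathfrak{o}(1)$.
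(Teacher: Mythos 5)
Your approach is correct and takes a genuinely different route from the paper's. The paper first rewrites the $\ell_2$ risk through the identity $\pnorm{\theta-\ave(\theta)\bm 1_n}{}^2 = n^{-1}\sum_{i<j}(\theta_i-\theta_j)^2$, puts a product prior, and then applies the \emph{two-dimensional} van Trees inequality to each pairwise contrast $\theta_i-\theta_j$ (conditioning on $\theta_{-i,-j}$), computing the $2\times 2$ Fisher submatrix $\mathcal I(\mu)$ and its inverse explicitly; the answer then emerges after summing the pairwise bounds over $i<j$. You instead invoke the full $n$-dimensional matrix van Trees once, with the projector $P$ handling the $\bm 1_n$-gauge degeneracy, reduce everything to $\tr(I(\theta^*)^\dagger)$ via spectral perturbation, and finish with the inequality $\tr(M^\dagger)\geq(1-1/n)^2\sum_i 1/M_{ii}$ from Cauchy--Schwarz on $\bm 1_n^\perp$. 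Each method has a clean "trick": the paper's is the algebraic decomposition of the $\ell_2$ norm into pairwise differences, which sidesteps the singular Fisher matrix entirely (every $2\times 2$ sub-Fisher is nonsingular); yours is the trace-pseudoinverse Cauchy--Schwarz step, which is shorter and avoids the pairwise Fisher computations but requires the $n$-dimensional matrix form of van Trees and a uniform spectral lower bound on the averaged Fisher Laplacian. One minor point to button up: the theorem is stated for general $\theta^*$ with $\max_i\theta_i^*-\min_i\theta_i^*\leq\kappa/2$ (not assumed centered), so the construction $\theta=\theta^*+P\xi$ gives $\bm 1_n^\top\theta=\bm 1_n^\top\theta^*$, which need not be zero; the paper handles this with a translation-invariance reduction to the uncentered set $\tilde\Theta(\kappa)$, and you should add a short analogous step (shift $\theta^*$ to its centered version and note the loss function and model are shift-invariant) before Step 1.
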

\begin{remark}\label{remark:lower_radius}
It is clear that the localization radius $(npL)^{-1/2}$ cannot be improved in terms of its order. Indeed, if localized in a neighborhood of $\theta^*$ of size $\delta_n(npL)^{-1/2}$ for some $\delta_n = \mathfrak{o}(1)$, then the trivial estimator $\hat{\theta}_{\text{fix}}\equiv \theta^*$ satisfies $\E_\theta\pnorm{\hat{\theta}_{\text{fix}} - \theta}{}^2 = \mathfrak{o}\big((pL)^{-1}\big)$ uniformly over this neighborhood.
\end{remark}

The proof of Theorem \ref{thm:lower} is given in Section \ref{subsubsec:proof_lower} and is based on van Trees' inequality \cite{gill1995applications}. Together with Proposition \ref{prop:mle_constant}, they imply that the MLE is asymptotically and locally minimax optimal. This agrees with the conventional wisdom in classical parametric statistics that MLE is the most efficient estimator \cite{van2000asymptotic}.

\subsection{Simulation}\label{subsec:simulation}

We close this section with some simulation studies to verify the theoretical results above. Throughout the simulation, we set the number of comparison to be $L = 1$ and the dynamic range to be $\kappa = 2$. We first verify the finite-dimensional CLTs established in Section \ref{subsec:app_clt}. For a given number $n$ of individuals, we sample $\theta^* = (\theta_1^\ast, \ldots,\theta_n^\ast)^\top$ i.i.d. from $\textrm{Unif}([0,2])$, and examine the distribution of $\hat{\theta}_1$ and $\tilde{\theta}_1$ prescribed by Propositions \ref{prop:app_clt_mle} and \ref{prop:app_clt_spec}. For numerical reasons, we use the slightly larger success probability $p = (\log n)^{3}/n$ for the Erd\H{o}s-R\'{e}nyi comparison graph, and the corresponding QQ-plots against the nominal normal quantiles are presented in Figure \ref{fig:dist}. We see that starting from $n = 500$, both QQ-plots align very well with the diagonal line, validating both CLTs established in Propositions \ref{prop:app_clt_mle} and \ref{prop:app_clt_spec}.

\begin{figure}
	\centering
	\begin{subfigure}[b]{0.48\textwidth}
		\centering
		\includegraphics[width=\textwidth]{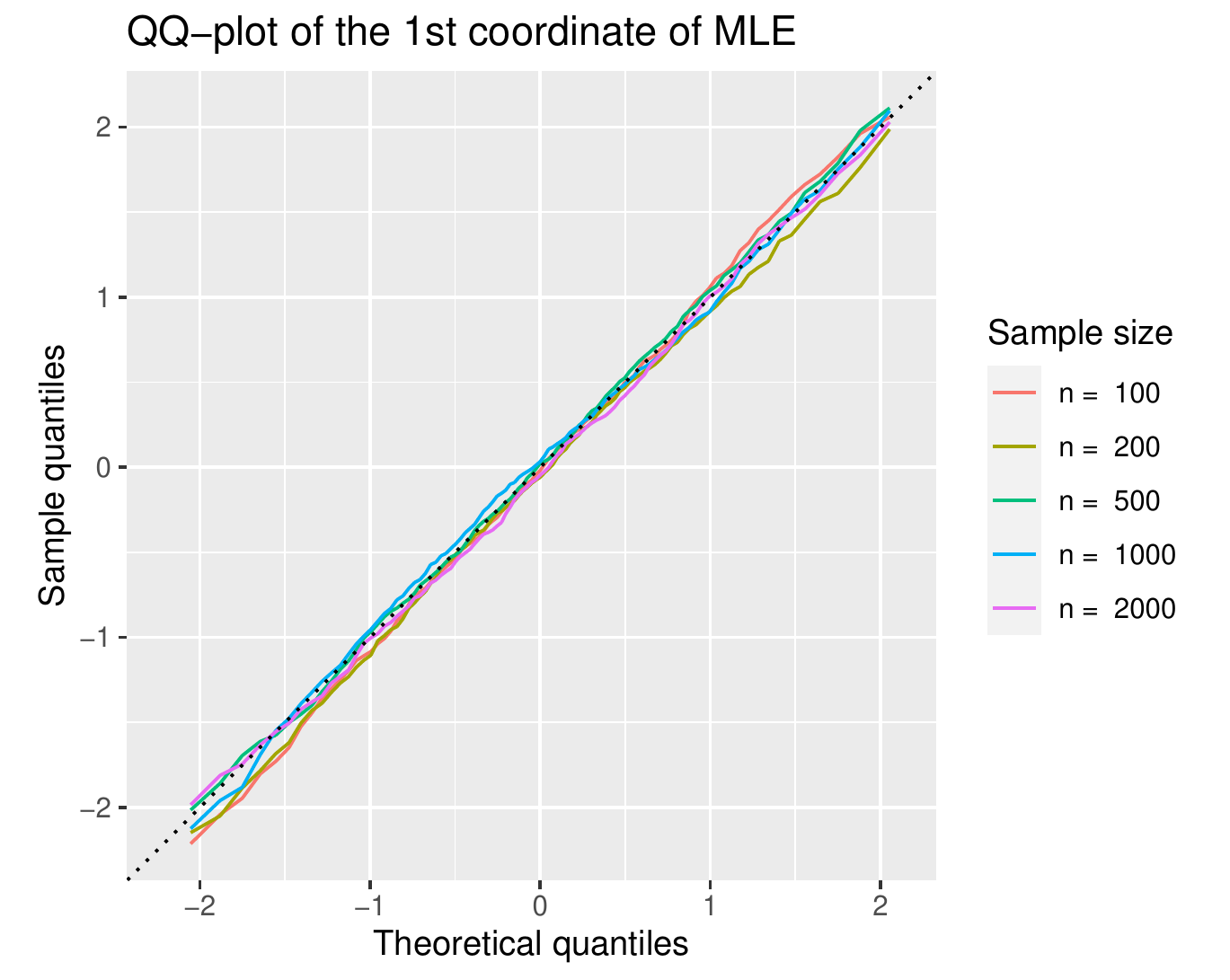}
		\caption{MLE}
		\label{fig:y equals x}
	\end{subfigure}
	\hfill
	\begin{subfigure}[b]{0.48\textwidth}
		\centering
		\includegraphics[width=\textwidth]{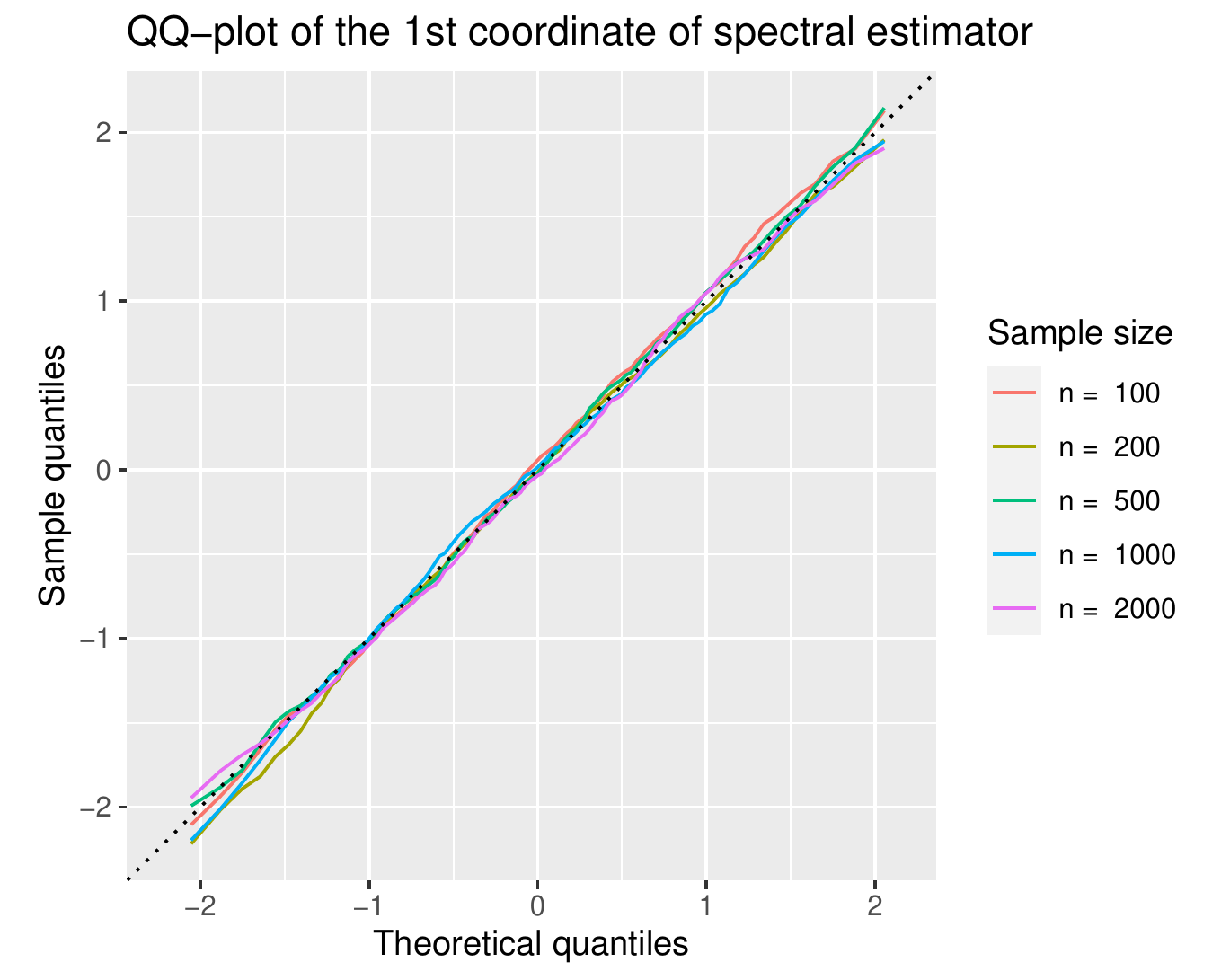}
		\caption{Spectral estimator}
		\label{fig:three sin x}
	\end{subfigure}
	\caption{QQ-plots comparing the theoretical and sample quantiles of $\hat{\theta}_1$ and $\tilde{\theta}_1$. Simulation parameters: $L = 1$, $n \in \{100, 200, 500, 1000, 2000\}$ and $p = (\log n)^{3}/n$. For each $n$, $\theta^\ast = (\theta_1^\ast, \ldots,\theta_n^\ast)^\top$ are i.i.d. from $\text{Unif}([0,2])$. The empirical quantile curves are averaged over $1000$ replications.}
	\label{fig:dist}
\end{figure}

Next we validate the $\ell_2$ risk predictions prescribed by Propositions \ref{prop:mle_constant} and \ref{prop:spec_constant}. For numerical reasons, we take $p = (\log n)^{3.5}/n$ to generate the Erd\H{o}s-R\'{e}nyi comparison graph. From the risk plot in Figure \ref{fig:risk}, we observe that the theoretical predictions in Propositions \ref{prop:mle_constant} and \ref{prop:spec_constant} are very accurate, and also that the MLE indeed achieves a smaller risk than the spectral estimator. 

\begin{figure}
\centering
\includegraphics[width = 0.48\textwidth]{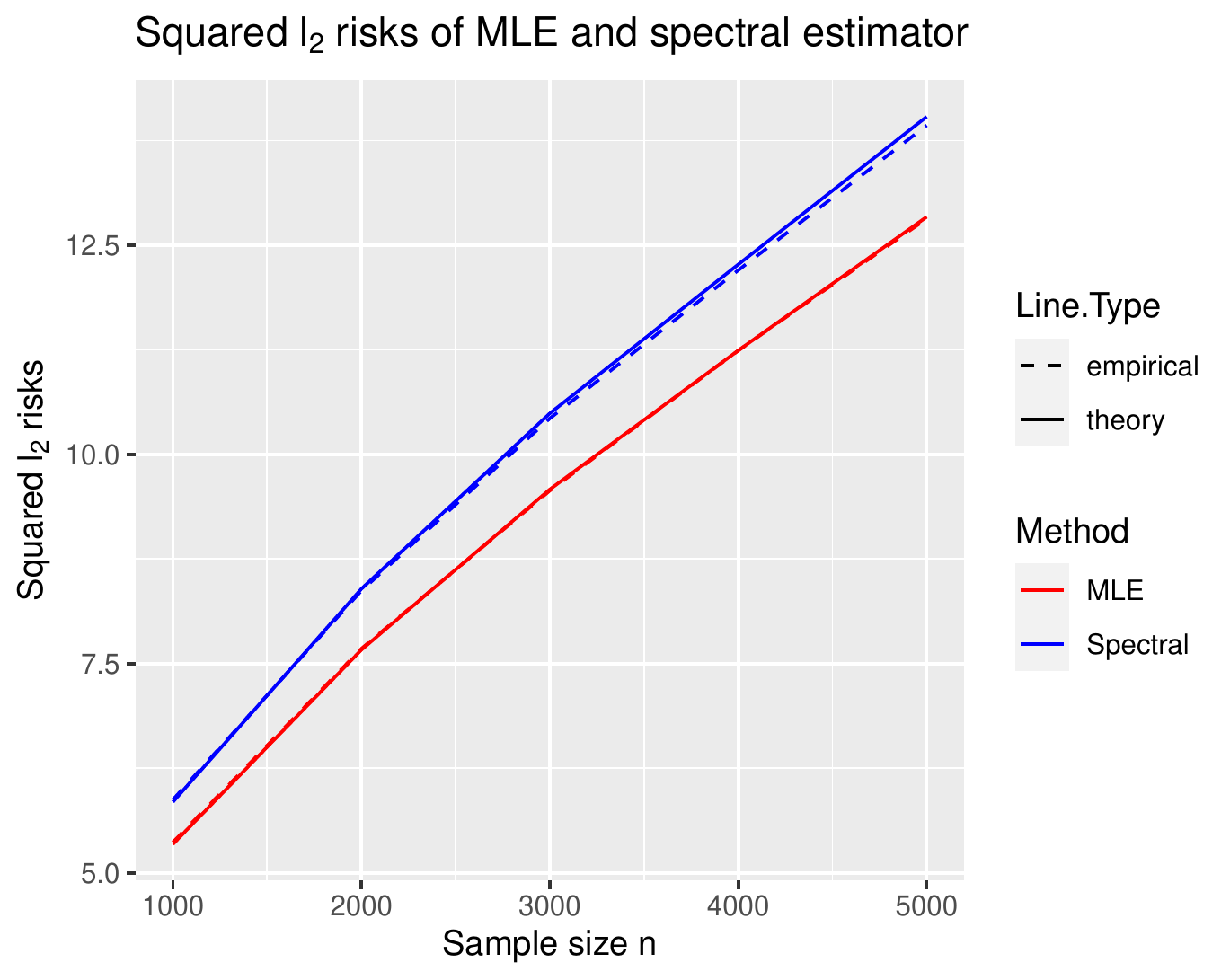}
\caption{Squared $\ell_2$ risks of the MLE $\hat{\theta}$ (red) and the spectral estimator $\tilde{\theta}$ (blue), with solid lines denoting the theoretical predictions by Proposition \ref{prop:mle_constant} and \ref{prop:spec_constant},  and dashed lines denoting the empirical values. Simulation parameters: $L = 1$, $n \in \{1000, 2000, 3000, 4000, 5000\}$ and $p = (\log n)^{3.5}/n$. For each $n$, $\theta^\ast = (\theta_1^\ast, \ldots,\theta_n^\ast)^\top$ are i.i.d. from $\text{Unif}([0,2])$. The empirical risk curves are averaged over $500$ replications.}
\label{fig:risk}
\end{figure}

\section{Proof of the main expansion for MLE}\label{sec:proof_main}

The goal of this section is to prove Theorem \ref{thm:mle_expansion}. We introduce some preliminaries in Section \ref{subsec:mle_prelim}, followed by two main steps of the proof in Sections \ref{subsec:proof_delta_l2} and \ref{subsec:proof_delta_infty} respectively. We refer to Section \ref{sec:proof_idea} for the general proof idea.

\subsection{Preliminary}\label{subsec:mle_prelim}

We will analyze $\hat{\theta}$ in a coordinate-wise manner. Fix any $i\in[n]$, and decompose the total likelihood in (\ref{eq:mle_likelihood}) as $\ell_n(\theta) = \ell_n^{(-i)}(\theta_{-i}) + \ell_n^{(i)}(\theta_i|\theta_{-i})$, where
\begin{align*}
\ell_n^{(-i)}(\theta_{-i}) &\equiv \sum_{j<k:j,k\neq i}A_{jk}\Big[\bar{y}_{jk}\log\frac{1}{\psi(\theta_j - \theta_k)} + \bar{y}_{kj}\log\frac{1}{\psi(\theta_k - \theta_j)}\Big],\\
\ell_n^{(i)}(\theta_i|\theta_{-i}) &\equiv \sum_{j:j\neq i}A_{ij}\Big[\bar{y}_{ij}\log\frac{1}{\psi(\theta_i - \theta_j)} + \bar{y}_{ji}\log\frac{1}{\psi(\theta_j - \theta_i)}\Big].
\end{align*}
Next define
\begin{align}\label{def:fg}
\notag f^{(i)}(\theta_i|\theta_{-i}) &\equiv \frac{\partial}{\partial \theta_i}\ell_n^{(i)}(\theta_i|\theta_{-i}) = -\sum_{j:j\neq i}A_{ij}\big(\bar{y}_{ij} - \psi(\theta_i - \theta_j)\big),\\
g^{(i)}(\theta_i|\theta_{-i}) &\equiv \frac{\partial^2}{\partial\theta_i^2}\ell_n^{(i)}(\theta_i|\theta_{-i}) = \sum_{j:j\neq i}A_{ij}\psi'(\theta_i - \theta_j).
\end{align}
Some preliminary estimates regarding $\hat{\theta}$, $f^{(i)}(\theta_i^*|\hat{\theta}_{-i})$ and $g^{(i)}(\theta_i^*|\hat{\theta}_{-i})$ are summarized in Lemmas \ref{lem:pre_estimate} and \ref{lem:bound_fg} in the Appendix. Let
\begin{align}\label{def:theta_bar}
\bar{\theta}_i &\equiv \theta_i^* - \frac{f^{(i)}(\theta_i^*|\hat{\theta}_{-i})}{g^{(i)}(\theta_i^*|\hat{\theta}_{-i})},
\end{align}
whose definition is motivated by the local quadratic expansion in (\ref{eq:mle_local_quad}). Lastly define the remainder vector $\delta\in\R^n$ via
\begin{align}\label{def:delta}
\hat{\theta}_i - \theta_i^* = -\frac{f^{(i)}(\theta_i^*|\theta^*_{-i})}{g^{(i)}(\theta_i^*|\theta^*_{-i})} + \delta_i.
\end{align}
We have the following $\ell_\infty$ bound for $\delta$.
\begin{proposition}\label{prop:delta_entrywise_bound}
Suppose that $\kappa = \mathcal{O}(1)$ and $np\gg (\log n)^{3/2}$. The following holds with probability $1 - \mathcal{O}(n^{-10})$. 
\begin{align*}
\max_{i}\Big|\delta_i - \epsilon_i\frac{f^{(i)}(\theta_i^*|\theta^*_{-i})}{g^{(i)}(\theta_i^*|\theta^*_{-i})}\Big| = \mathfrak{o}(\sqrt{\frac{1}{npL}}).
\end{align*}
Here $\epsilon\in\R^n$ satisfies $\pnorm{\epsilon}{\infty} = \mathfrak{o}(1)$.
\end{proposition}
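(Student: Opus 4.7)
The plan is to invert the self-consistent equation for $\delta$ sketched in Section \ref{sec:proof_idea}, using the preliminary $\ell_\infty$ bound of Proposition \ref{prop:mle_prelim}, the $\ell_2$ control $\pnorm{\delta}{} = \mathfrak{o}(1/\sqrt{pL})$ established in Section \ref{subsec:proof_delta_l2}, the eigenvalue bound on $H(\theta^*)$ in Lemma \ref{lem:hessian_eigen}, and a leave-two-out device that pushes the $\ell_\infty$ bound down to $\mathfrak{o}(1/\sqrt{npL})$ modulo the explicit $\epsilon_i\cdot f^{(i)}/g^{(i)}$ piece.

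First I would start from the first-order condition $f^{(i)}(\hat{\theta}_i|\hat{\theta}_{-i})=0$, Taylor-expand $f^{(i)}$ around $\theta_i^*$ in the first slot and then around $\theta_{-i}^*$ in the second slot, and absorb the remainders into a cumulated error of size $\mathfrak{o}(d_i/\sqrt{npL})$ via the preliminary $\ell_\infty$ bound $\pnorm{\hat{\theta}-\theta^*}{\infty}^2 \lesssim \log n/(npL)$ and the controls on $g^{(i)}$ and its derivatives from Lemma \ref{lem:bound_fg}. Plugging in $\hat{\theta}_j - \theta_j^* = \delta_j - f^{(j)}(\theta_j^*|\theta_{-j}^*)/g^{(j)}(\theta_j^*|\theta_{-j}^*)$ and dividing by $d_i\asymp np$ yields the representation
\begin{equation*}
\delta_i = \frac{1}{d_i}\sum_{j\neq i}A_{ij}\psi'(\theta_i^*-\theta_j^*)\,\delta_j - \frac{1}{d_i}\sum_{j\neq i}A_{ij}\psi'(\theta_i^*-\theta_j^*)\,\frac{f^{(j)}(\theta_j^*|\theta_{-j}^*)}{g^{(j)}(\theta_j^*|\theta_{-j}^*)} + \mathfrak{o}(1/\sqrt{npL}).
\end{equation*}
The second sum is the easier one: replacing $f^{(j)}/g^{(j)}$ by its leave-$i$-out analogue (computed from observations not involving $i$) makes the summand conditionally mean-zero in the randomness at edge $(i,j)$, and a conditional Bernstein bound leaves a systematic contribution coming from $A_{ij}^2(\bar{y}_{ji}-\psi(\theta_j^*-\theta_i^*))$, whose aggregate is a multiple of $f^{(i)}(\theta_i^*|\theta_{-i}^*)/g^{(i)}(\theta_i^*|\theta_{-i}^*)$ with $\mathfrak{o}(1)$ coefficient; this is exactly the $\epsilon_i$-term of the proposition, while the rest is $\mathfrak{o}(1/\sqrt{npL})$.

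The hard term is $S_i := (1/d_i)\sum_{j\neq i}A_{ij}\psi'(\theta_i^*-\theta_j^*)\delta_j$, where $\delta_j$ depends on $A_{ij}$. I would decorrelate by introducing a leave-one-out MLE $\hat{\theta}^{(i)}$ that erases all observations touching individual $i$, together with the corresponding residual $\delta^{(i)}_j$; by construction $\delta^{(i)}$ is independent of $\{A_{ij}\}_{j\neq i}$. Splitting $S_i$ into a decorrelated sum $(1/d_i)\sum_j A_{ij}\psi'(\theta_i^*-\theta_j^*)\delta^{(i)}_j$ plus a replacement error $(1/d_i)\sum_j A_{ij}\psi'(\theta_i^*-\theta_j^*)(\delta_j-\delta^{(i)}_j)$, the decorrelated sum is controlled by conditional Bernstein with variance proxy $p\pnorm{\delta}{}^2 = \mathfrak{o}(1/L)$, giving $\mathfrak{o}(1/\sqrt{npL})$ as soon as $np\gg\log n$; the deterministic drift $(p/d_i)\sum_j \psi'(\theta_i^*-\theta_j^*)\delta^{(i)}_j$ is dispatched by Cauchy--Schwarz and the $\ell_2$ bound. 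The replacement error is bounded by $\pnorm{\delta-\delta^{(i)}}{\infty}$.

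The principal obstacle is bounding $\pnorm{\delta-\delta^{(i)}}{\infty}$ uniformly in $i$ at the sharp rate $\mathfrak{o}(1/\sqrt{npL})$. A leave-one-out analysis in the style of \cite{chen2019spectral, chen2020partial} only yields the rough rate in (\ref{ineq:loo_rough}), which is insufficient. To improve it, I would introduce a leave-two-out MLE $\hat{\theta}^{(i,k)}$ that additionally erases all observations touching a second individual $k$, and repeat the self-consistent expansion plus conditional Bernstein step for $\hat{\theta}^{(i)}_k - \hat{\theta}^{(i,k)}_k$, now exploiting the independence of $\hat{\theta}^{(i,k)}$ from $\{A_{jk}\}_{j\neq i, k}$. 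One level of additional independence gains a factor of $\sqrt{\log n/(np)}$ in the concentration step, producing $\pnorm{\delta-\delta^{(i)}}{\infty} \lesssim \sqrt{(\log n)^3/(np)^2}\cdot(1/\sqrt{npL})$, which is exactly $\mathfrak{o}(1/\sqrt{npL})$ under the hypothesis $np\gg (\log n)^{3/2}$. Uniformity over $i\in[n]$ follows from a union bound across the $\mathcal{O}(n^2)$ leave-two-out MLEs at the $n^{-10}$ probability level, and assembling all pieces delivers the stated coordinatewise expansion.
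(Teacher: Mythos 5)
Your high-level architecture (self-consistent equation for $\delta$, leave-one-out decorrelation, leave-two-out refinement) is the right one, but the proposal misallocates the source of the $\epsilon_i$-term and, in doing so, makes a quantitative claim that fails under the stated hypothesis $np\gg(\log n)^{3/2}$.

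The decisive gap is in the first step, where you claim the Taylor remainders can be absorbed into $\mathfrak{o}(d_i/\sqrt{npL})$, so that after dividing by $d_i$ the self-consistent equation has remainder $\mathfrak{o}(1/\sqrt{npL})$. That is false. The cubic (Newton-step) remainder $\hat{\theta}_i-\bar{\theta}_i$, with $\bar{\theta}_i$ as in (\ref{def:theta_bar}), satisfies (Lemma~\ref{lem:theta_bar_close} and Lemma~\ref{lem:bound_fg}-(1))
\[
|\hat{\theta}_i-\bar{\theta}_i|\ \lesssim\ \Big(\frac{|f^{(i)}(\theta_i^*|\hat\theta_{-i})|}{np}\Big)^{3/2}\ \lesssim\ \Big(\frac{\log n}{npL}\Big)^{3/4},
\]
and $(\log n/(npL))^{3/4}\big/\,(npL)^{-1/2}=(\log n)^{3/4}/(npL)^{1/4}$, which does \emph{not} go to zero when, say, $L=1$ and $np\asymp(\log n)^{3/2+\eta}$ for small $\eta>0$. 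The same is true of the quadratic remainder $\sum_j A_{ij}\psi''(\cdot)(\hat\theta_j-\theta_j^*)^2$ if you only use the crude $\ell_\infty$ bound $\pnorm{\hat\theta-\theta^*}{\infty}^2\lesssim\log n/(npL)$: this gives $\log n/L$, and $\log n/L\big/\sqrt{np/L}=\log n/\sqrt{npL}\to\infty$ in the same regime. The paper handles the quadratic remainder by first replacing $\hat\theta_j$ with the leave-one-out $\hat\theta^{(i)}_j$ and then applying Bernstein's inequality conditionally on the data not involving $i$ (see the treatment of $R^{(m)}_3$ in Lemma~\ref{lem:loo_R_norm}), producing the sharper order $\sqrt{\log n}/L+(\log n)^2/(npL)$; and it handles the cubic remainder by \emph{not} absorbing it, but instead writing $|\hat\theta_m-\bar\theta_m|\lesssim\epsilon_m|f^{(m)}/g^{(m)}|+\mathfrak o(1/\sqrt{npL})$ with $\epsilon_m\asymp(\log n/(npL))^{1/4}=\mathfrak{o}(1)$ (Lemma~\ref{lem:delta_main}, bound for $\delta_{3,m}$). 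That cubic remainder, together with the ratio $g^{(i)}(\theta_i^*|\hat\theta_{-i})/g^{(i)}(\theta_i^*|\theta_{-i}^*)=1+\mathfrak{o}(1)$, is precisely where the multiplicative $\epsilon_i f^{(i)}/g^{(i)}$ piece in the proposition comes from, and it cannot be avoided at the stated sparsity level.

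By contrast, the diagonal $k=i$ cross term of $(1/d_i)\sum_{j\neq i}A_{ij}\psi'(\theta_i^*-\theta_j^*)f^{(j)}/g^{(j)}$, which you nominate as ``exactly the $\epsilon_i$-term,'' is actually of order $(np)^{-1}\cdot|f^{(i)}/g^{(i)}|\asymp(np)^{-1}\sqrt{\log n/(npL)}=\mathfrak{o}(1/\sqrt{npL})$ under $np\gg(\log n)^{3/2}$, hence fully absorbable; moreover its weights are $\psi'(\theta_i^*-\theta_j^*)/g^{(j)}$, which are not proportional to the flat weight $1/d_i$ defining $f^{(i)}/g^{(i)}$, so the ``aggregate is a multiple of $f^{(i)}/g^{(i)}$'' claim does not literally hold. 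Finally, the strategy of bounding the replacement error via $\pnorm{\delta-\delta^{(i)}}{\infty}$ is not what the paper does and the stated rate $\sqrt{(\log n)^3/(np)^2}\cdot(npL)^{-1/2}$ is asserted without a derivation; the paper instead arranges the decomposition so that this piece appears as $\sum_j A_{ij}\bigl(\psi(\theta_i^*-\hat\theta^{(i)}_j)-\psi(\theta_i^*-\hat\theta_j)\bigr)/g^{(i)}$ (the term $\delta_{1,m}$), which is an $\ell_2$ quantity controllable by Cauchy--Schwarz and $\pnorm{\hat\theta-\hat\theta^{(i)}}{}\lesssim(npL)^{-1/2}$ from Lemma~\ref{lem:pre_estimate}. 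The leave-two-out MLE enters elsewhere --- inside the analysis of the leave-one-out residual $\delta^{(i)}$ (Lemma~\ref{lem:bound_loo_fg}-(3), feeding into Lemmas~\ref{lem:loo_delta_l2_bound} and \ref{lem:loo_delta_max_bound}) --- to decorrelate from the remaining dependence of $\hat\theta^{(i)}$ on $\{A_{jk}\}_{j,k\neq i}$, not to control $\pnorm{\delta-\delta^{(i)}}{\infty}$.
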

Combining the above result with (\ref{def:delta}) immediately completes the proof.
The rest of this section is devoted to proving Proposition \ref{prop:delta_entrywise_bound}. As an essential intermediate step, we need to first give a sharp $\ell_2$ bound for (a leave-one-out version of) $\delta$. We present the $\ell_2$ bound in the next subsection and the main proof of Proposition \ref{prop:delta_entrywise_bound} in Section \ref{subsec:proof_delta_infty} ahead. 

\subsection{Control of $\pnorm{\delta}{}$}\label{subsec:proof_delta_l2}

The goal of this subsection is to prove the following $\ell_2$ bound for $\delta$ defined in (\ref{def:delta}). 
\begin{proposition}\label{prop:delta_bound}
Suppose that $\kappa = \mathcal{O}(1)$ and $np\gg \log n$. Then it holds with probability $1 - \mathcal{O}(n^{-10})$ that
\begin{align*}
\pnorm{\delta}{}\lesssim \frac{1}{\sqrt{pL}}\cdot\bigg[\sqrt{\frac{\log n}{np}}+\Big(\frac{\log n}{npL}\Big)^{1/4}\bigg] = \mathfrak{o}(\frac{1}{\sqrt{pL}}).
\end{align*} 
\end{proposition}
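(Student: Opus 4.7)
The plan is to implement the two steps sketched in Section \ref{sec:proof_idea}: first, derive a self-consistent linear equation of the form $H(\theta^*)\delta = w - \rho$ for the remainder $\delta$; second, invert $H(\theta^*)$ on the orthogonal complement of $\bm{1}_n$ via Lemma \ref{lem:hessian_eigen}, controlling the right-hand side with Proposition \ref{prop:mle_prelim} and a direct concentration estimate. The virtue of expanding $\delta$ rather than $\hat{\theta}-\theta^*$ is that the first-order noise $b$ is absorbed into the definition of $\delta$, so that the right-hand side is a strict second-order object---this is what allows the crude spectral inversion $\pnorm{\delta_\perp}{}\lesssim (np)^{-1}\pnorm{w-\rho}{}$ to deliver the bound $\mathfrak{o}(1/\sqrt{pL})$, sidestepping the entrywise Hessian-inverse approximation of \cite{simons1999asymptotics,han2020asymptotic} that fails in the sparse regime.

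\emph{Derivation of the self-consistent equation.} Because the log-likelihood is gauge-invariant along $\bm{1}_n$, the constrained first-order condition is $f^{(i)}(\hat{\theta}_i|\hat{\theta}_{-i}) = 0$ for every $i\in[n]$. Expanding this identity around $(\theta_i^*,\theta_{-i}^*)$ to second order, and recognizing $b_i = -f^{(i)}(\theta_i^*|\theta_{-i}^*)$, $d_i = g^{(i)}(\theta_i^*|\theta_{-i}^*)$ together with the Hessian (\ref{def:hessian}), yields
\begin{equation*}
H(\theta^*)(\hat{\theta}-\theta^*) = b - \rho, \qquad \rho_i = \tfrac{1}{2}\sum_{j\neq i}A_{ij}\psi''(\xi_{ij})\bigl((\hat{\theta}_i-\theta_i^*)-(\hat{\theta}_j-\theta_j^*)\bigr)^2.
\end{equation*}
Writing $H(\theta^*) = D - M$ with $D=\diag(d_i)$ and $M_{ij}=A_{ij}\psi'(\theta_i^*-\theta_j^*)\mathbb{I}_{i\neq j}$, substituting $\hat{\theta}_j-\theta_j^* = \delta_j + (b_j/d_j)$, and using $Du=b$ with $u_j\equiv b_j/d_j$ collapses the identity to
\begin{equation*}
H(\theta^*)\delta = w - \rho, \qquad w_i = \sum_{j\neq i}A_{ij}\psi'(\theta_i^*-\theta_j^*)\cdot\tfrac{b_j}{d_j}.
\end{equation*}

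\emph{Spectral inversion and control of $\rho$ and $\delta_\parallel$.} Decompose $\delta = \delta_\perp + \delta_\parallel$ with $\delta_\parallel\in\mathrm{span}(\bm{1}_n)$. Since $H(\theta^*)\bm{1}_n = 0$, Lemma \ref{lem:hessian_eigen} gives $\pnorm{\delta_\perp}{}\lesssim (np)^{-1}(\pnorm{w}{}+\pnorm{\rho}{})$ with high probability. The parallel piece satisfies $\delta_\parallel = -n^{-1}(\bm{1}_n^\top u)\bm{1}_n$ from $\bm{1}_n^\top\hat{\theta}=\bm{1}_n^\top\theta^*=0$; reorganizing $\bm{1}_n^\top u = \sum_i b_i/d_i$ as an edge sum $\sum_{j<k}A_{jk}(\bar{y}_{jk}-\psi(\theta_j^*-\theta_k^*))(1/d_j-1/d_k)$ via the antisymmetry of $\bar{y}-\psi$ produces a conditionally centered sum of independent increments to which Bernstein's inequality applies, giving $\pnorm{\delta_\parallel}{}$ of negligible order. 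For $\rho$, the bounds $\pnorm{\psi''}{\infty}\lesssim 1$, $\max_i\sum_{j\neq i}A_{ij}\lesssim np$, and Cauchy--Schwarz yield $\pnorm{\rho}{}^2\lesssim (np)^2\pnorm{\hat{\theta}-\theta^*}{\infty}^2\pnorm{\hat{\theta}-\theta^*}{}^2$; plugging in Proposition \ref{prop:mle_prelim} bounds this by $n\log n/L^2$, so its contribution to $\pnorm{\delta_\perp}{}$ is $\sqrt{\log n}/(\sqrt{n}pL)$, absorbed into the second bracket $(\log n/(npL))^{1/4}/\sqrt{pL}$ under $npL\gtrsim \log n$.

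\emph{Main obstacle---bounding $\pnorm{w}{}$.} This is the one step where Proposition \ref{prop:mle_prelim} alone is insufficient, because the edges $A_{ij}$ in $M$ and the edges $A_{ij}$ hidden inside $b_j$ are coupled. Setting $\eta_{jk}\equiv \bar{y}_{jk}-\psi(\theta_j^*-\theta_k^*) = -\eta_{kj}$ and regrouping by unordered pairs rewrites
\begin{equation*}
w_i = \sum_{j<k}A_{jk}\eta_{jk}\left(\frac{A_{ij}\psi'(\theta_i^*-\theta_j^*)}{d_j} - \frac{A_{ik}\psi'(\theta_i^*-\theta_k^*)}{d_k}\right),
\end{equation*}
which, conditional on $A$, is a centered linear functional of the independent variables $\{\eta_{jk}\}_{j<k}$ (each of conditional variance $\asymp 1/L$). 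Since $d_j\asymp np$ with high probability, the coefficients are of size $\lesssim 1/(np)$, and a direct variance computation gives $\var(w_i\mid A)\lesssim 1/L$ and $\E[\pnorm{w}{}^2\mid A]\lesssim n/L$. A Hanson--Wright or matrix Bernstein style deviation estimate then upgrades this to $\pnorm{w}{}\lesssim \sqrt{n\log n/L}$ with probability $1-\mathcal{O}(n^{-10})$; dividing by $np$ matches the first bracket $\sqrt{\log n/(np)}/\sqrt{pL}$. The edge-wise reformulation above is the key device that decouples $w_i$ into conditionally independent pieces, neutralizing the $A_{ij}$-coupling at the cost of modest bookkeeping; summing the two contributions yields the claimed bound.
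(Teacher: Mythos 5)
Your proof is correct and follows the paper's high-level strategy (self-consistent Laplacian equation for $\delta$, spectral inversion on $\bm{1}_n^\perp$ via Lemma \ref{lem:hessian_eigen}, edge-wise regrouping plus conditional Hoeffding for the coupled noise term, Proposition \ref{prop:mle_prelim} for the quadratic remainder, and a separate estimate of $\ave(\delta)$), but the algebraic decomposition is genuinely different and cleaner than the paper's. The paper proceeds coordinate-wise through the Newton-step proxy $\bar{\theta}_m$ and the local functions $f^{(m)},g^{(m)}$, which produces a perturbed-Hessian equation $(H+D)\delta = R_1+R_2+R_3+R_4$ with four error terms, one of which ($R_4$, the one-dimensional Newton error $g^{(m)}\cdot(\hat{\theta}_m-\bar{\theta}_m)$) is controlled via the $3/2$-power bound of Lemma \ref{lem:theta_bar_close} and is the source of the $(\log n/(npL))^{1/4}$ bracket in the stated bound. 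You instead Taylor-expand the full vector score equation $f^{(i)}(\hat{\theta}_i|\hat{\theta}_{-i})=0$ to second order (noting the Lagrange multiplier vanishes because $\sum_i f^{(i)}\equiv 0$), obtaining $H(\hat\theta-\theta^*)=b-\rho$ at once; substituting $\hat\theta-\theta^*=D^{-1}b+\delta$ then collapses everything to $H\delta=Mu-\rho$ with a clean quadratic remainder $\rho$. This removes the Hessian perturbation $D$ entirely and replaces the paper's $R_2,R_4$ (and the extra $R_3$) with the single term $\rho$, bounded by $\pnorm{\rho}{}^2\lesssim\pnorm{A}{\op}^2\pnorm{\hat\theta-\theta^*}{\infty}^2\pnorm{\hat\theta-\theta^*}{}^2\lesssim n\log n/L^2$, which is in fact smaller than the paper's $\pnorm{R_4}{}^2$; your route therefore yields the stated bound with room to spare (the $(\log n/(npL))^{1/4}$ bracket could be sharpened to $\sqrt{\log n/(npL)}$). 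The key edge-wise regrouping of $w=Mu$ into a conditionally centered sum over unordered pairs, followed by Hoeffding and a union bound, is exactly the paper's treatment of $R_1$, and the handling of $\ave(\delta)$ via antisymmetry of $\bar y-\psi$ matches Lemma \ref{lem:delta_avg}. Two small points of rigor worth flagging: the bound on $\pnorm{\rho}{}$ needs $\pnorm{A}{\op}\lesssim np$ (not merely the max row sum), as in the paper's treatment of $R_2$; and Hanson--Wright is unnecessary for $w$, since a coordinatewise Hoeffding bound plus union bound already gives $\max_i|w_i|\lesssim\sqrt{\log n/L}$ and hence $\pnorm{w}{}\lesssim\sqrt{n\log n/L}$.
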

\begin{proof}[Proof of Proposition \ref{prop:delta_bound}]
Fix $m\in[n]$. By definition, $\delta_m$ can be decomposed as follows:
\begin{align}\label{eq:delta_decomp}
\notag\delta_m &= \frac{f^{(m)}(\theta_m^*|\theta^*_{-m})}{g^{(m)}(\theta_m^*|\theta^*_{-m})} - \frac{f^{(m)}(\theta_m^*|\hat{\theta}_{-m})}{g^{(m)}(\theta_m^*|\hat{\theta}_{-m})} + \hat{\theta}_m - \bar{\theta}_m\\
\notag&= \frac{f^{(m)}(\theta_m^*|\theta^*_{-m}) - f^{(m)}(\theta_m^*|\hat{\theta}_{-m})}{g^{(m)}(\theta_m^*|\hat{\theta}_{-m})} \\
&\quad\quad + f^{(m)}(\theta_m^*|\theta^*_{-m})\Big(\frac{1}{g^{(m)}(\theta_m^*|\theta^*_{-m})} - \frac{1}{g^{(m)}(\theta_m^*|\hat{\theta}_{-m})}\Big) + \hat{\theta}_m - \bar{\theta}_m.
\end{align}
For the first term in the above display, Taylor expansion yields that
\begin{align*}
&f^{(m)}(\theta_m^*|\theta^*_{-m})-f^{(m)}(\theta_m^*|\hat{\theta}_{-m}) = \sum_{i:i\neq m}A_{mi}\big(\psi(\theta_m^* - \theta_i^*) - \psi(\theta_m^* - \hat{\theta}_i)\big)\\
&= \sum_{i:i\neq m}A_{mi}\psi'(\theta_m^* - \theta_i^*)(\hat{\theta}_i - \theta_i^*) - \frac{1}{2}\sum_{i:i\neq m}A_{mi}\psi''(\theta_m^* - \xi_{m,i})(\hat{\theta}_i - \theta_i^*)^2,
\end{align*}
where for each $i\neq m$, $\xi_{m,i}$ is some number between $\hat{\theta}_i$ and $\theta_i^*$. Now using again the decomposition of $\hat{\theta}_m$ in (\ref{def:delta}), we have 
\begin{align*}
&f^{(m)}(\theta_m^*|\theta^*_{-m})-f^{(m)}(\theta_m^*|\hat{\theta}_{-m})\\
&= -\sum_{i:i\neq m}A_{mi}\psi'(\theta_m^* - \theta_i^*)\frac{f^{(i)}(\theta_i^*|\theta^*_{-i})}{g^{(i)}(\theta_i^*|\theta^*_{-i})} + \sum_{i:i\neq m}A_{mi}\psi'(\theta_m^* - \theta_i^*)\delta_i \\
&\quad\quad\quad-\frac{1}{2}\sum_{i:i\neq m}A_{mi}\psi''(\theta_m^* - \xi_{m,i})(\hat{\theta}_i - \theta_i^*)^2.
\end{align*}
Go back to the definition of $\delta_m$, and the above calculation yields that
\begin{align}\label{eq:delta_consistent}
\notag g^{(m)}(\theta_m^*|\hat{\theta}_{-m})\delta_m &= \big(f^{(m)}(\theta_m^*|\theta^*_{-m})-f^{(m)}(\theta_m^*|\hat{\theta}_{-m})\big) \\
\notag&\quad\quad +f^{(m)}(\theta_m^*|\theta^*_{-m})\Big(\frac{g^{(m)}(\theta_m^*|\hat{\theta}_{-m})}{g^{(m)}(\theta_m^*|\theta^*_{-m})}-1\Big) + g^{(m)}(\theta_m^*|\hat{\theta}_{-m})\big(\hat{\theta}_m - \bar{\theta}_m\big)\\
\notag&= -\sum_{i:i\neq m}A_{mi}\psi'(\theta_m^* - \theta_i^*)\frac{f^{(i)}(\theta_i^*|\theta^*_{-i})}{g^{(i)}(\theta_i^*|\theta^*_{-i})} + \sum_{i:i\neq m}A_{mi}\psi'(\theta_m^* - \theta_i^*)\delta_i\\
\notag&\quad\quad -\frac{1}{2}\sum_{i:i\neq m}A_{mi}\psi''(\theta_m^* - \xi_{m,i})(\hat{\theta}_i - \theta_i^*)^2 \\
&\quad\quad  +f^{(m)}(\theta_m^*|\theta^*_{-m})\Big(\frac{g^{(m)}(\theta_m^*|\hat{\theta}_{-m})}{g^{(m)}(\theta_m^*|\theta^*_{-m})}-1\Big) + g^{(m)}(\theta_m^*|\hat{\theta}_{-m})\big(\hat{\theta}_m - \bar{\theta}_m\big).
\end{align}
Rearranging the terms yields that
\begin{align*}
&g^{(m)}(\theta_m^*|\theta^*_{-m})\delta_m - \sum_{i:i\neq m}A_{mi}\psi'(\theta_m^* - \theta_i^*)\delta_i + \big(g^{(m)}(\theta_m^*|\hat{\theta}_{-m}) - g^{(m)}(\theta_m^*|\theta^*_{-m})\big)\delta_m\\
&= -\sum_{i:i\neq m}A_{mi}\psi'(\theta_m^* - \theta_i^*)\frac{f^{(i)}(\theta_i^*|\theta^*_{-i})}{g^{(i)}(\theta_i^*|\theta^*_{-i})}-\frac{1}{2}\sum_{i:i\neq m}A_{mi}\psi''(\theta_m^* - \xi_{m,i})(\hat{\theta}_i - \theta_i^*)^2\\
&\quad\quad + f^{(m)}(\theta_m^*|\theta^*_{-m})\Big(\frac{g^{(m)}(\theta_m^*|\hat{\theta}_{-m})}{g^{(m)}(\theta_m^*|\theta^*_{-m})}-1\Big) + g^{(m)}(\theta_m^*|\hat{\theta}_{-m})\big(\hat{\theta}_m - \bar{\theta}_m\big)\\
&\equiv R_{1,m} + R_{2,m} + R_{3,m} + R_{4,m}.
\end{align*}
Here $R_1$-$R_4$ are vectors in $\R^n$. Let $H$ be the Laplacian matrix defined by $H_{ii} \equiv \sum_{j:j\neq i}A_{ij}\psi'(\theta_i^* - \theta_j^*)$ and $H_{ij}\equiv -A_{ij}\psi'(\theta_i^* - \theta_j^*)$ for $i\neq j$, and $D = \textrm{diag}(D_1,\ldots,D_n)$ be a diagonal matrix with $D_m\equiv g^{(m)}(\theta_m^*|\hat{\theta}_{-m}) - g^{(m)}(\theta_m^*|\theta^*_{-m})$. Then in matrix form, 
\begin{align}\label{eq:decomp_delta}
(H+D)\delta = R_1 + R_2 + R_3 + R_4.
\end{align}

Note that: (i) $\lambda_{\min,\perp}(H)\gtrsim np$ with the prescribed probability by Lemma \ref{lem:hessian_eigen}; (ii) By Lemma \ref{lem:bound_fg}, the operator norm of $D$ satisfies 
\begin{align*}
\pnorm{D}{\op} = \max_{m\in[n]}|g^{(m)}(\theta_m^*|\hat{\theta}_{-m}) - g^{(m)}(\theta_m^*|\theta^*_{-m})| \lesssim \sqrt{\frac{np}{L}} + \sqrt{\frac{(\log n)^3}{npL}} = \mathfrak{o}(np).
\end{align*}
Hence for large enough $n$, with
\begin{align}\label{def:avg}
\ave(u) \equiv \frac{1}{n}\sum_{i=1}^n u_i, \quad \forall u\in\R^n,
\end{align}
the left side of (\ref{eq:decomp_delta}) can be lower bounded by
\begin{align}\label{ineq:delta_l2_lower}
\notag&\pnorm{H\delta}{} - \pnorm{D}{\op}\pnorm{\delta}{}= \pnorm{H(\delta - \ave(\delta)\bm{1}_n)}{} - \mathfrak{o}(np)\pnorm{\delta}{}\\
& \gtrsim np\pnorm{\delta - \ave(\delta)\bm{1}_n}{} -\mathfrak{o}(np)\pnorm{\delta}{}\geq \frac{np}{2}\pnorm{\delta}{}-np\sqrt{n}|\ave(\delta)|.
\end{align}  
Hence by (\ref{eq:decomp_delta}), Lemma \ref{lem:delta_avg} for the bound for $|\ave(\delta)|$, and Lemma \ref{lem:R_norm} for the bounds for $\pnorm{R_1}{}$-$\pnorm{R_4}{}$, we have
\begin{align*}
\pnorm{\delta}{}&\lesssim \sqrt{n}|\ave(\delta)| + (np)^{-1}\cdot\big(\pnorm{R_1}{} + \pnorm{R_2}{} + \pnorm{R_3}{} + \pnorm{R_4}{}\big)\\
&\lesssim \frac{1}{\sqrt{pL}}\cdot\sqrt{\frac{\log n}{n}} + (np)^{-1}\cdot\bigg[\sqrt{\frac{n\log n}{L}} + \Big(\frac{n}{L}\sqrt{\frac{np\log n}{L}}\Big)^{1/2}\bigg]\\
&\asymp  \frac{1}{\sqrt{pL}}\cdot\bigg[\sqrt{\frac{\log n}{np}}+\Big(\frac{\log n}{npL}\Big)^{1/4}\bigg],
\end{align*}
as desired.
\end{proof}

\begin{lemma}\label{lem:delta_avg}
Suppose that $\kappa = \mathcal{O}(1)$ and $np\gg \log n$. Then the following holds with probability at least $1-\mathcal{O}(n^{-10})$.
\begin{align*}
|\ave(\delta)| \lesssim \frac{1}{\sqrt{npL}}\cdot\sqrt{\frac{\log n}{n}} = \mathfrak{o}(\frac{1}{\sqrt{npL}}).
\end{align*}
\end{lemma}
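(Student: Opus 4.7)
The plan is to exploit the centering constraint on $\hat{\theta}$ and $\theta^*$ to rewrite $\ave(\delta)$ as a conditionally-independent mean-zero sum over the edges of the comparison graph, and then to apply Bernstein's inequality.

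First, since $\bm{1}_n^\top\hat{\theta}=\bm{1}_n^\top\theta^*=0$, averaging the defining identity \eqref{def:delta} over $i$ immediately gives
\begin{align*}
\ave(\delta) = -\frac{1}{n}\sum_{i=1}^n\frac{b_i}{d_i},
\end{align*}
where $b_i = -f^{(i)}(\theta_i^*|\theta^*_{-i})=\sum_{j\neq i}A_{ij}(\bar{y}_{ij}-\psi(\theta_i^*-\theta_j^*))$ and $d_i = g^{(i)}(\theta_i^*|\theta^*_{-i})$. Now I would regroup the double sum $\sum_i(1/d_i)\sum_{j\neq i}A_{ij}(\bar{y}_{ij}-\psi(\theta_i^*-\theta_j^*))$ by unordered pairs. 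Setting $z_{ij}\equiv\bar{y}_{ij}-\psi(\theta_i^*-\theta_j^*)$ for $i<j$, the identities $\bar{y}_{ji}=1-\bar{y}_{ij}$ and $\psi(\theta_j^*-\theta_i^*)=1-\psi(\theta_i^*-\theta_j^*)$ combine to give $\bar{y}_{ji}-\psi(\theta_j^*-\theta_i^*)=-z_{ij}$, so that
\begin{align*}
\sum_{i=1}^n\frac{b_i}{d_i} = \sum_{i<j}A_{ij}\,z_{ij}\Bigl(\frac{1}{d_i}-\frac{1}{d_j}\Bigr).
\end{align*}

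The second step is a conditional Bernstein bound. Conditionally on the comparison graph $A$, the $d_i$'s become deterministic while $\{z_{ij}\}_{i<j}$ are independent, mean-zero, bounded by $1$, with variance at most $1/(4L)$. On the event (of probability $1-\mathcal{O}(n^{-10})$) where $\min_i d_i\gtrsim np$ and $\max_i\sum_{j\neq i}A_{ij}\lesssim np$ --- which follows from Lemma \ref{lem:bound_fg} and standard degree concentration for Erd\H{o}s-R\'enyi graphs --- the conditional variance proxy is
\begin{align*}
V \le \frac{1}{2L}\sum_{i<j}A_{ij}\Bigl(\frac{1}{d_i^2}+\frac{1}{d_j^2}\Bigr) = \frac{1}{2L}\sum_i\frac{\sum_{j\neq i}A_{ij}}{d_i^2}\lesssim \frac{1}{pL},
\end{align*}
and each summand is bounded by $M\lesssim 1/(np)$. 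Bernstein's inequality then yields, with probability $1-\mathcal{O}(n^{-10})$,
\begin{align*}
\Bigl|\sum_i\frac{b_i}{d_i}\Bigr|\lesssim\sqrt{V\log n}+M\log n\lesssim\sqrt{\frac{\log n}{pL}}+\frac{\log n}{np}.
\end{align*}
Dividing by $n$, the main piece $\frac{1}{n}\sqrt{\log n/(pL)}=\frac{1}{\sqrt{npL}}\sqrt{\log n/n}$ matches the claimed bound exactly, and the residual $\log n/(n^2 p)$ is of smaller order under the sparsity assumption $np\gg\log n$.

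There is no real obstacle beyond the pairing argument: it is crucial that the ordered pairs $(i,j)$ and $(j,i)$ share the same randomness $z_{ij}$ (via $\bar{y}_{ji}=1-\bar{y}_{ij}$), so that after regrouping the sum is over genuinely independent summands conditionally on $A$, rather than merely pairwise-correlated ones; otherwise Bernstein would not directly apply. Everything else --- the uniform control of $d_i$, the vertex-degree concentration, and the final probabilistic bookkeeping --- is already furnished by the preliminary lemmas.
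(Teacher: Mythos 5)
Your argument follows the paper's route almost step for step: cancel the centered $\hat\theta-\theta^*$ terms to get $\ave(\delta)=-\frac{1}{n}\sum_i b_i/d_i$, regroup by unordered pairs using the anti-symmetry $\bar y_{ji}-\psi(\theta_j^*-\theta_i^*)=-z_{ij}$ so that the summands are genuinely independent conditionally on $A$, control $\sum_{i<j}A_{ij}(1/d_i-1/d_j)^2\lesssim 1/p$ via degree concentration, and apply a scalar concentration bound conditionally on $A$. The only substantive deviation is the choice of concentration inequality, and there is a small gap there. The paper applies Hoeffding at the level of the individual outcomes $y_{ij\ell}$ (equivalently, treats $z_{ij}=\bar y_{ij}-\psi(\theta_i^*-\theta_j^*)$ as sub-Gaussian with variance proxy $1/(4L)$, which it is, being an average of $L$ independent terms in $[0,1]$), yielding $|\ave(\delta)|\lesssim\frac{1}{\sqrt{npL}}\sqrt{\log n/n}$ with no remainder. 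You instead invoke Bernstein with second moment $1/(4L)$ and almost-sure bound $1$, which produces the extra piece $M\log n/n\asymp\log n/(n^2p)$. Your assertion that this ``is of smaller order under the sparsity assumption $np\gg\log n$'' is not correct without further restriction: the ratio between the residual and the main term is $\sqrt{L\log n/(n^2p)}$, so the residual is only lower order when $L\lesssim n^2p/\log n$, whereas the paper imposes no upper bound on $L$. The fix is immediate and brings you to the paper's argument: use the sub-Gaussianity of $z_{ij}$ (Hoeffding), not just its bounded second moment, and the Bernstein remainder disappears.
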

\begin{proof}[Proof of Lemma \ref{lem:delta_avg}]
Since both $\hat{\theta}$ and $\theta^*$ are centered by definition, the definition of $\delta$ in (\ref{def:delta}) yields that
\begin{align*}
\ave(\delta) &= \frac{1}{n}\sum_{m=1}^n\frac{f^{(m)}(\theta_m^*|\theta^*_{-m})}{g^{(m)}(\theta^*_m|\theta^*_{-m})}.
\end{align*}
Let $w_{mi} \equiv A_{mi}/\big(\sum_{j:j\neq m}A_{mj}\psi'(\theta_m^*-\theta_j^*)\big)$. Then
\begin{align*}
\ave(\delta) &= \frac{1}{n}\sum_{m=1}^n \frac{\sum_{i:i\neq m}A_{mi}\big(\bar{y}_{mi} - \psi(\theta_m^*-\theta_i^*)\big)}{\sum_{i:i\neq m}A_{mi}\psi'(\theta_m^* -\theta_i^*)}\\
&= \frac{1}{n}\sum_{i\neq m}w_{mi}\big(\bar{y}_{mi} - \psi(\theta_m^*-\theta_i^*)\big)\\
&= \frac{1}{n}\sum_{i<m} w_{mi}\big(\bar{y}_{mi} - \psi(\theta_m^*-\theta_i^*)\big) +  w_{im}\big(\bar{y}_{im} - \psi(\theta_i^*-\theta_m^*)\big)\\
&\stackrel{(*)}{=} \frac{1}{n}\sum_{i<m} (w_{mi}-w_{im})\big(\bar{y}_{mi} - \psi(\theta_m^*-\theta_i^*)\big)\\
&= \frac{1}{nL}\sum_{i<m}\sum_{\ell=1}^L (w_{mi}-w_{im})\big(y_{mi\ell} - \psi(\theta_m^*-\theta_i^*)\big),
\end{align*}
where $(*)$ follows as $\bar{y}_{im} - \psi(\theta_i^*-\theta_m^*) = (1-\bar{y}_{mi}) - (1-\psi(\theta_m^*-\theta_i^*)) = -(\bar{y}_{mi} - \psi(\theta_m^*-\theta_i^*))$. Now conditioning on the graph $A$, $\{w_{mi}\}$ are deterministic and the summands in the above display are independent across $i<m, \ell$. Hence Hoeffding's inequality yields that for any $t > 0$,
\begin{align*}
\Prob\Big(\Big|\frac{1}{n}\sum_{m=1}^n\frac{f^{(m)}(\theta_m^*|\theta^*_{-m})}{g^{(m)}(\theta^*_m|\theta^*_{-m})}\Big| \geq t\Big| A\Big) \leq 2\exp\Big(-\frac{Cn^2Lt^2}{\sum_{i<m}(w_{mi} - w_{im})^2}\Big).
\end{align*}
By Lemma \ref{lem:A_concen}, it holds with the prescribed probability that
\begin{align*}
\sum_{i<m}w_{mi}^2 = \sum_{i<m}\Big(\frac{A_{mi}}{\sum_{j:j\neq m}A_{mj}\psi'(\theta_m^*-\theta_j^*)}\Big)^2 \lesssim (np)^{-2}\sum_{i<m}A_{mi} \lesssim p^{-1}.
\end{align*}
Since a similar bound holds for $\sum_{i<m}w_{im}^2$, we have $\sum_{i<m}(w_{mi} - w_{im})^2 \lesssim p^{-1}$ with the prescribed probability. By choosing $t\asymp \sqrt{\log n/(n^2pL)}$, the above two displays yield that with the prescribed probability $|\ave(\delta)|\lesssim n^{-1}\sqrt{\log n/(Lp)} = \sqrt{\log n /n}\cdot (1/\sqrt{npL})$. The proof is complete.
\end{proof}

\begin{lemma}\label{lem:R_norm}
Recall $R_1$-$R_4$ defined in the proof of Proposition \ref{prop:delta_bound}. Suppose $\kappa = \mathcal{O}(1)$ and $np\gg \log n$. Then the following hold with probability $1 - \mathcal{O}(n^{-10})$.
\begin{align*}
\pnorm{R_1}{}^2\vee \pnorm{R_2}{}^2\vee \pnorm{R_3}{}^2\lesssim \frac{n\log n}{L},\quad \pnorm{R_4}{}^2\lesssim \frac{n}{L}\sqrt{\frac{np\log n}{L}}.
\end{align*}
\end{lemma}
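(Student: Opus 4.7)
The plan is to treat each of $R_1, R_2, R_3, R_4$ separately, in each case leveraging different combinations of (i) conditional concentration of the comparison results given the graph $A$, (ii) the preliminary estimates for $\hat\theta - \theta^*$ in Proposition~\ref{prop:mle_prelim}, (iii) the eigenvalue and quantity bounds from Lemmas~\ref{lem:hessian_eigen} and~\ref{lem:bound_fg}, and (iv) the degree concentration from Lemma~\ref{lem:A_concen}.

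For $R_1$, the strategy is to condition on $A$ and express $R_{1,m}=-\sum_{i:i\neq m}\sum_{j:j\neq i}\frac{A_{mi}A_{ij}\psi'_{mi}}{d_i}(\bar y_{ij}-\psi_{ij})$ as a linear combination of the independent, bounded, mean-zero variables $\{\bar y_{ij}-\psi_{ij}\}_{i<j}$, each with conditional variance $\lesssim 1/L$. After using $\bar y_{ji}-\psi_{ji}=-(\bar y_{ij}-\psi_{ij})$ to pair up terms, a Bernstein-type bound gives $|R_{1,m}|^2\lesssim (\log n/L)\cdot V_m$ where $V_m$ is the sum of squared effective coefficients. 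Using $d_i\asymp np$ and $\sum_{j:j\neq i}A_{ij}\lesssim np$ from Lemma~\ref{lem:A_concen}, I expect $V_m\lesssim \sum_{i:i\neq m} A_{mi}(np)^{-2}(np)\lesssim 1$, yielding $|R_{1,m}|\lesssim \sqrt{\log n/L}$ with probability at least $1-n^{-20}$ per $m$; a union bound then gives $\|R_1\|^2\leq n\|R_1\|_\infty^2\lesssim n\log n/L$.

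For $R_2$, bound $|\psi''|\lesssim 1$ to get $|R_{2,m}|\lesssim \sum_{i:i\neq m}A_{mi}(\hat\theta_i-\theta_i^*)^2$. The plan is to combine the $\ell_\infty$ and $\ell_2$ estimates in Proposition~\ref{prop:mle_prelim} via the elementary inequality $\|R_2\|^2\leq \|R_2\|_\infty\cdot\sum_m|R_{2,m}|$. The $\ell_\infty$ bound gives $\|R_2\|_\infty\lesssim np\cdot\|\hat\theta-\theta^*\|_\infty^2\lesssim \log n/L$; swapping the order of summation yields $\sum_m|R_{2,m}|\lesssim \sum_i(\hat\theta_i-\theta_i^*)^2\sum_m A_{mi}\lesssim np\|\hat\theta-\theta^*\|^2\lesssim n/L$, hence $\|R_2\|^2\lesssim n\log n/L^2\leq n\log n/L$. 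For $R_3$, I would factor $|R_{3,m}|\leq |f^{(m)}(\theta_m^*|\theta_{-m}^*)|\cdot |g^{(m)}(\theta_m^*|\hat\theta_{-m})-g^{(m)}(\theta_m^*|\theta_{-m}^*)|/g^{(m)}(\theta_m^*|\theta_{-m}^*)$: the first factor is $\lesssim \sqrt{np\log n/L}$ by Bernstein conditional on $A$, the denominator is $\asymp np$ by Lemma~\ref{lem:hessian_eigen}, and the middle factor is $\lesssim\sqrt{np/L}$ by Lemma~\ref{lem:bound_fg}, giving $|R_{3,m}|\lesssim \sqrt{\log n}/L$ and thus $\|R_3\|^2\lesssim n\log n/L^2\leq n\log n/L$.

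For $R_4$ the key observation is that $\hat\theta_m$ satisfies the marginal score equation $f^{(m)}(\hat\theta_m|\hat\theta_{-m})=0$, so a second-order Taylor expansion of $f^{(m)}(\cdot|\hat\theta_{-m})$ around $\theta_m^*$ yields $\hat\theta_m-\bar\theta_m=-\frac{\partial^2_{\theta_m}\ell_n^{(m)}(\xi|\hat\theta_{-m})}{2 g^{(m)}(\theta_m^*|\hat\theta_{-m})}(\hat\theta_m-\theta_m^*)^2$ for some intermediate $\xi$. Since the numerator is $\lesssim np$ and the denominator is $\asymp np$, this gives $|\hat\theta_m-\bar\theta_m|\lesssim(\hat\theta_m-\theta_m^*)^2$, and hence $|R_{4,m}|\lesssim np(\hat\theta_m-\theta_m^*)^2$. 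Then $\|R_4\|^2\lesssim (np)^2\sum_m(\hat\theta_m-\theta_m^*)^4\leq (np)^2\|\hat\theta-\theta^*\|_\infty^2\|\hat\theta-\theta^*\|^2\lesssim (np)^2\cdot\frac{\log n}{npL}\cdot\frac{1}{pL}=\frac{n\log n}{L^2}$, which is dominated by $\frac{n}{L}\sqrt{np\log n/L}$ under $np\gg\log n$. The main technical obstacle will be the careful bookkeeping in the $R_1$ step: properly identifying the coefficient on each independent variable $\bar y_{ij}-\psi_{ij}$ after combining contributions from $(i,j)$ and $(j,i)$, and verifying that the resulting conditional variance proxy is indeed $\mathcal{O}(1)$ with high probability on the graph.
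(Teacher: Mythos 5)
Your treatment of $R_1$, $R_2$, and $R_4$ is sound. For $R_1$ and $R_2$ you follow essentially the paper's route; in $R_2$ you use $\|R_2\|^2\le\|R_2\|_\infty\|R_2\|_1$ where the paper uses $\|A\|_{\op}^2\|\hat\theta-\theta^*\|^2\|\hat\theta-\theta^*\|_\infty^2$, and both give $n\log n/L^2$. For $R_4$ you take a genuinely different and cleaner path, expanding the marginal score $f^{(m)}(\hat\theta_m|\hat\theta_{-m})=0$ to second order around $\theta^*_m$ to get $|\hat\theta_m-\bar\theta_m|\lesssim(\hat\theta_m-\theta_m^*)^2$ (the intermediate derivative should be the \emph{third} $\theta_m$-derivative of $\ell_n^{(m)}$, not the second, but both are $\mathcal{O}(np)$, so the conclusion stands); combined with $\|\cdot\|_4^4\le\|\cdot\|_\infty^2\|\cdot\|_2^2$ and Proposition~\ref{prop:mle_prelim}, this delivers $\|R_4\|^2\lesssim n\log n/L^2$, which is in fact sharper than the paper's bound via Lemma~\ref{lem:theta_bar_close}.

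The gap is in $R_3$. You bound $|g^{(m)}(\theta_m^*|\hat\theta_{-m})-g^{(m)}(\theta_m^*|\theta_{-m}^*)|\lesssim\sqrt{np/L}$, but Lemma~\ref{lem:bound_fg}(4) actually gives $\sqrt{np/L}+\sqrt{(\log n)^3/(npL)}$, and under the stated hypothesis $np\gg\log n$ the second term can dominate (e.g.\ when $np\asymp(\log n)^{1.2}$ and $L=\mathcal{O}(1)$). Moreover, even after restoring that term, your coordinatewise estimate $\max_m|f^{(m)}(\theta_m^*|\theta_{-m}^*)|\lesssim\sqrt{np\log n/L}$ yields
\begin{align*}
\|R_3\|^2\lesssim \frac{n\log n}{L^2}+\frac{n(\log n)^4}{(np)^2L^2},
\end{align*}
and the second term is $\lesssim n\log n/L$ only if $(\log n)^3\lesssim(np)^2L$, which fails in the regime $\log n\ll np\lesssim(\log n)^{3/2}$ when $L$ is bounded. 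The paper avoids this by replacing $n\cdot\max_m|f^{(m)}|^2$ with the $\ell_2$ estimate $\sum_m|f^{(m)}(\theta_m^*|\theta_{-m}^*)|^2\lesssim n^2p/L$ (Lemma~\ref{lem:mle_concentration}), which is smaller by a factor of $\log n$; with that substitution one gets $\|R_3\|^2\lesssim n/L^2+(n\log n/L)(\log n/(np))^2\lesssim n\log n/L$ for all $np\gg\log n$. You should keep both terms of Lemma~\ref{lem:bound_fg}(4) and sum $|f^{(m)}|^2$ directly rather than bounding each summand by the uniform maximum.
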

\begin{proof}[Proof of Lemma \ref{lem:R_norm}]
(\textbf{Bound of $R_1$}) 
For each $m\in[n]$, let
\begin{align*}
w_{ij;m}\equiv \frac{\psi'(\theta_m^*-\theta_i^*)A_{mi}A_{ij}}{\sum_{k:k\neq i}A_{ik}\psi'(\theta_i^*-\theta_k^*)}.
\end{align*}
Then we have
\begin{align*}
R_{1,m}&= \sum_{i:i\neq m}A_{mi}\psi'(\theta_m^*-\theta_i^*)\frac{\sum_{j:j\neq i}A_{ij}\big(\bar{y}_{ij} - \psi(\theta_i^*-\theta_j^*)\big)}{\sum_{k:k\neq i}A_{ik}\psi'(\theta_i^*-\theta_k^*)}\\
&= \sum_{i,j:j\neq i, i\neq m} \Big(\frac{\psi'(\theta_m^*-\theta_i^*)A_{mi}A_{ij}}{\sum_{k\neq i}A_{ik}\psi'(\theta_i^*-\theta_k^*)}\Big)\big(\bar{y}_{ij}-\psi(\theta_i^*-\theta_j^*)\big)\\
&= \sum_{i,j:j\neq i, i,j\neq m} w_{ij;m}\big(\bar{y}_{ij}-\psi(\theta_i^*-\theta_j^*)\big) + \sum_{i:i\neq m}w_{im;m}\big(\bar{y}_{im}-\psi(\theta_i^*-\theta_m^*)\big)\\
&= \sum_{i,j:i<j, i,j\neq m} w_{ij;m}\big(\bar{y}_{ij}-\psi(\theta_i^*-\theta_j^*)\big) + w_{ji;m}\big(\bar{y}_{ji}-\psi(\theta_j^*-\theta_i^*)\big) \\
&\quad\quad\quad +\sum_{i:i\neq m}w_{im;m}\big(\bar{y}_{im}-\psi(\theta_i^*-\theta_m^*)\big)\\
&\stackrel{(*)}{=} \frac{1}{L}\sum_{i,j:i<j, i,j\neq m}\sum_{\ell=1}^L (w_{ij;m} - w_{ji;m})\big(y_{ij\ell}-\psi(\theta_i^*-\theta_j^*)\big) \\
&\quad\quad\quad+\frac{1}{L}\sum_{i:i\neq m}\sum_{\ell=1}^L w_{im;m}\big(y_{im\ell}-\psi(\theta_i^*-\theta_m^*)\big).
\end{align*}
Here $(*)$ follows as $\bar{y}_{ji}-\psi(\theta_j^*-\theta_i^*) = (1-\bar{y}_{ij}) - (1-\psi(\theta_i^*-\theta_j^*)) = -(\bar{y}_{ij} - \psi(\theta_i^*-\theta_j^*))$. Now conditioning on the graph $A$, $\{w_{ij;m}\}$ are deterministic and the above two sums are independent across their summands. Hence Hoeffding's inequality yields that 
\begin{align*}
&\Big|\frac{1}{L}\sum_{i,j:i<j, i,j\neq m}\sum_{\ell=1}^L (w_{ij;m} - w_{ji;m})\big(y_{ij\ell}-\psi(\theta_i^*-\theta_j^*)\big)\Big|\\
&\quad\quad\quad\lesssim \sqrt{\frac{\log n}{L}\cdot \sum_{i,j:i<j, i,j\neq m}(w_{ij;m} - w_{ji;m})^2},\\
&\Big|\frac{1}{L}\sum_{i:i\neq m}\sum_{\ell=1}^L w_{im;m}\big(y_{im\ell}-\psi(\theta_i^*-\theta_m^*)\big)\Big| \lesssim \sqrt{\frac{\log n}{L}\cdot \sum_{i:i\neq m}w_{im;m}^2}
\end{align*}
with the prescribed probability. Now by Lemma \ref{lem:A_concen}, we have
\begin{align*}
\sum_{i,j:i<j, i,j\neq m}(w_{ij;m} - w_{ji;m})^2 &\lesssim \sum_{i,j:i<j, i,j\neq m}\frac{A_{ij}A_{mi}}{\big(\sum_{k:k\neq i}A_{ik}\psi'(\theta_i^*-\theta_k^*)\big)^2}\\
&\lesssim (np)^{-2}\sum_{i,j:i<j, i,j\neq m}A_{ij}A_{mi} \lesssim 1,\\
\sum_{i:i\neq m}w_{im;m}^2 &\lesssim \sum_{i:i\neq m}\frac{A_{im}}{\big(\sum_{k:k\neq i}A_{ik}\psi'(\theta_i^*-\theta_k^*)\big)^2}\lesssim \frac{1}{np}.
\end{align*}
Combining the above two displays yields that $\max_{m\in[n]}|R_{1,m}|\lesssim \sqrt{\log n/L}$, and hence $\pnorm{R_1}{} \lesssim \sqrt{n\log n/L}$ with the prescribed probability.


\medskip
\noindent (\textbf{Bound of $R_2$}) Using $\sup_{u\in\R}|\psi''(u)|\lesssim 1$, we have $|R_{2,m}| \lesssim (\sum_{i:i\neq m}A_{im}|\hat{\theta}_i - \theta_i^*|)\cdot \pnorm{\hat{\theta} - \theta^*}{\infty}$. Hence by Lemma  \ref{lem:hessian_eigen} and Proposition \ref{prop:mle_prelim}, we have
\begin{align*}
\pnorm{R_2}{}^2 &\lesssim \sum_{m=1}^n \Big(\sum_{i:i\neq m}A_{mi}|\hat{\theta}_i - \theta_i^*|\Big)^2\cdot \pnorm{\hat{\theta} - \theta^*}{\infty}^2\\
&\leq \pnorm{A}{\op}^2 \cdot \pnorm{\hat{\theta} - \theta^*}{}^2\cdot \pnorm{\hat{\theta} - \theta^*}{\infty}^2\\
&\lesssim (np)^2 \cdot \frac{1}{pL} \cdot \frac{\log n}{npL} = \frac{n\log n}{L^2}.
\end{align*}
\medskip
\noindent (\textbf{Bound of $R_3$}) By Lemma \ref{lem:bound_fg}, it holds with the prescribed probability that
\begin{align*}
|R_{3,m}| &\leq \Big|f^{(m)}(\theta_m^*|\theta^*_{-m})\Big|\cdot \frac{\max_{m\in[n]}\Big|g^{(m)}(\theta_m^*|\theta_{-m}^*) - g^{(m)}(\theta_m^*|\hat{\theta}_{-m})\Big|}{g^{(m)}(\theta_m^*|\theta_{-m}^*)}\\
&\lesssim \Big|f^{(m)}(\theta_m^*|\theta^*_{-m})\Big|\cdot \frac{1}{np}\Big(\sqrt{\frac{np}{L}} + \sqrt{\frac{(\log n)^3}{npL}}\Big).
\end{align*}
Hence again by Lemma \ref{lem:bound_fg},
\begin{align*}
\pnorm{R_3}{}^2&\lesssim \sum_{m=1}^n \Big(f^{(m)}(\theta_m^*|\theta^*_{-m})\Big)^2 \cdot \frac{1}{(np)^2}\Big(\frac{np}{L} + \frac{(\log n)^3}{npL}\Big)\\
&\lesssim \frac{n^2p}{L}\cdot \frac{1}{(np)^2}\Big(\frac{np}{L} + \frac{(\log n)^3}{npL}\Big) = \frac{n}{L^2} + \frac{n\log n}{L} \cdot \Big(\frac{\log n}{np}\Big)^2.
\end{align*}

\medskip
\noindent (\textbf{Bound of $R_4$}) Using the bound $g^{(m)}(\theta_m^*|\theta_{-m}^*) = \sum_{i:i\neq m}A_{mi}\psi^{\prime}(\theta_m^* - \theta_i^*)\lesssim np$ by Lemma \ref{lem:A_concen}, it follows from Lemma \ref{lem:theta_bar_close} that
\begin{align*}
\Big[g^{(m)}(\theta_m^*|\theta_{-m}^*)(\hat{\theta}_m - \bar{\theta}_m)\Big]^2 &\lesssim (np)^2\Big(\frac{|f^{(m)}(\theta_m^*|\hat{\theta}_{-m})|}{np}\Big)^3 = \frac{\big|f^{(m)}(\theta_m^*|\hat{\theta}_{-m})\big|^3}{np}.
\end{align*}
Moreover, it follows from Lemma \ref{lem:bound_fg}-(4) that $g^{(m)}(\theta_m^*|\hat{\theta}_{-m}) = \big(1+\mathfrak{o}(1)\big)g^{(m)}(\theta_m^*|\hat{\theta}_{-m})$. Hence by Lemma \ref{lem:bound_fg}-(1), we have
\begin{align*}
\pnorm{R_4}{}^2 &= \sum_{m=1}^n \Big[g^{(m)}(\theta_m^*|\hat{\theta}_{-m})(\hat{\theta}_m - \bar{\theta}_m)\Big]^2\\
&\lesssim \sum_{m=1}^n \Big[g^{(m)}(\theta_m^*|\theta^*_{-m})(\hat{\theta}_m - \bar{\theta}_m)\Big]^2\lesssim \frac{1}{np}\sum_{m=1}^n \Big|f^{(m)}(\theta_m^*|\hat{\theta}_{-m})\Big|^3\\
&\leq \frac{1}{np}\cdot \max_{m\in[n]}\Big|f^{(m)}(\theta_m^*|\hat{\theta}_{-m})\Big|\cdot \sum_{m=1}^n \Big|f^{(m)}(\theta_m^*|\hat{\theta}_{-m})\Big|^2\\
&\lesssim \frac{1}{np}\cdot\sqrt{\frac{np\log n}{L}}\cdot \frac{n^2p}{L} = \frac{n}{L}\sqrt{\frac{np\log n}{L}}.
\end{align*}
The proof is complete.
\end{proof}

\subsection{Control of $\pnorm{\delta}{\infty}$}\label{subsec:proof_delta_infty}
We will now prove Proposition \ref{prop:delta_entrywise_bound}. We start by introducing the leave-two-out technique and some additional notation in Section \ref{subsubsec:leave_two_out}, followed by some preliminary estimates of the corresponding quantities in Section \ref{subsubsec:loo_delta_bound}. The main proof of Proposition \ref{prop:delta_entrywise_bound} is given in Section \ref{subsubsec:delta_infty_main}.

\subsubsection{Leave-two-out preliminary}\label{subsubsec:leave_two_out}

Recall that $\delta$ satisfies the following self-consistent equation in (\ref{eq:delta_consistent}): for each $i\in[n]$,
\begin{align*}
g^{(i)}(\theta_i^*|\hat{\theta}_{-i})\delta_i = \sum_{j:j\neq i}A_{ij}\psi'(\theta_i^*-\theta_j^*)\delta_j + \bar{R}_i.
\end{align*}
for some error term $\bar{R}\in\R^n$. In the previous section, we derived the bound for $\pnorm{\delta}{}$ by treating the above system as a whole and solving for $\delta$. Here for $\pnorm{\delta}{\infty}$, we need to bound the right side directly, and the key difficulty lies in controlling the term $\sum_{j:j\neq i}A_{ij}\psi'(\theta_i^*-\theta_j^*)\delta_j$. By standard concentration arguments, this term could be properly controlled if the sequence $\{A_{ij}\}_{j:j\neq i}$ was independent of $\{\delta_j\}_{j:j\neq i}$. For this reason, we introduce in the following a leave-one-out version $\delta^{(i)}$ of $\delta$ that is independent of the $i$th individual (and therefore $\{A_{ij}\}_{j:j\neq i}$). 

Fix any index $i\in[n]$ to be left out. Recall the leave-one-out likelihood
\begin{align*}
\ell_n^{(-i)}(\theta_{-i}) = \sum_{j<k:j,k\neq i}A_{jk}\Big[\bar{y}_{jk}\log\frac{1}{\psi(\theta_j - \theta_k)} + \bar{y}_{kj}\log\frac{1}{\psi(\theta_k - \theta_j)}\Big].
\end{align*}
The leave-one-out MLE $\hat{\theta}^{(i)}\in\R^{n-1}$ is defined by
\begin{align}\label{def:loo_mle}
\hat{\theta}^{(i)}\equiv \argmin_{\theta_{-i}\in\R^{n-1}:\ave(\theta_{-i}) = \ave(\theta^*_{-i})}\ell_n^{(-i)}(\theta_{-i}).
\end{align}
We start by finding an approximation $\bar{\theta}_j^{(i)}$ of $\hat{\theta}^{(i)}_j$ for each $j\neq i$, analogous to the way $\bar{\theta}_j$ approximates $\hat{\theta}_j$. By isolating the $j$th individual, we have the following leave-two-out decomposition $\ell_n^{(-i)}(\theta_{-i}) = \ell_n^{(-i,-j)}(\theta_{-i,-j}) + \ell_n^{(-i,j)}(\theta_j|\theta_{-i,-j})$, where
\begin{align*}
\ell_n^{(-i,-j)}(\theta_{-i,-j}) &= \sum_{k<\ell:k,\ell\notin \{i,j\}}A_{k\ell}\Big[\bar{y}_{k\ell}\log\frac{1}{\psi(\theta_k - \theta_\ell)} + \bar{y}_{\ell k}\log\frac{1}{\psi(\theta_\ell - \theta_k)}\Big],\\
\ell_n^{(-i,j)}(\theta_j|\theta_{-i,-j}) &= \sum_{k\notin \{i,j\}}A_{jk}\Big[\bar{y}_{jk}\log\frac{1}{\psi(\theta_j - \theta_k)} + \bar{y}_{kj}\log\frac{1}{\psi(\theta_k - \theta_j)}\Big].
\end{align*}
Analogous to $f^{(i)}(\theta_i|\theta_{-i})$ and $g^{(i)}(\theta_i|\theta_{-i})$, define for each $j\neq i$
\begin{align}\label{def:loo_fg}
\notag f^{(-i,j)}(\theta_j|\theta_{-i,-j}) &\equiv \frac{\partial}{\partial \theta_j}\ell_n^{(-i,j)}(\theta_j|\theta_{-i,-j}) = -\sum_{k\notin \{i,j\}}A_{jk}\big(\bar{y}_{jk} - \psi(\theta_j - \theta_k)\big),\\
g^{(-i,j)}(\theta_j|\theta_{-i,-j}) &\equiv \frac{\partial^2}{\partial\theta_j^2}\ell_n^{(-i,j)}(\theta_j|\theta_{-i,-j}) = \sum_{k\notin \{i,j\}}A_{jk}\psi'(\theta_j - \theta_k).
\end{align}

Using a similar reasoning for $\bar{\theta}$, define for each $j\neq i$
\begin{align*}
\bar{\theta}^{(i)}_j\equiv \theta_j^* - \frac{f^{(-i,j)}(\theta_j^*|\hat{\theta}^{(i)}_{-j})}{g^{(-i,j)}(\theta_j^*|\hat{\theta}^{(i)}_{-j})} 
\end{align*}
This leads to the definition of a leave-one-out version of $\delta$: for each $j\neq i$, let $\delta^{(i)}_j$ be defined by
\begin{align}\label{eq:loo_delta}
\hat{\theta}^{(i)}_j - \theta_j^* &\equiv - \frac{f^{(-i,j)}(\theta_j^*|\theta^*_{-i,-j})}{g^{(-i,j)}(\theta_j^*|\theta^*_{-i,-j})} + \delta^{(i)}_j.
\end{align}

The following lemma is an analogue of Lemma \ref{lem:bound_fg} for the quantities $f^{(-i,j)}(\theta_j|\theta_{-i,-j})$ and $g^{(-i,j)}(\theta_j|\theta_{-i,-j})$. Its proof relies on the analysis of the leave-two-out MLE defined as
\begin{align}\label{def:lto_mle}
\hat{\theta}^{(i,j)}\equiv \argmax_{\theta_{-i,-j}\in \R^{n-2}: \ave(\theta_{-i,-j}) = \ave(\theta^*_{-i,-j})} \ell_n^{(-i,-j)} (\theta_{-i,-j}).
\end{align}
\begin{lemma}\label{lem:bound_loo_fg}
Suppose $\kappa = \mathcal{O}(1)$ and $np\gg \log n$. Then the following hold with probability at least $1 - \mathcal{O}(n^{-10})$.
\begin{enumerate}
\item There exists some $C = C(\kappa)  > 0$ such that 
\begin{align*}
\max_{i\in[n]}\max_{j:j\neq i}\big|f^{(-i,j)}(\theta_j^*|\hat{\theta}^{(i)}_{-j})\big| \leq C\sqrt{\frac{np\log n}{L}}
\end{align*}
and 
\begin{align*}
\max_{i\in[n]}\sum_{j:j\neq i} \big(f^{(-i,j)}(\theta_j^*|\hat{\theta}^{(i)}_{-j})\big)^2 \leq  C\frac{n^2p}{L}.
\end{align*}
\item There exist some positive $c=c(\kappa)$ and $C = C(\kappa)$ such that 
\begin{align*}
cnp\leq \min_{i\in[n]}\min_{j:j\neq i} g^{(-i,j)}(\theta_i^*|\theta^*_{-i,-j}) \leq \max_{i\in[n]}\max_{j:j\neq i} g^{(-i,j)}(\theta_i^*|\theta^*_{-i,-j}) \leq Cnp.
\end{align*}
\item There exists some $C = C(\kappa)  > 0$ such that 
\begin{align*}
\max_{i\in[n]}\max_{j:j\neq i}\Big|g^{(-i,j)}(\theta_j^*|\hat{\theta}^{(i)}_{-j}) - g^{(-i,j)}(\theta_j^*|\theta^*_{-i,-j})\Big| \leq C\Big(\sqrt{\frac{np}{L}} + \sqrt{\frac{(\log n)^3}{npL}}\Big).
\end{align*}
\end{enumerate}
\end{lemma}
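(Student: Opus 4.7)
The plan is to mirror, structurally, the proof of Lemma \ref{lem:bound_fg}, with the essential new ingredient being the independence afforded by the leave-two-out MLE $\hat{\theta}^{(i,j)}$ whenever a sum weighted by $\{A_{jk}\}$ must be combined with a residual involving $\hat{\theta}^{(i)}$. I would establish parts (2) and (3) first, and then deduce (1) from them via the stationarity of $\hat{\theta}^{(i)}$.

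\textbf{Parts (2) and (3).} Part (2) is essentially immediate: $g^{(-i,j)}(\theta_j^*\mid\theta^*_{-i,-j})$ is a sum of independent Bernoullis with weights in $[c,C]$ by $\kappa=\mathcal{O}(1)$, so a direct application of Lemma \ref{lem:A_concen} followed by a union bound over $(i,j)$ yields both sides of the bound at order $np$. For part (3), I would Taylor expand $\psi'(\theta_j^*-\hat{\theta}^{(i)}_k)$ around $\theta_k^*$, reducing the target to the linearized expression
\[
-\sum_{k\notin\{i,j\}} A_{jk}\,\psi''(\theta_j^*-\theta_k^*)\big(\hat{\theta}^{(i)}_k-\theta_k^*\big)
\]
plus a quadratic remainder that is routinely controlled using $\pnorm{\hat{\theta}^{(i)}-\theta^*}{\infty}$ and $\pnorm{\hat{\theta}^{(i)}-\theta^*}{}$ from Proposition \ref{prop:mle_prelim} applied to the leave-one-out model. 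The key move is then the leave-two-out split $\hat{\theta}^{(i)}_k-\theta_k^* = (\hat{\theta}^{(i,j)}_k-\theta_k^*) + (\hat{\theta}^{(i)}_k-\hat{\theta}^{(i,j)}_k)$. On the first piece, $\hat{\theta}^{(i,j)}$ is independent of $\{A_{jk}\}_{k\notin\{i,j\}}$, so a conditional Bernstein inequality delivers centered fluctuations of order $\sqrt{(\log n)/L}+\sqrt{(\log n)^3/(npL)}$ (plugging in the $\ell_2$ and $\ell_\infty$ bounds of Proposition \ref{prop:mle_prelim} for $\hat{\theta}^{(i,j)}$), while the conditional mean $p\sum\psi''(\cdot)(\hat{\theta}^{(i,j)}_k-\theta_k^*)$ is bounded by Cauchy-Schwarz by $p\sqrt{n}\,\pnorm{\hat{\theta}^{(i,j)}-\theta^*}{}\lesssim\sqrt{np/L}$. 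The second (stability) piece is controlled by $\sqrt{np}\cdot \pnorm{\hat{\theta}^{(i)}_{-j}-\hat{\theta}^{(i,j)}}{}$, where a sufficiently sharp leave-one-out stability bound of order $L^{-1/2}$ is obtained by transplanting the machinery of \cite{chen2019spectral,chen2020partial} to the already-leave-one-out model; combined, this contributes at most $\sqrt{np/L}$.

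\textbf{Part (1).} With parts (2) and (3) in hand, (1) is essentially one line of calculus. Since $\ell_n^{(-i)}$ is translation invariant along $\bm{1}_{n-1}$, the Lagrange multiplier for the centering constraint vanishes and we get $f^{(-i,j)}(\hat{\theta}^{(i)}_j\mid\hat{\theta}^{(i)}_{-j})=0$. The mean value theorem in $\theta_j$ then produces $f^{(-i,j)}(\theta_j^*\mid\hat{\theta}^{(i)}_{-j}) = g^{(-i,j)}(\xi\mid\hat{\theta}^{(i)}_{-j})(\theta_j^*-\hat{\theta}^{(i)}_j)$ for some $\xi$ between $\theta_j^*$ and $\hat{\theta}^{(i)}_j$; parts (2) and (3) bound the Hessian factor by $\mathcal{O}(np)$ (the shift $|\xi-\theta_j^*|\lesssim \pnorm{\hat{\theta}^{(i)}-\theta^*}{\infty}$ only contributes a harmless correction inside $\psi'$), and then the $\ell_\infty$ and $\ell_2$ bounds of Proposition \ref{prop:mle_prelim} applied to $\hat{\theta}^{(i)}-\theta^*$ yield both the uniform bound $\mathcal{O}(\sqrt{np\log n/L})$ and the squared-sum bound $\mathcal{O}(n^2p/L)$.

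\textbf{Main obstacle.} The crux is part (3): a naive Cauchy-Schwarz on the linearized sum only gives $\sqrt{np}\cdot\pnorm{\hat{\theta}^{(i)}-\theta^*}{}\lesssim\sqrt{n/L}$, which loses the crucial factor $\sqrt{p}$ in the sparse regime. Recovering the sharper $\sqrt{np/L}$ order forces the leave-two-out decorrelation above, which in turn rests on a sufficiently sharp $\ell_2$ stability bound $\pnorm{\hat{\theta}^{(i)}_{-j}-\hat{\theta}^{(i,j)}}{}=\mathcal{O}(L^{-1/2})$ for this nested leave-one-out perturbation. Establishing this nested stability on top of the existing BTL leave-one-out machinery is where the bulk of the technical effort will go.
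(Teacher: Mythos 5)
Your proposal is correct, and parts (2) and (3) essentially track the paper's argument: part (2) is immediate from Lemma \ref{lem:A_concen}, and for part (3) you identify the same key move — split $\hat{\theta}^{(i)}_k-\theta_k^* = (\hat{\theta}^{(i,j)}_k-\theta_k^*) + (\hat{\theta}^{(i)}_k-\hat{\theta}^{(i,j)}_k)$ so that the first piece is decorrelated from $\{A_{jk}\}_k$ and a conditional Bernstein bound applies, while the second is a stability term. The paper bounds $|\psi'(\theta_j^*-\theta_k^*)-\psi'(\theta_j^*-\hat{\theta}^{(i)}_k)|$ directly via Lipschitzness of $\psi'$ rather than Taylor-expanding to the $\psi''$ linearization plus a quadratic remainder, which is a cosmetic difference. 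One small note: the paper establishes the leave-two-out stability $\pnorm{\hat{\theta}^{(i)}_{-j}-\hat{\theta}^{(i,j)}}{}\lesssim (npL)^{-1/2}$ (the analogue of Lemma \ref{lem:pre_estimate}-(2)), which is a factor $\sqrt{np}$ sharper than the $L^{-1/2}$ you say you need; both suffice here.

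Your part (1) takes a genuinely different route from the paper. The paper writes $-f^{(-i,j)}(\theta_j^*\mid\hat{\theta}^{(i)}_{-j}) = (I_{ij})+(II_{ij})$, bounding the data-fluctuation term $(I_{ij})=\sum_k A_{jk}(\bar{y}_{jk}-\psi(\theta_j^*-\theta_k^*))$ via Lemma \ref{lem:mle_concentration} and the drift term $(II_{ij})$ via Lipschitzness of $\psi$ together with Lemma \ref{lem:pre_estimate}. You instead observe that $\hat{\theta}^{(i)}$ is an unconstrained stationary point of $\ell_n^{(-i)}$ (the centering constraint has zero Lagrange multiplier by translation invariance), so $f^{(-i,j)}(\hat{\theta}^{(i)}_j\mid\hat{\theta}^{(i)}_{-j})=0$, and then apply the mean value theorem in $\theta_j$ to get $f^{(-i,j)}(\theta_j^*\mid\hat{\theta}^{(i)}_{-j}) = g^{(-i,j)}(\xi\mid\hat{\theta}^{(i)}_{-j})(\theta_j^*-\hat{\theta}^{(i)}_j)$. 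Since $g^{(-i,j)}(\xi\mid\hat{\theta}^{(i)}_{-j})\leq\frac14\sum_{k\notin\{i,j\}}A_{jk}\lesssim np$ by Lemma \ref{lem:A_concen} alone (you do not actually need parts (2)--(3) for this upper bound), the $\ell_\infty$ and $\ell_2$ control of $\hat{\theta}^{(i)}-\theta^*_{-i}$ immediately deliver both claimed bounds. This is shorter than the paper's argument and leans only on properties the leave-one-out MLE already certifies; the paper's decomposition, by contrast, generalizes more transparently to other settings where no stationarity identity is available.
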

\begin{proof}[Proof of Lemma \ref{lem:bound_loo_fg}]
(1) 
For the first inequality, fix any $i\in[n]$ and then $j$ such that $j\neq i$. Then
\begin{align*}
-f^{(-i,j)}(\theta^*_j|\hat{\theta}^{(i)}_{-j}) &= \sum_{k\notin \{i,j\}}A_{jk}\big(\bar{y}_{jk} - \psi(\theta_j^*-\hat{\theta}^{(i)}_k)\big)\\
&=  \sum_{k\notin \{i,j\}}A_{jk}\big(\bar{y}_{jk} - \psi(\theta_j^*-\theta^*_k)\big) +   \sum_{k\notin \{i,j\}}A_{jk}\big(\psi(\theta_j^*-\theta^*_k) - \psi(\theta_j^*-\hat{\theta}^{(i)}_k)\big)\\
&\equiv (I_{ij}) + (II_{ij}).
\end{align*}
By Lemma \ref{lem:mle_concentration}, we have $\max_{i\in[n]}\max_{j:j\neq i}|(I_{ij})| \lesssim \sqrt{np\log n/L}$ with the prescribed probability. On the other hand, as $\psi(\cdot)$ is Lipschitz with a universal constant, Lemmas \ref{lem:A_concen} and \ref{lem:pre_estimate} yield that
\begin{align*}
\max_{i\in[n]}\max_{j:j\neq i}|(II_{ij})| &= \max_{i\in[n]}\max_{j:j\neq i}\Big| \sum_{k\notin \{i,j\}}A_{jk}\big(\psi(\theta_j^*-\theta^*_k) - \psi(\theta_j^*-\hat{\theta}^{(i)}_k)\big)\Big|\\
& \leq \max_{i\in[n]}\pnorm{\hat{\theta}^{(i)}-\theta_{-i}^*}{\infty}\cdot \max_{i\in[n]}\max_{j:j\neq i}\sum_{k\notin\{i,j\}}A_{jk}\\
& \lesssim \sqrt{\frac{\log n}{npL}}\cdot np = \sqrt{\frac{np\log n}{L}},
\end{align*}
with the prescribed probability. Putting together the estimates concludes the first inequality.

For the second inequality, note that again by Lemma \ref{lem:mle_concentration}, $\max_{i\in[n]}\sum_{j:j\neq i}^n (I_{ij})^2 \lesssim n^2p/L$. On the other hand, by Taylor expansion,
\begin{align*}
(II_{ij}) = \sum_{k\notin\{i,j\}}A_{jk}\psi'(\theta_j^* - \xi_{ijk})(\hat{\theta}^{(i)}_k - \theta_k^*)
\end{align*}
for some $\xi_{ijk}$ between $\hat{\theta}^{(i)}_k$ and $\theta_k^*$. Hence using $\sup_{u\in\R}|\psi'(u)|\lesssim 1$, we have
\begin{align*}
\max_{i\in[n]}\sum_{j:j\neq i} (II_{ij})^2 &= \max_{i\in[n]}\sum_{j:j\neq i}\Big(\sum_{k\notin\{i,j\}}A_{jk}\psi'(\theta_j^* - \xi_{ijk})(\hat{\theta}^{(i)}_k - \theta_k^*)\Big)^2\\
&\lesssim \pnorm{A}{\op}^2\cdot\max_{i\in[n]}\pnorm{\hat{\theta}^{(i)} - \theta^*_{-i}}{}^2\stackrel{(*)}{\lesssim} (np)^2\cdot\frac{1}{pL} = \frac{n^2p}{L},
\end{align*}
where $(*)$ is by Lemmas \ref{lem:hessian_eigen} and \ref{lem:pre_estimate}.

\medskip
\noindent (2) This follows directly from Lemma \ref{lem:A_concen} and the fact that $\psi'(\theta_i -\theta_j)\asymp 1$ for any $\theta\in\Theta(\kappa)$.

\medskip
\noindent (3) Fix $i\in[n]$ and then $j$ such that $j\neq i$. Recall the definition of the leave-two-out MLE $\hat{\theta}^{(i,j)}$ in (\ref{def:lto_mle}). Then using analogous arguments for the leave-one-out MLE $\hat{\theta}^{(i)}$, we have the following estimates for the leave-two-out MLE $\hat{\theta}^{(i,j)}$ (cf. Lemma \ref{lem:pre_estimate}):
\begin{align*}
\max_{i,j:j\neq i}\pnorm{\hat{\theta}^{(i,j)} - \theta_{-i,-j}^*}{}\lesssim \sqrt{\frac{1}{pL}},\quad \max_{i,j:j\neq i}\pnorm{\hat{\theta}^{(i,j)} - \theta_{-i,-j}^*}{\infty}\lesssim \sqrt{\frac{\log n}{npL}}
\end{align*}
and
\begin{align*}
\max_{i,j:j\neq i}\pnorm{\hat{\theta}^{(i,j)} - \hat{\theta}^{(i)}_{-j}}{}\lesssim \sqrt{\frac{1}{npL}}.
\end{align*}
Using the above estimates and Lemma \ref{lem:A_concen}, we have
\begin{align*}
\notag&\big|g^{(-i,j)}(\theta_j^*|\hat{\theta}^{(i)}_{-j}) - g^{(-i,j)}(\theta_j^*|\theta^*_{-i,-j})\big|\\
\notag&= \big|\sum_{k\notin\{i,j\}}A_{jk}\big(\psi'(\theta_j^* -\theta_k^*)- \psi'(\theta_j^* -\hat{\theta}^{(i)}_k)\big)\big|\\
\notag&\lesssim\sum_{k\notin\{i,j\}}A_{jk}|\hat{\theta}^{(i)}_k - \theta_k^*| \leq \sum_{k\notin\{i,j\}}A_{jk}|\hat{\theta}^{(i)}_k - \hat{\theta}^{(i,j)}_k| + \sum_{k\notin\{i,j\}}A_{jk}|\hat{\theta}^{(i,j)}_k - \theta_k^*|\\
\notag&\leq \big(\sum_{k\notin \{i,j\}}A_{jk}\big)^{1/2}\pnorm{\hat{\theta}^{(i)}_{-j} - \hat{\theta}^{(i,j)}}{} + p\cdot\sum_{k\notin\{i,j\}}|\hat{\theta}^{(i,j)}_k - \theta_k^*| +\sum_{k\notin\{i,j\}}(A_{jk}-p)|\hat{\theta}^{(i,j)}_k - \theta_k^*|\\
\notag&\leq  \sqrt{np}\sqrt{\frac{1}{npL}} + p\sqrt{n}\pnorm{\hat{\theta}^{(i,j)}-\theta_{-i,-j}^*}{} + \sqrt{p\log n}\pnorm{\hat{\theta}^{(i,j)}-\theta^*_{-i,-j}}{} + \log n\cdot \pnorm{\hat{\theta}^{(i,j)}-\theta^*_{-i,-j}}{\infty}\\
&\lesssim \sqrt{\frac{1}{L}} + \sqrt{\frac{np}{L}} + \sqrt{\frac{\log n}{L}} + \sqrt{\frac{(\log n)^3}{npL}} \asymp \sqrt{\frac{np}{L}} + \sqrt{\frac{(\log n)^3}{npL}}.
\end{align*}
The above bound is uniform over $i,j$ so the proof is complete.
\end{proof}


\subsubsection{Bounds for $\delta^{(i)}$}\label{subsubsec:loo_delta_bound}

We first establish the following analogue of Proposition \ref{prop:delta_bound} for $\pnorm{\delta^{(i)}}{}$. The proof is similar so we only sketch the steps. 
\begin{lemma}\label{lem:loo_delta_l2_bound}
Suppose $\kappa = \mathcal{O}(1)$ and $np\gg \log n$. Then the following holds with probability at least $1-\mathcal{O}(n^{-10})$.
\begin{align*}
\max_{m\in[n]}\pnorm{\delta^{(m)}}{} \lesssim \frac{1}{\sqrt{pL}}\cdot\bigg[\sqrt{\frac{\log n}{np}}+\Big(\frac{\log n}{npL}\Big)^{1/4}\bigg] = \mathfrak{o}(\frac{1}{\sqrt{pL}}).
\end{align*}
\end{lemma}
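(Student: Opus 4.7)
The plan is to fix $m \in [n]$ and replay the entire argument of Proposition \ref{prop:delta_bound} on the reduced system obtained by freezing the $m$th individual, then take a union bound over $m$. Since $\hat{\theta}^{(m)}$ is itself the MLE of a BTL model on the $n-1$ remaining individuals with Erd\H{o}s-R\'enyi subgraph $A^{(m)} \equiv \{A_{jk}\}_{j,k \neq m}$, and the leave-two-out quantities $f^{(-m,j)}, g^{(-m,j)}$ play exactly the same role for $\hat{\theta}^{(m)}$ that $f^{(j)}, g^{(j)}$ played for $\hat{\theta}$, every step of the earlier proof carries over structurally.

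First, for fixed $m$ and each $j \neq m$, by the same Taylor expansion used to derive (\ref{eq:delta_consistent}) one obtains the self-consistent equation
\begin{align*}
g^{(-m,j)}(\theta_j^*|\hat{\theta}^{(m)}_{-j}) \, \delta^{(m)}_j
&= \sum_{k \notin \{m,j\}} A_{jk}\psi'(\theta_j^* - \theta_k^*) \, \delta^{(m)}_k + \bar{R}^{(m)}_{1,j} + \bar{R}^{(m)}_{2,j} + \bar{R}^{(m)}_{3,j} + \bar{R}^{(m)}_{4,j},
\end{align*}
where the four error terms are the exact analogues of $R_{1,j}, \dots, R_{4,j}$ with $\hat{\theta}$ replaced by $\hat{\theta}^{(m)}$ and all summations restricted to indices outside $\{m,j\}$. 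In matrix form on $\R^{n-1}$ this reads
\begin{align*}
(H^{(m)} + D^{(m)}) \, \delta^{(m)} = R_1^{(m)} + R_2^{(m)} + R_3^{(m)} + R_4^{(m)},
\end{align*}
where $H^{(m)}$ is the Laplacian of the weighted leave-one-out graph and $D^{(m)}$ is the diagonal perturbation controlled by Lemma \ref{lem:bound_loo_fg}(3), which yields $\pnorm{D^{(m)}}{\op} \lesssim \sqrt{np/L} + \sqrt{(\log n)^3/(npL)} = \mathfrak{o}(np)$ uniformly in $m$.

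Next I would verify the eigenvalue bound $\lambda_{\min,\perp}(H^{(m)}) \gtrsim np$ by applying Lemma \ref{lem:hessian_eigen} to the subgraph $A^{(m)}$; this holds with probability $1 - \mathcal{O}(n^{-11})$ uniformly in $m$ because removing one vertex from an Erd\H{o}s-R\'enyi graph preserves the spectral-gap concentration up to constants. Combined with the lower bound
\begin{align*}
\pnorm{(H^{(m)} + D^{(m)})\delta^{(m)}}{} \gtrsim np \cdot \pnorm{\delta^{(m)}}{} - np\sqrt{n-1}\, |\ave(\delta^{(m)})|
\end{align*}
(exactly as in (\ref{ineq:delta_l2_lower})), the problem reduces to bounding $|\ave(\delta^{(m)})|$ and each $\pnorm{R_k^{(m)}}{}$ at the same rates as before. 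Since $\hat{\theta}^{(m)}$ is normalized so that $\ave(\hat{\theta}^{(m)}) = \ave(\theta^*_{-m})$, one has
\begin{align*}
\ave(\delta^{(m)}) = \frac{1}{n-1}\sum_{j : j \neq m} \frac{f^{(-m,j)}(\theta_j^*|\theta^*_{-m,-j})}{g^{(-m,j)}(\theta_j^*|\theta^*_{-m,-j})},
\end{align*}
to which the Hoeffding argument of Lemma \ref{lem:delta_avg} applies verbatim (conditioning on $A^{(m)}$), yielding $|\ave(\delta^{(m)})| \lesssim \sqrt{\log n/n} \cdot (npL)^{-1/2}$. The four $\pnorm{R_k^{(m)}}{}$ bounds are obtained by replicating the four sub-arguments in the proof of Lemma \ref{lem:R_norm}, replacing every invocation of Lemmas \ref{lem:bound_fg}, \ref{lem:pre_estimate}, \ref{lem:mle_concentration} by their leave-one-out counterparts (Lemma \ref{lem:bound_loo_fg}, plus the leave-one-out $\ell_2$/$\ell_\infty$ estimates of $\hat{\theta}^{(m)}$ already established in the preliminary estimates, together with the leave-two-out estimates recorded in the proof of Lemma \ref{lem:bound_loo_fg}).

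Each individual step succeeds with probability at least $1 - \mathcal{O}(n^{-11})$ uniformly in $m$, so a union bound over $m \in [n]$ preserves the stated $1 - \mathcal{O}(n^{-10})$ probability. The main (mild) obstacle is bookkeeping: one must check that the leave-two-out analogues of the concentration estimates used in Lemma \ref{lem:R_norm} still hold with strong enough probability to survive the outer union bound over $m$, which is why Lemma \ref{lem:bound_loo_fg} was stated with the uniform-in-$i$ form above. No new analytic ideas are needed beyond those already developed for Proposition \ref{prop:delta_bound}.
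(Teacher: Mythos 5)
Your proposal is correct and mirrors the paper's own proof essentially step for step: the paper likewise sets up the leave-one-out self-consistent equation $(H^{(m)}+D^{(m)})\delta^{(m)} = R^{(m)}_1+\cdots+R^{(m)}_4$, invokes Lemma \ref{lem:hessian_eigen} for $\lambda_{\min,\perp}(H^{(m)})\gtrsim np$, controls $\pnorm{D^{(m)}}{\op}$ via Lemma \ref{lem:bound_loo_fg} (item (3), as you correctly cite — the paper's reference to item (2) there appears to be a typo), bounds $\ave(\delta^{(m)})$ by the Hoeffding argument of Lemma \ref{lem:delta_avg}, and bounds the $R^{(m)}_k$ by replaying Lemma \ref{lem:R_norm} with leave-two-out analogues, followed by a union bound over $m$.
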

\begin{proof}[Proof of Lemma \ref{lem:loo_delta_l2_bound}]
Following the arguments in Proposition \ref{prop:delta_bound}, we arrive at the following expansion analogous to (\ref{eq:decomp_delta}):
\begin{align}\label{eq:decomp_loo_delta}
(H^{(m)} + D^{(m)})\delta^{(m)} = R^{(m)}_1 + R^{(m)}_2 + R^{(m)}_3 + R^{(m)}_4.
\end{align}
Here $H^{(m)}$, $D^{(m)}$, and $R^{(m)}_1$-$R^{(m)}_4$ are leave-one-out versions of $H$, $D$, and $R^1$-$R^4$ defined by
\begin{itemize}
\item $H^{(m)}\in\R^{(n-1)\times (n-1)}$ with $H^{(m)}_{ii} = \sum_{j:j\notin\{m,i\}}A_{ij}\psi'(\theta_i^*-\theta_j^*)$ for $i\neq m$ and $H^{(m)}_{ij} = -A_{ij}\psi'(\theta_i^*-\theta_j^*)$ for $j\notin\{m,i\}$.
\item $D^{(m)} \in\R^{(n-1)\times (n-1)}$ is defined by $D^{(m)}= \textrm{\diag}(D_1^{(m)},\ldots, D_{m-1}^{(m)}, D_{m+1}^{(m)},\ldots, D_n^{(m)})$ with $D^{(m)}_i = g^{(-m,i)}(\theta_i^*|\hat{\theta}^{(m)}_{-i}) - g^{(-m,i)}(\theta_i^*|\theta^*_{-m,-i})$.
\item $R^{(m)}_1$ - $R^{(m)}_4$ are defined by: for each $i\neq m$,
\begin{align*}
R^{(m)}_{1,i} &= -\sum_{j\notin\{m,i\}}A_{ij}\psi'(\theta_i^*-\theta_j^*)\frac{f^{(-m,j)}(\theta_j^*|\theta^*_{-m,-j})}{g^{(-m,j)}(\theta_i^*|\theta^*_{-m,-j})},\\
R^{(m)}_{2,i} &=-\frac{1}{2}\sum_{j\notin\{m,i\}}A_{ij}\psi''(\theta_i^*-\xi_{m,j})(\hat{\theta}^{(m)}_j - \theta_j^*)^2,\\
R^{(m)}_{3,i} &= f^{(-m,i)}(\theta_i^*|\hat{\theta}^{(m)}_{-i})\Big(\frac{g^{(-m,i)}(\theta_i^*|\hat{\theta}^{(m)}_{-i})}{g^{(-m,i)}(\theta_i^*|\theta^*_{-m,-i})} - 1\Big),\\
R^{(m)}_{4,i} &=g^{(-m,i)}(\theta_i^*|\hat{\theta}^{(m)}_{-i})\big(\hat{\theta}^{(m)}_i - \bar{\theta}^{(m)}_i\big).
\end{align*}
\end{itemize}
Note that: (i) $\bm{1}_{n-1}$ is in the null space of $H^{(m)}$ and the smallest non-null eigenvalue satisfies $\min_{x\in\R^{n-1}:\pnorm{x}{}=1, \ave(x) = 0}x^\top H^{(m)}x\gtrsim np$ by Lemma \ref{lem:hessian_eigen}; (ii) $\pnorm{D^{(m)}}{\op} = \mathfrak{o}(np)$ by Lemma \ref{lem:bound_loo_fg}-(2). Hence continuing with the arguments in Proposition \ref{prop:delta_bound} leads to
\begin{align*}
\pnorm{\delta^{(m)}}{}\lesssim \sqrt{n}|\ave(\delta^{(m)})| + (np)^{-1}\cdot\big(\pnorm{R^{(m)}_1}{} + \pnorm{R^{(m)}_2}{} + \pnorm{R^{(m)}_3}{} + \pnorm{R^{(m)}_4}{}\big).
\end{align*}
The quantities on the right side can be bounded as follows:
\begin{itemize}
\item Note that by definition $\ave(\hat{\theta}^{(m)}) = \ave(\theta^*_{-m})$, hence by analogous arguments as in Lemma \ref{lem:delta_avg}, we have
\begin{align*}
|\ave(\delta^{(m)})| &= \Big|\frac{1}{n-1}\sum_{i:i\neq m}\frac{f^{(-m,i)}(\theta_i^*|\theta^*_{-m,-i})}{g^{(-m,i)}(\theta_i^*|\theta^*_{-m,i})}\Big|\lesssim \frac{1}{\sqrt{npL}}\cdot\sqrt{\frac{\log n}{n}}.
\end{align*}
\item By analogous arguments as in Lemma \ref{lem:R_norm}, we have $\pnorm{R^{(m)}_1}{}\vee \pnorm{R^{(m)}_2}{} \vee\pnorm{R^{(m)}_3}{}  \lesssim \sqrt{n\log n/L}$.
\item By analogous arguments as in Lemma \ref{lem:theta_bar_close}, we have
\begin{align}\label{ineq:loo_theta_diff}
\big|\hat{\theta}^{(m)}_i - \bar{\theta}^{(m)}_i\big|\lesssim \Big(\frac{\big|f^{(-m,i)}(\theta_i^*|\hat{\theta}^{(m)}_{-i})\big|}{np}\Big)^{3/2}.
\end{align}
The bound $\pnorm{R^{(m)}_4}{}\lesssim \Big(n\sqrt{np\log n}/L^{3/2}\Big)^{1/2}$ now follows from subsequent arguments in Lemma \ref{lem:R_norm} upon using Lemma \ref{lem:bound_loo_fg}.
\end{itemize}
Collecting the estimates yields the desired claim. 
\end{proof}

The following control of $\pnorm{\delta^{(m)}}{\infty}$ is also needed in the bound for $\pnorm{\delta}{\infty}$ below.
\begin{lemma}\label{lem:loo_delta_max_bound}
Under the conditions $\kappa = \mathcal{O}(1)$ and $np\gg \log n$, the following holds with probability at least $1-\mathcal{O}(n^{-10})$.
\begin{align*}
\max_{m\in[n]}\pnorm{\delta^{(m)}}{\infty} \lesssim \sqrt{\frac{\log n}{npL}}.
\end{align*}
\end{lemma}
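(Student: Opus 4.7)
\textbf{Proof proposal for Lemma \ref{lem:loo_delta_max_bound}.} The plan is to decompose $\delta^{(m)}_j$ via its defining relation (\ref{eq:loo_delta}) into two pieces, namely
\begin{align*}
\delta^{(m)}_j = \big(\hat{\theta}^{(m)}_j - \theta_j^*\big) + \frac{f^{(-m,j)}(\theta_j^*|\theta^*_{-m,-j})}{g^{(-m,j)}(\theta_j^*|\theta^*_{-m,-j})},
\end{align*}
and to show that each piece is of order $\sqrt{\log n/(npL)}$ uniformly in $m\in[n]$ and $j\neq m$. The first piece is handled directly by the $\ell_\infty$ bound on the leave-one-out MLE, $\max_{m\in[n]}\pnorm{\hat{\theta}^{(m)}-\theta_{-m}^*}{\infty}\lesssim \sqrt{\log n/(npL)}$, which is recorded in Lemma \ref{lem:pre_estimate} and already used in the proof of Lemma \ref{lem:bound_loo_fg}. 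A union bound over $m\in[n]$ is then immediate.

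For the second (main) piece, I would treat the numerator and denominator separately. By (\ref{def:loo_fg}), conditional on the graph $A$, $f^{(-m,j)}(\theta_j^*|\theta^*_{-m,-j})$ is a sum of $L\cdot\sum_{k\notin\{m,j\}}A_{jk}$ independent centered random variables uniformly bounded by $1$. Lemma \ref{lem:A_concen} yields $\sum_{k\notin\{m,j\}}A_{jk}\asymp np$ uniformly, so a conditional Hoeffding inequality combined with a union bound over the $O(n^2)$ pairs $(m,j)$ gives
\begin{align*}
\max_{m\in[n]}\max_{j:j\neq m}\bigabs{f^{(-m,j)}(\theta_j^*|\theta^*_{-m,-j})} \lesssim \sqrt{\frac{np\log n}{L}}
\end{align*}
with probability $1-\mathcal{O}(n^{-10})$. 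The denominator satisfies $g^{(-m,j)}(\theta_j^*|\theta^*_{-m,-j})\asymp np$ uniformly by Lemma \ref{lem:bound_loo_fg}(2). Dividing, the main term is uniformly $O(\sqrt{\log n/(npL)})$, and the triangle inequality finishes the proof.

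There is no real obstacle here beyond careful bookkeeping; the two ingredients (leave-one-out $\ell_\infty$ consistency of $\hat{\theta}^{(m)}$, plus conditional Hoeffding on the leave-two-out score) are both standard at this stage of the paper. Conceptually, the point is precisely that this is the \emph{rough} bound alluded to in (\ref{ineq:loo_rough}): it is not strong enough on its own to give $\pnorm{\delta}{\infty}=\mathfrak{o}(1/\sqrt{npL})$, and its role is only to serve as the input that the subsequent leave-two-out refinement will bootstrap into the sharper $\ell_\infty$ control claimed in Proposition \ref{prop:delta_entrywise_bound}.
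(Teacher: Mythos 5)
Your proof is correct and takes a genuinely different, and in fact simpler, route than the paper's. You split $\delta^{(m)}_j$ via its \emph{defining relation} (\ref{eq:loo_delta}) into the raw error $\hat\theta^{(m)}_j-\theta_j^*$ plus the main ratio $f^{(-m,j)}(\theta_j^*|\theta^*_{-m,-j})/g^{(-m,j)}(\theta_j^*|\theta^*_{-m,-j})$, and bound each piece by $\sqrt{\log n/(npL)}$ directly: the first by the leave-one-out $\ell_\infty$ rate in Lemma~\ref{lem:pre_estimate}, the second by a conditional Hoeffding/union argument on the leave-two-out score together with Lemma~\ref{lem:bound_loo_fg}(2) for the denominator. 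The paper instead rewrites $\delta^{(m)}_i$ through the intermediary $\bar\theta^{(m)}_i$, mirroring the three-term splitting of (\ref{eq:delta_decomp}) used in the $\ell_2$ analysis: a term $\big(f^{(-m,i)}(\theta_i^*|\hat\theta^{(m)}_{-i})-f^{(-m,i)}(\theta_i^*|\theta^*_{-m,-i})\big)/g$, a quadratic-remainder term $\hat\theta^{(m)}_i-\bar\theta^{(m)}_i$ bounded via (\ref{ineq:loo_theta_diff}) and Lemma~\ref{lem:bound_loo_fg}(1), and a term $f\cdot(1/g-1/g)$. What you gain is a shorter argument with fewer ingredients (no appeal to the $\hat\theta-\bar\theta$ closeness, no Lipschitz/Taylor step on $f$); what the paper gains is structural parallelism with its other bounds, so that the same pieces $R^{(m)}_1,\ldots,R^{(m)}_4$ and the same lemma catalogue reappear throughout. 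Both approaches land on the same rough rate, and you correctly identify its role as the input (\ref{ineq:loo_rough}) to be bootstrapped later rather than a useful bound on its own.
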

\begin{proof}[Proof of Lemma \ref{lem:loo_delta_max_bound}]
By combining (\ref{ineq:loo_theta_diff}) and Lemma \ref{lem:bound_loo_fg}-(1), we have
\begin{align}\label{ineq:hat_bar_theta}
\big|\hat{\theta}^{(m)}_i - \bar{\theta}^{(m)}_i\big|\lesssim \Big(\frac{\big|f^{(-m,i)}(\theta_i^*|\hat{\theta}^{(m)}_{-i})\big|}{np}\Big)^{3/2} \lesssim \Big(\frac{\log n}{npL}\Big)^{3/4}.
\end{align} 
Hence for any $i\neq m$, we have
\begin{align*}
|\delta^{(m)}_i| &\leq \Big|\frac{f^{(-m,i)}(\theta_i^*|\hat{\theta}^{(m)}_{-i})-f^{(-m,i)}(\theta_i^*|\theta^*_{-m,-i})}{g^{(-m,i)}(\theta_i^*|\hat{\theta}^{(m)}_{-i})}\Big| +  \big|\hat{\theta}^{(m)}_i - \bar{\theta}^{(m)}_i\big|\\
&\quad\quad\quad+\Big|f^{(-m,i)}(\theta_i^*|\theta^*_{-m,-i})\Big(\frac{1}{g^{(-m,i)}(\theta_m^*|\theta^*_{-i})}-\frac{1}{g^{(-m,i)}(\theta_m^*|\hat{\theta}^{(m)}_{-i})}\Big)\Big|\\
&\stackrel{(*)}{\leq} (np)^{-1}\sum_{j\notin\{m,i\}}A_{ij}|\hat{\theta}^{(m)}_j - \theta_j^*| +  \Big(\frac{\log n}{npL}\Big)^{3/4} \\
&\quad\quad\quad+\sqrt{\frac{np\log n}{L}}(np)^{-2}\Big(\sqrt{\frac{np}{L}} + \sqrt{\frac{(\log n)^3}{npL}}\Big)\\
&\lesssim (np)^{-1}\Big(\sum_{j\notin\{m,i\}}A_{ij}\Big)\cdot \pnorm{\hat{\theta}^{(m)}-\theta_{-m}^*}{\infty} + \Big(\frac{\log n}{npL}\Big)^{3/4} + \frac{\sqrt{\log n}}{npL} + \frac{(\log n)^2}{(np)^2L}\\
&\stackrel{(**)}{\lesssim} \sqrt{\frac{\log n}{npL}} +  \Big(\frac{\log n}{npL}\Big)^{3/4} + \frac{\sqrt{\log n}}{npL} + \frac{(\log n)^2}{(np)^2L} \asymp \sqrt{\frac{\log n}{npL}}.
\end{align*}
Here $(*)$ follows from (\ref{ineq:hat_bar_theta}) and Lemma \ref{lem:bound_loo_fg}, and $(**)$ follows from Lemma \ref{lem:pre_estimate}. The proof is complete.  
\end{proof}

\subsubsection{Main proof}\label{subsubsec:delta_infty_main}
\begin{proof}[Proof of Proposition \ref{prop:delta_entrywise_bound}]
Recall the following decomposition of $\delta$ in (\ref{eq:delta_decomp}): for each $m\in[n]$,
\begin{align*}
\delta_m &= \frac{f^{(m)}(\theta_m^*|\theta_{-m}^*) - f^{(m)}(\theta_m^*|\hat{\theta}_{-m})}{g^{(m)}(\theta_m^*|\hat{\theta}_{-m})} + (\hat{\theta}_m - \bar{\theta}_m)\\
&\quad\quad\quad +f^{(m)}(\theta_m^*|\theta^*_{-m})\Big(\frac{1}{g^{(m)}(\theta_m^*|\theta^*_{-m})}-\frac{1}{g^{(m)}(\theta_m^*|\hat{\theta}_{-m})}\Big).
\end{align*}
%
Recall the definition of the leave-one-out MLE $\hat{\theta}^{(m)}$ in (\ref{def:loo_mle}). Then $\delta_m$ can be further decomposed as
\begin{align*}
\delta_m &= \frac{\sum_{i:i\neq m}A_{mi}\big(\psi(\theta_m^*-\hat{\theta}^{(m)}_i) - \psi(\theta_m^*-\hat{\theta}_i)\big)}{g^{(m)}(\theta_m^*|\hat{\theta}_{-m})} \\
&\quad\quad\quad+\frac{\sum_{i:i\neq m}A_{mi}\big(\psi(\theta_m^*-\theta_i^*) - \psi(\theta_m^*-\hat{\theta}^{(m)}_i)\big)}{g^{(m)}(\theta_m^*|\hat{\theta}_{-m})} + (\hat{\theta}_m - \bar{\theta}_m)\\
&\quad\quad\quad +\frac{f^{(m)}(\theta_m^*|\theta^*_{-m})}{g^{(m)}(\theta_m^*|\theta^*_{-m})}\Big(1-\frac{g^{(m)}(\theta_m^*|\theta^*_{-m})}{g^{(m)}(\theta_m^*|\hat{\theta}_{-m})}\Big)\\
&\equiv \delta_{1,m} + \delta_{2,m} + \delta_{3,m} + \frac{f^{(m)}(\theta_m^*|\theta^*_{-m})}{g^{(m)}(\theta_m^*|\theta^*_{-m})}\Big(1-\frac{g^{(m)}(\theta_m^*|\theta^*_{-m})}{g^{(m)}(\theta_m^*|\hat{\theta}_{-m})}\Big).
\end{align*}
By Lemma \ref{lem:bound_fg}, we have $1-g^{(m)}(\theta_m^*|\theta^*_{-m})/g^{(m)}(\theta_m^*|\hat{\theta}_{-m}) = \mathfrak{o}(1)$ under the condition $np\gg \log n$. On the other hand, it follows from Lemma \ref{lem:delta_main} that under the condition $np\gg (\log n)^{3/2}$, we have $\max_{m\in[n]}|\delta_{1,m}|\vee |\delta_{2,m}| = \mathfrak{o}(1/\sqrt{npL})$ and
\begin{align*}
|\delta_{3,m}| &\lesssim  \epsilon_m\Big|\frac{f^{(m)}(\theta_m^*|\theta^*_{-m})}{g^{(m)}(\theta_m^*|\theta^*_{-m})}\Big| + \mathfrak{o}(\frac{1}{\sqrt{npL}}),
\end{align*}
where $\pnorm{\epsilon}{\infty} = \mathfrak{o}(1)$ and the $\mathfrak{o}(1/\sqrt{npL})$ is uniform over $m\in[n]$. The proof is now complete by collecting the estimates.
\end{proof}

\begin{lemma}\label{lem:delta_main}
Recall the definition of $\delta_{1,m}$-$\delta_{3,m}$ from the proof of Proposition \ref{prop:delta_entrywise_bound}. Suppose $\kappa = \mathcal{O}(1)$ and $np\gg \log n$. Then the following holds with probability at least $1-\mathcal{O}(n^{-10})$ uniformly over $m\in[n]$.
\begin{align*}
|\delta_{1,m}| &\lesssim \frac{1}{\sqrt{npL}}\sqrt{\frac{1}{np}},\\ 
|\delta_{2,m}| &\lesssim \frac{1}{\sqrt{npL}}\cdot \Big(\sqrt{\frac{\log n}{np}}  + \Big(\frac{\log n}{npL}\Big)^{1/4} + \sqrt{\frac{(\log n)^3}{(np)^2}}\Big),\\
|\delta_{3,m}| &\lesssim \epsilon_m\Big|\frac{f^{(m)}(\theta_m^*|\theta^*_{-m})}{g^{(m)}(\theta_m^*|\theta^*_{-m})}\Big|  + \frac{1}{\sqrt{npL}}\cdot\Big[\Big(\frac{\log n}{npL}\Big)^{1/4} + \frac{(\log n)^{7/4}}{(np)^{5/4}L^{1/4}}\Big].
\end{align*}
Here $\epsilon\in\R^n$ satisfies $\pnorm{\epsilon}{\infty} = \mathfrak{o}(1)$. In particular, if $np\gg (\log n)^{3/2}$, we have $\max_{m\in[n]}|\delta_{1,m}|\vee |\delta_{2,m}| = \mathfrak{o}(1/\sqrt{npL})$ and $|\delta_{3,m}| \lesssim \epsilon_m \big|f^{(m)}(\theta_m^*|\theta^*_{-m})/g^{(m)}(\theta_m^*|\theta^*_{-m})\big| + \mathfrak{o}(1/\sqrt{npL})$, where $\pnorm{\epsilon}{\infty} = \mathfrak{o}(1)$ and $\mathfrak{o}(1/\sqrt{npL})$ is uniform over $m\in[n]$.
\end{lemma}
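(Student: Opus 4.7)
The proof tackles the three pieces $\delta_{1,m},\delta_{2,m},\delta_{3,m}$ in turn, with the leave-one-out MLE $\hat{\theta}^{(m)}$ and its leave-two-out refinement playing the central role. For the first piece, since $\psi$ is $1$-Lipschitz and $g^{(m)}(\theta_m^*|\hat{\theta}_{-m})\asymp np$ by Lemma~\ref{lem:bound_fg}, Cauchy--Schwarz together with $\sum_{i\neq m}A_{mi}\lesssim np$ yield $|\delta_{1,m}|\lesssim (np)^{-1/2}\pnorm{\hat{\theta}_{-m}-\hat{\theta}^{(m)}}{}$. The standard leave-one-out $\ell_2$ stability bound $\pnorm{\hat{\theta}_{-m}-\hat{\theta}^{(m)}}{}\lesssim 1/\sqrt{npL}$ (from Lemma~\ref{lem:pre_estimate}) then gives the target bound $(np)^{-1/2}\cdot(npL)^{-1/2}$.

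For $\delta_{2,m}$, I Taylor expand $\psi(\theta_m^*-\theta_i^*)-\psi(\theta_m^*-\hat{\theta}^{(m)}_i) = \psi'(\theta_m^*-\theta_i^*)(\hat{\theta}^{(m)}_i-\theta_i^*) + O\bigl((\hat{\theta}^{(m)}_i-\theta_i^*)^2\bigr)$. The quadratic remainder is controlled using $\pnorm{\hat{\theta}^{(m)}-\theta^*_{-m}}{\infty}\lesssim\sqrt{\log n/(npL)}$ from Lemma~\ref{lem:pre_estimate} and contributes one of the smaller terms. For the linear part, inserting the leave-two-out expansion (\ref{eq:loo_delta}) splits it into
\begin{align*}
-\sum_{i\neq m}A_{mi}\psi'(\theta_m^*-\theta_i^*)\frac{f^{(-m,i)}(\theta_i^*|\theta^*_{-m,-i})}{g^{(-m,i)}(\theta_i^*|\theta^*_{-m,-i})} + \sum_{i\neq m}A_{mi}\psi'(\theta_m^*-\theta_i^*)\delta^{(m)}_i.
\end{align*}
Crucially, both $f^{(-m,i)}/g^{(-m,i)}$ and $\delta^{(m)}_i$ depend only on data not involving index $m$, hence are independent of $\{A_{mi}\}_{i\neq m}$. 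Applying Bernstein's inequality conditional on this $m$-free randomness, combined with (i) the $\ell_2$/$\ell_\infty$ bounds on $\{f^{(-m,i)}\}$ from Lemma~\ref{lem:bound_loo_fg}(1), (ii) the $\ell_2$ bound on $\delta^{(m)}$ from Lemma~\ref{lem:loo_delta_l2_bound}, and (iii) the $\ell_\infty$ bound on $\delta^{(m)}$ from Lemma~\ref{lem:loo_delta_max_bound}, produces the three summands of the claimed bound after dividing by $g^{(m)}\asymp np$ and taking a union bound over $m\in[n]$.

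For $\delta_{3,m}$, the starting point is Lemma~\ref{lem:theta_bar_close}, which supplies $|\hat{\theta}_m-\bar{\theta}_m|\lesssim \bigl(|f^{(m)}(\theta_m^*|\hat{\theta}_{-m})|/np\bigr)^{3/2}$. Lipschitzness of $\psi$, $\sum A_{mi}\lesssim np$, and Proposition~\ref{prop:mle_prelim} imply $|f^{(m)}(\theta_m^*|\hat{\theta}_{-m})|\leq |f^{(m)}(\theta_m^*|\theta_{-m}^*)|+O\bigl(\sqrt{np\log n/L}\bigr)$. The perturbation term raised to the $3/2$ power yields a piece of order $(\log n/(npL))^{3/4}$, accounting for the uniformly small contribution. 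The remaining main piece $(|f^{(m)}(\theta_m^*|\theta_{-m}^*)|/np)^{3/2}$ is factored as $(|f^{(m)}|/np)\cdot(|f^{(m)}|/np)^{1/2}$, with the second factor uniformly $O((\log n/(npL))^{1/4})=\mathfrak{o}(1)$ by Lemma~\ref{lem:bound_fg}; this reproduces the form $\epsilon_m|f^{(m)}(\theta_m^*|\theta_{-m}^*)/g^{(m)}(\theta_m^*|\theta_{-m}^*)|$ in the statement.

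The hardest step is the $\delta_{2,m}$ bound: the rate-optimal $\ell_\infty$ control required for Theorem~\ref{thm:mle_expansion} demands improving the naive $\sqrt{\log n/L}$-type deviation (which would arise from a leave-one-out analysis alone) by an additional $\sqrt{1/(np)}$ factor. This gain is precisely what the leave-two-out decoupling buys, by ensuring that \emph{both} $\delta^{(m)}$ and $\{f^{(-m,i)}\}$ are independent of $\{A_{mi}\}_{i\neq m}$, so that Bernstein can be applied with both variance and $\ell_\infty$ parameters at their correct leave-two-out sizes; simultaneously invoking the $\ell_2$ and $\ell_\infty$ controls of $\delta^{(m)}$ from Lemmas~\ref{lem:loo_delta_l2_bound} and~\ref{lem:loo_delta_max_bound} is essential to achieve the required sharpness.
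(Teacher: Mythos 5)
Your structure (Taylor expansion, leave-two-out decoupling, then conditional concentration) mirrors the paper's, and the $\delta_{1,m}$ bound is correct as written. However, three of your bounding steps use crude estimates where the paper deploys sharper decoupling arguments, and under $np\gg(\log n)^{3/2}$ your crude versions fail to deliver the required $\mathfrak{o}(1/\sqrt{npL})$ sharpness.

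\textbf{Gap in the $R^{(m)}_1$ piece of $\delta_{2,m}$.} You propose to handle $\sum_{i\ne m}A_{mi}\psi'(\theta_m^*-\theta_i^*)\,f^{(-m,i)}/g^{(-m,i)}$ by Bernstein conditional on the $m$-free randomness. Bernstein then controls the fluctuation about the conditional mean $p\sum_{i\ne m}c_i$ with $c_i\equiv\psi'(\cdot)f^{(-m,i)}/g^{(-m,i)}$, but leaves that mean untouched. The only control your sketch invokes is $\pnorm{c}{}\lesssim 1/\sqrt{pL}$ (from the $\ell_2$ bound on $\{f^{(-m,i)}\}$), which gives $p\abs{\sum_i c_i}\le p\sqrt{n}\pnorm{c}{}\lesssim\sqrt{np/L}$; after dividing by $g^{(m)}\asymp np$ this is of order $1/\sqrt{npL}$, the \emph{same} order as the main term, not $\mathfrak{o}$. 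The paper avoids this by conditioning on the full graph $A$ and applying Hoeffding to the comparison results $\{\bar y_{ij}\}$, exploiting the exact conditional mean-zero structure of $f^{(-m,i)}$ together with the anti-symmetry $\bar y_{ij}-\psi=-(\bar y_{ji}-\psi)$ to make the relevant double sum concentrate at order $\sqrt{\log n/L}$ rather than $\sqrt{np/L}$. This is genuinely a different (and sharper) mechanism than what your Bernstein step captures.

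\textbf{Gaps in the quadratic remainder and in $\delta_{3,m}$.} For the quadratic remainder $R^{(m)}_3=-\tfrac12\sum_iA_{mi}\psi''(\cdot)(\hat\theta^{(m)}_i-\theta_i^*)^2$, bounding it by $(\sum_iA_{mi})\pnorm{\hat\theta^{(m)}-\theta^*_{-m}}{\infty}^2\lesssim\log n/L$ and dividing by $np$ gives $\log n/(npL)$, which is $\mathfrak{o}(1/\sqrt{npL})$ only when $npL\gg(\log n)^2$ — not implied by $np\gg(\log n)^{3/2}$ when $L$ is bounded. The paper's Lemma \ref{lem:loo_R_norm} instead bounds $\sum_iA_{mi}\abs{\hat\theta^{(m)}_i-\theta_i^*}$ by Bernstein using the independence of $\{A_{mi}\}$ from $\hat\theta^{(m)}$, giving $\sqrt{np/L}+\sqrt{(\log n)^3/(npL)}$ and a $\sqrt{\log n}$ saving. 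Likewise for $\delta_{3,m}$: your Lipschitz bound $\abs{f^{(m)}(\theta_m^*|\hat\theta_{-m})-f^{(m)}(\theta_m^*|\theta^*_{-m})}\lesssim\sqrt{np\log n/L}$ produces a perturbation piece of order $(\log n/(npL))^{3/4}$, which divided by $1/\sqrt{npL}$ is $(\log n)^{3/4}(npL)^{-1/4}$; this vanishes only if $npL\gg(\log n)^3$, again stronger than assumed. The paper uses Lemma \ref{lem:bound_fg}(2), whose sharper bound $\sqrt{np/L}+\sqrt{(\log n)^3/(npL)}$ (itself obtained through a leave-one-out decoupling plus Bernstein) removes the extra $\sqrt{\log n}$ and makes the perturbation term genuinely $\mathfrak{o}(1/\sqrt{npL})$ under $np\gg(\log n)^{3/2}$. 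In all three spots the common theme is the same: without decoupling (or, for $R_1^{(m)}$, without the anti-symmetry/Hoeffding argument), the naive $\ell_\infty$/$\ell_2$ bounds carry an extra $\sqrt{\log n}$ or $\sqrt{\log n/(np)}$ factor that violates the sparsity requirement.
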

\begin{proof}[Proof of Lemma \ref{lem:delta_main}]
(\textbf{Bound for $\delta_{1,m}$})
By Lemma \ref{lem:pre_estimate}, we have
\begin{align*}
|\delta_{1,m}| &\lesssim (np)^{-1}\sum_{i:i\neq m}A_{mi}\big|\hat{\theta}_i -\hat{\theta}^{(m)}_i\big|\leq (np)^{-1}\Big(\sum_{i:i\neq m}A_{mi}\Big)^{1/2}\pnorm{\hat{\theta}^{(m)}-\hat{\theta}_{-m}}{}\\
&\lesssim (np)^{-1}\cdot\sqrt{np}\cdot\sqrt{\frac{1}{npL}} =\frac{1}{\sqrt{npL}}\sqrt{\frac{1}{np}}.
\end{align*}
\medskip
\noindent (\textbf{Bound for $\delta_{2,m}$}) 
Using Taylor expansion and the expansion in (\ref{eq:loo_delta}) for each $\hat{\theta}^{(m)}_i$, $i\neq m$, we have
\begin{align*}
g^{(m)}(\theta_m^*|\hat{\theta}_{-m})\delta_{2,m} &= \sum_{i:i\neq m}A_{mi}\Big[\psi'(\theta_m^*-\theta_i^*)(\hat{\theta}^{(m)}_i - \theta_i^*) -\frac{1}{2}\psi''(\theta_m^*-\xi_{m,i})(\hat{\theta}^{(m)}_i - \theta_i^*)^2\Big]\\
&= -\sum_{i:i\neq m}A_{mi}\psi'(\theta_m^*-\theta_i^*)\frac{f^{(-m,i)}(\theta_i^*|\theta^*_{-m,-i})}{g^{(-m,i)}(\theta_i^*|\theta^*_{-m,-i})} \\
&\quad\quad + \sum_{i:i\neq m}A_{mi}\psi'(\theta_m^*-\theta_i^*)\delta^{(m)}_i -\frac{1}{2}\sum_{i:i\neq m}A_{mi}\psi''(\theta_m^*-\xi_{m,i})(\hat{\theta}^{(m)}_i - \theta_i^*)^2 \\
&\equiv R^{(m)}_{1} + R^{(m)}_{2} + R^{(m)}_{3}.
\end{align*}
The proof for $\delta_{2,m}$ is now complete by using Lemma \ref{lem:loo_R_norm} and the lower bound $g^{(m)}(\theta_m^*|\hat{\theta}_{-m})\gtrsim np$.

\noindent(\textbf{Bound for $\delta_{3,m}$}) It follows from Lemma \ref{lem:theta_bar_close} that
\begin{align*}
|\delta_{3,m}| = |\hat{\theta}_m - \bar{\theta}_m| &\lesssim \Big(\frac{\big|f^{(m)}(\theta_m^*|\hat{\theta}_{-m})\big|}{np}\Big)^{3/2}.
\end{align*}
By Lemma \ref{lem:bound_fg}-(1), we have $\big|f^{(m)}(\theta_m^*|\hat{\theta}_{-m})\big|/(np)\lesssim \sqrt{\log n/(npL)}$. Furthermore, we have by Lemma \ref{lem:bound_fg}-(2)
\begin{align*}
\big|f^{(m)}(\theta_m^*|\hat{\theta}_{-m})\big| = \big|f^{(m)}(\theta_m^*|\theta^*_{-m})\big| + \mathcal{O}\Big(\sqrt{\frac{np}{L}} + \sqrt{\frac{(\log n)^3}{npL}}\Big).
\end{align*}
Hence we have
\begin{align*}
|\delta_{3,m}| &\lesssim \Big(\frac{\log n}{npL}\Big)^{1/4}\cdot\Big[\frac{\big|f^{(m)}(\theta_m^*|\theta^*_{-m})\big|}{np} + \mathcal{O}\Big(\sqrt{\frac{1}{npL}} + \sqrt{\frac{(\log n)^3}{(np)^3L}}\Big)\Big]\\
&= \mathfrak{o}(1)\Big|\frac{f^{(m)}(\theta_m^*|\theta^*_{-m})}{g^{(m)}(\theta_m^*|\theta^*_{-m})}\Big| + \frac{1}{\sqrt{npL}}\cdot\Big[\Big(\frac{\log n}{npL}\Big)^{1/4} + \frac{(\log n)^{7/4}}{(np)^{5/4}L^{1/4}}\Big],
\end{align*}
as desired. The proof is complete.
%
\end{proof}

\begin{lemma}\label{lem:loo_R_norm}
Recall $R^{(m)}_1$-$R^{(m)}_3$ defined in the proof of Lemma \ref{lem:delta_main}. Suppose $\kappa = \mathcal{O}(1)$ and $np\gg \log n$. Then the following hold with probability $1 - \mathcal{O}(n^{-10})$.
\begin{align*}
&\max_{m\in[n]}|R^{(m)}_{1}|\lesssim \sqrt{\frac{\log n}{L}},\quad\max_{m\in[n]}|R^{(m)}_{2}|\lesssim \sqrt{\frac{\log n}{L}} + \frac{(np\log n)^{1/4}}{L^{3/4}} + \sqrt{\frac{(\log n)^3}{npL}},\\
&\max_{m\in[n]}|R^{(m)}_{3}|\lesssim \frac{\sqrt{\log n}}{L} + \frac{(\log n)^2}{npL}.
\end{align*}
\end{lemma}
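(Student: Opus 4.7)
The unifying principle is that each $R^{(m)}_k$ has the form $\sum_{i\neq m}A_{mi}\,c^{(m)}_{k,i}$, where by the leave-$m$-out construction the coefficients $c^{(m)}_{k,i}$ depend only on $\{A_{jk},y_{jk\ell}:j,k\neq m\}$ and are therefore independent of $\{A_{mi},y_{mi\ell}\}_{i\neq m}$. The three bounds will follow from a Bernstein or Hoeffding inequality applied conditional on the leave-$m$-out data, combined with a union bound over $m\in[n]$.

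For $R^{(m)}_1$, the cleanest route is to mirror the treatment of $R_1$ in Lemma \ref{lem:R_norm}: substitute the definition of $f^{(-m,i)}$, interchange the order of summation, and symmetrize using $\bar{y}_{vu}-\psi(\theta_v^*-\theta_u^*)=-(\bar{y}_{uv}-\psi(\theta_u^*-\theta_v^*))$ to rewrite
\begin{align*}
R^{(m)}_1 = \frac{1}{L}\sum_{\substack{u<v\\u,v\neq m}}\sum_{\ell=1}^L w^{(m)}_{uv}\bigl(y_{uv\ell}-\psi(\theta_u^*-\theta_v^*)\bigr),
\end{align*}
with (conditional on $A$) deterministic weights $w^{(m)}_{uv}=A_{uv}\bigl[A_{mu}\psi'(\theta_m^*-\theta_u^*)/g^{(-m,u)}-A_{mv}\psi'(\theta_m^*-\theta_v^*)/g^{(-m,v)}\bigr]$. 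Hoeffding conditional on $A$ gives $|R^{(m)}_1|\lesssim\sqrt{(\log n/L)\sum_{u<v}(w^{(m)}_{uv})^2}$; the remaining bound $\sum_{u<v}(w^{(m)}_{uv})^2\lesssim(np)^{-2}\sum_u A_{mu}\sum_v A_{uv}\lesssim 1$ follows from $g^{(-m,u)}\asymp np$ (Lemma \ref{lem:bound_loo_fg}-(2)) and the degree control $\sum_v A_{uv}\lesssim np$ from Lemma \ref{lem:A_concen}, yielding $|R^{(m)}_1|\lesssim\sqrt{\log n/L}$.

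For $R^{(m)}_2=\sum_{i\neq m}A_{mi}\psi'(\theta_m^*-\theta_i^*)\delta^{(m)}_i$, apply Bernstein conditional on the leave-$m$-out data. Cauchy--Schwarz bounds the conditional mean by $p\sqrt{n}\,\pnorm{\delta^{(m)}}{}$, which combined with Lemma \ref{lem:loo_delta_l2_bound} produces the two summands $\sqrt{\log n/L}$ and $(np\log n)^{1/4}/L^{3/4}$; the Bernstein variance contribution $\sqrt{\log n\cdot p\,\pnorm{\delta^{(m)}}{}^2}$ is dominated by the mean (the two differ by a factor $\sqrt{np/\log n}\gg 1$); and the maximum contribution $\log n\,\pnorm{\delta^{(m)}}{\infty}\lesssim\sqrt{(\log n)^3/(npL)}$ from Lemma \ref{lem:loo_delta_max_bound} supplies the last summand.

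For $R^{(m)}_3$, use $|\psi''|\lesssim 1$ to dominate $|R^{(m)}_3|\lesssim\sum_{i\neq m}A_{mi}(\hat\theta^{(m)}_i-\theta_i^*)^2$ and apply Bernstein conditional on the leave-$m$-out data. The conditional mean is $p\,\pnorm{\hat\theta^{(m)}-\theta^*_{-m}}{}^2\lesssim 1/L$, the variance contribution is at most $\sqrt{\log n\cdot p\,\pnorm{\hat\theta^{(m)}-\theta^*_{-m}}{\infty}^2\,\pnorm{\hat\theta^{(m)}-\theta^*_{-m}}{}^2}\lesssim\log n/(L\sqrt{np})$, and the maximum contribution is $\log n\,\pnorm{\hat\theta^{(m)}-\theta^*_{-m}}{\infty}^2\lesssim(\log n)^2/(npL)$, with all $\ell_2$ and $\ell_\infty$ inputs supplied by Lemma \ref{lem:pre_estimate}; under $np\gg\log n$ the first two are absorbed into $\sqrt{\log n}/L$, giving the stated bound. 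The main technical burden is not a single hard step but careful bookkeeping --- most delicately for $R^{(m)}_2$, where one must recognize that the conditional Bernstein mean $p\sum_i\psi'\delta^{(m)}_i$ is itself only $\mathcal{O}(\sqrt{\log n/L}+(np\log n)^{1/4}L^{-3/4})$ via Cauchy--Schwarz and not negligible; na\"ively centering the summands would produce a strictly worse bound.
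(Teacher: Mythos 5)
Your proposal is correct and follows essentially the same strategy as the paper for all three terms: for $R^{(m)}_1$ you unwind $f^{(-m,i)}$ and symmetrize exactly as the paper's Lemma~\ref{lem:R_norm} argument does, and for $R^{(m)}_2$ you split $A_{mi}$ into $p + (A_{mi}-p)$ and apply Bernstein conditionally on the leave-$m$-out data, identifying the non-negligible mean term as the paper does. The only small departure is in $R^{(m)}_3$, where you apply Bernstein directly to $\sum_i A_{mi}(\hat\theta^{(m)}_i-\theta_i^*)^2$ whereas the paper first factors out $\pnorm{\hat\theta^{(m)}-\theta^*_{-m}}{\infty}$ and applies Bernstein to the linear sum $\sum_i A_{mi}|\hat\theta^{(m)}_i-\theta_i^*|$; both routes use the same $\ell_2$ and $\ell_\infty$ estimates from Lemma~\ref{lem:pre_estimate} and yield the identical bound, with yours being marginally cleaner since the conditional mean $p\pnorm{\hat\theta^{(m)}-\theta^*_{-m}}{}^2 \lesssim 1/L$ is immediate.
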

\begin{proof}[Proof of Lemma \ref{lem:loo_R_norm}]

(\textbf{Bound of $R^{(m)}_1$}) 
Using a similar argument for the control of $R_{1,m}$ as in Lemma \ref{lem:R_norm}, we have $\max_{m\in[n]}|R^{(m)}_{1}|\lesssim \sqrt{\log n/L}$ with the prescribed probability. 

\noindent(\textbf{Bound of $R^{(m)}_2$}) Note that by definition, $\delta^{(m)}$ is independent from data involving the $m$th individual. Hence by applying Bernstein's inequality conditioning on data without the $m$th observation, we have
\begin{align*}
R^{(m)}_2 &= \sum_{i:i\neq m}A_{mi}\psi'(\theta_m^*-\theta_i^*)\delta^{(m)}_i\\
&= p\cdot\sum_{i:i\neq m}\psi'(\theta_m^*-\theta_i^*)\delta^{(m)}_i + \sum_{i:i\neq m}(A_{mi}-p)\psi'(\theta_m^*-\theta_i^*)\delta^{(m)}_i\\
&\lesssim p\sqrt{n}\pnorm{\delta^{(m)}}{} + \sqrt{p\log n}\pnorm{\delta^{(m)}}{} + \log n\cdot \pnorm{\delta^{(m)}}{\infty}\\
&\stackrel{(*)}{\lesssim} p\sqrt{n}\cdot \frac{1}{\sqrt{pL}}\cdot\Big[\sqrt{\frac{\log n}{np}} + \Big(\frac{\log n}{npL}\Big)^{1/4}\Big] + \log n\cdot\sqrt{\frac{\log n}{npL}}\\
&= \sqrt{\frac{\log n}{L}} + \frac{(np\log n)^{1/4}}{L^{3/4}} + \sqrt{\frac{(\log n)^3}{npL}},
\end{align*}
with the prescribed probability. Here $(*)$ follows from Lemmas \ref{lem:loo_delta_l2_bound} and \ref{lem:loo_delta_max_bound}. 

\noindent(\textbf{Bound for $R^{(m)}_{3}$}) By definition, we have
\begin{align*}
|R^{(m)}_3| \lesssim \Big(\sum_{i:i\neq m}A_{mi}|\hat{\theta}^{(m)}_i-\theta_i^*|\Big) \cdot \pnorm{\hat{\theta}^{(m)}-\theta^*_{-m}}{\infty}.
\end{align*}
The first term satisfies, using the independence between $\{A_{mi}\}$ and $\hat{\theta}^{(m)}$ and Bernstein's inequality, 
\begin{align*}
\sum_{i:i\neq m}A_{mi}|\hat{\theta}^{(m)}_i-\theta_i^*| &= p\cdot \sum_{i:i\neq m}|\hat{\theta}^{(m)}_i-\theta_i^*| + \sum_{i:i\neq m}(A_{mi}-p)|\hat{\theta}^{(m)}_i-\theta_i^*|\\
&\lesssim p\sqrt{n}\pnorm{\hat{\theta}^{(m)}-\theta_{-m}^*}{} + \log n\cdot \pnorm{\hat{\theta}^{(m)}-\theta_{-m}^*}{\infty}\\
&\stackrel{(*)}{\lesssim} p\sqrt{n}\sqrt{\frac{1}{pL}} + \log n\sqrt{\frac{\log n}{npL}} = \sqrt{\frac{np}{L}} + \sqrt{\frac{(\log n)^3}{npL}},
\end{align*}
where $(*)$ follows from Lemma \ref{lem:pre_estimate}. Hence
\begin{align*}
|R^{(m)}_3| \lesssim\Big(\sqrt{\frac{np}{L}} + \sqrt{\frac{(\log n)^3}{npL}}\Big)\cdot \sqrt{\frac{\log n}{npL}} = \frac{\sqrt{\log n}}{L} + \frac{(\log n)^2}{npL}.
\end{align*}
This completes the proof.
\end{proof}

\section{Proof of the main expansion for spectral method}\label{sec:proof_spec}

The goal of this section is to prove Theorem \ref{thm:spec_expansion}. We give a proof outline and introduce some preliminaries in Section \ref{subsec:proof_spec_prelim}, followed by two main steps of the proof in Sections \ref{subsec:proof_spec_delta} and \ref{subsec:proof_spec_Delta} respectively. We then complete the proof in Section \ref{subsec:proof_spec_completion}.

\subsection{Preliminary}\label{subsec:proof_spec_prelim}
First note that by a simple Taylor expansion, we can identify the following main term of $\tilde{\theta}_i - \theta_i^*$:
\begin{align}\label{def:spec_Delta}
\Delta_i \equiv \frac{\hat{\pi}_i - \pi_i^*}{\pi_i^*} - \frac{1}{n}\sum_{k=1}^n \frac{\hat{\pi}_k - \pi_k^*}{\pi_k^*}.
\end{align}
To find a close proxy of $\Delta$ that is tractable for analysis , we note the following property of $\hat{\pi}_i$:
\begin{align*}
\hat{\pi}_i = \frac{\sum_{j:j\neq i}A_{ij}\bar{y}_{ij}\hat{\pi}_j}{\sum_{j:j\neq i}A_{ij}\bar{y}_{ji}},
\end{align*}
which leads to the proxy choice (analogue of $\bar{\theta}$ for MLE)
\begin{align}\label{def:spec_pi_bar}
\bar{\pi}_i \equiv \frac{\sum_{j:j\neq i}A_{ij}\bar{y}_{ij}\pi_j^*}{\sum_{j:j\neq i}A_{ij}\bar{y}_{ji}}. 
\end{align}
The remainder vector $\delta$ for the spectral estimator is then defined by 
\begin{align}\label{eq:spec_expansion}
\Delta_i = \frac{\hat{\pi}_i - \pi^*_i}{\pi^*_i} - \frac{1}{n}\sum_{k=1}^n \frac{\hat{\pi}_k - \pi^*_k}{\pi^*_k} \equiv \frac{\bar{\pi}_i - \pi^*_i}{\pi^*_i} + \delta_i.
\end{align}
Now that the main term $(\bar{\pi}_i - \pi^*_i)/\pi^*_i$ is tractable for analysis, the goal is control $\pnorm{\delta}{\infty}$. Similar to the proof of its counterpart Proposition \ref{prop:delta_entrywise_bound}, an essential intermediate step is to give a tight bound for $\pnorm{\delta}{}$, which we now discuss in detail.


\subsection{Control of $\pnorm{\delta}{}$}\label{subsec:proof_spec_delta}

The goal of this subsection is to prove the following $\ell_2$ bound of $\delta$.
\begin{proposition}\label{prop:delta_norm_spec}
Suppose that $\kappa=\mathcal{O}(1)$ and $np\gg \log n$. Then it holds with probability $1-\mathcal{O}(n^{-10})$ that $\pnorm{\delta}{} = \mathfrak{o}(1/\sqrt{pL})$.
\end{proposition}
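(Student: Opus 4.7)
The plan is to mirror the strategy of Proposition \ref{prop:delta_bound} for the MLE, replacing the symmetric Hessian by a non-symmetric Laplacian-type matrix $L$ whose conditional mean $L^{(0)}\equiv \E(L\mid A)$ turns out to be symmetric. First, I would derive a self-consistent equation for $\delta$. Starting from the fixed-point identity
\begin{align*}
\hat{\pi}_i \sum_{j:j\neq i}A_{ij}\bar{y}_{ji} = \sum_{j:j\neq i}A_{ij}\bar{y}_{ij}\hat{\pi}_j,
\end{align*}
dividing both sides by $\pi^*_i\sum_{j:j\neq i}A_{ij}\bar{y}_{ji}$ and subtracting $1$ yields
\begin{align*}
\frac{\hat{\pi}_i-\pi^*_i}{\pi^*_i} = \frac{\bar{\pi}_i-\pi^*_i}{\pi^*_i} + \frac{\sum_{j:j\neq i}A_{ij}\bar{y}_{ij}(\hat{\pi}_j-\pi^*_j)}{\pi^*_i\sum_{j:j\neq i}A_{ij}\bar{y}_{ji}}.
\end{align*}
Combining this with $\Delta_j = (\bar{\pi}_j-\pi^*_j)/\pi^*_j + \delta_j$ and the elementary identity $(\hat{\pi}_j-\pi^*_j)/\pi^*_j = \Delta_j + \ave((\hat{\pi}-\pi^*)/\pi^*)$ produces, in matrix form, the self-consistent equation $L\delta = \tilde{R}$, where $L_{ii}\equiv \pi^*_i\sum_{j\neq i}A_{ij}\bar{y}_{ji}$, $L_{ij}\equiv -A_{ij}\bar{y}_{ij}\pi^*_j$ for $i\neq j$, and $\tilde{R}$ aggregates $\tilde{R}_{1,i}\equiv \sum_{j\neq i}A_{ij}\bar{y}_{ij}(\bar{\pi}_j-\pi^*_j)$ together with a global correction proportional to $\ave((\hat{\pi}-\pi^*)/\pi^*)$.

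The key observation is that the population Laplacian $L^{(0)}\equiv \E(L\mid A)$, obtained by replacing every $\bar{y}_{ij}$ with $\psi(\theta^*_i-\theta^*_j)$, is symmetric. Indeed, $\pi^*_i e^{\theta^*_j} = \pi^*_j e^{\theta^*_i}$ implies $\pi^*_i\psi(\theta^*_j-\theta^*_i) = \pi^*_j\psi(\theta^*_i-\theta^*_j)$ for every $i\neq j$, so $L^{(0)}$ is a symmetric weighted graph Laplacian with $\bm{1}_n$ in its kernel. Since $\pi^*_i\asymp 1/n$ under $\kappa = \mathcal{O}(1)$, an analogue of Lemma \ref{lem:hessian_eigen} gives $\lambda_{\min,\perp}(L^{(0)})\gtrsim p$ and $\pnorm{L^{(0)}}{\op}\lesssim p$ with probability $1-\mathcal{O}(n^{-10})$. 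A matrix Bernstein bound conditional on $A$, using that the entries of $L-L^{(0)}$ are independent, bounded by $\mathcal{O}(1/n)$ and of variance $\mathcal{O}(1/(n^2L))$ on $\{A_{ij}=1\}$, then gives $\pnorm{L-L^{(0)}}{\op} = \mathfrak{o}(p)$ under $np\gg \log n$, so $L$ inherits the spectral gap of $L^{(0)}$ up to constants on $\bm{1}_n^\perp$.

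Combining the above two steps along the lines of (\ref{ineq:delta_l2_lower}) gives $\pnorm{\delta}{} \lesssim \sqrt{n}\,|\ave(\delta)| + p^{-1}\pnorm{\tilde{R}}{}$. Bounding $\pnorm{\tilde{R}}{}$ then reduces to two sub-tasks parallel to Lemma \ref{lem:R_norm}. First, $\pnorm{\tilde{R}_1}{}$ is controlled by a conditional Hoeffding argument after the symmetrisation $\bar{y}_{ij}\leftrightarrow \bar{y}_{ji}$ and a linearisation of $\bar{\pi}_j-\pi^*_j$ based on the preliminary $\ell_2/\ell_\infty$ bounds of $\hat{\pi}-\pi^*$ translated from Proposition \ref{prop:mle_prelim}; this yields a bound of the correct order $1/\sqrt{nL}$, so $p^{-1}\pnorm{\tilde{R}_1}{} = \mathfrak{o}(1/\sqrt{pL})$. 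Second, the global correction from $\ave((\hat{\pi}-\pi^*)/\pi^*)$ is handled by a direct antisymmetry-plus-Hoeffding argument, analogous to Lemma \ref{lem:delta_avg}; the same argument also yields $|\ave(\delta)| = \mathfrak{o}(1/\sqrt{npL})$, so $\sqrt{n}|\ave(\delta)| = \mathfrak{o}(1/\sqrt{pL})$, completing the claim.

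The main obstacle will be the non-symmetry of $L$, which forces a two-step spectral analysis: first establishing the spectral gap of the symmetric $L^{(0)}$ at the correct scale $p$ (rather than $np$, because $\pi^*_i\asymp 1/n$), and then showing via matrix concentration that the non-symmetric fluctuation $L - L^{(0)}$ does not destroy this gap in the $\ell_2\to\ell_2$ sense. A secondary technical subtlety is that $\bar{\pi}_i$ is not unbiased for $\pi_i^*$ conditional on $A$ due to its random denominator, so a careful linearisation of the ratio $\bar{\pi}_j/\pi_j^* - 1$ is required to bound $\pnorm{\tilde{R}_1}{}$ at the precise $\mathfrak{o}(\sqrt{p/L})$ scale; the rest of the argument is then parallel to its MLE counterpart.
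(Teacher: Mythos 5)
Your proposal follows essentially the same route as the paper: the identical self-consistent equation $L\delta=\tilde R$ with the same non-symmetric Laplacian-type $L$, the observation that $\E(L\mid A)$ is symmetric with spectral gap of order $p$, a conditional matrix concentration bound on $L-\E(L\mid A)$, and symmetrisation-plus-Hoeffding arguments to control $\tilde R$ (Lemma~\ref{lem:bD_norm}) and $\ave(\delta)$ (Lemma~\ref{lem:delta_ave_spec}). One small inaccuracy worth correcting: the entries of $L-\E(L\mid A)$ are \emph{not} independent, since $L_{ij}$ and $L_{ji}$ are both driven by $\bar y_{ij}$ (indeed $\bar y_{ji}=1-\bar y_{ij}$) and each diagonal $L_{ii}$ depends on all off-diagonal comparisons of row $i$; the matrix Bernstein inequality must therefore be applied to the decomposition $L=\sum_{i<j}S_{ij}$ into independent rank-$\le 4$ blocks indexed by unordered pairs, exactly as in Lemma~\ref{lem:concentrate_L_spec}.
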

\begin{proof}[Proof of Proposition \ref{prop:delta_norm_spec}]
Fix $m\in[n]$. By the definitions of $\delta$ in (\ref{eq:spec_expansion}) and $\bar{\pi}_m$ in (\ref{def:spec_pi_bar}), we have
\begin{align*}
\delta_m  = \frac{\hat{\pi}_m - \bar{\pi}_m}{\pi^*_m} - \frac{1}{n}\sum_{i=1}^n \frac{\hat{\pi}_i - \pi^*_i}{\pi^*_i}= \frac{\sum_{i:i\neq m}A_{im}\bar{y}_{mi}(\hat{\pi}_i - \pi_i^*)}{\pi_m^*\cdot \sum_{i:i\neq m}A_{im}\bar{y}_{im}}-\frac{1}{n}\sum_{i=1}^n \frac{\hat{\pi}_i - \pi^*_i}{\pi^*_i}.
\end{align*}
Using again the expansion (\ref{eq:spec_expansion}) for each $\hat{\pi}_i - \pi^*_i$, we have
\begin{align*}
\delta_m &= \frac{\sum_{i:i\neq m}A_{im}\bar{y}_{mi}(\bar{\pi}_i - \pi_i^*)}{\pi_m^*\cdot \sum_{i:i\neq m}A_{im}\bar{y}_{im}} + \frac{\sum_{i:i\neq m}A_{im}\bar{y}_{mi}\pi^*_i\delta_i}{\pi_m^*\cdot \sum_{i:i\neq m}A_{im}\bar{y}_{im}}\\
&\quad\quad +\frac{1}{n}\sum_{i=1}^n \frac{\hat{\pi}_i - \pi^*_i}{\pi^*_i} \cdot  \frac{\sum_{i:i\neq m}A_{im}(\bar{y}_{mi}\pi_i^* - \bar{y}_{im}\pi^*_m)}{\pi_m^*\cdot \sum_{i:i\neq m}A_{im}\bar{y}_{im}}.
\end{align*}
In matrix form, the above display is equivalent to $L\delta = R_1 + R_2$, where $L$ is defined by $L_{ij} = -A_{ij}\bar{y}_{ij}\pi_j^*$ for $i\neq j$ and $L_{ii} = \sum_{j:j\neq i}A_{ij}\bar{y}_{ji}\pi_i^*$, and $R_1, R_2\in\R^n$ are defined by
\begin{align*}
R_{1,m} &= \sum_{i:i\neq m} A_{im}\bar{y}_{mi}(\bar{\pi}_i - \pi_i^*),\\
R_{2,m} &= \frac{1}{n}\sum_{i=1}^n \frac{\hat{\pi}_i - \pi^*_i}{\pi^*_i} \cdot  \sum_{i:i\neq m}A_{im}(\bar{y}_{mi}\pi_i^* - \bar{y}_{im}\pi^*_m).
\end{align*}
Note that $\E(L_{ij}|A) = -A_{ij}\psi(\theta_i^* - \theta_j^*)\pi_j^* = -A_{ij}\psi(\theta_j^* - \theta_i^*)\pi_i^*$ and $\E(L_{ii}|A) = \sum_{j:j\neq i}A_{ij}\psi(\theta_j^* - \theta_i^*)\pi_i^*$, and hence $\E(L|A)$ is a symmetric Laplacian matrix. Hence
\begin{align*}
\pnorm{L\delta}{} &\geq \pnorm{\E(L|A)\delta}{} - \pnorm{L - \E(L|A)}{\op}\pnorm{\delta}{}\\
&= \pnorm{\E(L|A)\big(\delta-\ave(\delta)\bm{1}_n\big)}{} - \pnorm{L - \E(L|A)}{\op}\pnorm{\delta}{}\\
&\geq \lambda_{\min, \perp}(\E(L|A))\pnorm{\delta-\ave(\delta)\bm{1}_n}{} - \pnorm{L - \E(L|A)}{\op}\pnorm{\delta}{}\\
&\geq \big[ \lambda_{\min, \perp}(\E(L|A)) - \pnorm{L - \E(L|A)}{\op}\big]_+\pnorm{\delta}{} - \sqrt{n}\cdot \lambda_{\min, \perp}(\E(L|A))\cdot |\ave(\delta)|.
\end{align*}
Here for any $M\in\R^{n\times n}$, $\lambda_{\min, \perp}(M) \equiv \min_{x\in\R^n: \bm{1}^\top_n x = 0, \pnorm{x}{} = 1} x^\top M x$. By Lemma \ref{lem:concentrate_L_spec}, $\pnorm{L - \E(L|A)}{\op}\leq \lambda_{\min, \perp}(\E(L|A))/2$ and $\lambda_{\min, \perp}(\E(L|A))\gtrsim p$ with the prescribed probability, hence rearranging the terms yields that
\begin{align*}
\pnorm{\delta}{}\lesssim \sqrt{n}|\ave(\delta)| + p^{-1}\cdot (\pnorm{R_1}{} + \pnorm{R_2}{}).
\end{align*}
The proof is now complete by plugging in the estimates of $|\ave(\delta)|$ in Lemma \ref{lem:delta_ave_spec} and of $\pnorm{R_1}{}$ and $\pnorm{R_2}{}$ in Lemma \ref{lem:bD_norm}.
\end{proof}

\begin{lemma}\label{lem:concentrate_L_spec}
Recall the matrix $L$ defined in the proof of Proposition \ref{prop:delta_norm_spec}. Suppose that $\kappa = \mathcal{O}(1)$ and $np\gg \log n$, then with probability at least $1 - \mathcal{O}(n^{-10})$, 
\begin{align*}
\lambda_{\min,\perp}(\E(L|A)) \geq cp, \quad \pnorm{L - \E(L|A)}{\op} \leq  C\Big(\sqrt{\frac{p\log n}{nL}} \vee \frac{\log n}{n}\Big)
\end{align*}
for some positive $c = c(\kappa)$ and $C = C(\kappa)$. Consequently, $\pnorm{L - \E(L|A)}{\op} \leq \lambda_{\min,\perp}(\E(L|A))/2$ for large enough $n$.
\end{lemma}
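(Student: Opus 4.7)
The plan is to decouple the two claims: the spectral gap of $\E(L|A)$ will be reduced to the spectral gap of the Erdős–Rényi Laplacian (already available from Lemma~\ref{lem:hessian_eigen}), and the concentration of $L$ around $\E(L|A)$ will be obtained from matrix Bernstein, with the crucial bookkeeping that the sum is indexed by \emph{pairs} $i<j$ (not by the $L$ individual observations), so that the repetition count $L$ enters only through the conditional variance $\var(\bar y_{ij}\mid A)\lesssim 1/L$.

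For the lower bound, I would first invoke the detailed balance identity $\pi_j^*\psi(\theta_i^*-\theta_j^*) = \pi_i^*\psi(\theta_j^*-\theta_i^*)$ to rewrite $\E(L_{ij}|A) = -A_{ij}\pi_i^*\psi(\theta_j^*-\theta_i^*)$, which exhibits $\E(L|A)$ as a symmetric weighted graph Laplacian with edge weights $w_{ij} = A_{ij}\pi_i^*\psi(\theta_j^*-\theta_i^*)$. Since $\kappa = \mathcal{O}(1)$, one has $\pi_i^* \asymp 1/n$ and $\psi(\theta_j^*-\theta_i^*) \asymp 1$ uniformly, so $w_{ij} \asymp A_{ij}/n$, and $\E(L|A)$ is uniformly $(1/n)$-comparable to the Laplacian of $A$. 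Combining this with the spectral-gap estimate for the Erdős–Rényi Laplacian used to prove Lemma~\ref{lem:hessian_eigen} yields $\lambda_{\min,\perp}(\E(L|A)) \gtrsim p$ on the prescribed high-probability event.

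For the concentration bound, I would set $\tilde Z_{ij} \equiv \bar y_{ij} - \psi(\theta_i^*-\theta_j^*)$ for $i<j$; conditional on $A$, these are independent across pairs, mean zero, bounded by $1$, with variance $\lesssim 1/L$. Using $\bar y_{ji} = 1-\bar y_{ij}$, every entry of $L-\E(L|A)$ is linear in the $\{\tilde Z_{ij}\}$, and one obtains the decomposition
\begin{equation*}
L - \E(L|A) = \sum_{i<j} A_{ij}\tilde Z_{ij}\, W^{(ij)}, \qquad W^{(ij)} \equiv \pi_j^* e_j e_j^\top - \pi_j^* e_i e_j^\top + \pi_i^* e_j e_i^\top - \pi_i^* e_i e_i^\top.
\end{equation*}
A direct $2\times 2$ SVD gives $\pnorm{W^{(ij)}}{\op} \leq \sqrt{2((\pi_i^*)^2+(\pi_j^*)^2)} \lesssim 1/n$. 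For the variance proxies $\sum_{i<j}A_{ij}\var(\tilde Z_{ij}|A)\,W^{(ij)}(W^{(ij)})^\top$ and the transposed version, each summand is supported on the $(i,j)$-block with entries of size $\lesssim 1/n^2$, so a Gershgorin row-sum estimate combined with $\sum_{j}A_{ij}\lesssim np$ (Lemma~\ref{lem:A_concen}) gives operator norm $\lesssim p/(nL)$. The rectangular matrix Bernstein inequality applied conditionally on $A$ then produces $\pnorm{L-\E(L|A)}{\op} \lesssim \sqrt{p\log n/(nL)} + \log n / n$ with probability $1-\mathcal{O}(n^{-10})$. The final assertion follows since, under $np \gg \log n$, both $\sqrt{p\log n/(nL)}$ and $\log n/n$ are $\mathfrak{o}(p)$.

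The main obstacle is the combinatorial bookkeeping in writing out the coefficients $W^{(ij)}$: keeping straight how each random pair $(\bar y_{ij},\bar y_{ji})$ contributes to the two off-diagonal entries $(i,j),(j,i)$ and the two diagonal entries $(i,i),(j,j)$ (with the asymmetric weights $\pi_i^*$ vs.\ $\pi_j^*$), and verifying that after this bookkeeping the variance sums collapse to a Laplacian-like object with the clean $p/(nL)$ bound. A secondary, but less novel, subtlety is that $L$ itself is not symmetric (unlike $\E(L|A)$), so one must use the non-Hermitian version of matrix Bernstein and control both variance proxies, rather than relying on a single symmetric one.
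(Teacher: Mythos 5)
Your proposal is correct and follows essentially the same route as the paper's proof: rewrite $\E(L|A)$ as a symmetric weighted Laplacian via the detailed-balance identity and invoke the Erd\H{o}s--R\'{e}nyi Laplacian spectral gap; then decompose $L-\E(L|A)=\sum_{i<j}A_{ij}\tilde Z_{ij}W^{(ij)}$ over pairs, bound $\pnorm{W^{(ij)}}{\op}\lesssim n^{-1}$ and the variance proxies by $p/(nL)$, and apply matrix Bernstein for non-Hermitian matrices. The only cosmetic divergence is that the paper controls the variance proxy by recognizing $\sum_{i<j}\E\big(S_{ij}S_{ij}^\top|A\big)$ as itself a weighted Laplacian and reapplying Lemma~\ref{lem:hessian_eigen}, whereas you use a Gershgorin row-sum bound; both give the same $p/(nL)$ estimate by the same degree count $\sum_j A_{ij}\lesssim np$.
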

\begin{proof}[Proof of Lemma \ref{lem:concentrate_L_spec}]
Recall that $\E(L|A)$ is a symmetric Laplacian matrix with $\E(L_{ij}|A) =  -A_{ij}\psi(\theta_j^* - \theta_i^*)\pi_i^*$ for $i\neq j$ and $\E(L_{ii}|A) = \sum_{j:j\neq i}A_{ij}\psi(\theta_j^* - \theta_i^*)\pi_i^*$. Hence the first claim follows from Lemma \ref{lem:hessian_eigen} by noting that $\psi(\theta_j^* - \theta_i^*)\pi_i^*\asymp n^{-1}$. 

Next we establish the concentration of $L$. Note that $L$ can be written as $L = \sum_{i<j} S_{ij}$, where
\begin{align*}
S_{ij} \equiv A_{ij}\big(-\bar{y}_{ij}\pi_j^* \cdot e_ie_j^\top - \bar{y}_{ji}\pi_i^*\cdot e_je_i^\top + \bar{y}_{ji}\pi^*_i\cdot e_ie_i^\top + \bar{y}_{ij}\pi_j^*\cdot e_je_j^\top\big),
\end{align*}
and $S_{ij}$ are independent across $i < j$. Hence by the Bernstein inequality for asymmetric matrices \cite[Theorem 1.6]{tropp2012user}, we have
\begin{align}\label{ineq:matrix_bern_rec}
\Prob\Big(\pnorm{L - \E(L|A)}{\op} \geq t|A\Big) \leq 2n\cdot\exp\Big(-\frac{Ct^2}{\sigma^2 + Rt}\Big)
\end{align}
for some universal $C > 0$, where $R$ and $\sigma^2$ are such that
\begin{align*}
\max_{i<j}\pnorm{S_{ij}}{\op} \leq R, \quad \sigma^2\equiv \max\Big\{\bigpnorm{\sum_{i<j}\E \big(S_{ij}S_{ij}^\top|A\big)}{\op}, \bigpnorm{\sum_{i<j}\E \big(S_{ij}^\top S_{ij}|A\big)}{\op}\Big\}.
\end{align*}
Obviously $R$ can be taken to be $\mathcal{O}(n^{-1})$. For $\sigma^2$, we have by direct calculation that
\begin{align*}
\E\big(S_{ij}S_{ij}^\top|A\big) = \frac{A_{ij}}{L}\psi'(\theta_i^*-\theta_j^*)\big[(\pi_i^*)^2 + (\pi_j^*)^2\big](e_ie_i^\top + e_je_j^\top - e_ie_j^\top - e_je_i^\top). 
\end{align*}
This implies that, with $w_{ij}\equiv A_{ij}\psi'(\theta_i^*-\theta_j^*)\big[(\pi_i^*)^2 + (\pi_j^*)^2\big]/L$, $W\equiv \sum_{i<j}\E \big(S_{ij}S_{ij}^\top|A\big)$ is a Laplacian matrix with $W_{ij} = -w_{ij}$ for $i\neq j$ and $W_{ii} = \sum_{j:j\neq i}w_{ij}$. Hence using the fact that $w_{ij}/A_{ij} \asymp (n^2L)^{-1}$, Lemma \ref{lem:hessian_eigen} implies that $\pnorm{W}{\op} \lesssim (n^2L)^{-1} \cdot (np) = p/(nL)$. A similar estimate for $\pnorm{\sum_{i<j}\E\big(S_{ij}^\top S_{ij}\big|A)}{\op}$ concludes that $\sigma^2$ can be taken to be $\mathcal{O}(p/(nL))$.

Finally plugging the estimates of $R$ and $\sigma^2$ into (\ref{ineq:matrix_bern_rec}) yields that with the prescribed probability,
\begin{align*}
\pnorm{L - \E(L|A)}{\op} \lesssim \sqrt{\log n}\cdot \sigma + \log n\cdot R \lesssim \sqrt{\frac{p\log n}{nL}} \vee \frac{\log n}{n}.
\end{align*}
The fact that $\pnorm{L - \E(L|A)}{\op} \leq \pnorm{\E(L|A)}{\op}/2$ (for large enough $n$) holds under the condition $np\gg \log n$. The proof is complete.
\end{proof}

\begin{lemma}\label{lem:delta_ave_spec}
Suppose that $\kappa = \mathcal{O}(1)$ and $np\gg \log n$. Then it holds with probability at least $1-\mathcal{O}(n^{-10})$ that
\begin{align*}
|\ave(\delta)| \leq C\sqrt{\frac{1}{npL}}\cdot\Big(\sqrt{\frac{\log n}{n}} + \sqrt{\frac{\log n}{npL}}\Big)
\end{align*}
for some positive $C = C(\kappa)$. Consequently, $|\ave(\delta)| = \mathfrak{o}(1/\sqrt{npL})$.
\end{lemma}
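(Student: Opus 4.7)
The plan is to reduce $\ave(\delta)$ to an explicit weighted sum of centered comparisons, then bound it via a conditional Bernstein argument after pairing $(i,j)$ with $(j,i)$ to exploit antisymmetry.

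First, since the averaging correction $n^{-1}\sum_k(\hat{\pi}_k-\pi_k^*)/\pi_k^*$ in (\ref{eq:spec_expansion}) is independent of $i$, averaging $\delta_i$ over $i$ collapses the two $\Delta$-contributions and yields
\begin{align*}
\ave(\delta) = -\frac{1}{n}\sum_{i=1}^n \frac{\bar{\pi}_i - \pi_i^*}{\pi_i^*} = -\frac{1}{n}\sum_{i=1}^n \frac{N_i}{\pi_i^*\,s_i},
\end{align*}
where $N_i \define \sum_{j:j\neq i}A_{ij}(\pi_i^*+\pi_j^*)(\bar{y}_{ij}-\psi(\theta_i^*-\theta_j^*))$ and $s_i \define \sum_{j:j\neq i}A_{ij}\bar{y}_{ji}$; the identity $\bar{y}_{ij}\pi_j^*-\bar{y}_{ji}\pi_i^* = (\pi_i^*+\pi_j^*)(\bar{y}_{ij}-\psi(\theta_i^*-\theta_j^*))$ recorded in Section \ref{subsec:proof_spec_prelim} puts $N_i$ in this centered form. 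I would then decompose $1/s_i = 1/s_i^* + (s_i^*-s_i)/(s_i^*s_i)$ with $s_i^* \define \sum_{j:j\neq i}A_{ij}\psi(\theta_j^*-\theta_i^*)$, splitting $\ave(\delta) = I + II$.

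For the main term $I = -n^{-1}\sum_i N_i/(\pi_i^*s_i^*)$, the antisymmetry $\bar{y}_{ij}-\psi(\theta_i^*-\theta_j^*)=-(\bar{y}_{ji}-\psi(\theta_j^*-\theta_i^*))$ lets me pair $(i,j)$ with $(j,i)$ to produce
\begin{align*}
I = \frac{1}{n}\sum_{i<j}A_{ij}(\pi_i^*+\pi_j^*)\Big(\frac{1}{\pi_j^*s_j^*}-\frac{1}{\pi_i^*s_i^*}\Big)\big(\bar{y}_{ij}-\psi(\theta_i^*-\theta_j^*)\big).
\end{align*}
Conditional on $A$, the summands are independent and mean zero; using $\pi_i^*+\pi_j^*\asymp 1/n$, $\pi_i^*s_i^*\asymp p$ (via Lemma \ref{lem:A_concen}), and $\sum_{i<j}A_{ij}\lesssim n^2p$, each summand is bounded by $\mathcal{O}(1/(n^2p))$ with total conditional variance $\lesssim 1/(n^2pL)$. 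Bernstein's inequality then yields $|I|\lesssim \sqrt{\log n/(n^2pL)} = \sqrt{1/(npL)}\cdot\sqrt{\log n/n}$, matching the first summand.

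For the remainder $II = n^{-1}\sum_i N_i(s_i^*-s_i)/(\pi_i^*s_i^*s_i)$, I would exploit that $s_i^*-s_i = \sum_k A_{ik}(\bar{y}_{ik}-\psi(\theta_i^*-\theta_k^*))$ is itself a centered sum in the comparisons, so $N_i(s_i^*-s_i)$ is a quadratic form in the edge-level noise. Splitting the double sum into a diagonal part ($j=k$) with conditional mean $\asymp p/L$ (giving a deterministic bias in $II$ of order $1/(npL)$) and an off-diagonal zero-mean part controlled via a Hanson-Wright estimate on its coefficient matrix (Frobenius and operator norms $\lesssim p$, sub-Gaussian scale $\asymp 1/\sqrt{L}$) yields $|II|\lesssim \sqrt{\log n}/(npL) = \sqrt{1/(npL)}\cdot\sqrt{\log n/(npL)}$ after the normalization $\pi_i^*s_i^*s_i\asymp np^2$, matching the second summand. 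The consequence $|\ave(\delta)|=\mathfrak{o}(1/\sqrt{npL})$ is then immediate since $\sqrt{\log n/n}+\sqrt{\log n/(npL)}=\mathfrak{o}(1)$ under the standing conditions. The main technical obstacle is the pairing step for $I$: without it, a naive Bernstein on $-n^{-1}\sum_i N_i/(\pi_i^*s_i^*)$ gives summands of size $\sqrt{\log n/(npL)}$ that do not cancel, yielding only an $\mathcal{O}(\sqrt{\log n/(npL)})$ bound that misses by a factor of $\sqrt{n}$ the $\sqrt{\log n/n}/\sqrt{npL}$ rate actually needed; the edge-level independence restored by pairing reduces the effective number of independent summands from $n$ to $\mathcal{O}(n^2p)$ and tightens the variance accordingly.
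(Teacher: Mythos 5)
Your decomposition of $\ave(\delta)$ into $I+II$ via $1/s_i = 1/s_i^* + (s_i^*-s_i)/(s_i^* s_i)$ and your pairing-plus-Hoeffding argument for $I$ are exactly the paper's proof of this lemma; the only genuine departure is in the treatment of $II$. The paper does something cruder and simpler there: after the pointwise bound $\pi_m^* s_m^* s_m \gtrsim np^2$ it writes
\[
|II| \lesssim (np)^{-2} \max_m\Big|\sum_{i\neq m}A_{im}(\bar{y}_{mi}\pi_i^*-\bar{y}_{im}\pi_m^*)\Big| \cdot \sqrt{n}\,\Big[\sum_m\Big|\sum_{i\neq m}A_{im}(\bar{y}_{im}-\psi(\theta_i^*-\theta_m^*))\Big|^2\Big]^{1/2},
\]
then plugs in the first and third bounds of Lemma~\ref{lem:mle_concentration} to get $|II|\lesssim \sqrt{\log n}/(npL)$. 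No quadratic-form machinery is needed. Your Hanson--Wright route for $II$ should also work and in fact appears to give a slightly sharper bound, but as stated it has two loose ends that the paper's argument avoids entirely. First, the coefficient weights in your quadratic form are $1/(\pi_i^* s_i^* s_i)$ with $s_i$ \emph{random}, so ``after the normalization $\pi_i^* s_i^* s_i \asymp np^2$'' is not a legitimate replacement inside a signed sum; you would need either to expand once more ($1/s_i = 1/s_i^* + (s_i^*-s_i)/(s_i^* s_i)$ again) and bound the resulting cubic remainder by Cauchy--Schwarz, or absorb it via a $(1+\mathfrak{o}(1))$ multiplicative bound on $|II|$, which brings you back to the paper's absolute-value estimate anyway. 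Second, your claimed Frobenius norm ``$\lesssim p$'' for the coefficient matrix of the off-diagonal part seems off by a $\sqrt{n}$ factor: the entries of the (unnormalized, per-$i$) matrix are of size $\asymp 1/n$ and there are $\asymp n^3p^2$ nonzero entries, giving $\|S\|_F \asymp p\sqrt{n}$; the operator norm $\lesssim p$ is correct. Your final rate $\sqrt{\log n}/(npL)$ still comes out, so this is an arithmetic slip rather than a fatal flaw, but the Cauchy--Schwarz route in the paper is both shorter and immune to these issues.
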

\begin{proof}[Proof of Lemma \ref{lem:delta_ave_spec}]
By the decomposition (\ref{eq:spec_expansion}), we have 
\begin{align*}
-\ave(\delta) &= \ave\Big(\frac{\bar{\pi}-\pi^*}{\pi^*}\Big) = \frac{1}{n}\sum_{m=1}^m \frac{\bar{\pi}_m -\pi^*_m}{\pi^*_m}\\
&= \frac{1}{n}\sum_{m=1}^n \frac{\sum_{i:i\neq m}A_{im}\big(\bar{y}_{mi}\pi_i^* - \bar{y}_{im}\pi_m^*\big)}{\pi_m^*\cdot \sum_{i:i\neq m} A_{im}\bar{y}_{im}}\\
&= \frac{1}{n}\sum_{m=1}^n \frac{\sum_{i:i\neq m}A_{im}\big(\bar{y}_{mi}\pi_i^* - \bar{y}_{im}\pi_m^*\big)}{\pi_m^*\cdot \sum_{i:i\neq m} A_{im}\psi(\theta_i^*-\theta_m^*)}\\
&\quad+ \frac{1}{n}\sum_{m=1}^n\frac{\sum_{i:i\neq m}A_{im}(\bar{y}_{mi}\pi_i^* - \bar{y}_{im}\pi_m^*)}{\pi_m^*}\cdot\Big(\frac{1}{\sum_{i:i\neq m} A_{im}\bar{y}_{im}}-\frac{1}{\sum_{i:i\neq m} A_{im}\psi(\theta_i^*-\theta_m^*)}\Big)\\
&\equiv (I) + (II).
\end{align*}
To control $(I)$, let $w_{im}\equiv A_{im}/\big(\pi_m^*\cdot \sum_{k:k\neq m}A_{km}\psi(\theta_k^* - \theta_m^*)\big)$. Then
\begin{align*}
n\cdot (I) = \sum_{i\neq m} w_{im}\big(\bar{y}_{mi}\pi_i^* - \bar{y}_{im}\pi_m^*\big) = \frac{1}{n}\sum_{i<m}(w_{im} - w_{mi})(\bar{y}_{mi}\pi_i^* - \bar{y}_{im}\pi_m^*).
\end{align*}
It can be readily checked that the summands $\bar{y}_{mi}\pi_i^* - \bar{y}_{im}\pi_m^*$ are independent across the indices $i<m$, centered, and sub-Gaussian with variance proxy bounded by $L^{-1}\pnorm{\pi^*}{\infty}^2\lesssim (n^2L)^{-1}$. Hence conditioning on the graph $A$, Hoeffding's inequality yields that
\begin{align*}
\Prob(n\cdot |(I)| \geq t|A)\leq 2\exp\Big(-\frac{Ct^2}{(n^2L)^{-1}\cdot \sum_{i<m} (w_{im} - w_{mi})^2}\Big).
\end{align*}
By Lemma \ref{lem:A_concen} and the lower bound $\min_{m\in[n]}\pi_m^*\gtrsim n^{-1}$, we have $\sum_{i<m} (w_{im} - w_{mi})^2\lesssim \sum_{i<m} (w_{im}^2 + w_{mi}^2) \lesssim n^2p/p^2 = n^2/p$. Hence by choosing $t\asymp \sqrt{\log n/(pL)}$, the above display yields that with the prescribed probability,
\begin{align*}
|(I)| \lesssim \frac{1}{n}\sqrt{\frac{\log n}{pL}} = \sqrt{\frac{1}{npL}}\cdot\sqrt{\frac{\log n}{n}}.
\end{align*}

To control $(II)$, using the lower bound $\min_{m\in[n]}\pi^*_m\gtrsim n^{-1}$ and standard concentration on the graph $A$, we have
\begin{align*}
|(II)| &\leq (np)^{-2}\times\max_{m\in[n]}\Big|\sum_{i:i\neq m}A_{im}\big(\bar{y}_{mi}\pi_i^* - \bar{y}_{im}\pi_m^*\big)\Big|\\
&\quad\quad\times \sum_{m=1}^n \Big|\sum_{i:i\neq m}A_{im}(\bar{y}_{im}-\psi(\theta_i^*-\theta_m^*))\Big|\\
&\leq (np)^{-2}\sqrt{n}\times\max_{m\in[n]}\Big|\sum_{i:i\neq m}A_{im}\big(\bar{y}_{mi}\pi_i^* - \bar{y}_{im}\pi_m^*\big)\Big|\\
&\quad\quad\times \Big[\sum_{m=1}^n \Big|\sum_{i:i\neq m}A_{im}(\bar{y}_{im}-\psi(\theta_i^*-\theta_m^*))\Big|^2\Big]^{1/2}\\
&\stackrel{(*)}{\lesssim} (np)^{-2}\sqrt{n}\cdot \sqrt{\frac{p\log n}{nL}}\cdot \sqrt{\frac{n^2p}{L}} = \sqrt{\frac{1}{npL}}\cdot \sqrt{\frac{\log n}{npL}},
\end{align*}
where $(*)$ follows from Lemma \ref{lem:mle_concentration}. The proof is complete. 
\end{proof}

\begin{lemma}\label{lem:bD_norm}
Recall the definitions of $R_1$ and $R_2$ in the proof of Proposition \ref{prop:delta_norm_spec}. Suppose that $\kappa = \mathcal{O}(1)$ and $np\gg \log n$. Then the following holds with probability at least $1-\mathcal{O}(n^{-10})$ for some $C = C(\kappa) > 0$.
\begin{align*}
\pnorm{R_1}{\infty}\vee \pnorm{R_2}{\infty} \leq C\frac{\sqrt{\log n}}{n\sqrt{L}}, \quad \pnorm{R_1}{} \vee \pnorm{R_2}{} = \mathfrak{o}(\sqrt{\frac{p}{L}}).
\end{align*}
\end{lemma}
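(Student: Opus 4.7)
The plan is to prove the $\ell_\infty$ bound $\pnorm{R_1}{\infty}\vee \pnorm{R_2}{\infty}\lesssim \sqrt{\log n}/(n\sqrt L)$ first and deduce the $\ell_2$ statement from it, since $\pnorm{R_j}{}\leq\sqrt n\pnorm{R_j}{\infty}\lesssim\sqrt{\log n/(nL)} = \sqrt{p/L}\cdot\sqrt{\log n/(np)} = \mathfrak{o}(\sqrt{p/L})$ under the standing hypothesis $np\gg\log n$.

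For $R_{2,m}$, I would exploit the BTL detailed-balance identity $\psi(\theta_m^*-\theta_i^*)\pi_i^* = \psi(\theta_i^*-\theta_m^*)\pi_m^*$, which gives
\begin{align*}
\bar{y}_{mi}\pi_i^* - \bar{y}_{im}\pi_m^* = (\pi_i^*+\pi_m^*)Z_{mi},\quad Z_{mi}\equiv \bar{y}_{mi} - \psi(\theta_m^*-\theta_i^*).
\end{align*}
Conditional on $A$, the second factor of $R_{2,m}$ is thus a sum of independent, centered, bounded terms with total variance $\lesssim p/(nL)$, so Bernstein yields $\big|\sum_{i\neq m}A_{im}(\pi_i^*+\pi_m^*)Z_{mi}\big|\lesssim \sqrt{p\log n/(nL)}$. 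For the first factor, using $\sum_k(\hat\pi_k-\pi_k^*)=0$ I would rewrite
\begin{align*}
n^{-1}\sum_k\frac{\hat\pi_k-\pi_k^*}{\pi_k^*} = n^{-1}\sum_k(\hat\pi_k-\pi_k^*)\Big(\frac{1}{\pi_k^*}-n\Big),
\end{align*}
then Cauchy--Schwarz together with $|1/\pi_k^*-n|\lesssim n$ (since $\pi_k^*\asymp 1/n$) and $\pnorm{\hat\pi-\pi^*}{}\lesssim (n\sqrt{pL})^{-1}$ (from the $\ell_2$ bound for $\tilde\theta$ in Proposition \ref{prop:mle_prelim}) yield $|n^{-1}\sum_k(\hat\pi_k-\pi_k^*)/\pi_k^*|\lesssim 1/\sqrt{npL}$. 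Multiplying the two factors gives $|R_{2,m}|\lesssim \sqrt{\log n}/(nL)\leq \sqrt{\log n}/(n\sqrt L)$ since $L\geq 1$.

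For $R_{1,m}$, I substitute the representation
\begin{align*}
\bar\pi_i-\pi_i^* = \frac{1}{D_i}\sum_{j\neq i}A_{ij}(\pi_i^*+\pi_j^*)Z_{ij},\quad D_i=\sum_{j\neq i}A_{ij}\bar{y}_{ji},
\end{align*}
and split into a leading piece obtained by replacing $(\bar{y}_{mi},D_i)$ by $(\psi(\theta_m^*-\theta_i^*),\;D_i^*=\sum_{j\neq i}A_{ij}\psi(\theta_j^*-\theta_i^*))$ plus corrections. The leading piece takes the bilinear form $\sum_{i\neq m,j\neq i}W_{ij;m}Z_{ij}$ with coefficients $W_{ij;m}\equiv A_{im}A_{ij}\psi(\theta_m^*-\theta_i^*)(\pi_i^*+\pi_j^*)/D_i^*$ that are deterministic given $A$; grouping unordered pairs via $Z_{ij}=-Z_{ji}$ and invoking Hoeffding conditional on $A$ gives a variance proxy $L^{-1}\bigl[\sum_{i\neq m}W_{im;m}^2+\sum_{i<j,\,i,j\neq m}(W_{ij;m}-W_{ji;m})^2\bigr]\lesssim 1/(n^2L)$, so the leading piece is $\lesssim \sqrt{\log n}/(n\sqrt L)$ as needed. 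The main obstacle is controlling the correction terms without losing the factor $\sqrt{np\log n/L}$ that a naive uniform bound would incur, because $Z_{mi}$ and $D_i-D_i^*$ are correlated with $\bar\pi_i-\pi_i^*$ through the shared pair-$(m,\cdot)$ data. I would resolve this by introducing a leave-one-out surrogate $\bar\pi_i^{(-m)}-\pi_i^*$ built from $D_i^{(-m)}\equiv D_i-A_{im}\bar{y}_{mi}$ and the restricted sum over $j\notin\{i,m\}$, which is independent of all $y_{m\cdot}$-data, and decomposing $\bar\pi_i-\pi_i^*$ as this surrogate plus an explicit $Z_{mi}$-linear correction in the numerator and a $1/D_i-1/D_i^{(-m)}$ correction in the denominator. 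The cross term pairing $Z_{mi}$ with the surrogate becomes a Hoeffding sum in the now-independent family $\{Z_{mi}\}_{i\neq m}$, contributing $\lesssim\sqrt{\log n}/(nL)$; the $Z_{mi}\cdot Z_{im}$ cross term collapses to $\sum_i A_{im}(\pi_i^*+\pi_m^*)Z_{mi}^2/D_i\lesssim 1/(nL)$ via Bernstein on the $\chi^2$-type quantities; and the denominator perturbation contributes at strictly smaller order. Since $L\geq 1$, all residual terms are dominated by $\sqrt{\log n}/(n\sqrt L)$, completing the $\ell_\infty$ bound and hence, by the first paragraph, the $\ell_2$ bound as well.
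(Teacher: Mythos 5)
Your $R_2$ bound and the leading-piece Hoeffding analysis for $R_1$ are essentially what the paper does, and the high-level decomposition (replace $\bar y_{mi}\to\psi$, $D_i\to D_i^*$, then handle corrections) is the right shape. Where you diverge is the conditioning direction for the $(\bar y_{mi}-\psi)$ correction: the paper conditions on the adjacency $A$ and \emph{all comparisons involving $m$}, treating the factor $\bar y_{mi}-\psi(\theta_m^*-\theta_i^*)$ as deterministic and applying Hoeffding over $\{z_{ij\ell}\}_{i,j\neq m}$, whereas you condition on the leave-$m$-out data and apply concentration over $\{Z_{mi}\}_{i\neq m}$ against a surrogate $\bar\pi_i^{(-m)}-\pi_i^*$. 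The trouble with your direction is the Bernstein second-order term: since $A_{im}Z_{mi}$ is only bounded (not genuinely sub-Gaussian with proxy $p/L$), the cross term acquires a contribution of order $\pnorm{\bar\pi^{(-m)}-\pi^*}{\infty}\cdot\log n \asymp \sqrt{\log n/(n^3pL)}\cdot\log n = (\log n)^{3/2}/(n\sqrt{npL})$, and this is $\le \sqrt{\log n}/(n\sqrt L)$ only when $np\gtrsim(\log n)^2$; under the lemma's hypothesis $np\gg\log n$ alone (e.g.\ $np\asymp(\log n)^{3/2}$), it exceeds the target.

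The more serious gap is your dismissal of the $(1/D_i-1/D_i^*)$ correction (the paper's $R^3_{1,m}$) as ``strictly smaller order.'' It is not: a direct bound using the uniform estimates $\max_i\bigl|\sum_{j\neq i}A_{ij}(\bar y_{ij}\pi_j^*-\bar y_{ji}\pi_i^*)\bigr|\lesssim\sqrt{p\log n/(nL)}$ and $\max_i|D_i-D_i^*|\lesssim\sqrt{np\log n/L}$ gives only $|R^3_{1,m}|\lesssim\log n/(nL)$, which is \emph{larger} than $\sqrt{\log n}/(n\sqrt L)$ whenever $L<\log n$ — in particular for $L=1$, the regime the paper emphasizes. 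To obtain the needed $|R^3_{1,m}|\lesssim 1/(nL)$, the paper replaces the $\ell_\infty$-type uniform bounds with $\ell_2$-type quantities $\max_m\bigl(\sum_{i\neq m}A_{im}U_i^2\bigr)^{1/2}$ and $\max_m\bigl(\sum_{i\neq m}A_{im}V_i^2\bigr)^{1/2}$ and bounds those via a covering argument over the sphere (to decouple $A_{im}$ from $U_i,V_i$), which shaves the spurious $\sqrt{\log n}$ factor. This is a genuinely nontrivial step that your sketch skips, and it cannot be replaced by a naive uniform bound.
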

\begin{proof}[Proof of Lemma \ref{lem:bD_norm}]
It suffices to bound $\pnorm{R_1}{\infty}$ and $\pnorm{R_2}{\infty}$. We first bound $\pnorm{R_2}{\infty}$. By definition, we have
\begin{align*}
\pnorm{R_2}{\infty}^2&\lesssim \Big(\sqrt{n}\pnorm{\hat{\pi} - \pi^*}{}\Big)^2\cdot \max_{m\in[n]}\Big(\sum_{i:i\neq m}A_{im}(\bar{y}_{mi}\pi_i^* - \bar{y}_{im}\pi_m^*)\Big)^2\\
&\stackrel{(*)}{\lesssim} \frac{1}{npL} \cdot \frac{p\log n}{nL} = \frac{\log n}{n^2L}\cdot \frac{1}{L}.
\end{align*}
Here $(*)$ follows from Lemmas \ref{lem:spec_rate} and \ref{lem:mle_concentration}. 

Next we bound $\pnorm{R_1}{\infty}$. By definition of $\bar{\pi}$ in (\ref{def:spec_pi_bar}), we have
\begin{align*}
R_{1,m} &= \sum_{i:i\neq m}A_{im}\bar{y}_{mi}\frac{\sum_{j:j\neq i}A_{ij}(\bar{y}_{ij}\pi_j^* - \bar{y}_{ji}\pi_i^*)}{\sum_{j:j\neq i}A_{ij}\bar{y}_{ji}}\\
&= \sum_{i:i\neq m}A_{im}\psi(\theta_m^*-\theta_i^*)\frac{\sum_{j:j\neq i}A_{ij}(\bar{y}_{ij}\pi_j^* - \bar{y}_{ji}\pi_i^*)}{\sum_{j:j\neq i}A_{ij}\psi(\theta_j^*-\theta_i^*)}\\
&\quad\quad + \sum_{i:i\neq m}A_{im}\big(\bar{y}_{mi}-\psi(\theta_m^*-\theta_i^*)\big)\frac{\sum_{j:j\neq i}A_{ij}(\bar{y}_{ij}\pi_j^* - \bar{y}_{ji}\pi_i^*)}{\sum_{j:j\neq i}A_{ij}\psi(\theta_j^*-\theta_i^*)}\\
&\quad\quad + \sum_{i:i\neq m}A_{im}\bar{y}_{mi}\Big[\sum_{j:j\neq i}A_{ij}(\bar{y}_{ij}\pi_j^* - \bar{y}_{ji}\pi_i^*)\Big(\frac{1}{\sum_{j:j\neq i}A_{ij}\bar{y}_{ji}}-\frac{1}{\sum_{j:j\neq i}A_{ij}\psi(\theta_j^*-\theta_i^*)}\Big)\Big]\\
&\equiv R^1_{1,m} + R^2_{1,m} + R^3_{1,m}.
\end{align*}
(\textbf{Bound of $\pnorm{R^1_1}{\infty}$}) Fix $m\in[n]$. Define $w_{ij}\equiv A_{im}A_{ij}\psi(\theta_m^*-\theta_i^*)/\big(\sum_{j:j\neq i}A_{ij}\psi(\theta_j^*-\theta_i^*)\big)$. Then $R^1_{1,m}$ can be written as
\begin{align*}
R^1_{1,m} &= \sum_{i,j:i\neq m,i\neq j}w_{ij}\big(\bar{y}_{ij}\pi_j^*-\bar{y}_{ji}\pi_i^*\big)\\
&= \sum_{i,j:i,j\neq m,i\neq j}w_{ij}\big(\bar{y}_{ij}\pi_j^*-\bar{y}_{ji}\pi_i^*\big) + \sum_{i:i\neq m}w_{im}\big(\bar{y}_{im}\pi_m^*-\bar{y}_{mi}\pi_i^*\big)\\
&= \sum_{i,j:i,j\neq m,i< j}(w_{ij}-w_{ji})\big(\bar{y}_{ij}\pi_j^*-\bar{y}_{ji}\pi_i^*\big) + \sum_{i:i\neq m}w_{im}\big(\bar{y}_{im}\pi_m^*-\bar{y}_{mi}\pi_i^*\big).
\end{align*}
Now conditioning on the graph $A$, $\{w_{ij}\}$ are deterministic and the above two sums are independent across their summands, which are sub-Gaussian with variance proxy bounded by $L^{-1}\pnorm{\pi^*}{\infty}^2\lesssim (n^2L)^{-1}$. Hence Hoeffding's inequality yields that with the prescribed probability, 
\begin{align*}
\Big|\sum_{i,j:i,j\neq m,i< j}(w_{ij}-w_{ji})\big(\bar{y}_{ij}\pi_j^*-\bar{y}_{ji}\pi_i^*\big)\Big|&\lesssim\sqrt{\frac{\log n}{n^2L}\cdot \sum_{i,j:i,j\neq m,i<j}(w_{ij}-w_{ji})^2},\\
\Big|\sum_{i:i\neq m}w_{im}\big(\bar{y}_{im}\pi_m^*-\bar{y}_{mi}\pi_i^*\big)\Big| &\lesssim\sqrt{\frac{\log n}{n^2L}\cdot \sum_{i:i\neq m}w_{im}^2}.
\end{align*}
Now by Lemma \ref{lem:A_concen}, we have
\begin{align*}
\sum_{i,j:i,j\neq m,i<j}(w_{ij}-w_{ji})^2 &\lesssim \sum_{i,j:i,j\neq m,i<j}\Big(\frac{A_{im}A_{ij}\psi(\theta_m^*-\theta_i^*)}{\sum_{j\neq i}A_{ij}\psi(\theta_j^*-\theta_i^*)}\Big)^2\\
&\lesssim (np)^{-2}\sum_{i,j:i,j\neq m,i<j}A_{im}A_{ij}  \lesssim 1,\\
\sum_{i:i\neq m}w_{im}^2 &\lesssim\sum_{i:i\neq m}\Big(\frac{A_{im}\psi(\theta_m^*-\theta_i^*)}{\sum_{j\neq i}A_{ij}\psi(\theta_j^*-\theta_i^*)}\Big)^2\lesssim (np)^{-1}.
\end{align*}
Combining the two estimates yields that $\max_{m\in[n]}|R^1_{1,m}|\lesssim \sqrt{\log n/(n^2L)}$. 

\noindent (\textbf{Bound of $\pnorm{R^2_1}{\infty}$}) Fix $m\in[n]$. Note that $R^2_{1,m}$ can be further decomposed as
\begin{align*}
R^2_{1,m} &= \sum_{i:i\neq m}A_{im}(\bar{y}_{mi} - \psi(\theta_m^*-\theta_i^*))\frac{\sum_{j:j\neq i,m}A_{ij}(\bar{y}_{ij}\pi_j^*-\bar{y}_{ji}\pi_i^*)}{\sum_{j:j\neq i}A_{ij}\psi(\theta_j^*-\theta_i^*)} \\
&\quad\quad\quad +\sum_{i:i\neq m}A_{im}(\bar{y}_{mi} - \psi(\theta_m^*-\theta_i^*))\frac{A_{im}(\bar{y}_{im}\pi_m^*-\bar{y}_{mi}\pi_i^*)}{\sum_{j:j\neq i}A_{ij}\psi(\theta_j^*-\theta_i^*)}\\
&\equiv R^{2,1}_{1,m} + R^{2,2}_{1,m}.
\end{align*}
To deal with $R^{2,1}_{1,m}$, note that 
\begin{align*}
R^{2,1}_{1,m} = \sum_{i,j:i,j\neq m,i\neq j}\sum_{\ell=1}^Lw_{ij}z_{ij\ell} =  \sum_{i,j:i,j\neq m,i< j}\sum_{\ell=1}^L(w_{ij} - w_{ji})z_{ij\ell}
\end{align*}
where
\begin{align*}
w_{ij} = \frac{A_{im}A_{ij}\big(\bar{y}_{mi} - \psi(\theta_m^*-\theta_i^*)\big)}{\sum_{k:k\neq i}A_{ik}\psi(\theta_k^*-\theta_i^*)},\quad z_{ij\ell} = \frac{1}{L}\big(y_{ij\ell}\pi_j^*-y_{ji\ell}\pi_i^*\big).
\end{align*}
Now conditioning on $A$ and all comparisons involving the $m$th individual, $\{w_{ij}\}$ is deterministic, and $z_{ij\ell}$ are independent across $i<j$ and sub-Gaussian with variance proxy of the order $(nL)^{-2}$, hence Hoeffding's inequality applies to conclude that with the prescribed probability, 
\begin{align*}
|R^{2,1}_{1,m}|\lesssim \sqrt{\frac{\log n}{n^2L}\cdot \sum_{i,j:i,j\neq m,i\neq j}w_{ij}^2} + \frac{\log n}{nL}\cdot\max_{i,j:i,j\neq m,i\neq j}|w_{ij}|.
\end{align*}
By definition, we have $\max_{i,j:i,j\neq m,i\neq j}|w_{ij}| \lesssim (np)^{-1}$ by Lemma \ref{lem:A_concen}, and by Lemma \ref{lem:mle_concentration} we have
\begin{align*}
\sum_{i,j:i,j\neq m,i\neq j}w_{ij}^2 \lesssim (np)^{-2} \cdot np \cdot \sum_{i:i\neq m}A_{im}\big(\bar{y}_{mi} - \psi(\theta_m^*-\theta_i^*)\big)^2\lesssim \frac{1}{L}.
\end{align*}
Plugging in the above estimates yields that $\max_{m\in[n]}|R^{2,1}_{1,m}|\lesssim \sqrt{\log n}/(nL)$. 

To deal with $R^{2,2}_{1}$, note that $R^{2,2}_{1,m} = \sum_{i:i\neq m}w_iz_i$, where
\begin{align*}
w_i = \frac{A_{im}}{\sum_{j:j\neq i}A_{ij}\psi(\theta_j^*-\theta_i^*)},\quad z_i = \big(\bar{y}_{mi} -\psi(\theta_m^*-\theta_i^*)\big)\big(\bar{y}_{im}\pi_m^* - \bar{y}_{mi}\pi_i^*\big).
\end{align*}
Further decompose 
\begin{align*}
R^{2,2}_{1,m} = \sum_{i:i\neq m}w_i\E z_i + \sum_{i:i\neq m} w_i(z_i - \E z_i) \equiv (I) + (II).
\end{align*}
Since $\E z_i = -(\pi_i^*+\pi_m^*)\psi'(\theta_m^*-\theta_i^*)/L$ so that $|\E z_i|\lesssim (nL)^{-1}$, Lemma \ref{lem:A_concen} yields that with the prescribed probability, 
\begin{align*}
|(I)|\lesssim \sum_{i:i\neq m} w_i|\E z_i| \lesssim (nL)^{-1}\cdot \sum_{i:i\neq m} \frac{A_{im}}{\sum_{j:j\neq i}A_{ij}\psi(\theta_j^*-\theta_i^*)} \lesssim (nL)^{-1}.
\end{align*}
For $(II)$, note that $\{z_i\}_{i:i\neq m}$ are independent, and each $z_i$ is the product of two sub-Gaussian terms with variance proxies $L^{-1}$ and $(n^2L)^{-1}$ respectively so that $z_i$ is sub-exponential with norm $K \lesssim (nL)^{-1}$. Therefore Bernstein's inequality yields that
\begin{align*}
\Prob\Big(|(II)| \geq t\Big| A\Big) \leq \exp\Big(-\frac{Ct^2}{\pnorm{w}{}^2K^2 + \pnorm{w}{\infty}K}\Big).
\end{align*} 
By Lemma \ref{lem:A_concen}, we have $\pnorm{w}{\infty}\lesssim (np)^{-1}$ and $\pnorm{w}{}^2 \lesssim (np)^{-1}$ with the prescribed probability. Hence by choosing $t\asymp (nL)^{-1}\cdot\sqrt{\log n/(np)}$, the above estimate yields that $|(II)|\lesssim (nL)^{-1}\cdot\sqrt{\log n/(np)}$ with the prescribed probability. This concludes that $\pnorm{R^{2,2}_1}{\infty}\lesssim 1/(nL)$. Combining the estimates for $R^{2,1}_1$ and $R^{2,2}_1$ yields that $\pnorm{R^2_1}{\infty}\lesssim \sqrt{\log n}/(nL)$ with the prescribed probability.

\noindent(\textbf{Bound of $\pnorm{R^3_1}{\infty}$}) By definition, we have
\begin{align*}
|R^3_{1,m}| &\lesssim (np)^{-2}\cdot\sum_{i:i\neq m}A_{im}\Big|\sum_{j:j\neq i}A_{ij}(\bar{y}_{ij}\pi_j^* - \bar{y}_{ji}\pi_i^*)\Big|\Big|\sum_{j:j\neq i}A_{ij}\big(\bar{y}_{ij}-\psi(\theta_i^*-\theta_j^*)\big)\Big|\\
&\lesssim (np)^{-2}\Big(\sum_{i:i\neq m}A_{im}U_i^2\Big)^{1/2}\cdot  \Big(\sum_{i:i\neq m}A_{im}V_i^2\Big)^{1/2},
\end{align*}
where 
\begin{align*}
U_i\equiv \sum_{j:j\neq i}A_{ij}(\bar{y}_{ij}\pi_j^* - \bar{y}_{ji}\pi_i^*), \quad V_i\equiv \sum_{j:j\neq i}A_{ij}\big(\bar{y}_{ij}-\psi(\theta_i^*-\theta_j^*).
\end{align*}
Now we bound $\sum_{i:i\neq m}A_{im}U_i^2$ for each $m\in[n]$. Since $\bar{y}_{ij}\pi_j^* - \bar{y}_{ji}\pi_i^* = (\pi^*_i + \pi^*_j)\big(\bar{y}_{ij} -\psi(\theta_i^*-\theta_j^*)\big)$, we have
\begin{align*}
\Big(\sum_{i:i\neq m}A_{im}U_i^2\Big)^{1/2} &= \Big[\sum_{i:i\neq m}A_{im}\Big(\sum_{j:j\neq i}A_{ij}(\pi^*_i + \pi^*_j)\big(\bar{y}_{ij} -\psi(\theta_i^*-\theta_j^*)\big)\Big)^2\Big]^{1/2}\\
&= \sup_{u\in\mathcal{U}} \sum_{i:i\neq m}A_{im}u_i\cdot \sum_{j:j\neq i}A_{ij}(\pi^*_i + \pi^*_j)\big(\bar{y}_{ij} -\psi(\theta_i^*-\theta_j^*)\big),
\end{align*}
where $\mathcal{U}\equiv \{\sum_{u\in\R^n}: \sum_{i:i\neq m} A_{im}u_i^2 \leq 1\}$ is defined conditioning on $A$. Let $\bar{\mathcal{U}}$ be a $1/2$-covering of $\mathcal{U}$ in the sense that for any $u\in\mathcal{U}$, there exists some $u'\in\bar{\mathcal{U}}$ such that $\sqrt{\sum_{i:i\neq m}A_{im}(u_i - u'_i)^2}\leq 1/2$. Since covering $\mathcal{U}$ is equivalent to covering the unit ball in dimension $\sum_{i:i\neq m}A_{im}$, we can find such a $\bar{\mathcal{U}}$ with $|\bar{\mathcal{U}}|\leq \exp(C\sum_{i:i\neq m}A_{im}) \leq \exp(C'np)$ with the prescribed probability. Then a standard covering argument yields that
\begin{align*}
\sqrt{\sum_{i:i\neq m}A_{im}U_i^2} &\leq 2\sup_{u\in \bar{\mathcal{U}}}  \sum_{i:i\neq m}A_{im}u_i\cdot \sum_{j:j\neq i}A_{ij}(\pi^*_i + \pi^*_j)\big(\bar{y}_{ij} -\psi(\theta_i^*-\theta_j^*)\big)\\
&= 2\sup_{u\in \bar{\mathcal{U}}} \Big\{\frac{1}{L}\sum_{\ell=1}^L\sum_{i,j:i\neq m,j\neq m,i\neq j}A_{im}A_{ij}u_i(\pi_i^* + \pi_j^*)(y_{ij\ell} - \psi(\theta_i^* - \theta_j^*))\\
&\quad\quad + \frac{1}{L}\sum_{\ell=1}^L\sum_{i:i\neq m}A_{im}u_i(\pi_i^* +\pi_m^*)\big(y_{im\ell} - \psi(\theta_i^* - \theta_m^*)\big)\Big\}.
\end{align*}
Now by Hoeffding's inequality as in the analysis of $R^1_1$, we have for any fixed $u\in\bar{\mathcal{U}}$ and $t > 0$
\begin{align*}
&\Prob\Big( \Big|\frac{1}{L}\sum_{\ell=1}^L\sum_{i,j:i\neq m,j\neq m,i\neq j}A_{im}A_{ij}u_i(\pi_i^* + \pi_j^*)(y_{ij\ell} - \psi(\theta_i^* - \theta_j^*))\Big| \geq t|A\Big)\\
&\leq \exp\Big(-\frac{CLt^2}{\sum_{i,j:i\neq m,j\neq m,i\neq j}A_{im}A_{ij}u_i^2(\pi_i^* + \pi_j^*)^2}\Big). 
\end{align*}
Hence by a union bound, Lemma \ref{lem:A_concen}, and choosing $t\asymp p/\sqrt{L}$, we have with the prescribed probability
\begin{align*}
\sup_{u\in \bar{\mathcal{U}}} \Big|\frac{1}{L}\sum_{\ell=1}^L\sum_{i,j:i\neq m,j\neq m,i\neq j}A_{im}A_{ij}u_i(\pi_i^* + \pi_j^*)(y_{ij\ell} - \psi(\theta_i^* - \theta_j^*))\Big| \lesssim \frac{p}{\sqrt{L}}.
\end{align*}
Using a similar analysis, we have
\begin{align*}
\sup_{u\in \bar{\mathcal{U}}}\Big|\frac{1}{L}\sum_{i:i\neq m}A_{im}u_i(\pi_i^* +\pi_m^*)\big(y_{im\ell} - \psi(\theta_i^* - \theta_m^*)\big)\Big| \lesssim \sqrt{\frac{p}{nL}} = \mathfrak{o}(\frac{p}{\sqrt{L}}).
\end{align*}
Putting together the two estimates yields that
\begin{align*}
\max_{m\in[n]}\Big(\sum_{i:i\neq m}A_{im}U_i^2\Big)^{1/2} \lesssim \frac{p}{\sqrt{L}}.
\end{align*}
A similar calculation yields that $\max_{m\in[n]}\Big(\sum_{i:i\neq m}A_{im}V_i^2\Big)^{1/2} \lesssim np/\sqrt{L}$, so combining the estimates yields that $\max_{m\in[n]}|R^3_{1,m}| \lesssim (nL)^{-1}$ with the prescribed probability.
Combining the estimates for $\pnorm{R^1_1}{\infty}$ - $\pnorm{R^3_1}{\infty}$ concludes the proof.
\end{proof}

\subsection{Entrywise expansion for the main term $\Delta$}\label{subsec:proof_spec_Delta}

Recall from Section \ref{subsec:proof_spec_prelim} that $\Delta_i$ defined in (\ref{def:spec_Delta}) is the main term of $\tilde{\theta}_i - \theta_i^*$. The goal of this subsection is to prove the following expansion. 

\begin{proposition}\label{prop:spec_Delta_expansion}
Suppose that $\kappa = \mathcal{O}(1)$ and $np\gg (\log n)^{3/2}$. Then the following expansion holds with probability $1-\mathcal{O}(n^{-10})$.
\begin{align*}
\Delta_i = \big(1+\epsilon_{1,i}\big)\frac{\sum_{j:j\neq i}A_{ij}\big(\bar{y}_{ij}\pi_j^* - \bar{y}_{ji}\pi_i^*\big)}{\pi_i^*\cdot \sum_{j:j\neq i}A_{ij}\psi(\theta_j^* - \theta_i^*)} +\epsilon_{2,i},
\end{align*}
where $\epsilon_1,\epsilon_2\in\R^n$ satisfy $\pnorm{\epsilon_1}{\infty} = \mathfrak{o}(1)$ and $\pnorm{\epsilon_2}{\infty} = \mathfrak{o}(1/\sqrt{npL})$.
\end{proposition}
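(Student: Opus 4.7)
The plan is to start from the decomposition $\Delta_i = (\bar{\pi}_i - \pi_i^*)/\pi_i^* + \delta_i$ in (\ref{eq:spec_expansion}), and then (i) convert the denominator of the main term $(\bar{\pi}_i - \pi_i^*)/\pi_i^*$ to the target form $\pi_i^*\sum_{j:j\neq i}A_{ij}\psi(\theta_j^*-\theta_i^*)$, absorbing the discrepancy into the multiplicative factor $(1+\epsilon_{1,i})$, and (ii) show that $\pnorm{\delta}{\infty} = \mathfrak{o}(1/\sqrt{npL})$ so that we may take $\epsilon_{2,i} = \delta_i$.

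For (i), since $\bar{\pi}_i - \pi_i^* = \sum_{j:j\neq i}A_{ij}(\bar{y}_{ij}\pi_j^* - \bar{y}_{ji}\pi_i^*)/\sum_{j:j\neq i}A_{ij}\bar{y}_{ji}$, the only task is to compare the random denominator to its population counterpart. Writing
\begin{align*}
\sum_{j:j\neq i}A_{ij}\bar{y}_{ji} = \sum_{j:j\neq i}A_{ij}\psi(\theta_j^*-\theta_i^*) + \sum_{j:j\neq i}A_{ij}\big(\bar{y}_{ji}-\psi(\theta_j^*-\theta_i^*)\big),
\end{align*}
Lemma \ref{lem:A_concen} gives that the first sum is of order $np$ uniformly in $i$, while Lemma \ref{lem:mle_concentration} gives that the fluctuation term is of order $\sqrt{np\log n/L}$ uniformly in $i$. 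Hence the ratio of the two denominators equals $1+\epsilon_{1,i}$ with $\pnorm{\epsilon_1}{\infty}\lesssim \sqrt{\log n/(npL)} = \mathfrak{o}(1)$, as desired. This step is routine.

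For (ii), which is the crux, I would follow the three-step strategy of Section \ref{sec:proof_idea} adapted to the spectral self-consistent equation (\ref{eq:delta_spec_consistent}). With the $\ell_2$ bound $\pnorm{\delta}{} = \mathfrak{o}(1/\sqrt{pL})$ already provided by Proposition \ref{prop:delta_norm_spec}, I would define a leave-one-out spectral estimator $\hat{\pi}^{(m)}$ based only on comparisons not involving individual $m$, with associated remainder $\delta^{(m)}$ independent of $\{A_{mj}\}_{j\neq m}$ and of the $m$th row of $\bar{y}$. Decomposing $\delta_m$ into (a) a swap term quantifying the gap between $\hat{\pi}$ and $\hat{\pi}^{(m)}$, (b) the leave-one-out linear term $(\pi_m^*\sum_{j:j\neq m}A_{mj}\bar{y}_{jm})^{-1}\sum_{j:j\neq m}A_{mj}\bar{y}_{mj}\pi_j^*\delta^{(m)}_j$, (c) a quadratic Taylor remainder from the logarithmic expansion (\ref{eq:spec_prelim_expansion}), and (d) the lower-order term $n^{-1}\sum_k(\hat{\pi}_k-\pi_k^*)/\pi_k^*$ bounded via Lemma \ref{lem:spec_rate}, I would estimate each piece. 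Terms (a), (c), (d) are controlled by standard concentration and the $\ell_\infty$ rate from Proposition \ref{prop:mle_prelim}; the linear term (b) is handled by a conditional Bernstein bound of the form $p\sqrt{n}\pnorm{\delta^{(m)}}{} + \log n\cdot \pnorm{\delta^{(m)}}{\infty}$, exactly mirroring $R_2^{(m)}$ in Lemma \ref{lem:loo_R_norm}. To bound $\pnorm{\delta^{(m)}}{\infty}$ with the required sharpness, I would further introduce a leave-two-out spectral estimator $\hat{\pi}^{(m,j)}$ and prove analogues of Lemmas \ref{lem:loo_delta_l2_bound} and \ref{lem:loo_delta_max_bound}. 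Collecting all estimates yields $\pnorm{\delta}{\infty}\lesssim \sqrt{(\log n)^3/(np)^2}\cdot (npL)^{-1/2} = \mathfrak{o}(1/\sqrt{npL})$ under $np\gg (\log n)^{3/2}$.

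The main obstacle is controlling the linear term (b): unlike the MLE self-consistent equation (\ref{eq:delta_mle_consistent}), whose weights $A_{ij}\psi'(\theta_i^*-\theta_j^*)$ depend only on $A$ and the truth, the spectral weights $A_{mj}\bar{y}_{mj}\pi_j^*$ still carry the sample noise $\bar{y}_{mj}$. I would split $\bar{y}_{mj} = \psi(\theta_m^*-\theta_j^*) + (\bar{y}_{mj}-\psi(\theta_m^*-\theta_j^*))$; the population part then plays nicely with the independence of $\delta^{(m)}$ from the $m$th row and permits the above conditional Bernstein bound, while the noise part produces a bilinear expression in independent randomness that requires a two-step concentration argument in the spirit of the treatment of $R^{2,2}_{1,m}$ in Lemma \ref{lem:bD_norm}. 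Beyond this bookkeeping, the proof follows the template of Section \ref{subsec:proof_delta_infty} without essential new ideas.
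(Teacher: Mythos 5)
Your overall scaffolding matches the paper's: produce $(1+\epsilon_{1,i})$ by swapping the sample denominator $\sum_{j}A_{ij}\bar{y}_{ji}$ for its population analogue, then show $\pnorm{\delta}{\infty}=\mathfrak{o}(1/\sqrt{npL})$ via a leave-one-out decomposition of $\delta_m$ and a conditional Bernstein bound on the linear term $\sum_{j}A_{mj}\bar{y}_{mj}\pi_j^*\delta^{(m)}_j$. Step (i) is exactly the paper's argument, and your identification of the linear term as the crux is correct.

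Where you overshoot is in proposing a leave-two-out spectral estimator $\hat{\pi}^{(m,j)}$ to sharpen $\pnorm{\delta^{(m)}}{\infty}$. The paper explicitly avoids this; see the Remark following Lemma \ref{lem:loo_delta_bound}. The needed bound $\max_m\pnorm{\delta^{(m)}}{\infty}\lesssim\sqrt{\log n/(npL)}$ follows in one line from the triangle inequality and the rate-optimal $\ell_\infty$ estimate on $\hat{\pi}^{(m)}-\pi^*$ (Lemma \ref{lem:spec_rate}). The leave-two-out hierarchy was forced in the MLE case only to control $g^{(-m,i)}(\theta_i^*|\hat{\theta}^{(m)}_{-i})$ (Lemma \ref{lem:bound_loo_fg}-(3)), a quantity that simply has no counterpart in the spectral decomposition, so adapting Lemmas \ref{lem:loo_delta_l2_bound}--\ref{lem:loo_delta_max_bound} as you propose would add substantial work for no gain. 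Similarly, you do not need to split $\bar{y}_{mj}$ into mean plus noise to isolate a bilinear piece à la $R^{2,2}_{1,m}$; the paper keeps $\bar{y}_{mj}$ as a bounded random weight and applies Bernstein conditionally to $\sum_i A_{im}\bar{y}_{mi}\pi_i^*\delta^{(m)}_i$, exploiting that $\delta^{(m)}$ is independent of the entire $m$th row of $(A,y)$. Finally, your item (c), the ``quadratic Taylor remainder from the logarithmic expansion,'' does not belong in the decomposition of $\delta_m$: the quantity $\Delta_m$ in this proposition is defined directly from $(\hat{\pi}_m-\pi_m^*)/\pi_m^*$ without any logarithm, and that remainder is handled separately in the completion of Theorem \ref{thm:spec_expansion} in Section \ref{subsec:proof_spec_completion}.
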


As in Section \ref{subsec:proof_delta_infty} for the MLE, we will first perform some preliminary analysis in Section \ref{subsubsec:proof_spec_infty_loo}, and the main proof of Proposition \ref{prop:spec_Delta_expansion} will be given in Section \ref{subsubsec:proof_spec_infty_main}. 

\subsubsection{Some preliminary analysis}\label{subsubsec:proof_spec_infty_loo}

We first introduce a leave-one-out version of the spectral estimate $\hat{\pi}$. Fix an index $m\in[n]$ to be left out, and define a new transition probability matrix $P^{(m)}\in\R^{n\times n}$ as follows. For off-diagonal elements, let
\begin{align*}
P^{(m)}_{ij}\equiv
\begin{cases}
P_{ij}, & i\neq m\text{ and } j\neq m,\\
\E P_{ij} = \frac{p}{d}\psi(\theta_j^* - \theta_i^*), & i = m \text{ or } j = m. 
\end{cases}
\end{align*}
This leads to the choice of diagonal elements $P^{(m)}_{ii} \equiv 1 - \sum_{j:j\neq i}P^{(m)}_{ij}$. Note that in the case of $i=m$ or $j = m$, we have taken unconditional expectation of $P_{ij}$ so that $P^{(m)}$ is independent of the data involving the $m$th individual, including the comparison indicators $\{A_{mi}\}_{i\neq m}$.  With the above definition, let $\hat{\pi}^{(m)}$ be the stationary measure of $P^{(m)}$, i.e. $\hat{\pi}^{(m)}$ is defined by
\begin{align}\label{def:spec_loo}
(\hat{\pi}^{(m)})^\top P^{(m)} = (\hat{\pi}^{(m)})^\top,
\end{align}
which, after some similar manipulations for $\hat{\pi}$, leads to
\begin{align*}
\hat{\pi}^{(m)}_i = \frac{\sum_{j:j\neq i}\hat{\pi}^{(m)}_jP^{(m)}_{ji}}{\sum_{j:j\neq i}P^{(m)}_{ij}} = \frac{\sum_{j:j\neq i,m}A_{ij}\bar{y}_{ij}\hat{\pi}^{(m)}_j + \mathbb{I}_{i\neq m}\cdot p\psi(\theta_i^*-\theta_m^*)\hat{\pi}^{(m)}_m}{\sum_{j:j\neq i,m}A_{ij}\bar{y}_{ji} + \mathbb{I}_{i\neq m}\cdot p\psi(\theta_m^*-\theta_i^*)}.
\end{align*}
Note that different from the analogue $\hat{\theta}^{(m)}\in\R^{n-1}$ in (\ref{def:loo_mle}) for the MLE, $\hat{\pi}^{(m)}$ is still in $\R^n$. Analogous to $\bar{\pi}$ in (\ref{def:spec_pi_bar}), define $\bar{\pi}^{(m)}\in\R^n$ as
\begin{align*}
\bar{\pi}^{(m)}_i = \frac{\sum_{j:j\neq i,m}A_{ij}\bar{y}_{ij}\pi^*_j + \mathbb{I}_{i\neq m}\cdot p\psi(\theta_i^*-\theta_m^*)\pi^*_m}{\sum_{j:j\neq i,m}A_{ij}\bar{y}_{ji} + \mathbb{I}_{i\neq m}\cdot p\psi(\theta_m^*-\theta_i^*)}.
\end{align*}
Lastly, define the leave-one-out analogue of $\delta$ in (\ref{eq:spec_expansion}) as
\begin{align}\label{def:spec_delta_loo}
\frac{\hat{\pi}^{(m)}_i - \pi^*_i}{\pi^*_i} - \frac{1}{n}\sum_{k=1}^n\frac{\hat{\pi}^{(m)}_k - \pi^*_k}{\pi^*_k} \equiv \frac{\bar{\pi}^{(m)}_i - \pi^*_i}{\pi^*_i} + \delta^{(m)}_i.
\end{align}
Note that by construction, all of $\hat{\pi}^{(m)}$, $\bar{\pi}^{(m)}$, and $\delta^{(m)}$ do not depend on the $m$th individual. 
We have the following estimates for $\delta^{(m)}$.
\begin{lemma}\label{lem:loo_delta_bound}
Suppose that $\kappa = \mathcal{O}(1)$ and $np\gg\log n$. Then the following holds with probability at least $1-\mathcal{O}(n^{-10})$ for some $C =C(\kappa) > 0$.
\begin{align*}
\max_{m\in[n]}\pnorm{\delta^{(m)}}{} = \mathfrak{o}(1/\sqrt{pL}) \quad \text{ and }\quad\max_{m\in[n]}\pnorm{\delta^{(m)}}{\infty} \leq C\sqrt{\log n/(npL)}.
\end{align*}
\end{lemma}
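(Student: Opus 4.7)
The plan is to mirror the analysis of Proposition \ref{prop:delta_norm_spec} for the $\ell_2$ bound and to adapt the leave-one-out philosophy of Lemma \ref{lem:loo_delta_max_bound} for the $\ell_\infty$ bound. The construction of $\hat{\pi}^{(m)}$ replaces the randomness in the $m$th row and column of $P$ by its conditional expectation, so $\hat{\pi}^{(m)}$, $\bar{\pi}^{(m)}$ and $\delta^{(m)}$ depend only on data not involving the $m$th individual; this independence will be exploited in the concentration steps.

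For the $\ell_2$ claim, I would first rewrite
\begin{align*}
\delta^{(m)}_i = \frac{\hat{\pi}^{(m)}_i - \bar{\pi}^{(m)}_i}{\pi^*_i} - \ave\Big(\frac{\hat{\pi}^{(m)}-\pi^*}{\pi^*}\Big)
\end{align*}
using (\ref{def:spec_delta_loo}), then plug $\hat{\pi}^{(m)}_j - \pi^*_j = \pi^*_j\delta^{(m)}_j + (\bar{\pi}^{(m)}_j - \pi^*_j) + \pi^*_j\cdot\ave(\cdot)$ into the defining recurrence for $\hat{\pi}^{(m)}_i - \bar{\pi}^{(m)}_i$. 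This yields a self-consistent system $L^{(m)}\delta^{(m)} = \tilde R_1^{(m)} + \tilde R_2^{(m)}$, where $L^{(m)} \in \R^{n\times n}$ is the leave-one-out analogue of $L$: its off-diagonal entries equal $-A_{ij}\bar{y}_{ij}\pi^*_j$ for $i,j\neq m$ and $-p\psi(\theta_j^*-\theta_i^*)\pi^*_j$ whenever $i$ or $j$ equals $m$. The conditional expectation $\E(L^{(m)}|A^{(-m)})$ remains a symmetric Laplacian with the same off-diagonal structure as $\E(L|A)$, so Lemma \ref{lem:hessian_eigen} gives $\lambda_{\min,\perp}(\E L^{(m)})\gtrsim p$, and the matrix Bernstein argument of Lemma \ref{lem:concentrate_L_spec} adapts verbatim to yield $\pnorm{L^{(m)}-\E L^{(m)}}{\op}\leq \frac{1}{2}\lambda_{\min,\perp}(\E L^{(m)})$ under $np \gg \log n$. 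The remainder vectors $\tilde R_1^{(m)}, \tilde R_2^{(m)}$ are bounded as in Lemma \ref{lem:bD_norm} with sums restricted to indices $j\neq m$, and $|\ave(\delta^{(m)})|$ is handled by the Hoeffding calculation of Lemma \ref{lem:delta_ave_spec} applied to pairs disjoint from $\{m\}$. Combining these gives the claimed $\mathfrak{o}(1/\sqrt{pL})$ bound, uniformly in $m$.

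For the $\ell_\infty$ bound, I would first bound the average term by $|\ave((\hat{\pi}^{(m)}-\pi^*)/\pi^*)|\lesssim \sqrt{n}\pnorm{\hat{\pi}^{(m)}-\pi^*}{}/\min_k\pi^*_k = \mathfrak{o}(1/\sqrt{npL})$, using the $\ell_2$ bound just established together with $\min_k \pi^*_k\asymp n^{-1}$. For the main term $(\hat{\pi}^{(m)}_i-\bar{\pi}^{(m)}_i)/\pi^*_i$, the defining recurrence writes the numerator as $\sum_{j\neq i,m}A_{ij}\bar{y}_{ij}(\hat{\pi}^{(m)}_j-\pi^*_j) + \mathbb{I}_{i\neq m}\cdot p\psi(\theta_i^*-\theta_m^*)(\hat{\pi}^{(m)}_m-\pi^*_m)$ while the denominator is $\gtrsim p$ by Lemma \ref{lem:A_concen}. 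The numerator is then controlled by a Bernstein inequality applied conditionally on $A$ and the leave-one-out comparison outcomes, combined with leave-one-out analogues of the $\ell_2$ and $\ell_\infty$ bounds in Proposition \ref{prop:mle_prelim} for $\hat{\pi}^{(m)}-\pi^*$. The main obstacle is establishing these leave-one-out $\ell_\infty$ rates uniformly in $m$ with the sharp $\sqrt{\log n/(npL)}$-dependence; I would do this by running the leave-one-out bootstrap of \cite{chen2019spectral,chen2020partial} on the modified chain $P^{(m)}$. Since $P^{(m)}$ differs from $P$ only by replacing $A_{mj}\bar{y}_{mj}$ and $A_{im}\bar{y}_{im}$ by their means in the $m$th row/column, all spectral gap and matrix concentration inputs for that bootstrap carry over with only notational changes, yielding the required rate and completing the proof.
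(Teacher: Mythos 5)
Your $\ell_2$ argument follows the paper closely: the paper also derives it by rerunning Proposition~\ref{prop:delta_norm_spec} verbatim with $\hat{\pi}^{(m)}$ in place of $\hat{\pi}$, the only modification being that one substitutes $\max_m\pnorm{\hat{\pi}^{(m)}-\pi^*}{}\lesssim 1/\sqrt{n^2pL}$ from Lemma~\ref{lem:spec_rate} into the analogue of Lemma~\ref{lem:bD_norm}. The finer self-consistent machinery you spell out is exactly what that reproof entails, so no disagreement there.

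Where you genuinely diverge is the $\ell_\infty$ bound, and you are working far harder than necessary because you treat the leave-one-out $\ell_\infty$ rate for $\hat{\pi}^{(m)}-\pi^*$ as an obstacle to be re-derived from scratch (``I would do this by running the leave-one-out bootstrap of \cite{chen2019spectral,chen2020partial} on the modified chain $P^{(m)}$''). That bound, $\max_m\pnorm{\hat\pi^{(m)}-\pi^*}{\infty}\lesssim n^{-1}\sqrt{\log n/(npL)}$, is already available off the shelf as Lemma~\ref{lem:spec_rate}-(2), quoted from \cite{chen2020partial}. Given it, the paper's proof of the $\ell_\infty$ claim is a one-line triangle inequality applied directly to the definition in (\ref{def:spec_delta_loo}): every ingredient of $\delta^{(m)}_i$ is a ratio $(\hat\pi^{(m)}_k-\pi_k^*)/\pi_k^*$ or $(\bar\pi^{(m)}_i-\pi_i^*)/\pi_i^*$, each of which is $\lesssim\sqrt{\log n/(npL)}$ since $\pi_k^*\asymp n^{-1}$. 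No unfolding of the stationary-measure recurrence, and no Bernstein step, is needed. Your proposed Bernstein step is also somewhat confused as stated: once you condition on both $A$ \emph{and} ``the leave-one-out comparison outcomes,'' the quantity $\sum_{j\neq i,m}A_{ij}\bar y_{ij}(\hat\pi^{(m)}_j-\pi^*_j)$ is deterministic, so there is nothing left for Bernstein to concentrate; and if you only condition on $A$, then for $j\neq m$ the weight $A_{ij}$ is \emph{not} independent of $\hat\pi^{(m)}_j$, which is precisely the correlation issue that would force a leave-two-out construction you do not have. (The paper makes a point of this in the remark after the lemma: no leave-two-out version of $\hat\pi$ is needed for the spectral estimator at this stage.) Separately, your bound on the average term carries a spurious $\sqrt{n}$: the correct Cauchy--Schwarz step gives $|\ave(u)|\leq n^{-1/2}\pnorm{u}{}$, not $\sqrt{n}\pnorm{u}{}$, and the resulting order is $1/\sqrt{npL}$ (not $\mathfrak{o}(1/\sqrt{npL})$), which is still fine since the lemma only asks for $\lesssim\sqrt{\log n/(npL)}$. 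In short: the result you target is correct and your $\ell_2$ route matches the paper, but for $\ell_\infty$ you should simply invoke Lemma~\ref{lem:spec_rate} rather than re-develop the spectral leave-one-out bootstrap.
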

\begin{proof}[Proof of Lemma \ref{lem:loo_delta_bound}]
The $\ell_2$ bound follows from analogous arguments as in Proposition \ref{prop:delta_norm_spec}, except now using the estimate $\max_{m\in[n]}\pnorm{\hat{\pi}^{(m)} - \pi^*}{}\lesssim 1/\sqrt{n^2pL}$ (instead of $\pnorm{\hat{\pi} - \pi^*}{}\lesssim 1/\sqrt{n^2pL}$) in Lemma \ref{lem:bD_norm}.
The $\ell_\infty$ bound follows from the following simple estimate: for any $i\in[n]$,
\begin{align*}
|\delta^{(m)}_i| \lesssim \frac{\pnorm{\hat{\pi}^{(m)} - \pi^*}{\infty}}{\pi_i^*} + \frac{1}{n}\sum_{k=1}^n \Big|\frac{\hat{\pi}^{(m)}_k - \pi^*_k}{\pi_k^*}\Big| \lesssim \sqrt{\frac{\log n}{npL}},
\end{align*}
using Lemma \ref{lem:spec_rate} in the last inequality. 
\end{proof}
\begin{remark}
In contrast to its counterpart Lemma \ref{lem:loo_delta_l2_bound} for the MLE, we do not need to analyze a leave-two-out version of $\hat{\pi}$ here for the spectral estimator. The reason is that in the proof of Lemma \ref{lem:loo_delta_l2_bound}, we need a leave-two-out analysis for the control of $g^{(-m,i)}(\theta_i^*|\hat{\theta}^{(m)}_{-i})$ therein, which does not appear in the analysis of the spectral estimator. On the other hand, if we were to perform a leave-$k$-out analysis to improve the exponent in the regime (\ref{def:regime}), a corresponding leave-$k$-out version of $\hat{\pi}$ would become necessary for the spectral method as well.
\end{remark}

\subsubsection{Main proof of Proposition \ref{prop:spec_Delta_expansion}}\label{subsubsec:proof_spec_infty_main}
\begin{proof}[Proof of Proposition \ref{prop:spec_Delta_expansion}]
Fix $m\in[n]$. Recall the definition of $\delta$ in (\ref{eq:spec_expansion}). Then we have
\begin{align*}
\Delta_m &= \frac{\sum_{i:i\neq m}A_{mi}\big(\bar{y}_{mi}\pi_i^* - \bar{y}_{im}\pi_m^*\big)}{\pi_m^* \cdot \sum_{i:i\neq m}A_{mi}\bar{y}_{im}} + \delta_m\\
&= \frac{\sum_{i:i\neq m}A_{mi}\big(\bar{y}_{mi}\pi_i^* - \bar{y}_{im}\pi_m^*\big)}{\pi_m^* \cdot \sum_{i:i\neq m}A_{mi}\psi(\theta_i^*-\theta_m^*)}\Big(1-\frac{\sum_{i:i\neq m}A_{mi}\big(\bar{y}_{im}-\psi(\theta_i^*-\theta_m^*)\big)}{\sum_{i:i\neq m}A_{mi}\bar{y}_{im}}\Big) + \delta_m. 
\end{align*}
Using standard concentration in Lemma \ref{lem:mle_concentration}, we have
\begin{align*}
\max_{m\in[n]}\Big|\frac{\sum_{i:i\neq m}A_{mi}\big(\bar{y}_{im}-\psi(\theta_i^*-\theta_m^*)\big)}{\pi_m^* \cdot \sum_{i:i\neq m}A_{mi}\bar{y}_{im}}\Big| \lesssim \frac{\sqrt{np\log n/L}}{np} = \sqrt{\frac{\log n}{npL}} = \mathfrak{o} (1).
\end{align*}
Hence it remains to show that $\pnorm{\delta}{\infty} = \mathfrak{o}(1/\sqrt{npL})$ with the prescribed probability. To this end, recall the definition of $\hat{\pi}^{(m)}$ in (\ref{def:spec_loo}). Then we have
\begin{align*}
\delta_m &= \frac{\sum_{i:i\neq m}A_{im}\bar{y}_{mi}(\hat{\pi}_i - \pi^*_i)}{\pi_m^* \cdot \sum_{i:i\neq m}A_{im}\bar{y}_{im}} - \frac{1}{n}\sum_{k=1}^n\frac{\hat{\pi}_k -\pi_k^*}{\pi_k^*}\\
&= \Big(\frac{\sum_{i:i\neq m}A_{im}\bar{y}_{mi}(\hat{\pi}_i - \hat{\pi}^{(m)}_i)}{\pi_m^* \cdot \sum_{i:i\neq m}A_{im}\bar{y}_{im}} - \frac{1}{n}\sum_{k=1}^n\frac{\hat{\pi}_k -\hat{\pi}^{(m)}_k}{\pi_k^*}\Big)\\
&\quad\quad\quad + \Big(\frac{\sum_{i:i\neq m}A_{im}\bar{y}_{mi}(\hat{\pi}^{(m)}_i - \pi^*_i)}{\pi_m^* \cdot \sum_{i:i\neq m}A_{im}\bar{y}_{im}} - \frac{1}{n}\sum_{k=1}^n\frac{\hat{\pi}^{(m)}_k -\pi_k^*}{\pi_k^*}\Big)\\
&\equiv \delta_{1,m} + \delta_{2,m}.
\end{align*}
Using Lemma \ref{lem:spec_rate}, the two terms inside $\delta_{1,m}$ can be bounded as follows:
\begin{itemize}
\item The first term satisfies
\begin{align*}
&\Big|\frac{\sum_{i:i\neq m}A_{im}\bar{y}_{mi}(\hat{\pi}_i - \hat{\pi}^{(m)}_i)}{\pi_m^* \cdot \sum_{i:i\neq m}A_{im}\bar{y}_{im}}\Big| \lesssim \frac{1}{p}\sum_{i:i\neq m}A_{im}|\hat{\pi}_i - \hat{\pi}^{(m)}_i|\\
&\leq \frac{1}{p}\Big(\sum_{i:i\neq m}A_{im}\Big)^{1/2}\cdot \pnorm{\hat{\pi}^{(m)} - \hat{\pi}}{} \lesssim \frac{1}{p}\cdot \sqrt{np}\cdot n^{-1}\sqrt{\frac{\log n}{npL}} = \mathfrak{o}(\frac{1}{\sqrt{npL}}).
\end{align*}
\item Using $\pi^*_m\gtrsim n^{-1}$, the second term satisfies
\begin{align*}
\Big|\frac{1}{n}\sum_{k=1}^n\frac{\hat{\pi}_k -\hat{\pi}^{(m)}_k}{\pi_k^*}\Big| \lesssim \sqrt{n}\pnorm{\hat{\pi}^{(m)} - \hat{\pi}}{} \lesssim \sqrt{\frac{\log n}{n}}\sqrt{\frac{1}{npL}}= \mathfrak{o}(\frac{1}{\sqrt{npL}}).
\end{align*}
\end{itemize}
We hence conclude $\pnorm{\delta_1}{\infty} = \mathfrak{o}(1/\sqrt{npL})$. For $\delta_{2,m}$, using the definition of $\delta^{(m)}$ in (\ref{def:spec_delta_loo}), we have
\begin{align*}
\Big(\pi_m^*\cdot \sum_{i:i\neq m}A_{im}\bar{y}_{im}\Big)\delta_{2,m} &= \sum_{i:i\neq m}A_{im}\bar{y}_{mi}(\bar{\pi}^{(m)}_i - \pi_i^*) + \sum_{i:i\neq m}A_{im}\bar{y}_{mi}\pi_i^*\delta^{(m)}_i + \\
&\quad\quad\quad \sum_{i:i\neq m}A_{im}(\bar{y}_{mi}\pi_i^* - \bar{y}_{im}\pi_m^*)\cdot \frac{1}{n}\sum_{k=1}^n \frac{\hat{\pi}^{(m)}_k - \pi_k^*}{\pi_k^*}\\
&\equiv R^{(m)}_{1,m} + \sum_{i:i\neq m}A_{im}\bar{y}_{mi}\pi_i^*\delta^{(m)}_i + R^{(m)}_{2,m}.
\end{align*}
By analogous arguments as in the proof of Lemma \ref{lem:bD_norm}, we have 
\begin{align*}
\max_{m\in[n]}\frac{|R^{(m)}_{1,m}| + |R^{(m)}_{2,m}|}{\pi_m^*\cdot \sum_{i:i\neq m}A_{im}\bar{y}_{im}} \lesssim \sqrt{\frac{\log n}{np}}\cdot \frac{1}{\sqrt{npL}} = \mathfrak{o}(\frac{1}{\sqrt{npL}}),
\end{align*}
under the condition $np\gg \log n$. Lastly, for the middle term, by Lemma \ref{lem:loo_delta_bound} and Bernstein's inequality applied conditionally to $\{A_{im}\}_{i\neq m}$ (note its independence from $\delta^{(m)}$ by construction), 
\begin{align*}
&\Big|\sum_{i:i\neq m}A_{im}\bar{y}_{mi}\pi_i^*\delta^{(m)}_i\Big| \leq \Big|p\cdot\sum_{i:i\neq m}\bar{y}_{mi}\pi_i^*\delta^{(m)}\Big| + \Big|\sum_{i:i\neq m}(A_{im}-p)\bar{y}_{mi}\pi_i^*\delta^{(m)}_i\Big|\\
&\lesssim (p/n)\sqrt{n}\pnorm{\delta^{(m)}}{} + \sqrt{p\log n\cdot \sum_{i:i\neq m}(\delta^{(m)}_i\pi_i^*)^2} + \log n\cdot n^{-1}\pnorm{\delta^{(m)}}{\infty}\\
&= \mathfrak{o}(\sqrt{\frac{p}{nL}}) + \sqrt{\frac{p}{nL}}\cdot \frac{(\log n)^{3/2}}{np}.
\end{align*}
Putting together the pieces, we conclude that $\pnorm{\delta}{\infty} = \mathfrak{o}(1/\sqrt{npL})$ under the condition $np\gg (\log n)^{3/2}$. The proof is complete.
\end{proof}

\subsection{Completion of the proof}\label{subsec:proof_spec_completion}
\begin{proof}[Proof of Theorem \ref{thm:spec_expansion}]
Fix $i\in[n]$. Recall the definition of $\Delta_i$ in (\ref{def:spec_Delta}). Then by definition of $\theta^*$ and the condition $\bm{1}_n^\top \theta^* = 0$, we have
\begin{align*}
\tilde{\theta}_i - \theta_i^* &= \big(\log \hat{\pi}_i - \log \pi_i^*\big) - \ave\big(\log\hat{\pi} - \log\pi^*\big)\\
&= \log\Big(\frac{\hat{\pi}_i-\pi^*_i}{\pi^*_i} + 1\Big) - \frac{1}{n}\sum_{k=1}^n \log\Big(\frac{\hat{\pi}_k - \pi_k^*}{\pi_k^*} + 1\Big) \equiv \Delta_i + R_i.
\end{align*}
Here with $h(x)\equiv \log(x+1) - x$ for $x\geq -1$,
\begin{align*}
R_i = h\Big(\frac{\hat{\pi}_i - \pi^*_i}{\pi^*_i}\Big) - \frac{1}{n}\sum_{k=1}^n h\Big(\frac{\hat{\pi}_k-\pi_k^*}{\pi_k^*}\Big).
\end{align*}
Define the event
\begin{align}\label{def:spec_event}
\mathcal{A}\equiv \Big\{\max_{k\in [n]}\Big|\frac{\hat{\pi}_k - \pi_k^*}{\pi_k^*}\Big|\leq \frac{1}{4}\Big\}.
\end{align}
Then it follows from Lemma \ref{lem:spec_rate} that $\mathcal{A}$ holds with probability $1-\mathcal{O}(n^{-10})$. On the event $\mathcal{A}$, using $|h(x)|\leq x^2$ for $x\in(-1/2,1/2)$ and Lemma \ref{lem:spec_rate} again, we have
\begin{align*}
\frac{1}{n}\sum_{k=1}^n h\Big(\frac{\hat{\pi}_k-\pi_k^*}{\pi_k^*}\Big) \leq \frac{1}{n}\sum_{k=1}^n\Big(\frac{\hat{\pi}_k - \pi_k^*}{\pi_k^*}\Big)^2 \lesssim \frac{1}{npL} = \mathfrak{o}(\frac{1}{\sqrt{npL}}).
\end{align*}
On the other hand, by definition of $\Delta_i$ in (\ref{def:spec_Delta}), we have
\begin{align*}
h\Big(\frac{\hat{\pi}_i - \pi^*_i}{\pi^*_i}\Big) \leq \Big(\frac{\hat{\pi}_i - \pi_i^*}{\pi_i^*}\Big)^2 &= \frac{\hat{\pi}_i - \pi_i^*}{\pi_i^*} \cdot \Big(\Delta_i + \frac{1}{n}\sum_{k=1}^n \frac{\hat{\pi}_k - \pi_k^*}{\pi_k^*}\Big) \\
&= \mathfrak{o}(\Delta_i) + \mathfrak{o}(\frac{1}{\sqrt{npL}})
\end{align*}
by the estimates in Lemma \ref{lem:spec_rate}, and thus we have $|R_i| = \mathfrak{o}(\Delta_i) + \mathfrak{o}(1/\sqrt{npL})$.
The proof is now completed by invoking Proposition \ref{prop:spec_Delta_expansion}.
\end{proof}

\section{Proofs of Application I}\label{subsec:proof_app_clt}
In this section, we provide proofs for the application in Section \ref{subsec:app_clt}.

\begin{proof}[Proofs of Propositions \ref{prop:app_clt_mle} and \ref{prop:app_clt_spec}]
As the two proofs are similar, we only present the proof for the MLE. We start by proving the CLT with $\bar{\theta} = \theta^*$. By the expansion in Theorem \ref{thm:mle_expansion}, we have for each $i\in[n]$
\begin{align*}
\rho_i(\theta^*)(\hat{\theta}_i - \theta_i^*) = (1+\epsilon_{1,i})f_i + \epsilon_{2,i}'.
\end{align*}
Here $\epsilon_{2,i}' = \rho_i(\theta^*)\epsilon_{2,i}$ satisfies $\pnorm{\epsilon_{2,i}'}{\infty} = \mathfrak{o}(1)$ with probability $1-\mathcal{O}(n^{-10})$ since standard concentration yields that $\max_{i\in[k]}\rho_i(\theta^*) = \mathcal{O}(\sqrt{npL})$ with the same probability, and the sequence $\{f_i\}$ is given by
\begin{align*}
f_i = \rho_i(\theta^*)\frac{b_i}{d_i} = \frac{\sqrt{L}\cdot \sum_{j:j\neq i}A_{ij}\big(\bar{y}_{ij} - \psi(\theta_i^*-\theta_j^*)\big)}{\sqrt{\sum_{j:j\neq i}A_{ij}\psi'(\theta_i^*-\theta_j^*)}}.
\end{align*}

Let $Z\stackrel{d}{=} \mathcal{N}_k(0,I_k)$. By the Cram\'{e}r-Wold theorem, it suffices to prove that for any fixed $a\in \R^k$,
\begin{align*}
a^\top\big(f_1,\ldots,f_k\big) \leadsto a^\top Z.
\end{align*}
Let $\mathcal{A}_n$ be the event $\{A:\min_{m\in[n]}\sum_{i:i\neq m}A_{mi}\psi'(\theta_m^*-\theta_i^*) \geq c_0np\}$ for some small enough constant $c_0$ such that $\mathcal{A}$ holds with probability $1-\mathcal{O}(n^{-10})$ by Lemma \ref{lem:A_concen}. We first prove a conditional version of the above result by verifying the Lindeberg-Feller condition. With
\begin{align*}
U_{\ell i} \equiv \frac{a_\ell\sqrt{L}A_{\ell i}\big(\bar{y}_{\ell i} - \psi(\theta_\ell^* - \theta_i^*)\big)}{\sqrt{\sum_{m:m\neq \ell}A_{\ell m}\psi'(\theta_\ell^*-\theta_m^*)}}.
\end{align*}
we have
\begin{align*}
a^\top\big(f_1,\ldots,f_m\big) &= \sum_{\ell=1}^ka_\ell\frac{\sqrt{L}\cdot \sum_{i:i\neq \ell}A_{\ell i}\big[\bar{y}_{\ell i} - \psi(\theta_\ell^*-\theta_i^*)\big]}{\sqrt{\sum_{m:m\neq \ell}A_{\ell m} \psi'(\theta_\ell^*-\theta_m^*)}}\\
&\equiv \sum_{\ell=1}^k \sum_{i=1,i\neq \ell}^n U_{\ell i} = \sum_{\ell=1}^k\sum_{i=k+1}^n U_{\ell i} + \sum_{1\leq \ell < i\leq k}(U_{\ell i} + U_{i \ell})\\
&\equiv \sum_{\ell=1}^k \sum_{i=k+1}^n V_{\ell i} + \sum_{1\leq \ell < i\leq k}V_{\ell i}.
\end{align*}
Here $\{V_{\ell i}\}$ with index set $\{\ell\leq k, k+1\leq i\leq n\}\cup \{1\leq \ell<i\leq k\}$ are defined by: if $\ell\leq k, k+1\leq i\leq n$, 
\begin{align*}
V_{\ell i} &= U_{\ell i} = \frac{a_\ell \sqrt{L}A_{\ell i}\big(\bar{y}_{\ell i} - \psi(\theta_\ell^*-\theta_i^*)\big)}{\sqrt{\sum_{m:m\neq \ell}A_{\ell m}\psi'(\theta_\ell^*-\theta_m^*)}},
\end{align*}
and if $1\leq \ell < i\leq k$, 
\begin{align*}
V_{\ell i} &= \sqrt{L}A_{\ell i}\big(\bar{y}_{\ell i} - \psi(\theta_\ell^*-\theta_i^*)\big)\cdot \Big(\frac{a_\ell}{\sqrt{\sum_{m:m\neq \ell}A_{\ell m}\psi'(\theta_\ell^*-\theta_m^*)}} - \frac{a_i}{\sqrt{\sum_{m:m\neq i}A_{i m}\psi'(\theta_i^*-\theta_m^*)}}\Big).
\end{align*}
Note that $\{V_{\ell i}\}$ are independent across its index set, centered when conditioning on $A$, and satisfy
\begin{align*}
&\Big(\sum_{\ell=1}^k \sum_{i=k+1}^n  + \sum_{1\leq \ell < i\leq k}\Big)\E(V^2_{\ell i}|A)\\
& = \pnorm{a}{}^2 + \sum_{1\leq \ell<i\leq k}\frac{2a_ia_\ell}{\sqrt{\sum_{m:m\neq \ell}A_{\ell m}\psi'(\theta_\ell^*-\theta_m^*)}\sqrt{\sum_{m:m\neq i}A_{i m}\psi'(\theta_i^*-\theta_m^*)}}\\
&\rightarrow  \pnorm{a}{}^2 = \E (a^\top Z)^2,
\end{align*}
where the convergence holds under the condition $\min_{m\in[n]}\sum_{i:i\neq m}A_{mi}\psi'(\theta_m^*-\theta_i^*) \geq c_0np$ on the event $\mathcal{A}_n$. On the other hand, the Lindeberg-Feller condition holds trivially since $\{V_{\ell, i}\}$ are bounded. Hence the Lindeberg-Feller CLT applies to conclude that for any $A\in\mathcal{A}_n$,
\begin{align*}
a^\top\big(f_1,\ldots,f_k\big)|A \leadsto a^\top Z.
\end{align*}
For the unconditional version, it holds for every $t\in\R$ that
\begin{align*}
&\Prob(a^\top\big(f_1,\ldots,f_k\big) \leq t)\\
&= \E \Big[\Prob(a^\top\big(f_1,\ldots,f_k\big) \leq t| A)\mathbb{I}_{A\in\mathcal{A}_n}\Big] + \E \Big[\Prob(a^\top\big(f_1,\ldots,f_k\big) \leq t| A)\mathbb{I}_{A\in\mathcal{A}^c_n}\Big]\\
&\rightarrow \Prob(a^\top Z\leq t) + 0 = \Prob(a^\top Z\leq t)
\end{align*}
by the dominated convergence theorem. This concludes the claimed CLT for $\bar{\theta} = \theta^*$. The claim for $\bar{\theta} = \hat{\theta}$ follows from the fact that $\rho_k(\hat{\theta}) = \big(1+\mathfrak{o}_P(1)\big)\rho_k(\theta^*)$ uniformly over $k\in[n]$. Indeed, using the lower bound $\min_{i\in[n]}\rho_i(\theta^*) \wedge \rho_i(\hat{\theta}) \gtrsim \sqrt{npL}$ with probability $1-\mathcal{O}(n^{-10})$, we have
\begin{align*}
\big|\rho_i(\theta^*) - \rho_i(\hat{\theta})\big| &= \frac{\big|\rho_i^2(\theta^*) - \rho_i^2(\hat{\theta})\big|}{\rho_i(\theta^*) + \rho_i(\hat{\theta})} \lesssim \sqrt{\frac{L}{np}}\cdot \Big|\sum_{j:j\neq i}A_{ij}\big(\psi'(\theta_i^*-\theta_j^*) - \psi'(\hat{\theta}_i - \hat{\theta}_j)\big)\Big| \\
&\lesssim \sqrt{\frac{L}{np}} \Big(\sum_{j:j\neq i}A_{ij}\Big)\cdot \pnorm{\hat{\theta}-\theta^*}{\infty} \lesssim \sqrt{\frac{L}{np}}\cdot np\cdot \sqrt{\frac{\log n}{npL}}\\
&= \sqrt{\log n} = \mathfrak{o}(\sqrt{np L}).
\end{align*}
The case where $\{A_{ij}\}$ are replaced by $p$ can be dealt with similar arguments so the proof is complete. 
\end{proof}

\section{Proofs of Application II}\label{subsec:proof_app_rank}

In this section, we provide proofs for the application in Section \ref{subsec:app_rank}.

\begin{proof}[Proof of Proposition \ref{prop:app_rank}]
It can be readily verified that the event $\{r(1) \notin [n_1+1, n-n_2]\}$ is contained in the event $\{\theta_i^*\notin \mathcal{C}_i\text{ for some $i$}\}$, whose probability can be bounded by
\begin{align*}
\Prob\Big(\{\theta_i^*\notin \mathcal{C}_i\text{ for some $i$}\}\Big) \leq \Prob(\theta_1^* \notin \mathcal{C}_1) + \sum_{i\neq 1} \Prob(\theta_i^* \notin \mathcal{C}_i).
\end{align*}
Since $\Prob(\theta_1^* \notin \mathcal{C}_1) \rightarrow \alpha$ by construction of $\mathcal{C}_1$ and the CLT in Proposition \ref{prop:app_clt_mle}, it suffices to show the second probability is vanishing. 

Using preliminary estimates in Proposition \ref{prop:mle_prelim}, it is not hard to see that $\rho_i(\hat{\theta})$ are uniformly close to their populations: $\rho_i(\hat{\theta})= (1+\mathfrak{o}(1))\rho_i(\theta^*)$ with probability $1-\mathcal{O}(n^{-10})$ and $\mathfrak{o}(1)$ uniform over $i\in[n]$.  Note that $\rho_i(\theta^*)\asymp \sqrt{npL}$. Hence by the expansion in Theorem \ref{thm:mle_expansion}, for each $i\neq 1$, we have for large enough $n$
\begin{align*}
\Prob(\theta_i^*\notin \mathcal{C}_i) &= \Prob\Big(\Big|(1+\epsilon_{1,i})\frac{b_i}{d_i} + \epsilon_{2,i}\Big|\geq \tau_i\Big) \\
&\leq \Prob\Big(\Big|\frac{b_i}{d_i}\Big|\geq \Big(1+\frac{c_0}{2}\Big)\sqrt{2\log n \cdot \rho_i^{-2}(\theta^*)}\Big) + \mathcal{O}(n^{-10}).
\end{align*}
Now note that $b_i/d_i = \sum_{j:j\neq i}\sum_{\ell=1}^L Z_{ij\ell}$, where $Z_{ij\ell}\equiv A_{ij}(y_{ij\ell} - \psi(\theta_i^*-\theta_j^*))/\big(L\cdot\sum_{k:k\neq i}A_{ik}\psi'(\theta_i^*-\theta_k^*)\big)$ are centered and independent when conditioning on $A$, and satisfy
\begin{align*}
\sum_{j:j\neq i}\sum_{\ell=1}^L \E(Z_{ij\ell}^2|A) = \rho_i^{-2}(\theta^*), \quad \max_{j:j\neq i}\max_{1\leq \ell\leq L}|Z_{ij\ell}| \leq 2\rho_i^{-2}(\theta^*).
\end{align*}
Hence for large enough $n$, it follows from Bernstein's inequality (with the prescribed constants in \cite[Theorem 2.10]{boucheron2013concentration}) applied conditionally on $A$ that $\Prob\big(|b_i/d_i|\geq \tilde{\tau}_i\big) \leq 2n^{-(1+c_0/2)}$. Take the union bound to complete the proof.
\end{proof}

\section{Proofs of Application III}\label{sec:proof_app_constant}

In this section, we provide proofs for the application in Section \ref{subsec:app_constant}.

\subsection{Proof of Proposition \ref{prop:mle_constant}}\label{subsec:proof_app_constant_mle}

\begin{proof}[Proof of Proposition \ref{prop:mle_constant}]
Recall the definition of $\delta$ in (\ref{def:delta}), and the terms $b_i,d_i$ in Theorem \ref{thm:mle_expansion}. Then for any $\epsilon > 0$, with the prescribed probability, we have
\begin{align*}
&\pnorm{\hat{\theta} -\theta^*}{}^2\leq (1+\epsilon)\sum_{m=1}^n \Big( \frac{b_m}{d_m}\Big)^2 + (1+\epsilon^{-1})\pnorm{\delta}{}^2.
\end{align*}
By Proposition \ref{prop:delta_bound}, we have $\pnorm{\delta}{}^2 = \mathfrak{o}((pL)^{-1})$. For the main term, let $D \equiv \textrm{diag}(d_1,\ldots,d_n)$ be a diagonal matrix, which is invertible with the prescribed probability, and when this happens,
\begin{align}\label{eq:main_decomp}
\sum_{m=1}^n \Big( \frac{b_m}{d_m}\Big)^2 = \pnorm{D^{-1}b}{}^2 = \E\big(\pnorm{D^{-1}b}{}^2|A\big) + \big(\pnorm{D^{-1}b}{}^2-\E\big(\pnorm{D^{-1}b}{}^2|A\big)\big).
\end{align}
The second term in the above display can be bounded by Lemma \ref{lem:quad_form_concentration} as follows. Note that $b_m = \sum_{i:i\neq m}A_{mi}Z_{mi}$ with $Z_{mi} = \bar{y}_{mi}-\psi(\theta_m^*-\theta_i^*)$ therein satisfying the anti-symmetry condition $Z_{mi} = -Z_{im}$. Moreover, up to anti-symmetry, $Z_{mi}$ are independent sub-Gaussian variables with variance proxy bounded by a constant multiple (only depending on $\kappa$) of $L^{-1}$. Hence with the prescribed probability, Lemma \ref{lem:quad_form_concentration} implies
\begin{align*}
\big(\pnorm{D^{-1}b}{}^2-\E\big(\pnorm{D^{-1}b}{}^2|A\big)\big) &\lesssim L^{-1}\sqrt{\log n}\Big(\min_{i\in[n]}d_i\Big)^{-2} \sqrt{n} \Big(\max_{i\in[n]}\sum_{j:j\neq i}A_{ij}\Big) \\
&\lesssim  \sqrt{\log n} \frac{\sqrt{n} np}{(np)^2L}=\mathfrak{o}((pL)^{-1}),
\end{align*}
where we use Lemma \ref{lem:A_concen} in the second inequality.
Next, for the first term in (\ref{eq:main_decomp}), it holds by direct calculation that
\begin{align*}
\E\big(\pnorm{D^{-1}b}{}^2|A\big) &= \frac{\tr\big(D^{-1}\big)}{L} \stackrel{(*)}{=} \big(1+\mathfrak{o}(1)\big)\frac{\tr\big((\E D)^{-1}\big)}{L}\\
&= \big(1+\mathfrak{o}(1)\big)\frac{1}{pL}\sum_{m=1}^n\frac{1}{\sum_{i:i\neq m}\psi'(\theta_m^*-\theta_i^*)}, 
\end{align*}
where $(*)$ follows from the fact that uniformly over $i\in[n]$, $d_i \asymp np$ and $\pnorm{D - \E D}{\op} = \max_i |d_i - \E d_i|\lesssim \sqrt{np\log n}$ by standard concentration. Putting together the two estimates, we have
\begin{align*}
\pnorm{\hat{\theta} - \theta^*}{}^2 \leq \frac{(1+\epsilon)}{pL}(1+\mathfrak{o}(1))\cdot\sum_{i=1}^n\Big( \sum_{j:j\neq i}\psi'(\theta_i^*-\theta_j^*)\Big)^{-1} + C_\kappa'(1+\epsilon^{-1})\mathfrak{o}\big(\frac{1}{pL}\big).
\end{align*}
The proof for the upper bound is now complete by choosing $\epsilon\downarrow 0$ slow enough (e.g. $\epsilon \asymp (\log n/n)^{1/4}$) such that the second term is still $\mathfrak{o}\big((pL)^{-1}\big)$. The proof for the lower bound is analogous by noting that for any $\epsilon > 0$,
\begin{align*}
\pnorm{\hat{\theta} -\theta^*}{}^2&\geq (1-\epsilon)\sum_{m=1}^n \Big( \frac{b_m}{d_m}\Big)^2 - (\epsilon^{-1} - 1)\pnorm{\delta}{}^2,
\end{align*}
and then using the same estimates established in the upper bound proof. 
\end{proof}

\begin{lemma}\label{lem:quad_form_concentration}
Let $A\in\{0,1\}^{n\times n}$ be a symmetric matrix and $D\in\mathbb{R}^{n\times n}$ be a diagonal matrix with positive diagonal entries $d_1,\ldots, d_n$. Define $b\in\R^n$ such that $b_i = \sum_{j:j\neq i}A_{ij}Z_{ij}$, where $\{Z_{ij}\}_{i,j:i<j}$ are a sequence of independent sub-Gaussian random variables with variance proxy $\tau^2$ and $Z_{ij} = -Z_{ji}$ for any $i>j$. Then there exists some universal $C > 0$ such that
\begin{align*}
\abs{\pnorm{D^{-1}b}{}^2 - \E\pnorm{D^{-1}b}{}^2} \leq  C \tau^2\sqrt{\log n}\Big(\min_{i\in[n]}d_i\Big)^{-2} \sqrt{n} \Big(\max_{i\in[n]}\sum_{j:j\neq i}A_{ij}\Big),
\end{align*}
with probability at least $1 - \mathcal{O}(n^{-10})$.
\end{lemma}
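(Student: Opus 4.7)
The plan is to view $\pnorm{D^{-1}b}{}^2$ as a quadratic form in the independent sub-Gaussian vector $Z=(Z_{ij})_{i<j}$ and then apply the Hanson--Wright inequality. Encode the antisymmetric relation $Z_{ji}=-Z_{ij}$ by writing $b = M Z$, where $M\in\R^{n\times \binom{n}{2}}$ has entries $M_{k,(i,j)}=A_{kj}\mathbb{I}_{k=i}-A_{ki}\mathbb{I}_{k=j}$ for $i<j$; a direct computation verifies $b_k=\sum_{\ell\neq k}A_{k\ell}Z_{k\ell}$. Consequently $\pnorm{D^{-1}b}{}^2 = Z^\top Q Z$ with the symmetric PSD matrix $Q \equiv M^\top D^{-2} M$.

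With this representation, the Hanson--Wright inequality yields, for any $t>0$,
\begin{align*}
\Prob\Big(\big|Z^\top Q Z - \E Z^\top Q Z\big|\geq t\Big) \leq 2\exp\Big(-c\min\Big\{\frac{t^2}{\tau^4\pnorm{Q}{F}^2},\frac{t}{\tau^2\pnorm{Q}{\op}}\Big\}\Big).
\end{align*}
The proof then reduces to two deterministic estimates on $Q$: a bound on $\pnorm{Q}{\op}$ and on $\pnorm{Q}{F}$. Choosing $t=C\tau^2\sqrt{n\log n}\,\pnorm{Q}{\op}$ and using the trivial Frobenius/operator comparison $\pnorm{Q}{F}^2\leq \rank(Q)\pnorm{Q}{\op}^2\leq n\pnorm{Q}{\op}^2$ (since $Q$ has rank at most $n$) makes the exponent $\gtrsim \log n$, producing the desired $\mathcal{O}(n^{-10})$ tail for a suitable absolute $C$.

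The main computational step, and the one that delivers the $\max_i\sum_{j\neq i}A_{ij}$ factor in the final bound, is the identification $MM^\top = \mathcal{L}(A)$, the unweighted graph Laplacian of $A$. Indeed, for $k\neq k'$ with $k<k'$, the only basis vector $e_{(i,j)}$ contributing to $(MM^\top)_{k,k'}$ is $(i,j)=(k,k')$, giving $(MM^\top)_{k,k'}=A_{kk'}\cdot(-A_{kk'})=-A_{kk'}$; the diagonal entries collect to $\sum_{\ell\neq k}A_{k\ell}$. By Gershgorin, $\pnorm{MM^\top}{\op}\leq 2\max_i\sum_{j\neq i}A_{ij}$, and hence
\begin{align*}
\pnorm{Q}{\op} \leq \pnorm{D^{-1}}{\op}^2\pnorm{MM^\top}{\op} \leq 2\Big(\min_{i\in[n]}d_i\Big)^{-2}\max_{i\in[n]}\sum_{j\neq i}A_{ij}.
\end{align*}
Inserting this into the Hanson--Wright deviation yields exactly the stated inequality. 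The only subtlety worth noting is that $Q$ is a fixed matrix (only $Z$ is random), so Hanson--Wright applies cleanly; the antisymmetry of the $Z_{ij}$'s is entirely absorbed into the definition of $M$ and plays no further role.
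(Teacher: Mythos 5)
Your proposal is correct and uses the same core strategy as the paper: rewrite $\pnorm{D^{-1}b}{}^2$ as a quadratic form in the antisymmetrically-extended independent sub-Gaussian vector $Z=(Z_{ij})_{i<j}$ and apply Hanson--Wright, after which the task reduces to bounding the Frobenius and operator norms of the quadratic form matrix. Where you differ is in how those norms are obtained. The paper writes down the quadratic form matrix $S$ entry by entry and bounds $\pnorm{S}{F}$ and $\pnorm{S}{\op}$ by direct, somewhat laborious, calculations. You instead exploit the factorization $Q = M^\top D^{-2} M$, observe that $\pnorm{Q}{\op} = \pnorm{D^{-1}MM^\top D^{-1}}{\op}\leq d_{\min}^{-2}\pnorm{MM^\top}{\op}$, compute $MM^\top = \mathcal L(A)$ (the unweighted graph Laplacian, whose operator norm is at most $2h_{\max}$ by Gershgorin), and then kill the Frobenius norm with the rank bound $\pnorm{Q}{F}^2\leq \rank(Q)\,\pnorm{Q}{\op}^2\leq n\pnorm{Q}{\op}^2$. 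Both routes land on the same estimates $\pnorm{Q}{\op}\lesssim d_{\min}^{-2}h_{\max}$ and $\pnorm{Q}{F}^2\lesssim n\,d_{\min}^{-4}h_{\max}^2$ and hence the same tail choice $t\asymp\tau^2\sqrt{n\log n}\,d_{\min}^{-2}h_{\max}$. Your route is more structural and shorter: the Laplacian identity and rank inequality replace the paper's explicit enumeration of overlapping-edge patterns, and the antisymmetry of $Z$ is encoded once in $M$ rather than threaded through a case analysis of $S$'s entries. This is a clean improvement in exposition with no loss of content.
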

\begin{proof}
Using the symmetry of $A$ and the anti-symmetry of $\{Z_{ij}\}_{i,j:i<j}$, we have
\begin{align}
\nonumber &\pnorm{D^{-1}b}{}^2 = \sum_{i=1}^n \Big(\frac{b_i}{d_i}\Big)^2 =\sum_{i=1}^n  d_i^{-2} \Big(\sum_{j:j\neq i} A_{ij}Z_{ij}\Big)^2\\
\label{eq:9.1eq1}& = \sum_{i=1}^nd_i^{-2} \Big(\sum_{j:j> i} A_{ij}Z_{ij} +\sum_{j:j< i} A_{ji} (-Z_{ji})  \Big)^2\\
\nonumber & = \sum_{i=1}^nd_i^{-2}\Big( \sum_{j,j':j,j'> i} A_{ij}A_{ij'}Z_{ij}Z_{ij'} +  \sum_{j,j':j,j'< i} A_{ji}A_{j'i}Z_{ji}Z_{j'i} -  \sum_{j,j':j,j'>i} A_{ij}A_{j'i}Z_{ij}Z_{j'i}  \Big).
\end{align}
which  can be written as a quadratic form of $\{Z_{ij}\}_{i,j:i<j}$.  Let $\mathcal{I}$ be the set of double indices defined by $\mathcal{I}\equiv\{(i,j): i,j\in[n], i<j\}$ with $N \equiv |\mathcal{I}| = {n\choose 2}$. Let $Z \equiv (Z_{ij})_{(i,j)\in \mathcal{I}}\in \R^N$ such that it has independent sub-Gaussian coordinates with variance proxy  $\tau^2$.  Then there exists a matrix $S= (S_{(i,j), (i',j')})\in\mathbb{R}^{N\times N}$ such that $\pnorm{D^{-1}b}{}^2 = Z^\top SZ = \sum_{(i,j),(i',j')\in\mathcal{I}}S_{(i,j),(i',j')}Z_{(i,j)}Z_{(i',j')}$ where $S$ is defined as
\begin{align*}
S_{(i,j),(i',j')} = \begin{cases}
A_{ij}A_{i'j'} (d_i^{-2} + d_j^{-2}),\quad \text{ if }i'=i,j'=j,\\
A_{ij}A_{i'j'}d_i^{-2},\quad\quad \quad\quad\;\;  \text{ if }i'=i,j'\neq j,\\
A_{ij}A_{i'j'}d_j^{-2},\quad\quad \quad\quad\;\;   \text{ if }i'\neq i,j'= j,\\
-A_{ij}A_{i'j'}d_i^{-2},\quad\quad\quad\;\;  \;  \text{ if }j'=i,\\
-A_{ij}A_{i'j'}d_j^{-2},\quad\quad\quad\;\;  \;  \text{ if }i'=j,\\
0,\quad\quad\quad\quad\quad\quad\quad\quad\quad \text{ otherwise.}
\end{cases}
\end{align*}
That is, the entry in $S$ is zero if the two corresponding edges $(i,j),(i',j')$ have no joint vertex, as the matrix $A$ can be interpreted as an adjacency matrix of a graph. Let $h_{\max}= \max_{i\in[n]}\sum_{j:j\neq i}A_{ij}$ which be understood as the maximum degree of the graph.

To apply the Hanson-Wright inequality (cf., \cite[Theorem 1.1]{rudelson2013hanson}) to control the deviation of $Z^\top SZ$, we now relate the quantities $\pnorm{S}{F}^2$ and $\pnorm{S}{\op}^2$. We first bound  $\pnorm{S}{F}^2$.
Denote $d_{\min} = \min_{i\in[n]}d_i$. We have
\begin{align}\label{ineq:hs_norm}
\nonumber \pnorm{S}{F}^2&\leq 4d_{\min}^{-4} \sum_{(i,j),(i',j')\in \mathcal{I}}A_{ij}A_{i'j'} \mathbb{I}\left\{\{i,j\}\cap \{i',j'\}\neq \emptyset\right\} \\
&\leq 8d_{\min}^{-4} \sum_{(i,j)\in \mathcal{I}}A_{ij} h_{\max} \leq 8d_{\min}^{-4} nh^2_{\max}.
\end{align}

For $\pnorm{S}{\op}$, let $v = (v_{(i,j)})\in \R^N$ be any vector such that $\pnorm{a}{}^2 = \sum_{i<j}a_{(i,j)}^2 \leq 1$. Note that $S$ is symmetric and  in the above derivation of $S$ we have (\ref{eq:9.1eq1}) being equal to $Z^\top SZ$ which holds without any condition on $Z$. Hence,
\begin{align*}
v^\top S v &=  \sum_{i=1}^nd_i^{-2} \Big(\sum_{j:j> i} A_{ij}v_{ij} +\sum_{j:j< i} A_{ji} (-v_{ji})  \Big)^2 \\
&\leq 4 \sum_{i=1}^nd_i^{-2} \Big(\Big(\sum_{j:j> i} A_{ij}\Big) \Big(\sum_{j:j> i} v_{ij}^2\Big)  + \Big(\sum_{j:j< i} A_{ij}\Big) \Big(\sum_{j:j< i} v_{ij}^2\Big)  \Big)\\
&\leq 4d_{\min}^{-2} h_{\max}  \sum_{i=1}^n\Big(\Big(\sum_{j:j> i} v_{ij}^2\Big)  +\Big(\sum_{j:j< i} v_{ij}^2\Big)  \Big)\\
&\leq 8d_{\min}^{-2} h_{\max}.
\end{align*}
Then we have
\begin{align}\label{ineq:op_norm}
\pnorm{S}{\op}\leq 8d_{\min}^{-2} h_{\max}.
\end{align}

Since each $Z_{jk}$ is sub-Gaussian with variance proxy $\tau^2$, the Hanson-Wright inequality now applies to conclude that for any $t\geq 0$,
\begin{align*}
\Prob\big(\abs{Z^\top SZ^\top - \E(Z^\top SZ^\top)}\geq t\big) \leq 2\exp\Big(-C \frac{t^2}{\tau^4\pnorm{S}{F}^2}\wedge \frac{t}{\tau^2\pnorm{S}{\op}}\Big).
\end{align*}
for some universal $C > 0$. So by choosing $t = \tau^2\cdot\mathcal{O}\big(\sqrt{\log n}\pnorm{S}{F}\vee (\log n)\pnorm{S}{\op}\big)$ and plugging in the estimates in (\ref{ineq:hs_norm}) and (\ref{ineq:op_norm}), we obtain
\begin{align*}
\abs{\pnorm{D^{-1}b}{}^2 - \E\pnorm{D^{-1}b}{}^2} &\lesssim \tau^2\Big(\sqrt{\log n}d_{\min}^{-2} \sqrt{n} h_{\max}  +( \log n)d_{\min}^{-2} h_{\max} \Big)\\
&\leq 2 \tau^2\sqrt{\log n}d_{\min}^{-2} \sqrt{n} h_{\max},
\end{align*}
with the prescribed probability.
\end{proof}

\subsection{Proof of Proposition \ref{prop:spec_constant}}\label{subsec:proof_app_constant_spec}
\begin{proof}[Proof of Proposition \ref{prop:spec_constant}]

We work on the event $\mathcal{A}$ defined in (\ref{def:spec_event}) which holds with the prescribed probability. Using $x-x^2 \leq \log(1+x)\leq x$ for $x\in (-1/2, 1/2)$ and the fact that
\begin{align*}
\tilde{\theta}_m - \theta^*_m = \log\Big(\frac{\hat{\pi}_m-\pi_m^*}{\pi_m^*}+1\Big) - \frac{1}{k}\sum_{k=1}^n\log\Big(\frac{\hat{\pi}_k-\pi_k^*}{\pi_k^*}+1\Big),
\end{align*}
we have (recall the definition of $\Delta$ in (\ref{def:spec_Delta}))
\begin{align*}
\Delta_m - \Big(\frac{\hat{\pi}_m - \pi^*_m}{\pi^*_m}\Big)^2\leq \tilde{\theta}_m-\theta_m^*\leq  \Delta_m+ \frac{1}{n}\sum_{i=1}^n \Big(\frac{\hat{\pi}_i - \pi^*_i}{\pi^*_i}\Big)^2.
\end{align*}
This implies that for any $\epsilon > 0$, 
\begin{align*}
\pnorm{\tilde{\theta}-\theta^*}{}^2 &\leq (1+\epsilon)\pnorm{\Delta}{}^2 + \\
&\quad\quad\quad\mathcal{O}(\epsilon^{-1})\cdot \max\Big\{\sum_{m=1}^n \Big(\frac{\hat{\pi}_m - \pi^*_m}{\pi^*_m}\Big)^4, n\Big(\frac{1}{n}\sum_{m=1}^n \Big(\frac{\hat{\pi}_m - \pi^*_m}{\pi^*_m}\Big)^2\Big)^2\Big\}\\
&= (1+\epsilon)\pnorm{\Delta}{}^2 + \mathcal{O}(\epsilon^{-1})\cdot \sum_{m=1}^n \Big(\frac{\hat{\pi}_m - \pi^*_m}{\pi^*_m}\Big)^4.
\end{align*}
Using Lemma \ref{lem:spec_rate} and the lower bound $\min_{m\in[n]}\pi_m^* \gtrsim n^{-1}$, we have
\begin{align*}
\sum_{m=1}^n \Big(\frac{\hat{\pi}_m - \pi^*_m}{\pi^*_m}\Big)^4 &\leq n^4\cdot \bigpnorm{\frac{\hat{\pi}-\pi^*}{\pi^*}}{\infty}^2\cdot \bigpnorm{\frac{\hat{\pi}-\pi^*}{\pi^*}}{}^2\\
&\lesssim n^4 \cdot n^{-2}\frac{\log n}{npL} \cdot n^{-2}\frac{1}{pL} = \mathfrak{o}(\frac{1}{pL}). 
\end{align*}
Hence the upper bound follows from Proposition \ref{prop:Delta_risk} by choosing $\epsilon \downarrow 0$ slow enough, e.g. of the order $\sqrt{\log n/(npL)}$. The lower bound proof is analogous so the proof is complete.
\end{proof}

\begin{proposition}\label{prop:Delta_risk}
Recall the definition of $\Delta$ in (\ref{def:spec_Delta}). Suppose that $\kappa = \mathcal{O}(1)$ and $np\gg \log n$. Then the following holds with probability $1-\mathcal{O}(n^{-10})$.
\begin{align*}
\pnorm{\Delta}{}^2 = \frac{1+\mathfrak{o}(1)}{pL}\cdot \sum_{m=1}^n \frac{\sum_{i:i\neq m}(e^{\theta_i^*}+e^{\theta_m^*})^2\psi'(\theta_i^*-\theta_m^*)}{\Big(\sum_{i:i\neq m}(e^{\theta_i^*} + e^{\theta_m^*})\psi'(\theta_i^*-\theta_m^*)\Big)^2}.
\end{align*}
\end{proposition}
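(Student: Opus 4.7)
The plan is to mimic the MLE argument in Proposition \ref{prop:mle_constant}: isolate a tractable leading term, identify its conditional expectation with the target formula, and absorb the rest as $\mathfrak{o}((pL)^{-1})$. Using the decomposition in (\ref{eq:spec_expansion}), write
\begin{align*}
\Delta_m = \frac{\bar\pi_m-\pi^*_m}{\pi^*_m} + \delta_m,
\end{align*}
so that $(a+b)^2 \le (1+\epsilon)a^2+(1+\epsilon^{-1})b^2$ together with the $\ell_2$ bound $\pnorm{\delta}{}^2=\mathfrak{o}((pL)^{-1})$ from Proposition \ref{prop:delta_norm_spec} reduces the claim to computing $\sum_m\bigl((\bar\pi_m-\pi^*_m)/\pi^*_m\bigr)^2$ up to $\mathfrak{o}((pL)^{-1})$. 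The matching lower bound will follow identically from the reverse inequality $(a+b)^2\ge(1-\epsilon)a^2-(\epsilon^{-1}-1)b^2$, with $\epsilon$ chosen to vanish slowly enough (e.g.\ $\epsilon\asymp (\log n/(np))^{1/4}$).

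Next, using $\bar y_{mj}\pi^*_j-\bar y_{jm}\pi^*_m=(\pi^*_m+\pi^*_j)\bigl(\bar y_{mj}-\psi(\theta^*_m-\theta^*_j)\bigr)$, the leading term factors as $\tilde b'_m/\tilde d'_m$ with
\begin{align*}
\tilde b'_m = \sum_{j:j\neq m}A_{mj}(\pi^*_m+\pi^*_j)\bigl(\bar y_{mj}-\psi(\theta^*_m-\theta^*_j)\bigr),\qquad \tilde d'_m = \pi^*_m\sum_{j:j\neq m}A_{mj}\bar y_{jm}.
\end{align*}
A Bernstein bound conditional on $A$, together with Lemma \ref{lem:A_concen}, gives $\tilde d'_m=(1+\mathfrak{o}(1))\tilde d_m$ uniformly in $m$, where $\tilde d_m=\pi^*_m\sum_{j\neq m}A_{mj}\psi(\theta^*_j-\theta^*_m)$. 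The key algebraic identity
\begin{align*}
(\pi^*_m+\pi^*_j)\psi'(\theta^*_m-\theta^*_j) = \pi^*_m\psi(\theta^*_j-\theta^*_m),
\end{align*}
which follows from $\pi^*_i=e^{\theta^*_i}/\sum_k e^{\theta^*_k}$, allows one to rewrite $\tilde d_m$ as $\sum_{j\neq m}A_{mj}(\pi^*_m+\pi^*_j)\psi'(\theta^*_m-\theta^*_j)$, matching the denominator inside $\E((\tilde b'_m)^2\mid A)=L^{-1}\sum_{j\neq m}A_{mj}(\pi^*_m+\pi^*_j)^2\psi'(\theta^*_m-\theta^*_j)$. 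Therefore, taking conditional expectations,
\begin{align*}
\sum_{m=1}^n\frac{\E((\tilde b'_m)^2\mid A)}{\tilde d_m^2} = \frac{1}{L}\sum_{m=1}^n\frac{\sum_{j\neq m}A_{mj}(e^{\theta^*_m}+e^{\theta^*_j})^2\psi'(\theta^*_m-\theta^*_j)}{\bigl(\sum_{j\neq m}A_{mj}(e^{\theta^*_m}+e^{\theta^*_j})\psi'(\theta^*_m-\theta^*_j)\bigr)^2},
\end{align*}
since the common factor $\bigl(\sum_k e^{\theta^*_k}\bigr)^{-2}$ cancels between numerator and denominator. Two concentration steps remain.

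The first step concentrates the quadratic form $\sum_m(\tilde b'_m)^2/\tilde d_m^2$ around its conditional mean. The innovations $\bar y_{mj}-\psi(\theta^*_m-\theta^*_j)$ are bounded and anti-symmetric across pairs, so the argument is entirely parallel to Lemma \ref{lem:quad_form_concentration}: the associated coefficient matrix has nonzero entries only between edges sharing a vertex, its operator norm is $\mathcal{O}(h_{\max}/(\min_m\tilde d_m)^2)$, and its Frobenius norm is $\mathcal{O}(\sqrt{n}h_{\max}/(\min_m\tilde d_m)^2)$, where $h_{\max}\lesssim np$ and $\min_m\tilde d_m\gtrsim p$ on the good event. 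Hanson--Wright then yields deviations of order $\sqrt{\log n}\cdot n^{-1}/p = \mathfrak{o}((pL)^{-1})$ after accounting for the $n^{-2}$ scale of $\pi^*_m\pi^*_j$. The second step replaces $A_{mj}$ by $p$ in the formula above via Bernstein, using $\sum_{j\neq m}A_{mj}\asymp np$ uniformly; since both numerator and denominator are Lipschitz-smooth functions of sums of the $A_{mj}$ that fluctuate by at most $\mathcal{O}(\sqrt{np\log n})$, the induced relative error is $\mathcal{O}(\sqrt{\log n/(np)})=\mathfrak{o}(1)$ uniformly in $m$.

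The main obstacle, as in the MLE case, will be the Hanson--Wright step: one needs sharp control on the Frobenius and operator norms of the quadratic-form matrix so that the fluctuation is comparable to $\mathfrak{o}((pL)^{-1})$ rather than merely to the leading term. The structure here is nearly identical to Lemma \ref{lem:quad_form_concentration} after absorbing the $\tilde d_m^{-2}$ factors into the diagonal, so this should go through; the only new ingredient is the anti-symmetric pairing $\bar y_{mj}=1-\bar y_{jm}$, which is already handled in that lemma.
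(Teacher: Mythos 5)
Your proof follows essentially the same route as the paper's: split $\Delta_m$ via (\ref{eq:spec_expansion}) and control $\pnorm{\delta}{}^2$ by Proposition \ref{prop:delta_norm_spec}; replace the sample denominator $\tilde d'_m$ by its conditional-on-$A$ mean $\tilde d_m$; identify the conditional mean of the resulting quadratic form and concentrate it via Lemma \ref{lem:quad_form_concentration}; finally replace $A_{mj}$ by $p$ via Bernstein. The only organizational difference is that you perform the replacement $\tilde d'_m = (1+\mathfrak{o}(1))\tilde d_m$ multiplicatively, whereas the paper makes an additive split $(I)_m + (II)_m$ and bounds $\sum_m (II)_m^2 = \mathfrak{o}((pL)^{-1})$ directly; both are valid. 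One arithmetic slip in the Hanson--Wright step is worth flagging: the deviation you quote, $\sqrt{\log n}\cdot n^{-1}/p$, drops the $L^{-1}$ from the variance proxy $\tau^2 \asymp (n^2L)^{-1}$, and $\sqrt{\log n}/(np) = \mathfrak{o}((pL)^{-1})$ would require $L \ll n/\sqrt{\log n}$, which is not assumed; plugging the correct $\tau^2$ into Lemma \ref{lem:quad_form_concentration} gives $\tau^2\sqrt{\log n}\,(\min_m \tilde d_m)^{-2}\sqrt{n}\, h_{\max} \lesssim \sqrt{\log n/n}/(pL)$, which is $\mathfrak{o}((pL)^{-1})$ with no restriction on $L$.
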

\begin{proof}[Proof of Proposition \ref{prop:Delta_risk}]
Recall the definition of $\bar{\pi}_m$ in (\ref{def:spec_pi_bar}). By (\ref{eq:spec_expansion}), we have
\begin{align*}
\pnorm{\Delta}{}^2 \leq (1+ \epsilon)\sum_{m=1}^n \Big(\frac{\bar{\pi}_m-\pi_m^*}{\pi_m^*}\Big)^2 + \mathcal{O}(\epsilon^{-1})\pnorm{\delta}{}^2.
\end{align*}
By Proposition \ref{prop:delta_norm_spec}, we have $\pnorm{\delta}{} = \mathfrak{o}(1/\sqrt{pL})$ with the prescribed probability. For the main term, we can further decompose it as
\begin{align*}
\frac{\bar{\pi}_m - \pi^*_m}{\pi^*_m} &= \frac{\sum_{i:i\neq m}A_{mi}\big(\bar{y}_{mi}\pi_i^* - \bar{y}_{im}\pi_m^*\big)}{\pi_m^*\cdot \sum_{i:i\neq m}A_{mi}\bar{y}_{im}}\\
&= \frac{\sum_{i:i\neq m}A_{mi}\big(\bar{y}_{mi}\pi_i^* - \bar{y}_{im}\pi_m^*\big)}{\pi_m^*\cdot \sum_{i:i\neq m}A_{mi}\psi(\theta_i^*-\theta_m^*)} + \\
&\quad\quad\quad\sum_{i:i\neq m}\frac{A_{mi}\big(\bar{y}_{mi}\pi_i^* - \bar{y}_{im}\pi_m^*\big)}{\pi_m^*}\Big(\frac{1}{\sum_{i:i\neq m}A_{mi}\bar{y}_{im}} - \frac{1}{\sum_{i:i\neq m}A_{mi}\psi(\theta_i^*-\theta_m^*)}\Big)\\
&\equiv (I)_m + (II)_m. 
\end{align*}
For $(II)_m$ in the above display, standard concentration and the lower bound $\min_{m\in[n]}\pi_m^*\gtrsim n^{-1}$ yield that 
\begin{align*}
|(II)_m| \lesssim n\cdot(np)^{-2}\cdot \Big|\sum_{i:i\neq m}A_{mi}\big(\bar{y}_{mi}\pi_i^* - \bar{y}_{im}\pi_m^*\big)\Big|\cdot\Big|\sum_{i:i\neq m}A_{mi}\big(\bar{y}_{im} - \psi(\theta_i^*-\theta_m^*)\big)\Big|.
\end{align*}
Hence by Lemma \ref{lem:mle_concentration}, 
\begin{align*}
\sum_{m=1}^n (II)_m^2 &\lesssim \frac{1}{n^2p^4}\cdot \max_{m\in[n]}\Big|\sum_{i:i\neq m}A_{mi}\big(\bar{y}_{mi}\pi_i^* - \bar{y}_{im}\pi_m^*\big)\Big|^2\times\\
&\quad\quad\quad \sum_{m=1}^n \Big|\sum_{i:i\neq m}A_{mi}\big(\bar{y}_{im} - \psi(\theta_i^*-\theta_m^*)\big)\Big|^2\\
& \lesssim \frac{1}{n^2p^4} \cdot \frac{p\log n}{nL}\cdot \frac{n^2p}{L} = \frac{\log n}{npL}\cdot \frac{1}{pL} = \mathfrak{o}(\frac{1}{pL}).
\end{align*}
For $(I)_m$, we have by definition of $\bar{\pi}_m$ that
\begin{align*}
(I)_m = \frac{\sum_{i:i\neq m}A_{mi}\big(\bar{y}_{mi}\pi_i^* - \bar{y}_{im}\pi_m^*\big)}{\pi_m^*\cdot \sum_{i:i\neq m}A_{mi}\psi(\theta_i^*-\theta_m^*)} = \big(D^{-1}b\big)_m,
\end{align*}
where $D = \diag(\tilde{d}_1,\ldots,\tilde{d}_n)$ with $\tilde{b},\tilde{d}\in\R^n$ defined in Theorem \ref{thm:spec_expansion}. This implies 
\begin{align}\label{eq:main_decomp_spec}
\pnorm{D^{-1}\tilde{b}}{}^2 = \E(\pnorm{D^{-1}\tilde{b}}{}^2|A) + \big(\pnorm{D^{-1}\tilde{b}}{}^2 - \E(\pnorm{D^{-1}\tilde{b}}{}^2|A)\big).
\end{align}
The second term in the above display can be bounded by Lemma \ref{lem:quad_form_concentration} as follows. Note that $\tilde{b}_m = \sum_{i:i\neq m}A_{mi}Z_{mi}$ with $Z_{mi} = \bar{y}_{mi}\pi_i^* - \bar{y}_{im}\pi_m^*$ therein satisfying the anti-symmetry condition $Z_{mi} = -Z_{im}$. Moreover, up to anti-symmetry, $Z_{mi}$ are independent sub-Gaussian variables with variance proxy bounded by a constant multiple (only depending on $\kappa$) of $(n^2L)^{-1}$. Hence with the prescribed probability, Lemma \ref{lem:quad_form_concentration} implies
\begin{align*}
\big(\pnorm{D^{-1}\tilde{b}}{}^2-\E\big(\pnorm{D^{-1}\tilde{b}}{}^2|A\big)\big) &\lesssim  \frac{1}{n^2L}\sqrt{\log n}\Big(\min_{i\in[n]}\tilde d_i\Big)^{-2} \sqrt{n} \Big(\max_{i\in[n]}\sum_{j:j\neq i}A_{ij}\Big) \\
&\lesssim  \sqrt{\log n} \frac{\sqrt{n} np}{(p)^2n^2L}=\mathfrak{o}((pL)^{-1}),
\end{align*}
where we use Lemma \ref{lem:A_concen} in the second inequality.

Lastly, by direct calculation, the main term $\E(\pnorm{D^{-1}\tilde{b}}{}^2|A)$ in (\ref{eq:main_decomp_spec}) satisfies 
\begin{align*}
\E(\pnorm{D^{-1}\tilde{b}}{}^2|A) &= \frac{1}{L}\sum_{m=1}^n \frac{\sum_{i:i\neq m}A_{im}(\pi_i^*+\pi^*_m)^2\psi'(\theta_i^*-\theta_m^*)}{\Big(\pi_m^*\cdot \sum_{i:i\neq m}A_{im}\psi(\theta_i^*-\theta_m^*)\Big)^2}\\
&= \frac{1}{L}\sum_{m=1}^n \frac{\sum_{i:i\neq m}A_{im}(e^{\theta_i^*}+e^{\theta_m^*})^2\psi'(\theta_i^*-\theta_m^*)}{\Big(\sum_{i:i\neq m}A_{im}(e^{\theta_i^*} + e^{\theta_m^*})\psi'(\theta_i^*-\theta_m^*)\Big)^2}.
\end{align*}
It remains to show that both the numerator and the denominator concentrate around their mean value with respect to $A$. For the denominator, it can be readily verified that $\E\Big(\sum_{i:i\neq m}A_{im}(e^{\theta_i^*} + e^{\theta_m^*})\psi'(\theta_i^*-\theta_m^*)\Big) \asymp np$, and Bernstein's inequality yields that with the prescribed probability, 
\begin{align*}
\Big|\sum_{i:i\neq m}(A_{im}-p)(e^{\theta_i^*} + e^{\theta_m^*})\psi'(\theta_i^*-\theta_m^*)\Big| \lesssim \sqrt{np\log n} + \log n = \mathfrak{o}(np).
\end{align*}
A similar calculation holds for the numerator. In summary, we have for any $\epsilon > 0$,
\begin{align*}
\pnorm{\Delta}{}^2 \leq (1+\epsilon)\frac{(1+\mathfrak{o}(1))}{pL}\sum_{m=1}^n \frac{\sum_{i:i\neq m}(e^{\theta_i^*}+e^{\theta_m^*})^2\psi'(\theta_i^*-\theta_m^*)}{\Big(\sum_{i:i\neq m}(e^{\theta_i^*} + e^{\theta_m^*})\psi'(\theta_i^*-\theta_m^*)\Big)^2} +\mathcal{O}(\epsilon^{-1})\mathfrak{o}(\frac{1}{pL}).
\end{align*}
A lower bound can be similarly proved, and hence the proof is complete by choosing $\epsilon\downarrow 0$ slow enough (e.g. $\epsilon\asymp (\log n/n)^{1/4}$). 
\end{proof}

\subsection{Proof of Lower bound (Theorem \ref{thm:lower})}\label{subsubsec:proof_lower}
\begin{proof}[Proof of Theorem \ref{thm:lower}]
Let $r_n$ be a sequence such that $(npL)^{-1/2} \ll r_n \ll \epsilon_n\wedge 1$. For each $i\in[n]$, let $\pi_i(\cdot)$ be a prior density defined by $\pi_i(x) = r_nK\big((x - \theta^*_i)/r_n\big)$, where $K(\cdot)$ is a $C^\infty$ kernel function supported on $[-1,1]$ such that $K(\pm 1) = 0$.

By definition of $\Theta(\kappa)$, we have
\begin{align*}
\inf_{\hat{\theta}}\sup_{\theta\in B(\theta^*,\epsilon_n)\cap \Theta(\kappa)} \E_\theta\pnorm{\hat{\theta}-\theta}{}^2 = \inf_{\hat{\theta}}\sup_{\theta\in B(\theta^*,\epsilon_n)\cap \Theta(\kappa)}\E_\theta\pnorm{\big(\hat{\theta} - \ave(\hat{\theta})\bm{1}_n\big) - \big(\theta - \ave(\theta)\bm{1}_n\big)}{}^2.
\end{align*}
Then using the fact that $\pnorm{\theta - \ave(\theta)\bm{1}_n}{}^2 = n^{-1}\cdot\sum_{i<j}(\theta_i - \theta_j)^2$ for any $\theta\in\R^n$, we have
\begin{align}\label{ineq:lower_reduction}
\notag&\inf_{\hat{\theta}}\sup_{\theta\in B(\theta^*,\epsilon_n)\cap \Theta(\kappa)} \E_\theta\pnorm{\big(\hat{\theta} - \ave(\hat{\theta})\bm{1}_n\big) - \big(\theta - \ave(\theta)\bm{1}_n\big)}{}^2\\
\notag&= \frac{1}{n}\inf_{\hat{\theta}}\sup_{\theta\in B(\theta^*,\epsilon_n)\cap \Theta(\kappa)}\sum_{i<j}\E_\theta\big[\big(\hat{\theta}_i - \hat{\theta}_j\big) - \big(\theta_i - \theta_j\big)\big]^2\\
\notag&\stackrel{(*)}{\geq} \frac{1}{n}\inf_{\hat{\theta}}\sup_{\theta\in B(\theta^*,\epsilon_n/2)\cap \tilde{\Theta}(\kappa)}\sum_{i<j}\E_\theta\big[\big(\hat{\theta}_i - \hat{\theta}_j\big) - \big(\theta_i - \theta_j\big)\big]^2\\
\notag&\stackrel{(**)}{\geq} \frac{1}{n}\inf_{\hat{\theta}}\int_{\theta\in[\theta^* - r_n\bm{1}_n, \theta^*+r_n\bm{1}_n]}\sum_{i<j}\E_\theta\big[\big(\hat{\theta}_i - \hat{\theta}_j\big) - \big(\theta_i - \theta_j\big)\big]^2\big(\prod_{i=1}^n\pi_i(\theta_i)\big)\big(\prod_{i=1}^n \d\theta_i\big)\\
\notag&= \frac{1}{n}\inf_{\hat{\theta}}\sum_{i<j}\int \big(\prod_{k:k\neq i,j}\pi_k(\theta_k)\big)\big(\prod_{k:k\neq i,j}\d\theta_k\big) \Big[\int \E_\theta\big[\big(\hat{\theta}_i - \hat{\theta}_j\big) - \big(\theta_i - \theta_j\big)\big]^2\pi_i(\theta_i)\pi_j(\theta_j)\d\theta_i\d\theta_j\Big]\\
\notag&\stackrel{(***)}{\geq} \frac{1}{n}\sum_{i<j}\int \big(\prod_{k:k\neq i,j}\pi_k(\theta_k)\big)\big(\prod_{k:k\neq i,j}\d\theta_k\big) \Big[\inf_{\tilde{T}_{ij}}\int \E_\theta\big[\tilde{T}_{ij} - \big(\theta_i - \theta_j\big)\big]^2\pi_i(\theta_i)\pi_j(\theta_j)\d\theta_i\d\theta_j\Big]\\
&\equiv \frac{1}{n}\sum_{i<j}\int \big(\prod_{k:k\neq i,j}\pi_k(\theta_k)\big)R_{ij}(\theta_{-i,-j})\big(\prod_{k:k\neq i,j}\d\theta_k\big).
\end{align}
Here $(*)$ holds as follows with $\tilde{\Theta}(\kappa) \equiv \{\theta'\in\R^n:\max_i\theta'_i - \min_i\theta'_i \leq \kappa\}$ dropping the centering condition. For any $\hat{\theta}$, with
\begin{align*}
\theta^* \equiv \argmax_{\theta\in B(\theta^*,\epsilon_n/2)\cap \tilde{\Theta}(\kappa)}\E_\theta\big[\big(\hat{\theta} - \ave(\hat{\theta})\bm{1}_n\big) - \big(\theta- \ave(\theta)\bm{1}_n\big)\big]^2
\end{align*}
and $\tilde{\theta}\equiv \theta^* - \ave(\theta^*)\bm{1}_n$, we have by the translation invariance of the model 
\begin{align*}
\E_{\theta^*}\big[\big(\hat{\theta} - \ave(\hat{\theta})\bm{1}_n\big) - \big(\theta^*- \ave(\theta^*)\bm{1}_n\big)\big]^2 = \E_{\tilde{\theta}}\big[\big(\hat{\theta} - \ave(\hat{\theta})\bm{1}_n\big) - \big(\tilde{\theta}- \ave(\tilde{\theta})\bm{1}_n\big)\big],
\end{align*}
and since $\theta^*\in B(\theta^*, \epsilon_n/2)$ and $\theta^*\in\Theta(\kappa)$,
\begin{align*}
|\ave(\theta^*)| = |\frac{1}{n}\sum_{i=1}^n (\theta^*_i - \theta^*_i)| \leq \pnorm{\theta^* - \theta^*}{\infty} \leq \epsilon_n/2.
\end{align*}
This entails that $\pnorm{\tilde{\theta} - \theta^*}{\infty} \leq \epsilon_n$, and hence $\tilde{\theta}\in B(\theta^*,\epsilon_n) \cap \Theta(\kappa)$, which concludes the proof of $(*)$. Next $(**)$ follows with $[\theta^* - r_n\bm{1}_n, \theta^*+r_n\bm{1}_n]\equiv \times_{i=1}^n [\theta_i^* - r_n, \theta^*_i + r_n]$ by bounding the (local) maximal risk by the Bayes risk induced by the product prior $\prod_{i=1}^n \pi_i(\theta_i)$ on $[\theta^* - r_n\bm{1}_n, \theta^* + r_n\bm{1}_n]$. Note that in $(**)$ we have $B(\theta^*,\epsilon_n/2)\subset \tilde{\Theta}(\kappa)$ for large enough $n$ due to the condition $\max_{i}\theta^*_i - \min_i\theta^*_i\leq \kappa/2$. Lastly the infimum over $\tilde{T}_{ij}$ in $(***)$ is taken over all estimators of $\theta_i - \theta_j$ with the knowledge of $\theta_{-i,-j}$. 

Let $\mu \equiv (\theta_i, \theta_j)\in \R^2$ and $T_{ij}\equiv T_{ij}(\mu) \equiv \theta_i - \theta_j \in \R$. Now by the multivariate van Tree's inequality in \cite[Equation (11)]{gill1995applications}, each $R_{ij}(\theta_{-i},\theta_{-j})$ can be lower bounded by
\begin{align}\label{ineq:Rij_lower}
R_{ij}(\theta_{-i},\theta_{-j}) \geq \frac{1}{n}\int \mathcal{J}_{ij}(\mu;\theta_{-i,-j})\pi_i(\theta_i)\pi_j(\theta_j)\d\theta_i\d\theta_j - \frac{1}{n^2}\tilde{\mathcal{I}}_{ij}(\pi_i,\pi_j), 
\end{align}
where with $\mathcal{I}(\mu)$ being the Fisher information matrix of $\mu = (\theta_i,\theta_j)$ defined below,
\begin{align*}
\mathcal{J}_{ij}(\mu;\theta_{-i,-j})&\equiv \Big(\frac{\partial T_{ij}}{\partial \mu}\Big)^\top\mathcal{I}(\mu)^{-1}\Big(\frac{\partial T_{ij}}{\partial \mu}\Big),
\end{align*}
and $\tilde{\mathcal{I}}_{ij}(\pi_i,\pi_j)$ is also defined below after we introduce a few other notations. We now estimate the two terms on the right side of (\ref{ineq:Rij_lower}).

(\textbf{First term in (\ref{ineq:Rij_lower})}) The fisher information of the parameter $\mu$ can be calculated as
\begin{align*}
\mathcal{I}(\mu) &\equiv \E_{\mu}\Big[\Big(\frac{\partial \ell_n(\mu;\theta_{-i,-j})}{\partial \mu}\Big)^\top\Big(\frac{\partial \ell_n(\mu;\theta_{-i,-j})}{\partial \mu}\Big)\Big]\\
&= (pL)\cdot 
\begin{bmatrix}
\sum_{k:k\neq i}\psi^\prime(\theta_i - \theta_k) & -\psi^\prime(\theta_i - \theta_j)\\
-\psi^\prime(\theta_i - \theta_j) & \sum_{k:k\neq j}\psi^\prime(\theta_j - \theta_k)
\end{bmatrix}.
\end{align*}
This entails that, with $D_{ij}\equiv \sum_{k:k\neq i,j}\big(\psi^\prime(\theta_i - \theta_k)+\psi^\prime(\theta_j - \theta_k)\big)$,
\begin{align*}
\mathcal{J}_{ij}(\mu;\theta_{-i,-j})&\equiv \Big(\frac{\partial T_{ij}}{\partial \mu}\Big)^\top\mathcal{I}(\mu)^{-1}\Big(\frac{\partial T_{ij}}{\partial \mu}\Big)\\
&= \frac{1}{pL}\cdot \frac{D_{ij}}{\big(\sum_{k:k\neq i,j}\psi^\prime(\theta_i - \theta_k)\big)\big(\sum_{k:k\neq i,j}\psi^\prime(\theta_j - \theta_k)\big) + \psi^\prime(\theta_i - \theta_j)D_{ij}}\\
&= (1+\mathfrak{o}(1))\cdot\frac{1}{pL}\cdot \frac{D_{ij}}{\big(\sum_{k:k\neq i,j}\psi^\prime(\theta_i - \theta_k)\big)\big(\sum_{k:k\neq i,j}\psi^\prime(\theta_j - \theta_k)\big)}\\
&= (1+\mathfrak{o}(1))\cdot\frac{1}{pL}\cdot\Big(\frac{1}{\sum_{k:k\neq i,j}\psi^\prime(\theta_i - \theta_k)} + \frac{1}{\sum_{k:k\neq i,j}\psi^\prime(\theta_j - \theta_k)}\Big),
\end{align*}
where the $\mathfrak{o}(1)$ term is uniform over all $\theta\in \Theta(\kappa)$ and $i,j$. Moreover, by definition of $\pi_i(\cdot)$ and the fact that $r_n = \mathfrak{o}(1)$, direct calculation yields that
\begin{align*}
&\int\frac{1}{\sum_{k:k\neq i,j}\psi^\prime(\theta_i - \theta_k)}\pi_i(\theta_i)\pi_j(\theta_j)\d\theta_i\d\theta_j\\
& = \int \frac{1}{\sum_{k:k\neq i,j}\psi^\prime(\theta_i - \theta_k)}K_{r_n}\big(\theta_i - \theta^*_i\big)\d\theta_i = \frac{1+\mathfrak{o}(1)}{\sum_{k:k\neq i,j}\psi^\prime(\theta^*_i - \theta_k)}
\end{align*}
uniformly overall $\theta\in \Theta(\kappa)$ and $i,j$. With a similar bound for the term involving index $j$, we have
\begin{align}\label{ineq:vt_main}
\notag&\int \mathcal{J}_{ij}(\mu;\theta_{-i,-j})\pi_i(\theta_i)\pi_j(\theta_j)\d\theta_i\d\theta_j\\
& = \big(1+\mathfrak{o}(1)\big)\cdot\frac{1}{pL}\cdot\Big(\frac{1}{\sum_{k:k\neq i,j}\psi^\prime(\theta^*_i - \theta_k)} + \frac{1}{\sum_{k:k\neq i,j}\psi^\prime(\theta^*_j - \theta_k)}\Big).
\end{align}
Here the $\mathfrak{o}(1)$ is uniform overall $\theta\in \Theta(\kappa)$ and $i,j$.

(\textbf{Second term in (\ref{ineq:Rij_lower})}) On the other hand, let $\Delta \equiv \det\big((pL)^{-1}\mathcal{I}(\mu)\big)$. Then
\begin{align*}
C(\mu)\equiv \Big(\frac{\partial T_{ij}(\mu)}{\partial \mu}\Big)^\top\mathcal{I}^{-1}(\mu) = \frac{1}{pL\Delta}\Big(\sum_{k:k\neq i,j}\psi^\prime(\theta_j-\theta_k),-\sum_{k:k\neq i,j}\psi^\prime(\theta_i-\theta_k)\Big).
\end{align*}
Hence by definition of $\pi_i(\cdot)$ and $\pi_j(\cdot)$, we have
\begin{align*}
&\frac{\partial \big(C_{11}(\mu)\pi_i(\theta_i)\pi_j(\theta_j)\big)}{\partial \theta_i}\\
& = \frac{\big(\sum_{k:k\neq i,j}\psi^\prime(\theta_j - \theta_k)\big)\cdot K_{r_n}(\theta_j - \theta^*_j)}{pL\Delta^2}\cdot \Big[r_n^{-1}K^\prime_{r_n}(\theta_i - \theta^*_i)\Delta - K_{r_n}(\theta_i - \theta^*_i)\frac{\partial \Delta}{\partial \theta_i}\Big].
\end{align*}
With a similar bound for $\partial \big(C_{12}(\mu)\pi_i(\theta_i)\pi_j(\theta_j)\big)/\partial \theta_j$, and the simple bounds that $\Delta \gtrsim n^2$ and $|\partial \Delta/\partial \theta_i|\vee |\partial \Delta/\partial \theta_j| = \mathcal{O}(n^2)$ uniformly over all $\theta\in\Theta(\kappa)$ and $i,j$, we have
\begin{align}\label{ineq:vt_remainder}
\notag\tilde{\mathcal{I}}_{ij}(\pi_i,\pi_j) &\equiv \int \frac{1}{\pi_i(\theta_i)\pi_j(\theta_j)}\Big[\frac{\partial \big(C_{11}(\mu)\pi_i(\theta_i)\pi_j(\theta_j)\big)}{\partial \theta_i} + \frac{\partial \big(C_{12}(\mu)\pi_i(\theta_i)\pi_j(\theta_j)\big)}{\partial \theta_j}\Big]^2 \d\theta_i\d\theta_j\\
&\lesssim \frac{1}{n^2p^2L^2}\Big(1\vee \frac{1}{r_n^2}\int \frac{\big(K^\prime(x)\big)^2}{K(x)}\d x\Big) \lesssim \frac{1}{n^2p^2L^2r_n^2}.
\end{align}

Now apply (\ref{ineq:Rij_lower}) with (\ref{ineq:vt_main}) and (\ref{ineq:vt_remainder}) to yield that
\begin{align*}
R_{ij}(\theta_{-i,-j}) \geq \big(1+\mathfrak{o}(1)\big)\cdot\frac{1}{pL}\cdot\Big(\frac{1}{\sum_{k:k\neq i,j}\psi^\prime(\theta^*_i - \theta_k)} + \frac{1}{\sum_{k:k\neq i,j}\psi^\prime(\theta^*_j - \theta_k)}\Big)
\end{align*}
under the condition $r_n\ll (npL)^{-1/2}$. By (\ref{ineq:lower_reduction}) and the above lower bound, the local minimax risk is lower bounded by
\begin{align*}
\frac{1+\mathfrak{o}(1)}{npL}\cdot\sum_{i<j}\int \Big(\frac{1}{\sum_{k:k\neq i,j}\psi^\prime(\theta^*_i - \theta_k)} + \frac{1}{\sum_{k:k\neq i,j}\psi^\prime(\theta^*_j - \theta_k)}\Big)\big(\prod_{k:k\neq i,j}\pi_k(\theta_k)\big)\d\theta_{-i,-j},
\end{align*}
where the $\mathfrak{o}(1)$ is uniform over all $i,j$. Note that for any $\theta_{-i,-j}$ in the support of the product prior, we have
\begin{align*}
\sum_{k:k\neq i,j}\psi^\prime(\theta^*_i - \theta_k) = \big(1+\mathfrak{o}(1)\big)\sum_{k:k\neq i,j}\psi^\prime(\theta^*_i - \theta^*_k)\quad \text{as } r_n = \mathfrak{o}(1),
\end{align*}
where the $\mathfrak{o}(1)$ is uniform over all $i,j$. Hence the local minimax risk is further lower bounded by
\begin{align*}
&\frac{1+\mathfrak{o}(1)}{npL}\cdot\sum_{i<j}\Big(\frac{1}{\sum_{k:k\neq i,j}\psi^\prime(\theta^*_i - \theta^*_k)} + \frac{1}{\sum_{k:k\neq i,j}\psi^\prime(\theta^*_j - \theta^*_k)}\Big)\\
&=\frac{1+\mathfrak{o}(1)}{npL}\cdot\sum_{i<j}\Big(\frac{1}{\sum_{k:k\neq i}\psi^\prime(\theta^*_i - \theta^*_k)} + \frac{1}{\sum_{k:k\neq j}\psi^\prime(\theta^*_j - \theta^*_k)}\Big)\\
&=\frac{1+\mathfrak{o}(1)}{pL}\cdot\sum_{i=1}^n\frac{1}{\sum_{k:k\neq i}\psi^\prime(\theta^*_i - \theta^*_k)}, 
\end{align*}
as desired.
\end{proof}

\appendix

\section{Additional auxiliary results}\label{sec:appendix}

\begin{lemma}\label{lem:A_concen}
Suppose that $np\gg \log n$. Then there exist some universal positive constants $c, C$ such that the following hold with probability $1 - \mathcal{O}(n^{-10})$.
\begin{align*} 
cnp\leq \min_{i\in[n]}\sum_{j:j\neq i}A_{ij} \leq \max_{i\in[n]}\sum_{j:j\neq i}A_{ij} \leq Cnp,\\
\max_{m\in[n]}\sum_{i,j:i<j, i,j\neq m}A_{ij}A_{mi} \lesssim C(np)^2.
\end{align*}
\end{lemma}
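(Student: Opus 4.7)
The plan is to derive both estimates from a single ingredient: a sharp Chernoff/Bernstein concentration bound on the degree of each vertex, followed by a union bound over $n$ (for part one) and a crude factorization (for part two).

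First I would handle the degree bound in part one. For each fixed $i \in [n]$, the quantity $d_i := \sum_{j:j\neq i} A_{ij}$ is a sum of $n-1$ i.i.d.\ Bernoulli$(p)$ random variables with mean $(n-1)p$. By the multiplicative Chernoff bound (or equivalently Bernstein's inequality), for any $\lambda \in (0,1)$,
\begin{align*}
\Prob\bigl(|d_i - (n-1)p| \geq \lambda (n-1)p\bigr) \leq 2\exp\bigl(-c_0\lambda^2 np\bigr)
\end{align*}
for some universal $c_0 > 0$. Taking $\lambda$ to be a small enough universal constant (say $\lambda = 1/2$) and using the hypothesis $np \gg \log n$, the right-hand side is $\mathcal{O}(n^{-11})$ for large $n$. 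A union bound over $i \in [n]$ then yields $np/2 \leq \min_i d_i \leq \max_i d_i \leq 3np/2$ with probability $1-\mathcal{O}(n^{-10})$, which is the claimed two-sided bound with $c = 1/2$ and $C = 3/2$.

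For part two, I would condition on the event from part one and simply factor the double sum. Fix $m\in[n]$; observe that since $A_{mi}$ does not depend on $j$,
\begin{align*}
\sum_{i<j,\, i,j\neq m} A_{ij}A_{mi} \;\leq\; \sum_{i:\,i\neq m} A_{mi} \cdot \max_{i':\,i'\neq m} \sum_{j:\,j\neq i',m} A_{i'j} \;\leq\; \Bigl(\max_{i\in[n]} \sum_{j:j\neq i} A_{ij}\Bigr)^2.
\end{align*}
On the event from part one, the right-hand side is at most $(Cnp)^2$, uniformly in $m$, which is the desired bound.

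The main obstacle is really just a bookkeeping issue: choosing the Chernoff exponent so that the $n^{-11}$ per-vertex bound survives the union bound to give probability $1-\mathcal{O}(n^{-10})$, which requires the sparsity hypothesis $np \gg \log n$ with a sufficiently large implicit constant in front of $\log n$. No symmetrization, graph-theoretic argument, or matrix concentration tool is needed, since the second claim reduces to a product of two first-claim quantities after factoring out the $A_{mi}$ term.
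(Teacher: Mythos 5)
Your proposal follows the paper's proof exactly in structure: per-vertex concentration plus a union bound over $i\in[n]$ for the degree estimate, and the identical factorization $\sum_{i<j,\,i,j\neq m}A_{ij}A_{mi}\leq\big(\sum_{i:i\neq m}A_{mi}\big)\big(\max_i\sum_{j:j\neq i}A_{ij}\big)\lesssim(np)^2$ for the second claim. The one substantive difference is your choice of concentration tool, and it is actually the right one: you use a multiplicative Chernoff/Bernstein bound giving exponent of order $\lambda^2 np$, whereas the paper cites Hoeffding's inequality. Vanilla Hoeffding for a sum of $n$ Bernoulli($p$) variables at deviation $t=\lambda np$ gives exponent $\asymp\lambda^2 np^2$, which does \emph{not} dominate $\log n$ in the sparse regime $p\asymp (\log n)^\alpha/n$ that the paper cares about; one needs the variance-aware (Bernstein/Chernoff) form to get the $np$ scaling. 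So your version is not merely equivalent but is the technically correct instantiation of the same argument. One small point to keep in mind: the hypothesis $np\gg\log n$ already supplies any fixed constant multiple of $\log n$ for $n$ large, so no extra ``sufficiently large constant'' needs to be imposed beyond what the asymptotic notation gives.
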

\begin{proof}
For any fixed $i\in[n]$, we have $cnp\leq \sum_{j:j\neq i}A_{ij} \leq Cnp$ with the prescribe probability by Hoeffding's inequality. The first claim now follows by taking the union bound over $i\in[n]$. The second claim follows since
\begin{align*}
\max_{m\in[n]}\sum_{i,j:i<j, i,j\neq m}A_{ij}A_{mi} \leq \max_{m\in[n]}\sum_{i:i\neq m}A_{mi}\cdot \max_{i\in[n]}\sum_{j:j\neq i}A_{ij} \lesssim (np)^2
\end{align*}
by the first claim. 
\end{proof}

The following lemma gives some standard concentration results in the analysis of both MLE and the spectral method. 
\begin{lemma}\label{lem:mle_concentration}
Assume $np\geq c_0\log n$ for some sufficiently large $c_0 > 0$ and $\kappa = \mathcal{O}(1)$. Then the following hold with probability $1 - \mathcal{O}(n^{-10})$ uniformly over $\theta^*\in\Theta(\kappa)$.
\begin{align*}
&\sum_{i=1}^n \Big(\sum_{j:j\neq i}A_{ij}\big(\bar{y}_{ij} - \psi(\theta_i^* - \theta_j^*)\big)\Big)^2 \leq C\frac{n^2p}{L},\\
&\max_{i\in[n]}\Big|\sum_{j:j\neq i}A_{ij}\big(\bar{y}_{ij} - \psi(\theta_i^* - \theta_j^*)\big)\Big|^2 \leq C\frac{np\log n}{L},\\
&\max_{m\in[n]} \Big|\sum_{i:i\neq m}A_{im}\big(\bar{y}_{mi}\pi_i^* - \bar{y}_{im}\pi_m^*\big)\Big|^2 \leq  C\frac{p\log n}{nL}.
\end{align*}
Here $C = C(\kappa) > 0$.
\end{lemma}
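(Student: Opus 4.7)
The plan is to condition on $A$ throughout and rely on $\min_i\sum_{j\neq i}A_{ij}\asymp np$ from Lemma~\ref{lem:A_concen}. For statement~(2), writing
$\sum_{j\neq i}A_{ij}\big(\bar y_{ij}-\psi(\theta_i^*-\theta_j^*)\big) = L^{-1}\sum_{j\neq i}\sum_{\ell=1}^L A_{ij}\big(y_{ij\ell}-\psi(\theta_i^*-\theta_j^*)\big)$
exhibits, conditionally on $A$, a sum of $L\sum_{j\neq i}A_{ij}$ independent centered terms bounded by $1$. Hoeffding's inequality gives tail $2\exp\big(-CLt^2/\sum_{j\neq i}A_{ij}\big)$; inserting $\sum_{j\neq i}A_{ij}\lesssim np$ and choosing $t\asymp\sqrt{np\log n/L}$ produces tail $\mathcal{O}(n^{-11})$, and a union bound over $i\in[n]$ closes (2). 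Statement~(3) is then completely analogous via the identity $\bar y_{mi}\pi_i^*-\bar y_{im}\pi_m^* = (\pi_i^*+\pi_m^*)\big(\bar y_{mi}-\psi(\theta_m^*-\theta_i^*)\big)$: since $\pi_i^*+\pi_m^*\lesssim 1/n$ on $\Theta(\kappa)$, the conditional Hoeffding variance proxy becomes $\lesssim n^{-2}L^{-1}\sum_{i\neq m}A_{im}\lesssim p/(nL)$, so $t\asymp\sqrt{p\log n/(nL)}$ together with a union bound over $m$ suffices.

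For statement~(1), the same coordinatewise approach combined with a union bound would introduce a spurious $\log n$ factor; to remove it I would invoke the quadratic-form concentration Lemma~\ref{lem:quad_form_concentration}. Set $X_i\equiv\sum_{j\neq i}A_{ij}Z_{ij}$ with $Z_{ij}\equiv\bar y_{ij}-\psi(\theta_i^*-\theta_j^*)$. The identities $\bar y_{ji}=1-\bar y_{ij}$ and $\psi(\theta_j^*-\theta_i^*)=1-\psi(\theta_i^*-\theta_j^*)$ give the anti-symmetry $Z_{ij}=-Z_{ji}$, and the $\{Z_{ij}\}_{i<j}$ are (conditionally on $A$) independent sub-Gaussian with proxy $\tau^2\lesssim 1/L$ because $L\bar y_{ij}\sim\mathrm{Binomial}\big(L,\psi(\theta_i^*-\theta_j^*)\big)$. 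Thus $\sum_i X_i^2=\pnorm{b}{}^2$ matches the setup of Lemma~\ref{lem:quad_form_concentration} with $D=I_n$. Direct calculation yields the conditional mean $\E\big[\sum_i X_i^2\mid A\big]=\sum_{i\neq j}A_{ij}\mathrm{Var}(\bar y_{ij}\mid A)\lesssim n^2p/L$ by Lemma~\ref{lem:A_concen}, while Lemma~\ref{lem:quad_form_concentration} controls the deviation by $L^{-1}\sqrt{\log n}\cdot\sqrt{n}\cdot np = n^{3/2}p\sqrt{\log n}/L = \mathfrak{o}(n^2p/L)$ under $np\gg\log n$. Combining the two concludes~(1).

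The main obstacle is precisely statement~(1): the tight $n^2p/L$ (rather than $n^2p\log n/L$) bound cannot be obtained by handling each coordinate $X_i^2$ separately and union-bounding, so one is forced to exploit the global quadratic structure. What makes the Hanson--Wright route viable here is the anti-symmetry $Z_{ij}=-Z_{ji}$, which realizes $\sum_i X_i^2$ as a genuine quadratic form in the single independent family $\{Z_{ij}\}_{i<j}$ (rather than in a pair of perfectly-correlated collections indexed by ordered pairs), and the sparse support dictated by $A$ keeps both the Frobenius and operator norms of the associated coefficient matrix under control, as already assembled inside the proof of Lemma~\ref{lem:quad_form_concentration}. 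Uniformity over $\theta^*\in\Theta(\kappa)$ is automatic throughout, since the only features of $\theta^*$ entering the estimates are $\psi'(\theta_i^*-\theta_j^*)\asymp 1$ and $\mathrm{Var}(y_{ij\ell})\asymp 1$, both guaranteed on $\Theta(\kappa)$.
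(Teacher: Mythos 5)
Your proof is correct and, for claims~(1)--(2), takes a different route than the paper: the paper simply cites Lemma~7.4 of \cite{chen2020partial} for those two bounds, whereas you give a self-contained argument. Your treatment of claim~(3) (conditional Hoeffding via the anti-symmetry identity plus a union bound over $m$, using $\max_m\sum_{i\neq m}A_{im}\lesssim np$ from Lemma~\ref{lem:A_concen}) is essentially word-for-word what the paper does. Your proof of~(2) is the standard conditional Hoeffding plus union bound. Your proof of~(1) is the interesting part: you correctly identify that a coordinate-wise union bound would cost an extra $\log n$, and you instead realize $\sum_i b_i^2$ as a quadratic form in the single independent family $\{Z_{ij}\}_{i<j}$ via the anti-symmetry $Z_{ij}=-Z_{ji}$, invoking the paper's own Lemma~\ref{lem:quad_form_concentration} with $D=I_n$ and $\tau^2\lesssim 1/L$. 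The arithmetic checks out: the conditional mean is $\lesssim n^2p/L$ and the Hanson--Wright deviation is $\lesssim n^{3/2}p\sqrt{\log n}/L$, which is of strictly lower order (note this comparison needs only $\log n=\mathfrak{o}(n)$, so it holds under the lemma's weaker assumption $np\geq c_0\log n$, not just under $np\gg\log n$ as you wrote). It is also worth noting that the seemingly slicker route $\pnorm{b}{}^2\leq n\pnorm{A\circ Z}{\op}^2$ does \emph{not} give the clean $n^2p/L$ bound, since matrix Bernstein for the antisymmetric matrix $A\circ Z$ picks up a $\log n$ factor in $\pnorm{A\circ Z}{\op}^2$; your Hanson--Wright argument genuinely avoids this, which is why it is the right tool here. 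Overall your proposal uses the paper's internal machinery (Lemma~\ref{lem:quad_form_concentration}, which the paper itself later uses in the proof of Proposition~\ref{prop:mle_constant}) in place of the external citation.
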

\begin{proof}[Proof of Lemma \ref{lem:mle_concentration}]
The first two claims are proved in \cite[Lemma 7.4]{chen2020partial}. For the third claim, note that the summands $\bar{y}_{mi}\pi_i^* - \bar{y}_{im}\pi_m^*$ are independent, centered, and sub-Gaussian with variance proxy bounded by $L^{-1}\pnorm{\pi^*}{\infty}^2\lesssim (n^2L)^{-1}$. Hence the claim follows from Hoeffding's inequality
\begin{align*}
\Prob\Big(\Big|\sum_{i:i\neq m}A_{im}\big(\bar{y}_{mi}\pi_i^* - \bar{y}_{im}\pi_m^*\big)\Big|\geq t|A\Big) \leq 2\exp\Big(-\frac{Ct^2}{(n^2L)^{-1}\cdot \sum_{i:i\neq m}A_{im}}\Big),
\end{align*}
the bound $\max_{m\in[n]}\sum_{i:i\neq m}A_{mi}\lesssim np$ by Lemma \ref{lem:A_concen}, and a union bound. 
\end{proof}

The following lemma controls the spectrum of Laplacian matrices corresponding to a weighted Erd\H{o}s-R\'{e}nyi graph. 
\begin{lemma}\label{lem:hessian_eigen}
Let $A$ be the Erd\H{o}s-R\'{e}nyi graph adjacency matrix with connection probability $p$, and $\{w_{ij}\}_{i\neq j}$ be a non-negative weight sequence such that $w_{ij} = w_{ji}$. Let $H$ be a weighted Laplacian matrix defined by $H_{ij} \equiv -A_{ij}w_{ij}$ for $i\neq j$ and $H_{ii}\equiv \sum_{j\neq i}A_{ij}w_{ij}$. Assume that $p\geq c_0\log n/n$ for some sufficiently large $c_0 > 0$ and $\kappa_1\leq \min_{i\neq j}w_{ij} \leq \max_{i\neq j}w_{ij} \leq \kappa_2$ for some $\kappa_1,\kappa_2 > 0$. Then for some positive $c,C$ only depending on $c_0$, 
\begin{align*}
(c\kappa_1)np\leq\lambda_{\min,\perp}(H) \leq \lambda_{\max,\perp}(H) \leq (C\kappa_2)np
\end{align*}
with probability at least $1 - \mathcal{O}(n^{-10})$. Here $\lambda_{\min,\perp}$ stands for the smallest eigenvalue along directions orthogonal to $\bm{1}_n$.
\end{lemma}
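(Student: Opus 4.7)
The plan is to decompose $H$ into its conditional expectation and a zero-mean fluctuation, lower-bound the former via a direct quadratic-form computation, and control the latter with standard matrix concentration.

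First, write $H = pW + E$, where $W$ is the deterministic Laplacian with $W_{ij} = -w_{ij}$ for $i \neq j$ and $W_{ii} = \sum_{j\neq i} w_{ij}$, and $E \equiv H - pW$ is the mean-zero random part. For the deterministic piece, use the quadratic-form identity $x^\top W x = \sum_{i<j} w_{ij}(x_i - x_j)^2$ together with $\sum_{i<j}(x_i - x_j)^2 = n\|x\|_2^2 - (\bm{1}_n^\top x)^2$ to conclude that, for every unit vector $x$ with $\bm{1}_n^\top x = 0$,
\begin{align*}
\kappa_1 n \leq x^\top W x \leq \kappa_2 n.
\end{align*}
Since $W\bm{1}_n = 0$, this gives $\lambda_{\min,\perp}(pW) \geq \kappa_1 np$ and $\lambda_{\max,\perp}(pW) \leq \kappa_2 np$.

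Next, split $E = -B + D$, where $B$ is the symmetric matrix with zero diagonal and off-diagonal entries $B_{ij} = (A_{ij} - p)w_{ij}$, and $D$ is the diagonal matrix with $D_{ii} = \sum_{j:j\neq i}(A_{ij}-p)w_{ij}$. The matrix $B$ is of Wigner type with independent centered bounded entries (up to symmetry), so matrix Bernstein (or the Bandeira--van Handel bound) yields $\|B\|_{\op} \leq C_1 \kappa_2 \sqrt{np}$ with probability at least $1-\mathcal{O}(n^{-10})$, provided $np \geq c_0 \log n$ with $c_0$ large enough. For $D$, apply Bernstein's inequality to each diagonal entry, which is a sum of $n-1$ independent centered variables bounded by $\kappa_2$ with variance proxy $\kappa_2^2 p$, and then union bound over $i\in[n]$ to obtain $\|D\|_{\op} = \max_i |D_{ii}| \leq C_2 \kappa_2 \sqrt{np \log n}$ with the same probability. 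Combining, $\|E\|_{\op} \leq C_3 \kappa_2 \sqrt{np \log n}$.

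Finally, apply Weyl's inequality on the subspace orthogonal to $\bm{1}_n$: since $E\bm{1}_n$ need not vanish, project onto this subspace first, then use
\begin{align*}
\lambda_{\min,\perp}(H) \geq \lambda_{\min,\perp}(pW) - \|E\|_{\op} \geq \kappa_1 np - C_3 \kappa_2 \sqrt{np \log n},
\end{align*}
and analogously for the upper bound. By choosing $c_0$ sufficiently large (so that $\sqrt{\log n/(np)} \leq 1/\sqrt{c_0}$ dominates $C_3 \kappa_2/\kappa_1$), the fluctuation term is absorbed into a constant fraction of $\kappa_1 np$, yielding the claimed two-sided bound.

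The main obstacle is the $\|B\|_{\op}$ estimate: the off-diagonal matrix $B$ has $\binom{n}{2}$ independent entries whose largest one is $O(1)$, so a naive Bernstein bound gives the inflated rate $\sqrt{np\log n}$ rather than the sharp $\sqrt{np}$. For the lemma as stated, either rate suffices since we only need $\|E\|_{\op} = \mathfrak{o}(np)$, which holds as soon as $np \gg \log n$; hence the cleaner matrix Bernstein bound at rate $\sqrt{np\log n}$ already closes the argument, and no refined random-matrix theory is required.
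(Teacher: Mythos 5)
Your proof is correct but takes a genuinely different route. The paper's own proof sandwiches the weighted quadratic form against the unweighted Erd\H{o}s--R\'enyi graph Laplacian $\mathcal{L}_A$: for a unit $c$ with $\bm{1}_n^\top c = 0$, it writes $c^\top H c = \sum_{i<j} A_{ij} w_{ij}(c_i - c_j)^2$ and bounds $w_{ij}$ by $\kappa_1$ and $\kappa_2$ to get $\kappa_1\, c^\top \mathcal{L}_A c \leq c^\top H c \leq \kappa_2\, c^\top \mathcal{L}_A c$, then invokes a standard spectral concentration result for $\mathcal{L}_A$. You instead decompose $H = pW + E$ with $pW = \E[H]$, evaluate the deterministic piece exactly via $\sum_{i<j}(x_i-x_j)^2 = n\|x\|^2 - (\bm{1}_n^\top x)^2$, and control the fluctuation $E = -B + D$ by matrix Bernstein plus a scalar Bernstein/union bound on the diagonal. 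Your route is more self-contained, since it essentially re-derives, in the weighted setting, the Erd\H{o}s--R\'enyi Laplacian concentration that the paper cites from an external reference. The one trade-off is in the absorption step: you need $C_3\kappa_2\sqrt{\log n/(np)} < \kappa_1$, i.e.\ $c_0 \gtrsim (\kappa_2/\kappa_1)^2$, so your threshold for ``$c_0$ sufficiently large'' picks up a dependence on the weight ratio, whereas the paper factors out the weights \emph{before} concentrating and so obtains a universal threshold, which is what makes the literal claim that $c,C$ depend on $c_0$ alone come out cleanly. This is cosmetic in the paper's use of the lemma (there $\kappa_1 \asymp \kappa_2 \asymp 1$). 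One small wording slip: your parenthetical ``so that $\sqrt{\log n/(np)} \leq 1/\sqrt{c_0}$ dominates $C_3\kappa_2/\kappa_1$'' is inverted --- you need $1/\sqrt{c_0}$ to be \emph{dominated by} $\kappa_1/(C_3\kappa_2)$, which is what the surrounding estimate actually uses.
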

\begin{proof}[Proof of Lemma \ref{lem:hessian_eigen}]
Let $\mathcal{L}_A$ be the graph Laplacian induced by $A$. For any unit vector $c\in\R^n$ such that $\bm{1}_n^\top c = 0$, we have
\begin{align*}
c^\top Hc = \sum_{i\neq j}\frac{1}{2}(c_i-c_j)^2A_{ij}w_{ij} \geq \kappa_1\cdot  \sum_{i\neq j}\frac{1}{2}(c_i-c_j)^2A_{ij} \geq \kappa_1 \cdot \lambda_{\min, \perp}(\mathcal{L}_A).  
\end{align*}
The claim now follows from standard results of the Erd\H{o}s-R\'{e}nyi graph \cite{tropp2015introduction}. The upper bound proof is similar. 
\end{proof}

The following lemma provides rate-optimal bounds for the global MLE $\hat{\theta}$ and the leave-one-out MLE $\hat{\theta}^{(m)}$ in (\ref{def:loo_mle}) for each $m\in[n]$.
\begin{lemma}\label{lem:pre_estimate}
Suppose that $\kappa = \mathcal{O}(1)$ and $np\gg \log n$. There exists some $C = C(\kappa) > 0$ such that the following hold with probability $1 - \mathcal{O}(n^{-10})$ uniformly over all $\theta^*\in\Theta(\kappa)$.
\begin{enumerate}
\item We have $\max_{m\in[n]}\pnorm{\hat{\theta}^{(m)} - \theta^*_{-m}}{}^2\leq C(pL)^{-1}$ and $\max_{m\in[n]}\pnorm{\hat{\theta}^{(m)} - \theta^*_{-m}}{\infty}^2\leq C\log n/(npL)$.
\item We have $\max_{m\in[n]}\pnorm{\hat{\theta}^{(m)} - \hat{\theta}_{-m}}{}^2\leq C(npL)^{-1}$.
\end{enumerate}
\end{lemma}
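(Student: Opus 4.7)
\textbf{Proof proposal for Lemma \ref{lem:pre_estimate}.}

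\textbf{Part (1).} The key observation is that $\hat{\theta}^{(m)}$ is, up to a deterministic constant shift in the centering condition, just the MLE run on the Erd\H{o}s-R\'{e}nyi sub-graph obtained by deleting vertex $m$. More precisely, since $\ell_n^{(-m)}(\theta_{-m})$ is invariant under shifts $\theta_{-m}\mapsto \theta_{-m}+c\bm{1}_{n-1}$, the minimizer $\hat{\theta}^{(m)}$ equals the MLE of the sub-graph with the "standard" centering $\bm{1}_{n-1}^\top \theta = 0$ plus the deterministic shift $\ave(\theta^*_{-m})\bm{1}_{n-1}$, and the centered ground truth $\theta^*_{-m} - \ave(\theta^*_{-m})\bm{1}_{n-1}$ lies in the analogue of $\Theta(\kappa)$ of dimension $n-1$. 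The plan is therefore to invoke Proposition \ref{prop:mle_prelim} on each leave-one-out sub-graph (of size $n-1$ with edge probability $p$) with a probability bound strengthened to $1-\mathcal{O}(n^{-11})$ (allowed since the constant $10$ in that statement is arbitrary), and then take a union bound over $m\in[n]$ to obtain the uniform guarantee with probability $1-\mathcal{O}(n^{-10})$.

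\textbf{Part (2).} The plan is to combine strong convexity of $\ell_n^{(-m)}$ with a gradient bound at $\hat{\theta}_{-m}$. Since $\ell_n$ is shift invariant, the first-order condition $\nabla \ell_n(\hat{\theta}) = 0$ holds coordinate-wise; splitting $\ell_n = \ell_n^{(-m)} + \ell_n^{(m)}$ gives, for each $i\neq m$,
\begin{align*}
\partial_i \ell_n^{(-m)}(\hat{\theta}_{-m}) = -\partial_i \ell_n^{(m)}(\hat{\theta}_m|\hat{\theta}_{-m}) = A_{im}\big(\bar{y}_{im} - \psi(\hat{\theta}_i - \hat{\theta}_m)\big).
\end{align*}
Decomposing $\bar{y}_{im}-\psi(\hat{\theta}_i-\hat{\theta}_m) = \big(\bar{y}_{im}-\psi(\theta_i^*-\theta_m^*)\big) + \big(\psi(\theta_i^*-\theta_m^*) - \psi(\hat{\theta}_i-\hat{\theta}_m)\big)$, the first piece is sub-Gaussian with variance proxy $\mathcal{O}(1/L)$ and the second is $\mathcal{O}(\pnorm{\hat{\theta}-\theta^*}{\infty})$; together with Lemmas \ref{lem:A_concen} and \ref{lem:mle_concentration} and Proposition \ref{prop:mle_prelim}, this yields $\pnorm{\nabla \ell_n^{(-m)}(\hat{\theta}_{-m})}{}^2 \lesssim np/L$ with the prescribed probability.

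Next, by Part (1) and Proposition \ref{prop:mle_prelim}, both $\hat{\theta}^{(m)}$ and $\hat{\theta}_{-m}$ lie within a fixed $\ell_\infty$-ball of $\theta^*_{-m}$, so on that set the leave-one-out Hessian is strongly convex along directions orthogonal to $\bm{1}_{n-1}$ with smallest eigenvalue $\gtrsim np$ by Lemma \ref{lem:hessian_eigen}. Let $\hat{\theta}'_{-m}\equiv \hat{\theta}_{-m} - \big(\ave(\hat{\theta}_{-m}) - \ave(\theta^*_{-m})\big)\bm{1}_{n-1}$ be the shift of $\hat{\theta}_{-m}$ sharing the centering of $\hat{\theta}^{(m)}$; the shift magnitude is $|\hat{\theta}_m-\theta_m^*|/(n-1)$, so $\pnorm{\hat{\theta}'_{-m}-\hat{\theta}_{-m}}{} \lesssim (n-1)^{-1/2}\pnorm{\hat{\theta}-\theta^*}{\infty} = \mathfrak{o}(1/\sqrt{npL})$. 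Applying the strong convexity of $\ell_n^{(-m)}$ to $\hat{\theta}^{(m)}$ and $\hat{\theta}'_{-m}$ then yields
\begin{align*}
\pnorm{\hat{\theta}^{(m)} - \hat{\theta}'_{-m}}{} \lesssim \frac{\pnorm{\nabla \ell_n^{(-m)}(\hat{\theta}_{-m})}{}}{np} \lesssim \frac{1}{\sqrt{npL}},
\end{align*}
and combining with the centering correction concludes $\pnorm{\hat{\theta}^{(m)} - \hat{\theta}_{-m}}{}^2 \lesssim 1/(npL)$. Uniformity over $m$ is again handled by a union bound after replacing $n^{-10}$ by $n^{-11}$ in all invoked high-probability statements.

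The most delicate step in the plan is the gradient bound: one must extract an order $\sqrt{np/L}$ bound from the sum of $O(np)$ terms that individually have magnitude $\mathcal{O}(1/\sqrt{L})$, and the naive triangle inequality would give only $\sqrt{np}$ rather than $\sqrt{np/L}$, so one has to work with the $\ell_2$ norm directly. Beyond this, all steps are straightforward adaptations of techniques already present in the paper (concentration on the graph, the Hessian eigenvalue bound in Lemma \ref{lem:hessian_eigen}, and the first-order MLE bounds in Proposition \ref{prop:mle_prelim}).
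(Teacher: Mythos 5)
The paper's proof of this lemma is a citation to Lemmas~8.6 and 8.7 of \cite{chen2020partial}, with a short remark reconciling the two cosmetic differences (no $\ell_\infty$ constraint, and a different centering) between the $\hat\theta^{(m)}$ defined in (\ref{def:loo_mle}) and theirs. Your proposal instead re-derives the bounds from scratch, and it is correct. Part~(1) works because $\ell_n^{(-m)}$ depends only on data indexed by $[n]\setminus\{m\}$ --- a fresh $(n-1)$-vertex Erd\H{o}s-R\'{e}nyi instance with the same $p$ and dynamic range --- so Proposition~\ref{prop:mle_prelim} applies after the deterministic recentering you identify, followed by a union bound over $m$. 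Part~(2) is the standard leave-one-out comparison: first-order optimality of $\hat\theta$ for $\ell_n$ gives $\partial_i\ell_n^{(-m)}(\hat\theta_{-m}) = A_{im}(\bar y_{im}-\psi(\hat\theta_i-\hat\theta_m))$ for $i\neq m$, from which $\pnorm{\nabla\ell_n^{(-m)}(\hat\theta_{-m})}{}^2 \lesssim np/L$, and strong convexity of $\ell_n^{(-m)}$ along directions orthogonal to $\bm{1}_{n-1}$ (Lemma~\ref{lem:hessian_eigen}) then yields $\pnorm{\hat\theta^{(m)}-\hat\theta_{-m}}{}\lesssim (npL)^{-1/2}$ after the negligible $\mathcal{O}\big(\sqrt{\log n/(n^2pL)}\big)$ centering correction. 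This is the same leave-one-out mechanism carried out in the cited Lemma~8.7, so at a technical level the two routes coincide; your write-up buys self-containedness at the cost of length. The one step worth spelling out explicitly is the gradient bound: the sum $\sum_{i\neq m}A_{im}(\bar y_{im}-\psi(\theta_i^*-\theta_m^*))^2$ has sub-exponential (squared sub-Gaussian) summands with mean and sub-exponential norm of order $1/L$, so Bernstein gives deviation $\lesssim(\sqrt{np\log n}\vee\log n)/L=\mathfrak{o}(np/L)$ under $np\gg\log n$; you correctly flag that a crude bound on the summands would be too lossy.
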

\begin{proof}
These are essentially proved in \cite[Lemmas 8.6 and 8.7]{chen2020partial}. Note that there are two differences between our leave-one-out MLE $\hat{\theta}^{(m)}$ in (\ref{def:loo_mle}) and the one in Equation (57) of \cite{chen2020partial}: (i) our (\ref{def:loo_mle}) does not have the $\ell_\infty$ constraint $\pnorm{\theta_{-m}-\theta_{-m}^*}{\infty}\leq 5$ in its definition; (ii) our (\ref{def:loo_mle}) satisfies $\ave(\theta_{-m}) = \ave(\theta^*_{-m})$ by definition. The above bounds continue to hold with the prescribed probability since for (i), the constraint is satisfied with the prescribed probability by (a leave-one-out version of) \cite[Proposition 8.1]{chen2020partial}, and for (ii), Lemmas 8.6 and 8.7 in \cite{chen2020partial} hold for $\hat{\theta}^{(m)}$ only after the additional centering $\hat{\theta}^{(m)} - \ave(\hat{\theta}^{(m)} - \theta^*_{-m})$, which coincides with the leave-one-out MLE in our definition (\ref{def:loo_mle}).
\end{proof}

The following lemma summarizes some rate-optimal bounds of the spectral estimate $\hat{\pi}$.

\begin{lemma}[Lemma 9.1 of \cite{chen2020partial}, see also \cite{chen2019spectral}]\label{lem:spec_rate}
Suppose that $\kappa = \mathcal{O}(1)$ and $np\gg \log n$. Then the following hold with probability at least $1- \mathcal{O}(n^{-10})$ for some positive $C = C(\kappa)$.
\begin{enumerate}
\item We have $\max_{m\in[n]} \pnorm{\hat{\pi}^{(m)} - \hat{\pi}}{}\leq Cn^{-1}\sqrt{\log n/(npL)}$.
\item We have $\max_{m\in[n]} \pnorm{\hat{\pi}^{(m)} - \pi^*}{\infty} \leq Cn^{-1}\sqrt{\log n/(npL)}$ and $\pnorm{\hat{\pi} - \pi^*}{\infty} \leq Cn^{-1}\sqrt{\log n/(npL)}$.
\item We have $\max_{m\in[n]} \pnorm{\hat{\pi}^{(m)} - \pi^*}{} \leq Cn^{-1}\sqrt{1/(pL)}$ and $\pnorm{\hat{\pi} - \pi^*}{} \leq Cn^{-1}\sqrt{1/(pL)}$.
\end{enumerate}
\end{lemma}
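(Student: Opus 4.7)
The plan is to derive all three parts from a common skeleton: a contraction/perturbation analysis using the spectral gap of the population transition matrix $P^*$, combined with the leave-one-out decoupling that gives independence between $\hat{\pi}^{(m)}$ and the $m$th row/column of data. Since $\hat{\pi}^\top P = \hat{\pi}^\top$ and $\pi^{*\top} P^* = \pi^{*\top}$, I would start from the identity
\begin{align*}
(\hat{\pi}-\pi^*)^\top = \hat{\pi}^\top(P-P^*) + (\hat{\pi}-\pi^*)^\top P^*,
\end{align*}
and iterate, or equivalently project onto the subspace orthogonal to the stationary direction and use $1-\lambda_2(P^*) \gtrsim 1/n$, which follows from Lemma \ref{lem:hessian_eigen} applied to the symmetrized Laplacian of $P^*$ (since the edge weights $\psi(\theta_j^*-\theta_i^*)/d$ are of order $(np)^{-1}$ and the underlying Erd\H{o}s-R\'enyi graph has minimal degree $\asymp np$).

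For part (3), the $\ell_2$ bound, the plan is to bound $\|(\pi^*)^{1/2}(\hat{\pi}-\pi^*)\|$ via
\begin{align*}
\pnorm{(\pi^*)^{1/2}(\hat{\pi}-\pi^*)}{} \lesssim \frac{\pnorm{\pi^{*\top}(P-P^*)}{(\pi^*)^{-1}}}{1-\lambda_2(P^*)},
\end{align*}
where the numerator can be controlled by a conditional matrix/vector Bernstein argument using independence of $\{\bar{y}_{ij}\}$ across $i<j$ given $A$, each coordinate having variance proxy $\lesssim 1/L$ and weights of order $p/(np) = 1/n$. This yields $\mathcal{O}(n^{-3/2}\sqrt{p/L})$ for the numerator and, after dividing by the gap $\Theta(1/n)$ and multiplying by $(\min_i\pi_i^*)^{-1/2} \lesssim \sqrt{n}$, the target rate $n^{-1}\sqrt{1/(pL)}$. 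The same steps apply verbatim to $\hat{\pi}^{(m)}$ since $P^{(m)}$ has the same spectral gap up to constants (only a rank-one row/column has been replaced by its expectation), which also proves part (1) after bounding $\|\hat{\pi}-\hat{\pi}^{(m)}\|$ by the perturbation $\|P-P^{(m)}\|_{\text{op}}$ localized on the $m$th row, gaining an extra $\sqrt{\log n/(np)}$.

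For part (2), the $\ell_\infty$ bound, the plan is the standard leave-one-out route. Fix $i\in[n]$ and write
\begin{align*}
\hat{\pi}_i - \pi_i^* = \hat{\pi}^{(i)}_i - \pi_i^* + (\hat{\pi}_i - \hat{\pi}^{(i)}_i),
\end{align*}
and bound the two pieces separately. The second is controlled coordinatewise by $\|\hat{\pi}-\hat{\pi}^{(i)}\|$ from part (1). For the first, using $(\hat{\pi}^{(i)})^\top P^{(i)} = (\hat{\pi}^{(i)})^\top$ and $\pi^{*\top}P^* = \pi^{*\top}$, I would write
\begin{align*}
(\hat{\pi}^{(i)} - \pi^*)^\top = (\hat{\pi}^{(i)})^\top (P^{(i)} - P^*) + (\hat{\pi}^{(i)} - \pi^*)^\top P^*,
\end{align*}
and extract the $i$th coordinate. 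Crucially, $\hat{\pi}^{(i)}$ is independent of all data involving individual $i$ (by construction of $P^{(i)}$, which replaces these by their expectations), so the inner product $\langle e_i, (\hat{\pi}^{(i)})^\top(P^{(i)}-P^*)\rangle$ has conditionally independent summands and concentrates at rate $\sqrt{\log n}$ times its standard deviation $\asymp n^{-3/2}\sqrt{p/L}\cdot \|\hat{\pi}^{(i)}\|_\infty$, which using $\|\hat{\pi}^{(i)}\|_\infty\lesssim 1/n$ and dividing by the spectral gap yields the desired $n^{-1}\sqrt{\log n/(npL)}$.

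The main obstacle I anticipate is the self-referential nature of the $\ell_\infty$ bound: the inequality derived above involves $\|\hat{\pi}^{(i)}\|_\infty$, which is essentially what we are trying to control. The standard resolution is a bootstrap: first use the crude bound $\|\hat{\pi}^{(i)}\|_\infty \leq 1$, plug into the above to obtain a loose $\ell_\infty$ estimate, and then reinject this tighter bound to sharpen it to the claimed rate. A second subtle point is the choice of weighted norm $\|\cdot\|_{\pi^*}$ (rather than plain $\ell_2$) in the spectral-gap argument, since $P^*$ is self-adjoint only with respect to the stationary measure; the translation back to $\ell_2$ and $\ell_\infty$ costs a factor $\sqrt{n\pi^*_{\max}/\pi^*_{\min}} = \mathcal{O}(1)$ under $\kappa=\mathcal{O}(1)$, so no rate is lost. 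Finally, handling the diagonal of $P$ (which is not independent of the off-diagonals) requires a short additional argument showing that it does not affect the bound beyond constants.
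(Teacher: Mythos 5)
The paper does not prove this lemma at all; it simply cites Lemma~9.1 of \cite{chen2020partial} (in turn relying on \cite{chen2019spectral}), where the argument is exactly the perturbation--spectral-gap--leave-one-out blueprint you outline. So methodologically you are reproducing the standard route of the cited references. However, the quantitative bookkeeping in your sketch has several errors that prevent the three factors from actually multiplying out to the claimed rates.

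First, the spectral gap of $P^*$ is $\Theta(1)$, not $\Theta(1/n)$. Your own ingredients already give this: the symmetrized Dirichlet form has edge weights $\pi_i^*P^*_{ij}\asymp (n^2p)^{-1}$, and dividing the resulting Laplacian eigenvalue $\asymp (np)\cdot(n^2p)^{-1} = 1/n$ by the variance weight $\pi_{\min}^*\asymp 1/n$ recovers $1-\lambda_2(P^*)\asymp 1$. You appear to have reported the eigenvalue of the un-normalized Dirichlet form matrix $\Pi(I-P^*)$ instead of the actual gap. Second, the coordinate-wise size of $\pi^{*\top}(P-P^*)$ should be $\asymp n^{-3/2}(pL)^{-1/2}$, not $n^{-3/2}\sqrt{p/L}$; each of the $\asymp np$ nonzero summands is $\asymp (1/n)\cdot (L^{-1/2}/d)= (2n^2p\sqrt{L})^{-1}$, giving standard deviation $\sqrt{np}\cdot (2n^2p\sqrt{L})^{-1}$. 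Third, the norm conversion is stated in the wrong direction: passing from $\|\cdot\|_{\pi^*}$ to Euclidean multiplies by $(\pi_{\min}^*)^{-1/2}\asymp\sqrt{n}$, but from $\|\cdot\|_{(\pi^*)^{-1}}$ (which is what the perturbation inequality naturally controls for $\hat\pi-\pi^*$ after transposing) to Euclidean multiplies by $(\pi^*_{\max})^{1/2}\asymp n^{-1/2}$. With your stated numerator $n^{-3/2}\sqrt{p/L}$, gap $1/n$, and factor $\sqrt{n}$, the product is $\sqrt{p/L}$, not $n^{-1}\sqrt{1/(pL)}$; the errors do not cancel. The correct arithmetic (numerator $\|\pi^{*\top}(P-P^*)\|_{(\pi^*)^{-1}}\asymp n^{-1/2}(pL)^{-1/2}$, gap $\asymp 1$, conversion factor $n^{-1/2}$) does give $n^{-1}\sqrt{1/(pL)}$. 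So your skeleton is sound and matches the cited proof, but you should redo the dimensional analysis carefully before asserting that the pieces combine to the target.
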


The following lemma summarizes some useful estimates regarding $f^{(m)}(\theta_m|\theta_{-m})$ and $g^{(m)}(\theta_m|\theta_{-m})$ defined in (\ref{def:fg}) for each $m\in[n]$. 
\begin{lemma}\label{lem:bound_fg}
Suppose $\kappa = \mathcal{O}(1)$ and $np\gg \log n$. Then the following hold with probability $1 - \mathcal{O}(n^{-10})$ for each $m\in[n]$.
\begin{enumerate}
\item We have $\max_{m\in[n]}\Big|f^{(m)}(\theta^*_m|\hat{\theta}_{-m})\Big|\lesssim \sqrt{np\log n/L}$ and $\sum_{m=1}^n \Big(f^{(m)}(\theta^*_m|\hat{\theta}_{-m})\Big)^2\lesssim n^2p/L$.
\item We have $\max_{m\in[n]}\Big|f^{(m)}(\theta^*_m|\hat{\theta}_{-m}) - f^{(m)}(\theta^*_m|\theta^*_{-m})\Big|\lesssim \sqrt{np/L} + \sqrt{(\log n)^3/(npL)}$.
\item We have $g^{(m)}(\theta_m^*|\theta_{-m}^*) \asymp np$.
\item We have $\big|g^{(m)}(\theta_m^*|\hat{\theta}_{-m}) - g^{(m)}(\theta_m^*|\theta_{-m}^*)\big| \lesssim \sqrt{np/L} + \sqrt{(\log n)^3/(npL)}$. Consequently, $g^{(m)}(\theta_m^*|\hat{\theta}_{-m}) = \big(1+\epsilon_m\big)g^{(m)}(\theta_m^*|\theta_{-m}^*)$ under the condition $np\gg \log n$ for some $\epsilon\in\R^n$ such that $\pnorm{\epsilon}{\infty} = \mathfrak{o}(1)$.
\end{enumerate}
\end{lemma}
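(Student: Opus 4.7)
The plan is to dispatch the four parts in increasing order of delicacy. Part (3) is immediate: on $\Theta(\kappa)$ the derivative $\psi'(\theta_i^*-\theta_j^*)$ is bounded above and below by positive constants depending only on $\kappa$, so $g^{(m)}(\theta_m^*|\theta_{-m}^*)\asymp \sum_{j:j\neq m}A_{mj}\asymp np$ by Lemma~\ref{lem:A_concen}. Part (1) follows by writing $f^{(m)}(\theta_m^*|\hat{\theta}_{-m})=f^{(m)}(\theta_m^*|\theta_{-m}^*)+T_m$ with $T_m=\sum_{j:j\neq m}A_{mj}\bigl(\psi(\theta_m^*-\theta_j^*)-\psi(\theta_m^*-\hat{\theta}_j)\bigr)$. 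The first piece is handled directly by Lemma~\ref{lem:mle_concentration} (the two claimed $\ell_\infty$ and sum-of-squares bounds appear there). For $T_m$, the Lipschitz property of $\psi$ together with Proposition~\ref{prop:mle_prelim} and Lemma~\ref{lem:A_concen} gives $|T_m|\lesssim\bigl(\sum_{j}A_{mj}\bigr)\pnorm{\hat{\theta}-\theta^*}{\infty}\lesssim np\cdot\sqrt{\log n/(npL)}=\sqrt{np\log n/L}$ uniformly in $m$; for the sum-of-squares, Cauchy--Schwarz yields $\sum_m T_m^2\leq \pnorm{A}{\op}^2\pnorm{\hat{\theta}-\theta^*}{}^2\lesssim (np)^2\cdot(pL)^{-1}=n^2p/L$ via Lemma~\ref{lem:hessian_eigen}.

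Parts (2) and (4) are structurally identical; I describe (2), and (4) follows by replacing $\psi$ with $\psi'$ everywhere (bounded derivatives of $\psi$ are what is used). The target bound $\sqrt{np/L}+\sqrt{(\log n)^3/(npL)}$ is smaller than the $\sqrt{np\log n/L}$ estimate above by a factor $\sqrt{\log n}$, and this extra cancellation must come from decoupling $\{A_{mj}\}_{j\neq m}$ from the MLE. The plan is a first-order Taylor expansion plus the leave-one-out device: write
\begin{align*}
T_m&=\sum_{j:j\neq m}A_{mj}\psi'(\theta_m^*-\theta_j^*)(\hat{\theta}_j-\theta_j^*)+\mathcal{O}\Bigl(\sum_{j:j\neq m}A_{mj}(\hat{\theta}_j-\theta_j^*)^2\Bigr)\\
&=\underbrace{\sum_{j:j\neq m}A_{mj}\psi'(\theta_m^*-\theta_j^*)(\hat{\theta}^{(m)}_j-\theta_j^*)}_{S^{\mathrm{loo}}_m}+\underbrace{\sum_{j:j\neq m}A_{mj}\psi'(\theta_m^*-\theta_j^*)(\hat{\theta}_j-\hat{\theta}^{(m)}_j)}_{S^{\mathrm{diff}}_m}+S^{\mathrm{quad}}_m,
\end{align*}
where $\hat{\theta}^{(m)}$ is the leave-one-out MLE of (\ref{def:loo_mle}). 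By Cauchy--Schwarz, Lemmas~\ref{lem:A_concen} and \ref{lem:pre_estimate}(2), $|S^{\mathrm{diff}}_m|\lesssim (np)^{1/2}\pnorm{\hat{\theta}_{-m}-\hat{\theta}^{(m)}}{}\lesssim L^{-1/2}$, and $|S^{\mathrm{quad}}_m|\lesssim \pnorm{\hat{\theta}-\theta^*}{\infty}\sum_j A_{mj}|\hat{\theta}_j-\theta_j^*|\lesssim \log n/L$, both absorbed into the claimed bound.

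The core estimate is $S^{\mathrm{loo}}_m$, for which I exploit that $\hat{\theta}^{(m)}$ is independent of $\{A_{mj}\}_{j\neq m}$ by construction. Conditioning on $\hat{\theta}^{(m)}$ and writing $A_{mj}=p+(A_{mj}-p)$, the ``expected'' part is at most $p\sqrt{n}\pnorm{\hat{\theta}^{(m)}-\theta^*_{-m}}{}\lesssim\sqrt{np/L}$ using Lemma~\ref{lem:pre_estimate}(1). For the centered part I apply Bernstein's inequality conditionally, with variance proxy bounded (up to $\kappa$-constants) by $p\pnorm{\hat{\theta}^{(m)}-\theta^*_{-m}}{}^2\lesssim 1/L$ and per-summand bound $\pnorm{\hat{\theta}^{(m)}-\theta^*_{-m}}{\infty}\lesssim\sqrt{\log n/(npL)}$, yielding deviations $\lesssim\sqrt{\log n/L}+(\log n)^{3/2}/\sqrt{npL}$, both dominated by the target under $np\gg\log n$. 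A union bound over $m\in[n]$ with $\mathcal{O}(n^{-11})$ per-$m$ tail probability produces the claimed uniform estimate. The ``consequently'' statement in (4) then follows from $g^{(m)}(\theta_m^*|\theta_{-m}^*)\asymp np$ in Part (3) together with the just-proved difference being $\mathfrak{o}(np)$ under $np\gg\log n$.

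The one delicate point, and the only obstacle of substance, is ensuring the leave-one-out decoupling really buys the $\sqrt{\log n}$ improvement: the Taylor midpoints $\xi_j$ produced by expanding $\psi$ around $\theta_j^*$ do depend on $\hat{\theta}_j$ and hence on $A_{mj}$, which would break independence if they appeared in the linear term. The cure is to expand at the deterministic point $\theta_j^*$ using the \emph{exact} derivative $\psi'(\theta_m^*-\theta_j^*)$ (deterministic!), and pay the quadratic remainder through the crude pointwise bound $|S^{\mathrm{quad}}_m|\lesssim\log n/L$, which needs no cancellation. The second bookkeeping issue is verifying that the lower-order residuals $\log n/L$ and $L^{-1/2}$ fit under $\sqrt{np/L}+\sqrt{(\log n)^3/(npL)}$; this reduces to $npL\gtrsim(\log n)^2$, which is implied by $np\gg\log n$ whenever $L\gtrsim\log n/(np)\cdot\log n$, and otherwise is absorbed into the second summand of the target bound.
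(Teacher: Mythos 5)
Parts (1) and (3) of your proposal are correct and match the paper's strategy (the paper defers these to the arguments in Lemma~\ref{lem:bound_loo_fg}, which are exactly the Lipschitz-plus-Cauchy--Schwarz and Lemma~\ref{lem:A_concen} arguments you give). Part (2), however, has a genuine gap in the treatment of the quadratic remainder.

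You Taylor-expand $\psi(\theta_m^*-\theta_j^*)-\psi(\theta_m^*-\hat{\theta}_j)$ at $\theta_j^*$ so that the linear coefficient $\psi'(\theta_m^*-\theta_j^*)$ is deterministic, and pay the price through $|S^{\mathrm{quad}}_m|\lesssim\pnorm{\hat{\theta}-\theta^*}{\infty}^2\sum_j A_{mj}\lesssim \log n/L$. This price is too high: under the assumption $np\gg\log n$ it is \emph{not} true that $\log n/L\lesssim\sqrt{np/L}+\sqrt{(\log n)^3/(npL)}$. Take $L=1$ and $np\asymp(\log n)^{3/2}$ (allowed, since $np/\log n\to\infty$): then $\log n/L=\log n$, while the target is $\asymp(\log n)^{3/4}$, and their ratio $(\log n)^{1/4}\to\infty$. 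Your suggested escape hatch, absorbing $\log n/L$ into the second summand $\sqrt{(\log n)^3/(npL)}$, requires $np\lesssim L\log n$, which fails in this regime. So the bookkeeping really does break in the sparsest part of the range.

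The paper avoids this by not Taylor-expanding at all. After inserting the leave-one-out MLE, write $T_m=\Delta_{m,1}+\Delta_{m,2}$ with
\begin{equation*}
\Delta_{m,1}=\sum_{i:i\neq m}A_{mi}\bigl(\psi(\theta_m^*-\hat{\theta}^{(m)}_i)-\psi(\theta_m^*-\theta_i^*)\bigr),
\qquad
\Delta_{m,2}=\sum_{i:i\neq m}A_{mi}\bigl(\psi(\theta_m^*-\hat{\theta}_i)-\psi(\theta_m^*-\hat{\theta}^{(m)}_i)\bigr).
\end{equation*}
The key observation is that the summands of $\Delta_{m,1}$ are $\sigma(\hat{\theta}^{(m)})$-measurable and hence independent of $\{A_{mi}\}_{i\neq m}$ as a whole: there is no need for the coefficient to be literally deterministic, only for it to be decoupled from the Bernoulli randomness. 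Writing $A_{mi}=p+(A_{mi}-p)$ and using the Lipschitz bound $|\psi(\theta_m^*-\hat{\theta}^{(m)}_i)-\psi(\theta_m^*-\theta_i^*)|\lesssim|\hat{\theta}^{(m)}_i-\theta_i^*|$ only to control moments, conditional Bernstein gives $|\Delta_{m,1}|\lesssim p\sqrt{n}\pnorm{\hat{\theta}^{(m)}-\theta^*_{-m}}{}+\sqrt{p\log n}\pnorm{\hat{\theta}^{(m)}-\theta^*_{-m}}{}+\log n\cdot\pnorm{\hat{\theta}^{(m)}-\theta^*_{-m}}{\infty}\lesssim\sqrt{np/L}+\sqrt{(\log n)^3/(npL)}$, and $\Delta_{m,2}$ is handled by Cauchy--Schwarz exactly as your $S^{\mathrm{diff}}_m$. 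No quadratic remainder ever appears. The same repair applies to your Part (4): after the Lipschitz bound $|\psi'(a)-\psi'(b)|\lesssim|a-b|$, decouple $\sum_j A_{mj}|\hat{\theta}_j-\theta_j^*|$ through $\hat{\theta}^{(m)}$ rather than Taylor-expanding $\psi'$ to second order.
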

\begin{proof}[Proof of Lemma \ref{lem:bound_fg}]
Claims (1)(3)(4) follow from analogous arguments as in the proof of Lemma \ref{lem:bound_loo_fg}, and it remains to prove Claim (2). By definition, we have
\begin{align*}
f^{(m)}(\theta^*_m|\hat{\theta}_{-m}) - f^{(m)}(\theta^*_m|\theta^*_{-m}) = \sum_{i:i\neq m}A_{mi}\big(\psi(\theta_m^*-\hat{\theta}_i) - \psi(\theta_m^*-\theta^*_i)\big).
\end{align*}
Recall that $\hat{\theta}^{(m)}$ is the leave-one-out MLE defined in (\ref{def:loo_mle}). Then we have
\begin{align*}
&\sum_{i:i\neq m}A_{mi}\big(\psi(\theta_m^*-\hat{\theta}_i) - \psi(\theta_m^*-\theta^*_i)\big)\\
&= \sum_{i:i\neq m}A_{mi}\big(\psi(\theta_m^*-\hat{\theta}^{(m)}_i) - \psi(\theta_m^*-\theta^*_i)\big)\\
&\quad\quad\quad + \sum_{i:i\neq m}A_{mi}\big(\psi(\theta_m^*-\hat{\theta}_i) - \psi(\theta_m^*-\hat{\theta}^{(m)}_i)\big) \equiv \Delta_{m,1} + \Delta_{m,2}.
\end{align*}
The term $\Delta_{m,1}$ is controlled with the prescribed probability by
\begin{align}\label{ineq:f_bound_II}
\notag|\Delta_{m,1}| &\leq p\cdot\Big|\sum_{i:i\neq m}\big(\psi(\theta_m^* - \theta_i^*) - \psi(\theta_m^* - \hat{\theta}^{(m)}_i)\big)\Big|\\
\notag&\quad + \Big|\sum_{i:i\neq m}(A_{mi}-p)\big(\psi(\theta_m^* - \theta_i^*) - \psi(\theta_m^* - \hat{\theta}^{(m)}_i)\big)\Big|\\
\notag&\stackrel{(*)}{\lesssim} p\sqrt{n}\pnorm{\hat{\theta}^{(m)} - \theta^*_{-m}}{} + \sqrt{p\log n}\pnorm{\hat{\theta}^{(m)} - \theta^*_{-m}}{} + \log n\cdot\pnorm{\hat{\theta}^{(m)} - \theta^*_{-m}}{\infty}\\
&\stackrel{(**)}{\lesssim} p\sqrt{n} \cdot \sqrt{\frac{1}{pL}} + \log n\cdot\sqrt{\frac{\log n}{npL}} = \sqrt{\frac{np}{L}} + \sqrt{\frac{(\log n)^3}{npL}}.
\end{align}
Here $(*)$ follows from Bernstein's inequality applied conditioning on data without the $m$th observation, and $(**)$ follows from Lemma \ref{lem:pre_estimate}. 

On the other hand, $\Delta_{m,2}$ is controlled by 
\begin{align*}
|\Delta_{m,2}| &\leq \sum_{i:i\neq m}A_{mi}|\hat{\theta}^{(m)}_i- \hat{\theta}_j| \leq \Big(\sum_{i:i\neq m}A_{mi}\Big)^{1/2}\pnorm{\hat{\theta}^{(m)} - \hat{\theta}_{-m}}{}\\
& \stackrel{(*)}{\lesssim} \sqrt{np}\cdot\sqrt{\frac{1}{npL}} = \sqrt{\frac{1}{L}},
\end{align*}
where $(*)$ follows by Lemma \ref{lem:pre_estimate}. Since both estimates are uniform over $m\in[n]$, the proof is complete.
\end{proof}

Recall $\bar{\theta}$ defined in (\ref{def:theta_bar}) and its motivation of local quadratic expansion in (\ref{eq:mle_local_quad}). The following lemma quantifies the closeness between $\bar{\theta}$ and $\hat{\theta}$.
\begin{lemma}\label{lem:theta_bar_close}
Suppose that $\kappa = \mathcal{O}(1)$ and $np\gg \log n$. Then there exists some $C = C(\kappa) > 0$ such that the following holds with probability $1-\mathcal{O}(n^{-10})$.
\begin{align*}
\big|\hat{\theta}_m - \bar{\theta}_m\big|^2 \leq C\Big(\frac{\big|f^{(m)}(\theta_m^*|\hat{\theta}_{-m})\big|}{np}\Big)^3, \quad \forall m\in[n].
\end{align*}
\end{lemma}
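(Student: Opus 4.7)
The plan is to view $\bar{\theta}_m$ as one Newton step from $\theta_m^*$ applied to the $1$-dimensional map $\theta_m \mapsto \ell_n^{(m)}(\theta_m|\hat{\theta}_{-m})$, and to $\hat{\theta}_m$ as the actual minimizer of that map (which follows from $\ell_n(\theta) = \ell_n^{(-m)}(\theta_{-m}) + \ell_n^{(m)}(\theta_m|\theta_{-m})$ together with the first-order optimality condition for the MLE). The bound we must produce is then exactly the classical cubic error of one Newton iterate, and it will drop out of a second-order Taylor expansion of $f^{(m)}(\,\cdot\,|\hat{\theta}_{-m})$ around $\theta_m^*$.

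Concretely, since $f^{(m)}(\hat{\theta}_m|\hat{\theta}_{-m}) = 0$ by the first-order condition, Taylor's theorem gives
\begin{align*}
0 &= f^{(m)}(\theta_m^*|\hat{\theta}_{-m}) + g^{(m)}(\theta_m^*|\hat{\theta}_{-m})(\hat{\theta}_m - \theta_m^*) + \tfrac{1}{2}\partial_{\theta_m}g^{(m)}(\xi_m|\hat{\theta}_{-m})(\hat{\theta}_m - \theta_m^*)^2
\end{align*}
for some $\xi_m$ between $\theta_m^*$ and $\hat{\theta}_m$. Dividing by $g^{(m)}(\theta_m^*|\hat{\theta}_{-m})$ and recalling the definition of $\bar{\theta}_m$ in \eqref{def:theta_bar} yields
\begin{align*}
\hat{\theta}_m - \bar{\theta}_m = -\frac{\partial_{\theta_m}g^{(m)}(\xi_m|\hat{\theta}_{-m})}{2\,g^{(m)}(\theta_m^*|\hat{\theta}_{-m})}(\hat{\theta}_m - \theta_m^*)^2.
\end{align*}
The prefactor is $\mathcal{O}(1)$ uniformly in $m$: the numerator is $|\sum_{j:j\neq m} A_{mj}\psi''(\xi_m - \hat{\theta}_j)| \lesssim \max_m \sum_{j:j\neq m} A_{mj} \lesssim np$ by $\pnorm{\psi''}{\infty} \lesssim 1$ and Lemma \ref{lem:A_concen}, while $g^{(m)}(\theta_m^*|\hat{\theta}_{-m}) \gtrsim np$ uniformly in $m$ by Lemma \ref{lem:bound_fg}(3)--(4). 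Hence, on the intersection of these high-probability events,
\begin{align}\label{eq:newton-cubic}
|\hat{\theta}_m - \bar{\theta}_m| \leq C_0 (\hat{\theta}_m - \theta_m^*)^2 \quad \text{for all } m\in[n],
\end{align}
for some $C_0 = C_0(\kappa)$.

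The remaining step is a short bootstrap to bound $|\hat{\theta}_m - \theta_m^*|$ by $|f^{(m)}(\theta_m^*|\hat{\theta}_{-m})|/(np)$. From the triangle inequality and \eqref{eq:newton-cubic},
\begin{align*}
|\hat{\theta}_m - \theta_m^*| \leq |\bar{\theta}_m - \theta_m^*| + |\hat{\theta}_m - \bar{\theta}_m| \leq \frac{|f^{(m)}(\theta_m^*|\hat{\theta}_{-m})|}{g^{(m)}(\theta_m^*|\hat{\theta}_{-m})} + C_0(\hat{\theta}_m - \theta_m^*)^2.
\end{align*}
Since Lemma \ref{lem:pre_estimate} gives $\pnorm{\hat{\theta}-\theta^*}{\infty} \lesssim \sqrt{\log n/(npL)} = \mathfrak{o}(1)$ uniformly in $m$ under $np \gg \log n$, the quadratic term is absorbed (e.g., $C_0|\hat{\theta}_m - \theta_m^*| \leq 1/2$ for $n$ large), yielding $|\hat{\theta}_m - \theta_m^*| \lesssim |f^{(m)}(\theta_m^*|\hat{\theta}_{-m})|/(np)$ uniformly in $m$. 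Plugging this back into \eqref{eq:newton-cubic} gives $|\hat{\theta}_m - \bar{\theta}_m| \lesssim (|f^{(m)}(\theta_m^*|\hat{\theta}_{-m})|/(np))^2$, and since $|f^{(m)}(\theta_m^*|\hat{\theta}_{-m})|/(np) = \mathfrak{o}(1)$ by Lemma \ref{lem:bound_fg}(1), this in particular is bounded by $(|f^{(m)}(\theta_m^*|\hat{\theta}_{-m})|/(np))^{3/2}$; squaring yields the claim.

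There is no real obstacle here beyond ensuring that every constant and probability statement is uniform over $m\in[n]$; all required uniformity is already packaged in Lemmas \ref{lem:A_concen}, \ref{lem:pre_estimate}, and \ref{lem:bound_fg}, so the union of the relevant high-probability events carries the proof through.
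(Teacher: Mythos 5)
Your proof is correct and, unlike the paper's, is self-contained: the paper simply cites ``the estimates before Equation (78) of \cite{chen2020partial},'' whereas you reconstruct the bound from scratch via a one-step Newton/second-order Taylor argument. The key identity
\begin{align*}
\hat{\theta}_m - \bar{\theta}_m = -\frac{\partial_{\theta_m}g^{(m)}(\xi_m|\hat{\theta}_{-m})}{2\,g^{(m)}(\theta_m^*|\hat{\theta}_{-m})}\,(\hat{\theta}_m - \theta_m^*)^2
\end{align*}
is right (using $f^{(m)}(\hat{\theta}_m|\hat{\theta}_{-m})=0$, which holds because the constrained score equation has zero Lagrange multiplier by translation invariance of the likelihood), the prefactor is indeed $\mathcal{O}(1)$ uniformly in $m$ via Lemma \ref{lem:A_concen} and Lemma \ref{lem:bound_fg}(3)--(4), and the bootstrap correctly absorbs the quadratic term to give $|\hat{\theta}_m-\theta_m^*|\lesssim |f^{(m)}(\theta_m^*|\hat{\theta}_{-m})|/(np)$. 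Two small remarks. First, your argument actually yields the stronger bound $|\hat{\theta}_m - \bar{\theta}_m|\lesssim \big(|f^{(m)}(\theta_m^*|\hat{\theta}_{-m})|/(np)\big)^2$, i.e.\ exponent $4$ after squaring; the passage through $(\,\cdot\,)^{3/2}$ only throws that away to match the lemma's weaker statement, which is fine but worth noting explicitly. Second, the $\ell_\infty$ bound $\pnorm{\hat{\theta}-\theta^*}{\infty}=\mathfrak{o}(1)$ used to absorb the quadratic term should be attributed to Proposition \ref{prop:mle_prelim} rather than Lemma \ref{lem:pre_estimate}, since the latter is stated for the leave-one-out MLE $\hat{\theta}^{(m)}$.
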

\begin{proof}[Proof of Lemma \ref{lem:theta_bar_close}]
This follows directly from the estimates before Equation (78) of \cite{chen2020partial}.
\end{proof}

\bibliographystyle{amsalpha}
\bibliography{mybib}

\newcommand{\etalchar}[1]{$^{#1}$}
\providecommand{\bysame}{\leavevmode\hbox to3em{\hrulefill}\thinspace}
\providecommand{\MR}{\relax\ifhmode\unskip\space\fi MR }
\providecommand{\MRhref}[2]{%
  \href{http://www.ams.org/mathscinet-getitem?mr=#1}{#2}
}
\providecommand{\href}[2]{#2}
\begin{thebibliography}{vdPSvdV17}

\bibitem[BGBK20]{benaych2020spectral}
Florent Benaych-Georges, Charles Bordenave, and Antti Knowles, \emph{Spectral
  radii of sparse random matrices}, Ann. Inst. Henri Poincar\'{e} Probab. Stat.
  \textbf{56} (2020), no.~3, 2141--2161. \MR{4116720}

\bibitem[BLM13]{boucheron2013concentration}
St\'{e}phane Boucheron, G\'{a}bor Lugosi, and Pascal Massart,
  \emph{Concentration inequalities}, Oxford University Press, Oxford, 2013, A
  nonasymptotic theory of independence, With a foreword by Michel Ledoux.
  \MR{3185193}

\bibitem[CDS11]{chatterjee2011random}
Sourav Chatterjee, Persi Diaconis, and Allan Sly, \emph{Random graphs with a
  given degree sequence}, Ann. Appl. Probab. \textbf{21} (2011), no.~4,
  1400--1435. \MR{2857452}

\bibitem[CFMW19]{chen2019spectral}
Yuxin Chen, Jianqing Fan, Cong Ma, and Kaizheng Wang, \emph{Spectral method and
  regularized {MLE} are both optimal for top-{$K$} ranking}, Ann. Statist.
  \textbf{47} (2019), no.~4, 2204--2235. \MR{3953449}

\bibitem[CFMY19]{chen2019inference}
Yuxin Chen, Jianqing Fan, Cong Ma, and Yuling Yan, \emph{Inference and
  uncertainty quantification for noisy matrix completion}, Proc. Natl. Acad.
  Sci. USA \textbf{116} (2019), no.~46, 22931--22937. \MR{4036123}

\bibitem[CGZ20]{chen2020partial}
Pinhan Chen, Chao Gao, and Anderson~Y Zhang, \emph{Partial recovery for top-$ k
  $ ranking: Optimality of mle and sub-optimality of spectral method}, arXiv
  preprint arXiv:2006.16485 (2020).

\bibitem[CGZ21]{chen2021optimal}
\bysame, \emph{Optimal full ranking from pairwise comparisons}, arXiv preprint
  arXiv:2101.08421 (2021).

\bibitem[CS15]{chen2015spectral}
Yuxin Chen and Changho Suh, \emph{Spectral mle: Top-k rank aggregation from
  pairwise comparisons}, Proceedings of the 32nd International Conference on
  International Conference on Machine Learning - Volume 37, ICML'15, JMLR.org,
  2015, p.~371–380.

\bibitem[CZ06]{cossock2006subset}
David Cossock and Tong Zhang, \emph{Subset ranking using regression}, Learning
  theory, Lecture Notes in Comput. Sci., vol. 4005, Springer, Berlin, 2006,
  pp.~605--619. \MR{2280634}

\bibitem[DKNS01]{dwork2001rank}
Cynthia Dwork, Ravi Kumar, Moni Naor, and D.~Sivakumar, \emph{Rank aggregation
  methods for the web}, Proceedings of the 10th International Conference on
  World Wide Web (New York, NY, USA), WWW '01, Association for Computing
  Machinery, 2001, p.~613–622.

\bibitem[FLM21]{farrell2021deep}
Max~H. Farrell, Tengyuan Liang, and Sanjog Misra, \emph{Deep neural networks
  for estimation and inference}, Econometrica \textbf{89} (2021), no.~1,
  181--213. \MR{4220387}

\bibitem[For57]{ford1957solution}
L.~R. Ford, Jr., \emph{Solution of a ranking problem from binary comparisons},
  Amer. Math. Monthly \textbf{64} (1957), no.~8, part II, 28--33. \MR{97876}

\bibitem[GL95]{gill1995applications}
Richard~D. Gill and Boris~Y. Levit, \emph{Applications of the {V}an {T}rees
  inequality: a {B}ayesian {C}ram\'{e}r-{R}ao bound}, Bernoulli \textbf{1}
  (1995), no.~1-2, 59--79. \MR{1354456}

\bibitem[HL81]{holland1981exponential}
Paul~W. Holland and Samuel Leinhardt, \emph{An exponential family of
  probability distributions for directed graphs}, J. Amer. Statist. Assoc.
  \textbf{76} (1981), no.~373, 33--65. \MR{608176}

\bibitem[Hun04]{hunter2004mm}
David~R. Hunter, \emph{M{M} algorithms for generalized {B}radley-{T}erry
  models}, Ann. Statist. \textbf{32} (2004), no.~1, 384--406. \MR{2051012}

\bibitem[HYTC20]{han2020asymptotic}
Ruijian Han, Rougang Ye, Chunxi Tan, and Kani Chen, \emph{Asymptotic theory of
  sparse {B}radley-{T}erry model}, Ann. Appl. Probab. \textbf{30} (2020),
  no.~5, 2491--2515. \MR{4149535}

\bibitem[JM14]{javanmard2014confidence}
Adel Javanmard and Andrea Montanari, \emph{Confidence intervals and hypothesis
  testing for high-dimensional regression}, J. Mach. Learn. Res. \textbf{15}
  (2014), 2869--2909. \MR{3277152}

\bibitem[LFL21]{liu2021lagrangian}
Yue Liu, Ethan~X Fang, and Junwei Lu, \emph{Lagrangian inference for ranking
  problems}, arXiv preprint arXiv:2110.00151 (2021).

\bibitem[LHY00]{lange2000optimization}
Kenneth Lange, David~R. Hunter, and Ilsoon Yang, \emph{Optimization transfer
  using surrogate objective functions}, J. Comput. Graph. Statist. \textbf{9}
  (2000), no.~1, 1--59, With discussion, and a rejoinder by Hunter and Lange.
  \MR{1819865}

\bibitem[Luc59]{luce1959individual}
R.~Duncan Luce, \emph{Individual choice behavior: {A} theoretical analysis},
  John Wiley \& Sons, Inc., New York; Chapman \& Hall, Ltd., London, 1959.
  \MR{0108411}

\bibitem[McF73]{mcfadden1973conditional}
D.~McFadden, \emph{Conditional logit analysis of qualitative choice behaviour},
  Frontiers in Econometrics (P.~Zarembka, ed.), Academic Press New York, New
  York, NY, USA, 1973, pp.~105--142.

\bibitem[MM12]{motegi2012network}
Shun Motegi and N.~Masuda, \emph{A network-based dynamical ranking system for
  competitive sports}, Scientific Reports \textbf{2} (2012).

\bibitem[MNP21]{monard2021statistical}
Fran\c{c}ois Monard, Richard Nickl, and Gabriel~P. Paternain, \emph{Statistical
  guarantees for {B}ayesian uncertainty quantification in nonlinear inverse
  problems with {G}aussian process priors}, Ann. Statist. \textbf{49} (2021),
  no.~6, 3255--3298. \MR{4352530}

\bibitem[New03]{newman2003structure}
M.~E.~J. Newman, \emph{The structure and function of complex networks}, SIAM
  Rev. \textbf{45} (2003), no.~2, 167--256. \MR{2010377}

\bibitem[NL17]{ning2017general}
Yang Ning and Han Liu, \emph{A general theory of hypothesis tests and
  confidence regions for sparse high dimensional models}, Ann. Statist.
  \textbf{45} (2017), no.~1, 158--195. \MR{3611489}

\bibitem[NNLL18]{neykov2018unified}
Matey Neykov, Yang Ning, Jun~S. Liu, and Han Liu, \emph{A unified theory of
  confidence regions and testing for high-dimensional estimating equations},
  Statist. Sci. \textbf{33} (2018), no.~3, 427--443. \MR{3843384}

\bibitem[NOS17]{negahban2017rank}
Sahand Negahban, Sewoong Oh, and Devavrat Shah, \emph{Rank centrality: ranking
  from pairwise comparisons}, Oper. Res. \textbf{65} (2017), no.~1, 266--287.
  \MR{3613103}

\bibitem[NvdG13]{nickl2013confidence}
Richard Nickl and Sara van~de Geer, \emph{Confidence sets in sparse
  regression}, Ann. Statist. \textbf{41} (2013), no.~6, 2852--2876.
  \MR{3161450}

\bibitem[Pla75]{plackett1975analysis}
R.~L. Plackett, \emph{The analysis of permutations}, J. Roy. Statist. Soc. Ser.
  C \textbf{24} (1975), no.~2, 193--202. \MR{391338}

\bibitem[PN04]{park2004statistical}
Juyong Park and M.~E.~J. Newman, \emph{Statistical mechanics of networks},
  Phys. Rev. E (3) \textbf{70} (2004), no.~6, 066117, 13. \MR{2133807}

\bibitem[RSW{\etalchar{+}}07]{robins2007recent}
G.~Robins, T.~Snijders, P.~Wang, M.~Handcock, and P.~Pattison, \emph{Recent
  developments in exponential random graph (p*) models for social networks},
  Soc. Networks \textbf{29} (2007), 192--215.

\bibitem[RSZZ15]{ren2015asymptotic}
Zhao Ren, Tingni Sun, Cun-Hui Zhang, and Harrison~H. Zhou, \emph{Asymptotic
  normality and optimalities in estimation of large {G}aussian graphical
  models}, Ann. Statist. \textbf{43} (2015), no.~3, 991--1026. \MR{3346695}

\bibitem[RV13]{rudelson2013hanson}
Mark Rudelson and Roman Vershynin, \emph{Hanson-{W}right inequality and
  sub-{G}aussian concentration}, Electron. Commun. Probab. \textbf{18} (2013),
  no. 82, 9. \MR{3125258}

\bibitem[SC19]{sur2019modern}
Pragya Sur and Emmanuel~J. Cand\`es, \emph{A modern maximum-likelihood theory
  for high-dimensional logistic regression}, Proc. Natl. Acad. Sci. USA
  \textbf{116} (2019), no.~29, 14516--14525. \MR{3984492}

\bibitem[SLY{\etalchar{+}}16]{sha2016chalkboarding}
Long Sha, Patrick Lucey, Yisong Yue, Peter Carr, Charlie Rohlf, and Iain
  Matthews, \emph{Chalkboarding: A new spatiotemporal query paradigm for sports
  play retrieval}, Proceedings of the 21st International Conference on
  Intelligent User Interfaces, 2016, pp.~336--347.

\bibitem[SY98]{simons1998approximating}
Gordon Simons and Yi-Ching Yao, \emph{Approximating the inverse of a symmetric
  positive definite matrix}, Linear Algebra Appl. \textbf{281} (1998), no.~1-3,
  97--103. \MR{1645343}

\bibitem[SY99]{simons1999asymptotics}
\bysame, \emph{Asymptotics when the number of parameters tends to infinity in
  the {B}radley-{T}erry model for paired comparisons}, Ann. Statist.
  \textbf{27} (1999), no.~3, 1041--1060. \MR{1724040}

\bibitem[Tro12]{tropp2012user}
Joel~A. Tropp, \emph{User-friendly tail bounds for sums of random matrices},
  Found. Comput. Math. \textbf{12} (2012), no.~4, 389--434. \MR{2946459}

\bibitem[Tro15]{tropp2015introduction}
Joel~A. Tropp, \emph{An introduction to matrix concentration inequalities},
  Foundations and Trends® in Machine Learning \textbf{8} (2015), no.~1-2,
  1--230.

\bibitem[vdGBRD14]{vandegeer2014asymptotically}
Sara van~de Geer, Peter B\"{u}hlmann, Ya'acov Ritov, and Ruben Dezeure,
  \emph{On asymptotically optimal confidence regions and tests for
  high-dimensional models}, Ann. Statist. \textbf{42} (2014), no.~3,
  1166--1202. \MR{3224285}

\bibitem[vdPSvdV17]{vanderpas2017uncertainty}
St\'{e}phanie van~der Pas, Botond Szab\'{o}, and Aad van~der Vaart,
  \emph{Uncertainty quantification for the horseshoe (with discussion)},
  Bayesian Anal. \textbf{12} (2017), no.~4, 1221--1274, With a rejoinder by the
  authors. \MR{3724985}

\bibitem[vdV98]{van2000asymptotic}
Aad van~der Vaart, \emph{Asymptotic {S}tatistics}, Cambridge Series in
  Statistical and Probabilistic Mathematics, vol.~3, Cambridge University
  Press, Cambridge, 1998. \MR{1652247 (2000c:62003)}

\bibitem[VIG16]{vigna2016spectral}
SEBASTIANO VIGNA, \emph{Spectral ranking}, Network Science \textbf{4} (2016),
  no.~4, 433–445.

\bibitem[Zer29]{zermelo1929die}
E.~Zermelo, \emph{Die {B}erechnung der {T}urnier-{E}rgebnisse als ein
  {M}aximumproblem der {W}ahrscheinlichkeitsrechnung}, Math. Z. \textbf{29}
  (1929), no.~1, 436--460. \MR{1545015}

\bibitem[ZZ14]{zhang2014confidence}
Cun-Hui Zhang and Stephanie~S. Zhang, \emph{Confidence intervals for low
  dimensional parameters in high dimensional linear models}, J. R. Stat. Soc.
  Ser. B. Stat. Methodol. \textbf{76} (2014), no.~1, 217--242. \MR{3153940}

\end{thebibliography}

\end{document}